\documentclass[a4paper,11pt]{article}
\usepackage[utf8]{inputenc}

\usepackage{boxedminipage}
\usepackage{amsfonts}
\usepackage{amsmath} 
\usepackage{amssymb}
\usepackage{graphicx}
\usepackage{amsthm}
\usepackage{t1enc}
\usepackage{subfig}
\usepackage{authblk} 
 \usepackage{mathabx}
 \usepackage{url}

\newtheorem{theorem}{Theorem}[section]
\newtheorem{lemma}[theorem]{Lemma}
\newtheorem{proposition}[theorem]{Proposition}
\newtheorem{corollary}[theorem]{Corollary}
\newtheorem{definition}[theorem]{Definition}

\newtheorem{fact}[theorem]{Fact}
\newtheorem{remark}[theorem]{Remark}
\newtheorem{question}{Question}

\newcommand{\hybcomp}{\mathbin{\ltimes_{\mathrm{hyb}}}}

\newcommand{\ZFC}{\mathsf{ZFC}}

\newcommand{\CH}{\mathsf{CH}}

\newcommand{\PD}{\mathsf{PD}}

\newcommand{\GCH}{\mathsf{GCH}}

\def\undertilde#1{\mathord{\vtop{\ialign{##\crcr
$\hfil\displaystyle{#1}\hfil$\crcr\noalign{\kern1.5pt\nointerlineskip}
$\hfil\tilde{}\hfil$\crcr\noalign{\kern1.5pt}}}}}

\title{On a local variant of the 12th Delfino problem --- the $\Pi$-side} 
\author{ Stefan Hoffelner$^{1}$  \footnote{The author's research was funded in whole by the Austrian Science Fund (FWF) Grant-DOI 10.55776/P37228. For the purpose of open access, the author has applied a CC BY public copyright license to any Author Accepted Manuscript version arising from this submission. }  }
\date{
    $^1$TU Wien \\
    \today
}

\begin{document}
\maketitle

\begin{abstract}
Assume that \(M_n\), the canonical inner model with \(n\) Woodin cardinals,
exists.  We force a model with continuum \(\aleph_2\) in which every
\(\boldsymbol{\Sigma}^1_{n+2}\) set of reals is Lebesgue measurable and has the
Baire property, the \(\Sigma^1_{n+2}\)- and
\(\Pi^1_{n+3}\)-uniformization properties hold, and the reals admit a
\(\Delta^1_{n+3}\)-definable well-order.  Thus regularity up to a fixed finite
projective level, together with a definable well-order of the reals at the
adjacent level, does not force the determinacy strength which would normally
explain that regularity, even when this package is strengthened by adjacent
\(\Sigma\)- and \(\Pi\)-uniformization.  In particular, this gives a negative
answer to a local form of Woodin's twelfth Delfino problem asked by Friedman--Schindler.
\end{abstract}

\section{Introduction}\label{sec:introduction}

A central theme in descriptive set theory is the interaction between
regularity properties, such as Lebesgue measurability and the Baire
property, and definable choice principles for projective sets of reals.
This paper concerns finite projective configurations in which regularity,
uniformization, and definable well-ordering occur together.  The canonical
source of such configurations is the hierarchy of mice with finitely many
Woodin cardinals.  Let \(M_k\) denote the canonical inner model with \(k\)
Woodin cardinals.  Steel's analysis shows that \(M_k\) has a
\(\Delta^1_{k+2}\)-definable well-order of its reals
\cite{Steel2,Steel3}.  On the other hand, the Woodin cardinals in these
models account, through the corresponding local determinacy and scale
analysis, for regularity and uniformization phenomena at lower projective
levels.

Thus \(M_{n+1}\) suggests a specific finite constellation in the
projective hierarchy.  Below the well-ordering level one has the
regularity and scale-theoretic consequences associated with the local
determinacy fragment; at the next level the fine structure of \(M_{n+1}\)
supplies a \(\Delta^1_{n+3}\)-definable well-order of the reals.  The
visible pattern relevant here is therefore regularity for boldface
\(\boldsymbol\Sigma^1_{n+2}\) sets, the corresponding finite
uniformization behavior, and a \(\Delta^1_{n+3}\)-definable well-order.

Before the present work, the systematic examples exhibiting this exact
constellation came from \(M_{n+1}\) and closely related canonical models;
in particular, they satisfied the corresponding
\(\boldsymbol\Delta^1_{n+1}\)-determinacy fragment.  Since the standard
source for this behavior of the projective sets of reals is the
\(M_{n+1}\)-picture, it is natural to ask whether this source is unique.
Does the finite projective behavior just described imply the corresponding
local determinacy hypothesis, or can the same behavior be obtained over a
weaker mouse?

The global antecedent of this recognition question is Woodin's twelfth
Delfino problem.  In one formulation, the problem asks whether $\mathsf{ZFC}$
together with the statements that every projective set is Lebesgue measurable
and has the Baire property, and that every projective relation admits a
projective uniformization, implies $\PD$.  Woodin showed that this theory
implies that $x^\dagger$ exists for every real $x$ and conjectured a
positive answer.  Steel gave a negative solution in 1997, and Schindler
later determined the precise consistency strength of the theory; see
\cite{schindler_delfino,Schindler2002CoreModel,CaicedoLoeweDelfino2019}.
The present paper studies
the corresponding finite recognition problem.

Friedman and Schindler isolated finite-level versions of this question in
\cite{friedman2003universally}.  Motivated by their formulation, we consider
the following exact-placement local form.

\begin{question}[Friedman--Schindler local problem, exact-placement form]
\label{question:fs-local}
Let $2\leq n<\omega$.  Suppose that every boldface
$\boldsymbol\Sigma^1_{n+2}$ set of reals is Lebesgue measurable and has the
Baire property, and suppose that the reals admit a
$\Delta^1_{n+3}$-definable well-order.  Must
$\boldsymbol\Delta^1_{n+1}$-determinacy hold?
\end{question}

Friedman and Schindler obtained a negative answer to a nearby local problem.
Working over inner models with finitely many strong cardinals, they produced
forcing extensions in which the relevant finite level of the projective
hierarchy is universally Baire and the reals admit a projective well-order.
Their construction gives the well-order at the desired level from a specific
coding real as a parameter, and parameter-free at a higher projective level.
Thus the exact lightface placement in Question~\ref{question:fs-local}, with
a $\Delta^1_{n+3}$ well-order and no additional real parameter, remained
open.

We resolve this exact-placement problem and add an adjacent uniformization
conclusion.  Throughout the introduction, uniformization statements written
without the word ``boldface'' are meant in the lightface sense: a pointclass
$\Gamma$ has uniformization if every lightface relation in $\Gamma$ has a
uniformizing function whose graph is again in $\Gamma$.  Boldface regularity
assertions are stated explicitly.

\begin{theorem}\label{thm:intro-main}
Let $1\leq n<\omega$, and assume that $M_n$, the canonical inner model with
$n$ Woodin cardinals, exists.  Then there is a forcing extension $W^\ast$ of
$M_n$ preserving all cardinals such that $2^{\aleph_0}=\aleph_2$ and:
\begin{enumerate}
   \item every boldface $\boldsymbol\Sigma^1_{n+2}$ set of reals is
   Lebesgue measurable;
   \item every boldface $\boldsymbol\Sigma^1_{n+2}$ set of reals has the
   Baire property;
   \item the $\Sigma^1_{n+2}$-uniformization property holds;
   \item the $\Pi^1_{n+3}$-uniformization property holds;
   \item the reals admit a $\Delta^1_{n+3}$-definable well-order.
\end{enumerate}
\end{theorem}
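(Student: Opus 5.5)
We plan to work over $M_n$ and build $W^\ast$ as the result of a countable-support (or finite-support hybrid) iteration of length $\omega_2$ of proper, $\aleph_2$-c.c.\ forcings, arranged so that two tasks are interleaved: a \emph{regularity} task and a \emph{coding} task. Before starting, we force $\mathsf{GCH}$-type bookkeeping over $M_n$ (which satisfies $\mathsf{GCH}$ and has a $\Delta^1_{n+2}$ good well-order of its reals by Steel's analysis) and fix a definable $\diamondsuit$-style sequence on $\omega_1$ together with an $\aleph_1$-sized family of almost disjoint, $M_n$-definably chosen stationary subsets of $\omega_1$ (or Suslin trees) to serve as coding devices. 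The regularity task will add, cofinally often, random and Cohen reals over every intermediate model, and more precisely will use amoeba-type forcings so that at the end every real is in an intermediate model over which the set of random (resp.\ Cohen) reals is co-null (resp.\ comeager). Using the $n$ Woodin cardinals of $M_n$, $\boldsymbol\Sigma^1_{n+2}$-absoluteness between $M_n[G_\alpha]$ and $M_n[G]$ (via genericity iterations/Shoenfield-type absoluteness at level $n+2$ available in the presence of $M_n^\#$-like closure over the intermediate models) then gives items 1 and 2 by the standard Solovay characterization: a $\boldsymbol\Sigma^1_{n+2}$ set in parameter $x$ is measurable iff almost every real is random over a model containing $x$ that is correct for $\Sigma^1_{n+2}$.

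The coding task produces the well-order. At each stage $\alpha$ we take the $M_n$-least (in the bookkeeping) new real $x_\alpha$ and code the pair ``$x_\alpha$ is the $\alpha$-th real'' by destroying, via club shooting or Suslin tree specialization, a pattern of the fixed coding devices determined by $x_\alpha$ and by the codes for $\{x_\beta:\beta<\alpha\}$, then sealing with almost disjoint coding into a single real $r_\alpha$. The key point is that ``$y$ codes the pattern for $x$'' is checkable inside countable iterates of $M_n$ (suitable countable models containing $y$ which compute the coding devices correctly), which makes the statement $x<y$ expressible as $\Sigma^1_{n+3}$: there is a real $r$ such that every countable $M_n$-like mouse containing $r$ sees the coded pattern. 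Its complement is of the same form, giving $\Delta^1_{n+3}$ with no parameters, since the coding devices are defined lightface in $M_n$. Goodness of the resulting well-order then yields item 4 ($\Pi^1_{n+3}$-uniformization) in the usual way: for a $\Pi^1_{n+3}$ relation choose, for each $x$, the least witness $y$ together with the least code certifying $\Pi^1_{n+3}$-membership, arranged so that the iteration also codes, at cofinally many stages, witnesses making the uniformizing function's graph $\Pi^1_{n+3}$ (the minimality clause ``no smaller $y'$ works'' uses goodness and stays $\Pi^1_{n+3}$ only because the failure of $\Pi^1_{n+3}$ for smaller $y'$ is itself witnessed by a coded real). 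Item 3, $\Sigma^1_{n+2}$-uniformization, we plan to derive from the fact that $\Sigma^1_{n+2}$ statements are absolute between $M_n$ and $W^\ast$ up to the parameters added: in $M_n$, $\Pi^1_{n+1}$ and thus $\Sigma^1_{n+2}$ uniformization hold by its $\Delta^1_{n+2}$ good well-order and the scale property from $\boldsymbol\Delta^1_{n}$-determinacy, and we transfer this via the correctness of $M_n$-iterates over the generic extension (genericity iteration makes each real generic over an iterate).

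Cardinal preservation and $2^{\aleph_0}=\aleph_2$ follow from properness and the $\aleph_2$-c.c.\ under $\mathsf{CH}$ in the ground model, the length $\omega_2$ adding $\aleph_2$ reals. The main obstacle is the interaction of the two tasks: the regularity forcings must not accidentally create fake codes (a real appearing to code a pattern it does not), which would spoil $\Delta^1_{n+3}$-definability, and conversely the coding must not destroy the $\Sigma^1_{n+2}$-correctness needed for regularity and for item 3. I would try first to make every factor of the iteration $\omega$-distributive-or-ccc in a controlled way and prove a preservation lemma that the coding devices not explicitly destroyed remain stationary (resp.\ Suslin) in $W^\ast$, and that countable suitable models containing a code compute it correctly; I expect most of the work to lie in verifying this preservation lemma uniformly across the whole iteration.
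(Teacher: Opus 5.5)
The genuine gap is item 4. You derive $\Pi^1_{n+3}$-uniformization from goodness of the coded well-order, and this fails for two independent reasons.

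First, a good well-order of the reals has countable proper initial segments, so it exists only under $\mathsf{CH}$. In $W^\ast$ you have $2^{\aleph_0}=\aleph_2$, and your own order, which enumerates the reals as $x_\alpha$ for $\alpha<\omega_2$, has initial segments of size $\aleph_1$.

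Second, even granting goodness, the least-witness argument gives $\Sigma^1_{n+3}$-uniformization. Applied to a $\Pi^1_{n+3}$ relation it only gives a graph of the form $\Pi^1_{n+3}\wedge\Sigma^1_{n+3}$. Moreover, $\Sigma^1_{n+3}$- and $\Pi^1_{n+3}$-uniformization are mutually exclusive: uniformization for a class gives reduction for that class, $\Sigma$-reduction gives $\Pi$-separation, and $\Pi$-reduction together with a universal set refutes $\Pi$-separation. So goodness would refute item 4, and the $\Delta^1_{n+3}$ order in the paper is necessarily not good.

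Your parenthetical repair, coding that the smaller $y'$ lie outside the section, does not help either. The condition $\forall y'\,(y'<y\rightarrow\Phi_n(\ldots))$ is a universal real quantifier in front of a $\Sigma^1_{n+3}$ matrix, hence only $\Pi^1_{n+4}$; with goodness it would again be $\Sigma^1_{n+3}$, not $\Pi^1_{n+3}$.

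What works, and what the paper does, is to code in the opposite direction. For every $y\neq y_0$ that \emph{lies in} the $x$-section, one codes a reserved tag $\langle\mathrm{UF},n,x,y,m\rangle$. Then $U_m(x,y)\iff A^n_m(x,y)\wedge\neg\Phi_n(\langle\mathrm{UF},n,x,y,m\rangle)$ is $\Pi^1_{n+3}$, and exactness of $\Phi_n$ on reserved tags (your ``no fake codes'' lemma) makes $y_0$ the unique uncoded member.

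This exposes an obstacle your plan never confronts, and it is not the regularity/coding interaction you single out. Codes are permanent, but membership in a $\Pi^1_{n+3}$ section is not preserved by later stages. A protected $y_0$ can later be forced out of the $x$-section of $A^n_m$ while the section stays nonempty, leaving $U_m$ with an empty section. Asking $y_0$ to be stable under ``all later forcings of the iteration'' is circular, since those forcings depend on the choices being made.

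The paper breaks the circularity with the derivative hierarchy of $\rho$-allowable hybrid forcings. The iteration is confined to the stabilized ($\infty$-allowable) class. At a UF stage a value is protected only if, for the least rank $\zeta$, no relative $\zeta$-allowable tail can force it out of the section. If no such value exists, a countable allowable tail that actually forces the current candidate out is appended, so sections without a protected value end up empty.

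Less seriously, item 3 is only gestured at. The genericity-iteration transfer is unspecified, and $\Pi^1_{n+1}$-uniformization fails in $M_n$ for odd $n$ (e.g.\ $\Pi^1_2$ in $M_1$). The paper instead takes, for $s=a\oplus x$, the least witness in the good $\Sigma^1_{n+2}(s)$ order of the reals of $M_n(s)$. This uses capture and $\Pi^1_{n+1}$-correctness of $M_n(s)$ in small generic extensions of $M_n$.

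Likewise, the small-forcing $\Sigma^1_{n+2}$-absoluteness you need for items 1--2 comes from the weakly homogeneous tree $T_{n+1}$ below the least Woodin cardinal. It does not come from $M_n^\#$-closure, which $M_n$ lacks.
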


For $n\geq 2$, Theorem~\ref{thm:intro-main} gives a negative answer to
Question~\ref{question:fs-local}.  The proof records that the final extension
does not satisfy $\boldsymbol\Delta^1_{n+1}$-determinacy.  Hence the
regularity-and-well-ordering window suggested by $M_{n+1}$ can already be
realized by forcing over $M_n$.  Moreover, the construction gives the two
additional lightface uniformization conclusions
$\Sigma^1_{n+2}$-uniformization and $\Pi^1_{n+3}$-uniformization.

The first case, $n=1$, is not an instance of
Question~\ref{question:fs-local} as stated, but it illustrates the same
phenomenon at the lowest level covered by the theorem.  Starting from $M_1$,
we obtain a model in which all boldface $\boldsymbol\Sigma^1_3$ sets are
Lebesgue measurable and have the Baire property,
$\Sigma^1_3$-uniformization and $\Pi^1_4$-uniformization hold, and the reals
have a $\Delta^1_4$-definable well-order.  Thus adjacent
$\Pi$-uniformization can coexist with a projective well-order of the reals.

\paragraph{Relation with the companion paper.}
The present paper is the $\Pi$-side construction for the local Delfino
problem.  Its additional conclusion, beyond the exact-placement negative
answer, is the adjacent $\Pi^1_{n+3}$-uniformization theorem.  The forcing
mechanism introduced for this purpose is the derivative hierarchy of
allowable hybrid forcings, which controls the stability of proposed values
for sections of $\Pi^1_{n+3}$ relations through all later allowable
extensions.

The companion paper~\cite{HoffelnerLocalDelfinoI} treats a different
direction.  It obtains the same $\boldsymbol\Sigma^1_{n+2}$ regularity and
a $\Delta^1_{n+3}$ well-order, but combines them with Martin's Axiom and
with a global tail of $\Sigma$-uniformization, namely
$\Sigma^1_{n+2+m}$-uniformization for every $m\in\omega$.  That
construction is based on coding over codes.  The two papers share the same
exact-placement background, but their forcing mechanisms are separate: the
present construction is based on the local coding predicate and the
derivative hierarchy which yields the $\Pi^1_{n+3}$-uniformization
conclusion.

\paragraph{Proof overview.}
The construction has four main components.  First, over $M_n$ we add branches
through an $M_n$-definable independent sequence of Suslin trees and define a
local coding predicate $\Phi_n$ of complexity $\Sigma^1_{n+3}$.  The predicate
is arranged to be exact on the coding tags used for the well-order and for
the uniformization requirements.

Second, we use a hybrid forcing which combines a countable-support product of
$M_n$-Cohen reservoir coordinates with a finite-support c.c.c. iteration.
The reservoir coordinates provide fresh coding areas, while the iteration
meets the coding, uniformization, and regularity requirements.  This hybrid
form refines the forcing methods from \cite{HOFFELNER2023103292}.

Third, the adjacent $\Pi^1_{n+3}$-uniformization conclusion is obtained from
a derivative hierarchy of allowable hybrid forcings, refining the method of
\cite{Pi_Uniformization}.  At each derivative step, the construction tests
whether a proposed value for a section of a $\Pi^1_{n+3}$ relation is stable
under all further allowable extensions or can still be destroyed.  The
process is iterated to a stable class of allowable forcings, and the final
iteration uses only forcings from this stable class.

The same coding predicate gives the $\Delta^1_{n+3}$ well-order.  For each
pair of reals, the construction codes the relative order of their canonical
localized presentations.  Exactness of $\Phi_n$ on the well-order tags
ensures that exactly one of the two possible order tags is coded.  This gives
both a $\Sigma^1_{n+3}$ and a $\Pi^1_{n+3}$ definition of the order.

Finally, the regularity bookkeeping is kept separate from the coding
bookkeeping.  The final iteration places ordinary Cohen stages cofinally
often and, for every real parameter $a$, places relative random,
measure-amoeba, and category-amoeba stages over $L[T_{n+1},a]$ cofinally
often.  The covering statements obtained from the amoeba stages, together
with the small-generic absoluteness of the weakly homogeneous
Martin--Solovay tree $T_{n+1}$, yield Lebesgue measurability and the Baire
property for all boldface $\boldsymbol\Sigma^1_{n+2}$ sets by Hjorth's
argument.  The $\Sigma^1_{n+2}$-uniformization conclusion is obtained
separately from Steel's relativized capture analysis of $M_n(s)$ in the
small generic extensions used in the construction.

\paragraph{Organization.}
Section~\ref{sec:canonical-inner-models} recalls the inner-model-theoretic
background: the relevant fragments of Steel's comparison theory for $M_1$
and $M_n$, the recovery of the initial segments used in the construction,
and the Martin--Solovay trees $T_{n+1}$.  The next sections define the local
coding predicate, the hybrid forcings, and the derivative hierarchy of
allowable forcings.  The main construction is first carried out in full
detail over $M_1$.  The final section records the uniform lift to $M_n$ and
proves Theorem~\ref{thm:intro-main} in full generality.

\section{Canonical inner models with Woodin cardinals and the trees $T_n$}\label{sec:canonical-inner-models}

\subsection{The canonical inner model with one Woodin cardinal}\label{subsec:m1-background}

We recall the fragment of Steel's comparison theory which will be used later.  The ground model for the main construction is the canonical proper class mouse $M_1$, the minimal iterable proper class premouse with one Woodin cardinal.  We use the notation and comparison conventions of Steel's outline of inner model theory, and the projective definability analysis of Steel's work on projectively well-ordered inner models; see \cite{Steel3,Steel2}.  In particular, every proper initial segment of $M_1$ is $1$-small and $\omega$-sound, and the order of construction gives a canonical $\Delta^1_3$ well-order of the reals of $M_1$.

A premouse $\mathcal M$ is \emph{$1$-small above $\eta$} if whenever $E$ is an extender on the $\mathcal M$-sequence and $\eta<\operatorname{crit}(E)$, the initial segment $\mathcal J^{\mathcal M}_{\operatorname{crit}(E)}$ has no Woodin cardinal above $\eta$.  We say that $\mathcal M$ is \emph{$1$-small} if it is $1$-small above $0$.

We also recall the weak iterability notion used by Steel at the first odd level.  Let $\mathcal T$ be an $\omega$-maximal putative iteration tree on $\mathcal M$, let $b$ be a maximal branch through $\mathcal T$, and let $\alpha$ be a countable ordinal.  The branch $b$ is \emph{$\alpha$-good} if whenever $\mathcal N$ is either $\mathcal M_b^{\mathcal T}$ itself, or the $\alpha$-th linear iterate of an initial segment $\mathcal P\trianglelefteq \mathcal M_b^{\mathcal T}$ by one extender on the $\mathcal P$-sequence and its images, then either $\mathcal N$ is well-founded or $\alpha\in\operatorname{wfp}(\mathcal N)$.  A countable premouse $\mathcal M$ is \emph{$\Pi^1_2$-iterable} if player II wins the corresponding one-round weak iteration game: player I plays a countable putative $\omega$-maximal tree together with a countable ordinal $\alpha$, and player II either accepts a last well-founded model or plays a maximal $\alpha$-good branch.  Steel proves that this iterability condition is $\Pi^1_2$ in the codes.  For $1$-small mice it is the $n=1$ instance of the general $\Pi_n$-iterability analysis; see \cite[Lemma~1.7]{Steel2}.

We now fix the local notation for limit length trees.

\begin{definition}[The common part of a limit tree]\label{def:delta-common-part}
Let $\mathcal T$ be a $k$-maximal iteration tree of limit length on a premouse $\mathcal M$, where $k\leq\omega$.
\begin{enumerate}
   \item We set
   \[
      \delta(\mathcal T)=
      \sup\{\operatorname{lh}(E_\xi^{\mathcal T})\mid
             \xi+1<\operatorname{lh}(\mathcal T)\}.
   \]
   \item $\mathcal M(\mathcal T)$ denotes the common part of the models along $\mathcal T$ below $\delta(\mathcal T)$, i.e. the unique passive premouse $\mathcal P$ of height $\delta(\mathcal T)$ such that, for every extender $E_\xi^{\mathcal T}$ used in $\mathcal T$, $\mathcal P$ agrees with the corresponding model of the tree below $\operatorname{lh}(E_\xi^{\mathcal T})$.
\end{enumerate}
\end{definition}

We shall use the following form of Steel's branch uniqueness theorem, often called the zipper lemma.

\begin{theorem}[Steel's zipper lemma]\label{thm:zipper-local}
Let $\mathcal T$ be a $k$-maximal iteration tree of limit length on a premouse $\mathcal M$, where $k\leq\omega$, and let $b,c$ be distinct cofinal branches through $\mathcal T$.  Put $\delta=\delta(\mathcal T)$.  Suppose that $A\subseteq\delta$ and that $A,\delta\in\operatorname{wfp}(\mathcal M_b^{\mathcal T})\cap\operatorname{wfp}(\mathcal M_c^{\mathcal T})$.  Then one of the two branch models satisfies
\[
   \exists\kappa<\delta\,
   (\kappa\text{ is }A\text{-strong up to }\delta).
\]
\end{theorem}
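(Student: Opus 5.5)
The plan is to run the standard zipper argument, in the form used in Martin--Steel and in Steel's \emph{Outline}. Since $b\neq c$ are distinct cofinal branches of the limit length tree $\mathcal T$, their intersection is bounded in $\operatorname{lh}(\mathcal T)$. Both branches are cofinal, so $\sup(b\cap c)<\operatorname{lh}(\mathcal T)$ and the two branches zip past each other cofinally often. Concretely, I will list the extenders used along $b$ and along $c$ above the splitting point: $E_{\xi}^{\mathcal T}$ for $\xi+1\in b$, and $E_\eta^{\mathcal T}$ for $\eta+1\in c$. I will then form an increasing interleaved sequence of indices $\xi_0<\eta_0<\xi_1<\eta_1<\cdots$ such that $\xi_i+1\in b$, $\eta_i+1\in c$, and $\operatorname{pred}_T(\eta_i+1)$ lies on the $b$-side at stage $\le\xi_i$, with the symmetric property for $\xi_{i+1}$. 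The standard facts used here are the $k$-maximality rules: lengths are increasing, the predecessor is the least $\gamma$ with $\operatorname{crit}(E)<\nu(E_\gamma)$, and $\mathcal M(\mathcal T)$ is the common part of height $\delta$. With these facts, the critical points $\kappa_i$ of the extenders $E$ applied along the two branches satisfy $\kappa_0<\kappa_1<\cdots<\delta$, and each extender $E$ of the interleaving is applied to a model which agrees with $\mathcal M(\mathcal T)$ up to $\operatorname{lh}(E)$.

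Next I will use the hypothesis $A,\delta\in\operatorname{wfp}(\mathcal M_b^{\mathcal T})\cap\operatorname{wfp}(\mathcal M_c^{\mathcal T})$. Since $A$ and $\delta$ are in the well-founded parts, both branch models compute $A\cap\alpha$ for $\alpha<\delta$, and both agree with $\mathcal M(\mathcal T)$ below $\delta$. Suppose toward contradiction that neither branch model satisfies ``$\exists\kappa<\delta$ ($\kappa$ is $A$-strong up to $\delta$)''. Then in $\mathcal M_b^{\mathcal T}$, each $\kappa<\delta$ has a least failure $f_b(\kappa)<\delta$. This is an ordinal $\le\delta$ past which no extender on the sequence with critical point $\kappa$ witnesses $A$-strength, that is, no extender satisfying $i_E(A)\cap\alpha=A\cap\alpha$. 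The same holds for $c$. Pull these ordinals back along the branch embeddings. The first-order statement ``$\kappa$ is not $A$-strong up to $\delta$'' has witnesses which are definable from $A,\delta$. The embeddings $i^{\mathcal T}_{\gamma,b}$ send $\delta$-type data at intermediate stages to $\delta$, because of the preimages of $A$ and $\delta$ in the models $\mathcal M_\gamma^{\mathcal T}$ for $\gamma$ on the respective branch past the splitting point.

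The heart of the proof is the zipper comparison. The extender $E_{\xi_i}$ applied on $b$ has critical point $\kappa_i$, and it is on the sequence of $\mathcal M(\mathcal T)$ below $\delta$, hence also on the sequences of both branch models. Its agreement with $i_E(A)$ up to its generators comes from the coherence $i^{\mathcal T}$ commuting with the preimages of $A$. Concretely, $A\cap\nu(E_{\xi_i})$ equals the image of the preimage of $A$ in the model to which $E_{\xi_i}$ is applied, because the later embeddings along $b$ have critical points above $\nu(E_{\xi_i})$. Alternating between $b$ and $c$, this shows that $\kappa_i$ is $A$-strong up to $\operatorname{lh}(E_{\eta_i})$ in the sense of $\mathcal M(\mathcal T)$, and these lengths are cofinal in $\delta$. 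Comparing with the failure function, $f(\kappa_i)$ must exceed every $\operatorname{lh}(E_{\eta_j})$, which contradicts $f(\kappa_i)<\delta$. Hence some $\kappa_i$ is $A$-strong up to $\delta$ in one of the branch models (whichever one computes strength correctly there). I expect the main obstacle to be the bookkeeping in this last step. One must verify precisely that the preimages of $A$ on the two branches agree on the overlap and are moved correctly. This is the classical ``zipper'' induction, and I would follow the argument of Steel's Outline, Theorem~6.10, essentially verbatim.
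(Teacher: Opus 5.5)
The paper does not prove this theorem; it quotes it from Steel. Measured against the standard Martin--Steel argument, your sketch has a genuine gap at exactly the step you call bookkeeping.

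\textbf{The gap.} The mechanism you describe is a single-branch fact. If $E$ with critical point $\kappa$ is applied along a branch to $\mathcal M_p$, with $A=i_{p,b}(A_p)$, then $A\cap\kappa=A_p\cap\kappa$. Since later critical points on that branch are $\ge\nu(E)$, also $A\cap\nu(E)=i_E(A_p)\cap\nu(E)$. This shows that $\kappa$ is $A$-strong up to $\nu(E)$ for its own extender $E$. Note that the witnesses are initial segments of $E$: $E$ itself is not on the $\mathcal M(\mathcal T)$-sequence, which is passive at $\operatorname{lh}(E)$, so you get $\nu(E)$ rather than $\operatorname{lh}(E)$.

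Your concluding inference goes from ``$\kappa_i$ is $A$-strong up to $\operatorname{lh}(E_{\eta_i})$, and these lengths are cofinal in $\delta$'' to ``$f(\kappa_i)$ exceeds every $\operatorname{lh}(E_{\eta_j})$''. This is a non sequitur. The critical point changes with $i$, and a sequence of critical points, each strong up to its own bound, gives no single $\kappa$ strong up to $\delta$. The same inference pattern would apply to one cofinal branch, where the conclusion is false. For example, in an $\omega$-length linear iteration by a measure, each $\kappa_n$ is trivially $A$-strong up to $\nu(E_n)=\kappa_n+1$, these bounds are cofinal in $\delta$, yet no $\kappa$ is $A$-strong up to $\delta$. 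So the second branch must enter in a way your sketch never specifies. A smaller error: $\operatorname{pred}_T(\eta_i+1)$ lies on $c$, not on the $b$-side. The relevant property is the ordinal inequality $\operatorname{pred}_T(\eta_i+1)\le\xi_i$.

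\textbf{The missing idea.} You need to propagate the strength of one \emph{fixed} critical point through the overlapping extenders.

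First set up the zipper. Choose $\sigma_0<\sigma_1<\cdots$ alternately in $b\setminus c$ and $c\setminus b$, each $\sigma_{k+1}$ least in the other branch above $\sigma_k$. Then $\sigma_{k+1}=\gamma_k+1$. Put $F_k=E_{\gamma_k}$, $\kappa_k=\operatorname{crit}(F_k)$, and $p_k=\operatorname{pred}_T(\gamma_k+1)$, which lies on the branch of $\sigma_{k+1}$. One checks $\sigma_k\le p_{k+1}\le\gamma_k$, so normality gives the overlap $\kappa_k<\kappa_{k+1}<\nu(F_k)$. Fix $j$ so large that neither branch drops from $p_j$ on, and $A=i_{p_k,\cdot}(A_{p_k})$ on the relevant branch for all $k\ge j$.

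Now prove by induction on $k\ge j$ that $\kappa_j$ is $A$-strong up to $\nu(F_k)$ in $\mathcal M(\mathcal T)$.
\begin{enumerate}
\item The base case $k=j$ is the single-branch fact above.
\item For the step, $\kappa_{k+1}<\nu(F_k)$ gives that $\kappa_j$ is $A$-strong up to $\kappa_{k+1}$. This involves only $\mathcal M(\mathcal T)|\kappa_{k+1}$ and $A\cap\kappa_{k+1}=A_{p_{k+1}}\cap\kappa_{k+1}$, so it holds in $\mathcal M_{p_{k+1}}$ with respect to $A_{p_{k+1}}$.
\item Apply $i_{F_{k+1}}$, whose critical point $\kappa_{k+1}$ lies above $\kappa_j$. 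By elementarity, in $\mathcal M_{\gamma_{k+1}+1}$ the ordinal $\kappa_j$ is $i_{F_{k+1}}(A_{p_{k+1}})$-strong up to $i_{F_{k+1}}(\kappa_{k+1})\ge\nu(F_{k+1})$.
\item This model agrees with $\mathcal M(\mathcal T)$ below $\operatorname{lh}(F_{k+1})$, and $i_{F_{k+1}}(A_{p_{k+1}})\cap\nu(F_{k+1})=A\cap\nu(F_{k+1})$. So the strength transfers back to $\mathcal M(\mathcal T)$.
\end{enumerate}
The overlap $\kappa_{k+1}<\nu(F_k)$ is exactly what two distinct branches provide and a single branch does not; it is what keeps the induction going. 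Since $\sup_k\nu(F_k)=\delta$, $\kappa_j$ is $A$-strong up to $\delta$ in $\mathcal M(\mathcal T)$, hence in both branch models. No failure function and no pulling back of ordinals is needed.
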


\begin{definition}[$\mathcal Q$-structures]\label{def:q-structures-local}
Let $\mathcal T$ be a $k$-maximal iteration tree of limit length on a premouse $\mathcal M$, where $k\leq\omega$, and let $b$ be a cofinal well-founded branch through $\mathcal T$.  The $\mathcal Q$-structure $\mathcal Q(b,\mathcal T)$ is the least initial segment of $\mathcal M_b^{\mathcal T}$, if it exists, which either sees that $\delta(\mathcal T)$ is not Woodin over the common part $\mathcal M(\mathcal T)$, or projects strictly below $\delta(\mathcal T)$ at an allowed finite degree.  More explicitly, $\mathcal Q(b,\mathcal T)=\mathcal J_\gamma^{\mathcal M_b^{\mathcal T}}$ for the least $\gamma$ such that either
\[
   \mathcal J_{\gamma+1}^{\mathcal M_b^{\mathcal T}}
      \models ``\delta(\mathcal T)\text{ is not Woodin}'',
\]
or, for some $m<\omega$ allowed by the degree of the tree,
\[
   \rho_{m+1}(\mathcal J_\gamma^{\mathcal M_b^{\mathcal T}})
      <\delta(\mathcal T).
\]
If no such $\gamma$ exists, then $\mathcal Q(b,\mathcal T)$ is undefined.
\end{definition}

The point of the preceding definition is that, in the $1$-small context, a $\mathcal Q$-structure determines at most one good cofinal branch.  If two distinct branches had the same relevant well-founded $\mathcal Q$-structure, the zipper lemma would produce strength below $\delta(\mathcal T)$, contradicting the initial segment which witnesses that $\delta(\mathcal T)$ is not Woodin.

In the next few statements, an \emph{ordinary premouse} means a premouse in Steel's usual premouse language, with no real parameter, predicate parameter, or base set added to the structure.  This is only a terminological convention distinguishing these premice from mice over a real or over another base; no additional fine-structural notion is being introduced.

\begin{lemma}[Comparison with a $\Pi^1_2$-iterable mouse]\label{lem:local-comparison-m1}
Let $\mathcal M$ and $\mathcal N$ be countable ordinary premice.  Assume that both are $\omega$-sound and project to $\omega$, that $\mathcal M\triangleleft M_1$, and that $\mathcal N$ is $1$-small and $\Pi^1_2$-iterable.  Then the comparison of $\mathcal M$ with $\mathcal N$ is successful.  Consequently
\[
   \mathcal M\trianglelefteq\mathcal N
   \quad\text{or}\quad
   \mathcal N\trianglelefteq\mathcal M.
\]
\end{lemma}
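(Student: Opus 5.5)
We compare $\mathcal M$ with $\mathcal N$ by the usual coiteration: at each successor stage we take the least disagreement between the current last models $\mathcal M_\alpha$ and $\mathcal N_\alpha$, and apply the disagreeing extender(s) in the $\omega$-maximal way, giving trees $\mathcal T$ on $\mathcal M$ and $\mathcal U$ on $\mathcal N$. Two things must be shown. First, at every limit stage both trees have cofinal well-founded branches that allow the comparison to continue. Second, the comparison halts at some countable stage. The conclusion $\mathcal M\trianglelefteq\mathcal N$ or $\mathcal N\trianglelefteq\mathcal M$ then follows by the standard argument for sound mice projecting to $\omega$. If the final models line up, say $\mathcal M^{\mathcal T}_\infty\trianglelefteq\mathcal N^{\mathcal U}_\infty$, then soundness together with $\rho_\omega=\omega$ forces the side that is an initial segment to have been left undropped and untouched. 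Otherwise that model would not be sound above its projectum, and this would contradict the agreement of the two final models. Since both $\mathcal M$ and $\mathcal N$ project to $\omega$, a nontrivial iteration on both sides is impossible, so one side is trivial and we get $\mathcal M\trianglelefteq\mathcal N$ or the reverse.

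\emph{Termination} is the usual countable elementary substructure argument. All relevant objects are countable, and if the comparison ran $\omega_1$ steps we could reflect it to a countable hull. There the last extenders used on both sides would be compatible, which is impossible. For this we only need branches to be chosen at all limits below $\omega_1$, together with well-foundedness of the branch models up to the height needed.

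\emph{Branch existence on the $\mathcal M$-side.} Since $\mathcal M\triangleleft M_1$ and $M_1$ is fully iterable, the tree $\mathcal T$ can be viewed as a tree on $M_1$ via $\mathcal M\triangleleft M_1$. It therefore has a unique cofinal well-founded branch, namely the one given by $\mathcal Q$-structures. Uniqueness in the $1$-small case comes from the zipper lemma (Theorem~\ref{thm:zipper-local}), as in the remark after Definition~\ref{def:q-structures-local}. Because $\mathcal M$ is $1$-small and $\mathcal M(\mathcal T)$ has no Woodin cardinal that is witnessed as Woodin in the branch model, the $\mathcal Q$-structure $\mathcal Q(b,\mathcal T)$ exists.

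\emph{Branch existence on the $\mathcal N$-side.} This is the main obstacle, since $\mathcal N$ is only $\Pi^1_2$-iterable. Player II's one-round strategy gives, for each countable tree and each countable $\alpha$, either a last well-founded model or a maximal $\alpha$-good branch. It does not directly give a fully well-founded branch. The plan is as follows.
\begin{enumerate}
\item Form the comparison with $\mathcal N$-side branches chosen by the $\mathcal Q$-structure that the $\mathcal M$-side (fully iterable) determines. That is, at a limit $\lambda$ where $\mathcal T\restriction\lambda$ and $\mathcal U\restriction\lambda$ have the same common part $\mathcal M(\mathcal U)=\mathcal M(\mathcal T)$, let the good $\mathcal M$-side branch $b$ supply $\mathcal Q(b,\mathcal T)$, and look for the $\mathcal U$-branch $c$ whose $\mathcal Q$-structure matches it.
\item Apply $\Pi^1_2$-iterability with $\alpha$ exceeding the height of $\mathcal Q(b,\mathcal T)$ and its linear iterates. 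This yields an $\alpha$-good branch $c$ whose $\mathcal Q$-structure lies in the well-founded part.
\item Use the zipper lemma with $A$ coding the $\mathcal Q$-structure to show that this $c$ is unique and agrees with any other candidate. Hence the choice is canonical, and the whole comparison of countable length is $\alpha$-good for a sufficiently large countable $\alpha$.
\end{enumerate}

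Finally, a disagreement at a stage where an $\mathcal N$-side model is ill-founded above $\alpha$ cannot matter: the comparison is decided below the heights that are witnessed well-founded, because $\mathcal M$ is countable and all relevant indices stay below its iterate heights. With both sides handled, termination and the soundness argument above give the lemma.
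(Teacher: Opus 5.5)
\textbf{The gap is termination.} Your handling of countable limit stages is essentially the paper's. You read off $\mathcal Q(b,\mathcal T)$ from the $\mathcal M$-side, play the $\Pi^1_2$ game on $\mathcal U$ with $\alpha$ above its height, and get uniqueness of the matching branch from the zipper lemma.

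The problem is your claim that the reflection argument needs branches only at limits below $\omega_1$. That argument runs on cofinal branches $b$ and $c$ through the length-$\omega_1$ trees $\mathcal T$ and $\mathcal U$. Take a countable hull $X$ containing them, let $\alpha=X\cap\omega_1$, and let $\pi$ be the inverse collapse. Then $\alpha\in b\cap c$, and the extenders applied at $\alpha$ along $b$ and along $c$ both have critical point $\alpha$. Both measure subsets of $\alpha$ according to $\pi$, so they are compatible, which is the contradiction. The branches chosen at countable stages, including the one at $\alpha$ itself, say nothing about which extender, if any, $\mathcal U$ applies with critical point $\alpha$.

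The branch $b$ exists because $M_1$ is fully iterable. But $\Pi^1_2$-iterability of $\mathcal N$ concerns only countable trees, so the existence of $c$ is exactly what must be proved. The paper supplies it by forcing with $\operatorname{Col}(\omega,\omega_1)$:
\begin{enumerate}
\item In the extension $\mathcal U$ is countable, and $\mathcal N$ remains $\Pi^1_2$-iterable by Shoenfield absoluteness for a $\Pi^1_2$ property of a real.
\item Player II therefore produces a good branch through $\mathcal U$.
\item The $\mathcal Q$-structure/zipper uniqueness makes this branch definable from $\mathcal U$ and $\mathcal Q(b,\mathcal T)$.
\item Homogeneity of the collapse then puts it back into $V$.
\end{enumerate}
Only after this does the reflection argument go through.

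\textbf{A second, repairable weakness is your final paragraph.} Fixing one $\alpha$ that works for ``the whole comparison'' is circular, since the comparison is built out of the branch choices. Ill-founded $\mathcal N$-side models are also not harmless: later stages take ultrapowers of them, and the conclusion is about genuine premice. What you need is that uniqueness makes the branch independent of $\alpha$. For every countable $\alpha'$ above the height of the $\mathcal Q$-structure, player II's $\alpha'$-good branch must be the same $c$. Hence $\operatorname{wfp}(\mathcal M^{\mathcal U}_c)$ contains every countable ordinal, and since $\mathcal M^{\mathcal U}_c$ is countable, it is well-founded. You should also treat limit stages where one side has stopped using extenders; there the $\mathcal Q$-structure is taken from the last model on the stopped side.
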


\begin{proof}
Run the usual coiteration by least disagreement, producing trees $\mathcal T$ on $\mathcal M$ and $\mathcal U$ on $\mathcal N$.  The $\mathcal M$-side is governed by the strategy inherited from $M_1$.  The $\mathcal N$-side is governed by the winning strategy witnessing $\Pi^1_2$-iterability.

There is no new issue at successor stages.  Consider a countable limit stage and suppose first that both sides have reached the same comparison height, so that $\delta(\mathcal T)=\delta(\mathcal U)$.  Let $b$ be the branch selected on the $\mathcal M$-side.  Since $\mathcal M\triangleleft M_1$ and $M_1$ is $1$-small below its Woodin, the branch model $\mathcal M_b^{\mathcal T}$ has a $\mathcal Q$-structure witnessing the relevant failure of Woodinness at this common $\delta$.  Write this $\mathcal Q$-structure as $L_\alpha(\mathcal M(\mathcal T))$, using the standard coding of the common part.

Apply the $\Pi^1_2$-iterability of $\mathcal N$ to the tree $\mathcal U$ with ordinal parameter $\alpha$.  Player II supplies a maximal $\alpha$-good branch $c$.  Because the two sides of the comparison agree below the common $\delta$, the initial segment $L_\alpha(\mathcal M(\mathcal T))$ is also the corresponding $\mathcal Q$-structure for $c$ on the $\mathcal N$-side, provided the branch model is well-founded past that structure.  If there were a second sufficiently good cofinal branch through $\mathcal U$, then the zipper lemma, applied with a code for the common $\mathcal Q$-structure as parameter $A$, would produce strength below $\delta(\mathcal U)$.  This contradicts the witness to the failure of Woodinness coded by the $\mathcal Q$-structure.  Hence the good branch through $\mathcal U$ is unique, and it is fully well-founded.

If one side has stopped using extenders, the same argument applies with the last model on the stopped side replacing the common branch model.  Its relevant initial segment supplies the $\mathcal Q$-structure which witnesses that the remaining comparison height is not Woodin, and therefore determines the unique well-founded branch on the other side.

It remains to rule out a comparison of length $\omega_1$.  Suppose that such a putative comparison existed.  Force with the L\'evy collapse $\operatorname{Col}(\omega,\omega_1)$ over the ambient model.  In the collapse extension, the putative tree becomes countable, and $\Pi^1_2$-iterability is preserved.  Thus there is a branch supplied by the $\Pi^1_2$ strategy.  By the uniqueness just proved, this branch is ordinal definable from the ground-model data of the comparison.  Homogeneity of the collapse therefore puts the branch back in the ground model, contradicting the assumption that the comparison had no branch at stage $\omega_1$.

Thus the comparison terminates below $\omega_1$.  The terminal models are linearly ordered by initial segment.  Since both premice are $\omega$-sound and project to $\omega$, the usual no-drop argument for the shorter side pulls the conclusion back to the original premice.  Hence $\mathcal M\trianglelefteq\mathcal N$ or $\mathcal N\trianglelefteq\mathcal M$.
\end{proof}

\paragraph{Preservation convention for the outer models used below.}\label{par:steel-preservation-convention}
We shall use Lemma~\ref{lem:local-comparison-m1} only in the forcing extensions which occur in this paper.  The preservation fact needed there is the following: if $\mathcal M\triangleleft M_1$ is countable and belongs to one of these extensions, then $\mathcal M$ remains $\Pi^1_2$-iterable there.  This is the realizability preservation supplied by Steel's weak iteration-game analysis for initial segments of $M_1$, applied to the proper, $\omega_1$-preserving forcing extensions used below.  We do not claim that the set of all real codes for $\Pi^1_2$-iterable premice is absolute between arbitrary outer models with the same $\omega_1$.

\begin{lemma}[Definable cofinal system of $M_1$-initial segments]
\label{lem:definable-m1-initial-segments}
Let $M_1[G]$ be an $\omega_1$-preserving forcing extension of $M_1$ of the kind fixed in Paragraph~\ref{par:steel-preservation-convention}.  In $M_1[G]$ let
\[
\begin{split}
   \mathcal I=\bigl\{\mathcal N\mid {}&
      \mathcal N\text{ is a countable ordinary premouse, }\\
      &\mathcal N\text{ is }1\text{-small, }\mathcal N\text{ is }\omega\text{-sound, }\\
      &\rho_\omega(\mathcal N)=\omega,
        \text{ and }\mathcal N\text{ is }\Pi^1_2\text{-iterable}\bigr\}.
\end{split}
\]
Then $\mathcal I$ is $\Pi^1_2$-definable in the codes.  Moreover every member of $\mathcal I$ is of the form $\mathcal J_\eta^{M_1}$ for some $\eta<\omega_1$, and
\[
   \{\eta<\omega_1\mid \mathcal J_\eta^{M_1}\in\mathcal I\}
\]
is cofinal in $\omega_1$.
\end{lemma}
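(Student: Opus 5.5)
The proof has three parts: definability, identification with initial segments of $M_1$, and cofinality.

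\emph{Definability.} Each clause in the definition of $\mathcal I$ will be checked in the codes. Being a countable ordinary premouse, being $1$-small, being $\omega$-sound, and satisfying $\rho_\omega(\mathcal N)=\omega$ are all first-order properties of the countable structure coded by a real. They are therefore arithmetic in the code, apart from the well-foundedness of the model, which is $\Pi^1_1$. Steel's analysis \cite[Lemma~1.7]{Steel2}, recalled before Definition~\ref{def:delta-common-part}, gives that $\Pi^1_2$-iterability is $\Pi^1_2$ in the codes. A conjunction of $\Pi^1_2$ conditions is $\Pi^1_2$, so $\mathcal I$ is $\Pi^1_2$.

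\emph{Each member of $\mathcal I$ is an initial segment of $M_1$.} Let $\mathcal N\in\mathcal I$. Since $\omega_1$ is preserved, $\omega_1^{M_1[G]}=\omega_1^{M_1}$, and $\mathcal N$ has countable height, say $\theta<\omega_1$. Because $M_1$ satisfies $\omega_1$-condensation and has a fine-structural construction, there are cofinally many $\eta<\omega_1$ such that $\mathcal J^{M_1}_\eta$ is $\omega$-sound and projects to $\omega$; take the least such $\eta>\theta$.

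By the preservation convention, $\mathcal M=\mathcal J^{M_1}_\eta$ remains $\Pi^1_2$-iterable in $M_1[G]$, and it is in any case governed by the $M_1$ strategy. Lemma~\ref{lem:local-comparison-m1} applies with $\mathcal M\triangleleft M_1$ and the given $\mathcal N$, yielding $\mathcal M\trianglelefteq\mathcal N$ or $\mathcal N\trianglelefteq\mathcal M$. The height of $\mathcal N$ is less than $\eta$, so the first alternative is impossible. Hence $\mathcal N\trianglelefteq\mathcal J^{M_1}_\eta$, which means $\mathcal N=\mathcal J^{M_1}_\theta$.

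This step is where I expect the main obstacle. One needs that Lemma~\ref{lem:local-comparison-m1} may legitimately be invoked inside $M_1[G]$. That requires the $\Pi^1_2$ strategy for $\mathcal N$ to exist there, and it does, by the definition of $\mathcal I$ evaluated in $M_1[G]$. It also requires the $\omega_1$-argument in that lemma's proof, via $\operatorname{Col}(\omega,\omega_1)$ and homogeneity, to run over $M_1[G]$. I would rely on the paragraph~\ref{par:steel-preservation-convention} convention for exactly this.

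\emph{Cofinality.} Given $\alpha<\omega_1$, choose $\eta>\alpha$ such that $\mathcal J^{M_1}_\eta$ is $\omega$-sound and $\rho_\omega(\mathcal J^{M_1}_\eta)=\omega$. Such $\eta$ exist cofinally: every real of $M_1$ is constructed at a countable stage, and new reals appear cofinally below $\omega_1^{M_1}$. At a level where a new real appears, the projectum drops to $\omega$, and soundness holds because every proper initial segment of $M_1$ is $\omega$-sound. The level $\mathcal J^{M_1}_\eta$ is countable, ordinary, and $1$-small, being a proper initial segment of $M_1$ below its Woodin cardinal. It is $\Pi^1_2$-iterable in $M_1[G]$ by the preservation convention, so it lies in $\mathcal I$. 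This establishes cofinality.
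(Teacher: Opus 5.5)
Your proposal is correct and follows essentially the same route as the paper: definability by combining the premouse clauses with Steel's bound that $\Pi^1_2$-iterability is $\Pi^1_2$ in the codes, identification of each $\mathcal N\in\mathcal I$ with some $\mathcal J^{M_1}_\theta$ by comparing it (via Lemma~\ref{lem:local-comparison-m1} and the preservation convention) with a taller $\omega$-sound level $\mathcal J^{M_1}_\eta$ projecting to $\omega$, and cofinality from the fine structure of $M_1$ together with that same convention. Your extra remarks are harmless refinements of points the paper states without detail: that well-foundedness of the coded model is $\Pi^1_1$ rather than arithmetic, and that the levels at which new reals appear are exactly where the projectum drops to $\omega$.
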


\begin{proof}
The clauses saying that a real codes a countable ordinary premouse, that the premouse is $1$-small, that it is $\omega$-sound, and that $\rho_\omega=\omega$ are arithmetic, after fixing the usual coding of countable premice by reals.  By Steel's analysis of the weak iteration games, $\Pi^1_2$-iterability is a $\Pi^1_2$ condition in the code.  Hence the displayed definition of $\mathcal I$ is $\Pi^1_2$.

We next show that no nonstandard premouse enters $\mathcal I$.  Work in $M_1[G]$ and let $\mathcal N\in\mathcal I$.  Since $\omega_1$ is preserved, $\mathcal N$ has countable height below the true $\omega_1^{M_1}$.  Choose $\eta<\omega_1$ such that $\operatorname{Ord}^{\mathcal N}<\eta$ and such that $\mathcal J_\eta^{M_1}$ is $\omega$-sound and projects to $\omega$.  The initial segment $\mathcal J_\eta^{M_1}$ is an ordinary premouse and is realizable inside $M_1$; hence it is $\Pi^1_2$-iterable in the present extension by Paragraph~\ref{par:steel-preservation-convention}.  Applying Lemma~\ref{lem:local-comparison-m1} to $\mathcal J_\eta^{M_1}$ and $\mathcal N$, the alternative $\mathcal J_\eta^{M_1}\triangleleft\mathcal N$ is impossible by the choice of $\eta$.  Therefore $\mathcal N\trianglelefteq\mathcal J_\eta^{M_1}$, so $\mathcal N$ is itself an initial segment of $M_1$.

Finally, the fine structure of $M_1$ gives cofinally many $\eta<\omega_1$ such that $\mathcal J_\eta^{M_1}$ is $\omega$-sound and projects to $\omega$.  These levels are ordinary premice, $1$-small, and realizable, hence $\Pi^1_2$-iterable in the present extension by the same preservation convention.  Therefore they belong to $\mathcal I$, proving cofinality.
\end{proof}

\begin{definition}[Recovering $M_1|\omega_1$]\label{def:recover-m1-omega1}
In any $\omega_1$-preserving forcing extension of $M_1$ of the kind fixed in Paragraph~\ref{par:steel-preservation-convention}, we write
\[
   \mathcal N=M_1|\omega_1
\]
for the assertion that
\[
   \mathcal N=\bigcup\{\mathcal M\mid \mathcal M\in\mathcal I\},
\]
where $\mathcal I$ is the class from Lemma~\ref{lem:definable-m1-initial-segments}.
Equivalently, $x\in\mathcal N$ iff $x$ belongs to some countable ordinary premouse which is $1$-small, $\omega$-sound, $\Pi^1_2$-iterable, and projects to $\omega$.
\end{definition}

\begin{lemma}[The recovered initial segment]\label{lem:recover-m1-omega1}
Let $M_1[G]$ be an $\omega_1$-preserving forcing extension of $M_1$ of the kind fixed in Paragraph~\ref{par:steel-preservation-convention}.  Then Definition~\ref{def:recover-m1-omega1} defines the true initial segment
\[
   M_1|\omega_1=\mathcal J^{M_1}_{\omega_1}.
\]
Moreover the definition is uniform in all such extensions.
\end{lemma}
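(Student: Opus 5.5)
We prove the two inclusions between $\mathcal N=\bigcup\mathcal I$ and $\mathcal J^{M_1}_{\omega_1}$ separately, both from Lemma~\ref{lem:definable-m1-initial-segments}, working in $M_1[G]$.

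For $\mathcal N\subseteq\mathcal J^{M_1}_{\omega_1}$, let $x\in\mathcal N$. Then $x$ lies in some $\mathcal M\in\mathcal I$. By Lemma~\ref{lem:definable-m1-initial-segments}, $\mathcal M=\mathcal J^{M_1}_\eta$ for some $\eta<\omega_1$. Hence $x\in\mathcal J^{M_1}_\eta\subseteq\mathcal J^{M_1}_{\omega_1}$.

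For the converse inclusion, let $x\in\mathcal J^{M_1}_{\omega_1}$, say $x\in\mathcal J^{M_1}_\xi$ with $\xi<\omega_1^{M_1}$. Since the extension preserves $\omega_1$, we have $\omega_1^{M_1}=\omega_1^{M_1[G]}$. The cofinality clause of Lemma~\ref{lem:definable-m1-initial-segments} therefore gives some $\eta>\xi$, $\eta<\omega_1$, with $\mathcal J^{M_1}_\eta\in\mathcal I$. Since $\mathcal J^{M_1}_\xi\subseteq\mathcal J^{M_1}_\eta$ (the $\mathcal J$-hierarchy is increasing), we get $x\in\mathcal N$.

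Uniformity comes from the definition itself. The defining formula of $\mathcal I$ is a fixed $\Pi^1_2$ formula with no parameters, so the same formula recovers $M_1|\omega_1$ in every extension of the stipulated kind. Two points need care here.

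\begin{itemize}
\item The lemma's identification of members of $\mathcal I$ with levels of $M_1$ must hold for all members of $\mathcal I$ in $M_1[G]$, not only those lying in $M_1$. This is exactly what the lemma proves, via comparison carried out inside $M_1[G]$.
\item One must check that $\mathcal J^{M_1}_{\omega_1}$ means the same object computed in $M_1$ or in $M_1[G]$. This holds because it is defined in $M_1$ and the forcing adds no new levels.
\end{itemize}

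The main obstacle is therefore already absorbed into Lemma~\ref{lem:definable-m1-initial-segments}, specifically the preservation of $\Pi^1_2$-iterability of the levels $\mathcal J^{M_1}_\eta$ in $M_1[G]$. If anything more needs checking, it is that the union of an $\trianglelefteq$-chain of initial segments cofinal in $\omega_1$ is literally $\mathcal J^{M_1}_{\omega_1}$ as a structure, with its extender sequence included, and not merely equal as a set. This follows because the members of $\mathcal I$ are pairwise comparable under $\trianglelefteq$, so their predicates cohere.
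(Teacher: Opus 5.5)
Your proposal is correct and is essentially the paper's proof. You get $\bigcup\mathcal I\subseteq\mathcal J^{M_1}_{\omega_1}$ from the clause of Lemma~\ref{lem:definable-m1-initial-segments} that every member of $\mathcal I$ is some $\mathcal J^{M_1}_\eta$, the reverse inclusion from its cofinality clause, and uniformity from applying the same fixed formula inside each extension; your extra remarks on $\omega_1$-preservation and on the coherence of the union as a structure are harmless elaborations.
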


\begin{proof}
By Lemma~\ref{lem:definable-m1-initial-segments}, every member of $\mathcal I$ is an initial segment $\mathcal J^{M_1}_\eta$ with $\eta<\omega_1$.  Hence the union in Definition~\ref{def:recover-m1-omega1} is contained in $M_1|\omega_1$.  Conversely, the same lemma gives cofinally many $\eta<\omega_1$ with $\mathcal J_\eta^{M_1}\in\mathcal I$.  Their union is $\mathcal J^{M_1}_{\omega_1}$.  The same formula defining $\mathcal I$ is used in every such extension.  We do not assert that the same real codes belong to $\mathcal I$ in different extensions, since new reals may code new countable putative iteration trees.  Rather, applying Lemma~\ref{lem:definable-m1-initial-segments} inside the given extension identifies the union of the premice satisfying that formula with the true $\mathcal J^{M_1}_{\omega_1}$.
\end{proof}

\paragraph{Remark.}\label{rem:m1-omega1-not-absolute}
The notation in Definition~\ref{def:recover-m1-omega1} is a convention for outer models of $M_1$ which preserve $\omega_1$.  The same first-order-looking assertion, if evaluated inside an arbitrary transitive model, need not imply that the object obtained is the true $M_1|\omega_1$.  Later, whenever this definition is used inside countable auxiliary models, the relevant countable premouse is also required externally to belong to the class $\mathcal I$.

We shall need a canonical diamond sequence which is available from the same fine structure.  The argument is Jensen's proof of diamond in $L$, with Steel's condensation theorem for initial segments of $M_1$ replacing condensation for $L$.

\begin{theorem}[Steel condensation, in the form used here]\label{thm:steel-condensation-m1}
Let $\mathcal M\trianglelefteq M_1$ be an $\omega$-sound initial segment and let
\[
   \pi:\bar N\rightarrow\mathcal M
\]
be the inverse of the transitive collapse of a sufficiently elementary substructure of $\mathcal M$.  Suppose that the critical point of $\pi$ is the relevant standard projectum of $\bar N$.  Then either
\begin{enumerate}
   \item $\bar N\trianglelefteq\mathcal M$, or
   \item $\bar N$ is an initial segment of a degree-zero ultrapower of an initial segment of $\mathcal M$ by an extender on the $M_1$-sequence whose length is that projectum.
\end{enumerate}
In the hulls used in the diamond argument below, the second alternative is impossible.  Hence the transitive collapse is an initial segment of $M_1$.
\end{theorem}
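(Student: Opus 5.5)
The statement is Steel's condensation theorem specialized to initial segments of $M_1$, together with the claim that the second alternative cannot occur in the hulls used for diamond. I would not reprove condensation from scratch. The main part I would cite from Steel's fine-structural condensation lemma, as found in the outline of inner model theory \cite{Steel3}. That lemma applies to $\omega$-sound mice which are iterable in the appropriate sense. The iterability hypothesis holds here because any countable hull of $\mathcal M\trianglelefteq M_1$ embeds into $\mathcal M$, and so inherits an iteration strategy by pulling back the strategy of $M_1$.

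The key steps, in order, are as follows.

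First, I would check that $\bar N$ is a premouse. It is $1$-small and $\omega$-sound above $\operatorname{crit}(\pi)$, and it is countably iterable via the pullback strategy. Elementarity of $\pi$ gives the first-order premouse axioms and $1$-smallness. Soundness of $\bar N$ comes from the hull being taken with the standard parameters, together with the critical point being $\rho=\rho_\omega(\bar N)$.

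Second, I would compare $\bar N$ with $\mathcal M|\rho$, or with the appropriate initial segment of $\mathcal M$. The two structures agree below $\rho$, since $\pi\restriction\rho$ is the identity. This is the standard proof: one runs the comparison of the phalanx $(\mathcal M,\bar N,\rho)$ against $\mathcal M$. The outcome gives either $\bar N\trianglelefteq\mathcal M$, or that $\mathcal M|\rho$ is active with extender $E$ and $\bar N\trianglelefteq\operatorname{Ult}_0(\mathcal M|\xi,E)$. This is exactly the dichotomy in the statement.

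Third, I would rule out the second alternative for the diamond hulls. In Jensen's argument one takes $X\prec\mathcal J^{M_1}_{\omega_2}$ (or a suitable $\mathcal J^{M_1}_\beta$ with $\beta<\omega_2$) that is countable, with $X\cap\omega_1=\alpha$. The collapse then has critical point $\alpha=\omega_1^{\bar N}$. So $\alpha$ is a cardinal, indeed $\omega_1$, in $\bar N$.

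If $\bar N$ were an initial segment of $\operatorname{Ult}_0(\mathcal M|\xi,E)$ with $\operatorname{lh}(E)=\alpha$, then $\alpha$ would not be a cardinal there: the extender $E$ is the last active extender on $\mathcal M|\alpha$, and the ultrapower collapses $\alpha$ to $\operatorname{crit}(E)^{+}$. More carefully, in $\operatorname{Ult}(\mathcal M|\xi,E)$ the ordinal $\alpha$ is the successor of $\operatorname{crit}(E)$ at most. Alternatively, one can argue that $\alpha$ is a limit of cardinals in $\bar N$ while it is a successor cardinal in the ultrapower. Either way this contradicts $\alpha=\omega_1^{\bar N}$ with $\omega_1^{\bar N}$ not equal to $\kappa^{+}$ for a $\kappa$ that $\bar N$ sees as a measurable.

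I expect the main obstacle to be this last step. One needs precisely that the critical point of $\pi$ is a cardinal of $\bar N$ which is not a successor of a measurable in a way compatible with the ultrapower. The cleanest route I would try first is this: $\alpha=\omega_1^{\bar N}$, and $\omega_1$ of the ultrapower is $\omega_1$ of $\mathcal M|\xi$, which is below $\operatorname{crit}(E)<\alpha$, giving a contradiction. If that route fails, I would fall back on taking hulls only of $\mathcal J^{M_1}_\beta$ with $\alpha$ passive on the $M_1$-sequence, arranging this by elementarity, since $\omega_1$ is passive.

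Given the theorem, the final sentence of the statement follows: the collapse is an initial segment of $M_1$.
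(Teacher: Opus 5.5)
Your plan matches the paper's proof: cite Steel's condensation theorem and exclude the ultrapower alternative by contradicting $\alpha=\omega_1^{\bar N}$. Your ``cleanest route'' is the right way to carry out that exclusion: $\omega_1^{\operatorname{Ult}}=\omega_1^{\mathcal M|\xi}<\operatorname{crit}(E)<\alpha$, while $\bar N\trianglelefteq\operatorname{Ult}$ with $\alpha\in\bar N$ gives $\omega_1^{\bar N}=\omega_1^{\operatorname{Ult}}$ by acceptability. This places the contradiction at the critical point, which is more accurate than the paper's phrase that the index itself becomes inaccessible.

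Discard the other heuristics. Under Mitchell--Steel indexing, $\alpha=\operatorname{lh}(E)$ is not collapsed in the ultrapower; it is the cardinal $\nu(E)^{+}$ there. Also, $\alpha=\omega_1^{\bar N}$ is not a limit of cardinals of $\bar N$. Finally, the passivity fallback cannot be arranged by elementarity: elementarity only makes $\bar N|\alpha$ passive, and that is exactly what happens in the ultrapower alternative, where $\mathcal M|\alpha$ is active.
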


\begin{proof}
This is the standard condensation theorem for the Steel $M_1$-construction; see \cite[Theorem~5.1]{Steel3}.  We only spell out why the ultrapower alternative does not occur in the present application.  The hulls below are chosen so that their collapse has projectum equal to its internal $\omega_1$.  If the second alternative held, there would be an extender on the $M_1$-sequence indexed exactly at this internal $\omega_1$.  The lower part below that index is the collapse of the corresponding lower part of the hull and sees the index as the successor cut reached by the construction.  An extender indexed there would make the index inaccessible in the relevant extender model.  This contradicts the fact that the collapsed lower part computes it as its $\omega_1$.  Therefore only the initial-segment alternative remains.
\end{proof}

\begin{lemma}[A canonical $M_1$-diamond sequence]\label{lem:m1-diamond}
There is a sequence
\[
   \vec D=\langle D_\alpha\mid \alpha<\omega_1\rangle\in M_1
\]
with $D_\alpha\subseteq\alpha$ for all $\alpha<\omega_1$ such that
\[
   M_1\models ``\vec D\text{ is a }\diamondsuit_{\omega_1}\text{-sequence}''.
\]
Moreover $\vec D$ is uniformly definable over $M_1|\omega_1$ from the canonical order of construction of $M_1$.
\end{lemma}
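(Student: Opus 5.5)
We follow Jensen's construction of $\diamondsuit$ in $L$, with the order of construction of $M_1$ replacing that of $L$ and Theorem~\ref{thm:steel-condensation-m1} replacing Gödel–Jensen condensation. Let $<_{M_1}$ be the canonical well-order of $M_1|\omega_1$ given by the order of construction. By recursion on $\alpha<\omega_1$ we define $(D_\alpha,C_\alpha)$ as follows. If $\alpha$ is a limit and there is a pair $(D,C)$ with $D\subseteq\alpha$, $C\subseteq\alpha$ club, and $D\cap\beta\neq D_\beta$ for all $\beta\in C$, then $(D_\alpha,C_\alpha)$ is the $<_{M_1}$-least such pair, with the search carried out in $M_1|\omega_1$. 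Otherwise $D_\alpha=C_\alpha=\emptyset$. Every $D\subseteq\alpha$ for countable $\alpha$ lies in $M_1|\omega_1$, since $M_1\models\mathsf{GCH}$ at $\omega$ and all such subsets are constructed below $\omega_1^{M_1}$ by acceptability. Hence the recursion is carried out inside $M_1|\omega_1$, which gives the uniform definability clause. Lemma~\ref{lem:recover-m1-omega1} is not needed for this clause; it only shows that the same definition makes sense in the later extensions.

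Suppose towards a contradiction that $\vec D$ is not a $\diamondsuit$-sequence in $M_1$. Then there is a $<_{M_1}$-least pair $(D,C)$ with $D\subseteq\omega_1$, $C$ club in $\omega_1$, and $D\cap\alpha\neq D_\alpha$ for all $\alpha\in C$. Since $M_1\models\mathsf{GCH}$, this pair lies in some level $\mathcal J^{M_1}_\theta$ with $\theta<\omega_2^{M_1}$, and we choose $\theta$ so that $\mathcal J^{M_1}_\theta$ is $\omega$-sound, projects to $\omega_1$, and satisfies enough of $\mathsf{ZF}^-$ to see $\vec D$, $D$, $C$ and the minimality of $(D,C)$.

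Inside $M_1$, build a continuous chain of countable elementary substructures $X_i\prec\mathcal J^{M_1}_\theta$ with $\omega_1\in X_0$. The ordinals $X_i\cap\omega_1$ form a club, so we can pick $i$ with $\alpha=X_i\cap\omega_1$ a limit point of $C$. Let $\pi:\bar N\to\mathcal J^{M_1}_\theta$ be the inverse collapse of $X_i$. Then $\operatorname{crit}(\pi)=\alpha$ and $\pi^{-1}(D,C)=(D\cap\alpha,C\cap\alpha)$.

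The main step is to show $\bar N\triangleleft M_1$ via Theorem~\ref{thm:steel-condensation-m1}. For this we need two facts.
\begin{enumerate}
\item The critical point $\alpha=\omega_1^{\bar N}$ is the relevant projectum of $\bar N$. Since $\bar N$ is the collapse of a hull of a structure projecting to $\omega_1$, taken with standard parameters in the hull, $\bar N$ is sound and projects to $\alpha$.
\item The ultrapower alternative is excluded. This is the argument in the proof of Theorem~\ref{thm:steel-condensation-m1}: $\bar N$ computes $\alpha$ as its $\omega_1$, so no extender indexed at $\alpha$ can occur.
\end{enumerate}
I expect this to be the main obstacle. One has to arrange the hulls as $\Sigma_{k+1}$-hulls containing the standard parameter, so that soundness transfers to $\bar N$. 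Given that, condensation yields $\bar N=\mathcal J^{M_1}_{\bar\theta}$ for some countable $\bar\theta>\alpha$.

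Finally, $\bar N$ is an initial segment of $M_1|\omega_1$ and correctly computes $\vec D\restriction\alpha$ and the order $<_{M_1}$ below its height. By elementarity, $\bar N$ sees that $(D\cap\alpha,C\cap\alpha)$ is the $<_{M_1}$-least pair with $C\cap\alpha$ club in $\alpha$ and $D\cap\beta\neq D_\beta$ for all $\beta\in C\cap\alpha$. Every candidate pair for stage $\alpha$ is a pair of subsets of $\alpha$, and any such pair appearing before $(D\cap\alpha,C\cap\alpha)$ in $<_{M_1}$ already lies in $\bar N$ by the initial-segment property. Therefore $\bar N$ correctly computes the recursion at stage $\alpha$, and $(D_\alpha,C_\alpha)=(D\cap\alpha,C\cap\alpha)$. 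But $\alpha\in C$, because $C$ is closed and $\alpha$ is a limit point of $C$, and so $D\cap\alpha\neq D_\alpha$. This is a contradiction.
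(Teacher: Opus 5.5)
Your proposal is correct and matches the paper's proof. It uses the same Jensen-style recursion by $<_{M_1}$-least counterexample pairs, the same countable hull of a level containing $(D,C,\vec D)$ with $\alpha=X\cap\omega_1\in C$, Theorem~\ref{thm:steel-condensation-m1} to make the collapse an initial segment of $M_1$, and elementarity to force $D_\alpha=D\cap\alpha$. Your extra step of choosing $\theta<\omega_2^{M_1}$ with $\rho_\omega(\mathcal J^{M_1}_\theta)=\omega_1$ and putting the standard parameter into the hull is what actually verifies the projectum hypothesis of that condensation theorem, which the paper leaves implicit.
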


\begin{proof}
Work in $M_1$.  Use the canonical well-order $<_{M_1}$ of the construction to define $\vec D$ recursively.  At a limit ordinal $\alpha<\omega_1$, suppose $\langle D_\beta\mid\beta<\alpha\rangle$ has been defined.  If there is a pair $(A,C)$ such that $A\subseteq\alpha$, $C\subseteq\alpha$ is club in $\alpha$, and
\[
   \forall\beta\in C\, (D_\beta\neq A\cap\beta),
\]
then let $(A_\alpha,C_\alpha)$ be the $<_{M_1}$-least such pair and set $D_\alpha=A_\alpha$.  If there is no such pair, set $D_\alpha=\emptyset$.  At successor stages we may again set $D_\alpha=\emptyset$.

The recursion is carried out over $M_1|\omega_1$.  Indeed, all objects considered at stage $\alpha$ are subsets of the countable ordinal $\alpha$ in $M_1$, and the order used to choose the least pair is the restriction of the canonical $M_1$ construction order.  Hence the resulting sequence is uniformly definable over $M_1|\omega_1$.

It remains to verify that $\vec D$ is a diamond sequence.  Suppose not.  Let $(A,C)$ be the $<_{M_1}$-least counterexample, so $A\subseteq\omega_1$, $C\subseteq\omega_1$ is club, and
\[
   \forall\alpha\in C\,(D_\alpha\neq A\cap\alpha).
\]
Choose an $\omega$-sound initial segment $\mathcal J^{M_1}_\theta$ containing $A$, $C$, and the sequence $\vec D$, and then take a countable elementary substructure
\[
   X\prec \mathcal J^{M_1}_\theta
\]
with $A,C,\vec D\in X$ and with $\alpha=X\cap\omega_1\in C$.  Let
\[
   \pi:\bar X\rightarrow X
\]
be the inverse of the transitive collapse.  By Theorem~\ref{thm:steel-condensation-m1}, $\bar X$ is an initial segment of $M_1$.  Consequently the recursive construction of $\vec D$ inside $\bar X$ is exactly the initial part
\[
   \langle D_\beta\mid\beta<\alpha\rangle.
\]
Moreover the collapse sends $A$ to $A\cap\alpha$ and $C$ to $C\cap\alpha$, and by elementarity $\bar X$ sees $(A\cap\alpha,C\cap\alpha)$ as the $<_{M_1}$-least witness that the previous sequence is not diamond on $\alpha$.  Therefore the recursion at stage $\alpha$ gives
\[
   D_\alpha=A\cap\alpha.
\]
Since $\alpha\in C$, this contradicts the choice of $(A,C)$.  Thus no counterexample exists, and $\vec D$ is a $\diamondsuit_{\omega_1}$-sequence in $M_1$.
\end{proof}

We fix once and for all the $<_{M_1}$-least sequence $\vec D$ satisfying Lemma~\ref{lem:m1-diamond}.  In later sections this sequence will be used to define a canonical $\omega_1$-sequence of independent Suslin trees over $M_1$.  The construction of those trees is postponed until the point where the coding machinery needs them.

\subsection{The canonical inner model with $n$-many Woodin cardinals}\label{subsec:mn-background}

We now record the higher-level analogue of Subsection~\ref{subsec:m1-background}.  Throughout this subsection $1\leq n<\omega$ is fixed, and $M_n$ denotes the canonical minimal proper class premouse with $n$ Woodin cardinals, in the sense of Steel's construction of tame mice with full background extenders.  Thus $M_0=L$, and for $n>0$ the model $M_n$ is obtained from the Steel background construction by stopping at the first failure of $n$-smallness, or as the limit of the $n$-small construction if no such failure occurs.  We shall use only the following standard consequences of Steel's analysis.

First, if there are $n$ Woodin cardinals, then $M_n$ exists and satisfies that there are $n$ Woodin cardinals.  Secondly, every proper initial segment of $M_n$ is $n$-small and $\omega$-sound.  Thirdly, the order of construction of $M_n$ gives a canonical construction well-order of the reals of $M_n$, and Steel's projective analysis shows that this well-order is $\Delta^1_{n+2}$ over the reals of $M_n$; see \cite{Steel2,Steel3}.  The case $n=1$ is exactly the situation isolated in the previous subsection.

\begin{definition}[$n$-smallness]\label{def:n-smallness-general}
Let $\mathcal M$ be a premouse and let $\eta<\operatorname{Ord}^{\mathcal M}$.  We say that $\mathcal M$ is \emph{$n$-small above $\eta$} if whenever $E$ is an extender on the $\mathcal M$-sequence and
\[
   \eta<\operatorname{crit}(E),
\]
then the initial segment $\mathcal J^{\mathcal M}_{\operatorname{crit}(E)}$ does not have $n$ Woodin cardinals above $\eta$.  We say that $\mathcal M$ is \emph{$n$-small} if it is $n$-small above $0$.
\end{definition}

We shall also use Steel's finite-level iterability condition.  The exact definition depends on the parity of $n$.  For even $n$, one uses the $n$-round weak iteration game in which the branches played by player II are required to be sufficiently definable over the trees played by player I.  For odd $n$, one adds the corresponding $\alpha$-goodness requirement at the last round.  In both cases the definition is arranged so that realizable mice satisfy it, and so that it is projectively simple in the codes.

\begin{definition}[Steel $\Pi_n$-iterability]\label{def:pin-iterability}
Let $\mathcal M$ be a countable premouse and let $\eta<\operatorname{Ord}^{\mathcal M}$ be a cutpoint.  We say that $\mathcal M$ is \emph{$\Pi_n$-iterable above $\eta$} if player II has a winning strategy in Steel's game $\mathcal G(\mathcal M,\eta,n)$ for $\Pi_n$-iterability above $\eta$.  We say simply that $\mathcal M$ is \emph{$\Pi_n$-iterable} if it is $\Pi_n$-iterable above $0$.
\end{definition}

For later reference we isolate the three consequences of the definition which are used in this paper.

\begin{fact}[Steel]\label{fact:steel-pin-iterability}
For each fixed $1\leq n<\omega$ the following hold.
\begin{enumerate}
   \item The relation
   \[
      ``x\text{ codes a countable premouse which is }\Pi_n\text{-iterable}''
   \]
   is $\boldsymbol\Pi^1_{n+1}$ in the real code $x$.
   \item If $\mathcal M$ is a countable initial segment of $M_n$, then $\mathcal M$ is $\Pi_n$-iterable in the Steel sense, in all outer models considered in this paper in which the relevant realizability argument is preserved.
   \item If $\mathcal M$ is countable, $n$-small and realizable, and $\mathcal N$ is countable, $n$-small and $\Pi_n$-iterable, then the Steel comparison of $\mathcal M$ and $\mathcal N$ is successful, provided the two mice have the same lower part.  In the applications below the lower part is empty, so this is the $\eta=0$ case.
\end{enumerate}
\end{fact}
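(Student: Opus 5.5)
This Fact has three clauses, and I would treat it as a compilation of Steel's results from \cite{Steel2,Steel3} rather than prove it from scratch. For each clause I would cite the relevant result and check that its hypotheses match the present setting.

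\emph{Clause 1.} I would count quantifiers in Steel's game $\mathcal G(\mathcal M,0,n)$. For countable $\mathcal M$, a play has $n$ rounds. In each round player I plays a countable tree coded by a real, possibly together with an ordinal $\alpha$ in the odd-parity last round. Player II responds with a branch or a last model. The requirement that II's branch be sufficiently definable over the tree (even case), or $\alpha$-good (odd case), is the clause that makes the whole statement ``II wins'' reducible to alternating real quantifiers.

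The key step is to replace ``II has a winning strategy'' by the unravelled statement
\[
\forall \mathcal T_1\,\exists b_1\,\forall \mathcal T_2\,\exists b_2\cdots
\]
where each $b_i$ is a real and the matrix is $\Pi^1_1$, coming from well-foundedness or $\alpha$-goodness. This unravelling is legitimate because the game is closed for II at each round and the moves are reals. It gives $n$ alternations on top of a $\Pi^1_1$ matrix, hence $\Pi^1_{n+1}$. Parameters enter only through the code $x$, so the relation is even lightface, and in particular boldface. This is \cite[Lemma~1.7 and its $n$-round generalization]{Steel2}.

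\emph{Clause 2.} Realizability is the tool. A countable $\mathcal M\trianglelefteq M_n$ embeds into $M_n$, and $M_n$ is $\omega_1+1$-iterable in $V$. In the extension, II plays the branches given by pulling back the realization strategy. In the odd case with $\alpha$-goodness, II uses absoluteness of well-foundedness of countable models to the ground model. This is exactly what the preservation convention of Paragraph~\ref{par:steel-preservation-convention} asserts, extended verbatim to level $n$. I would therefore state Clause 2 as holding by that convention in the forcing extensions used below, which are proper and $\omega_1$-preserving.

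\emph{Clause 3.} I would reproduce the proof of Lemma~\ref{lem:local-comparison-m1} with $n$-smallness in place of $1$-smallness. At limit stages, $\mathcal Q$-structures exist because both sides are $n$-small. By the zipper lemma (Theorem~\ref{thm:zipper-local}), the $\mathcal Q$-structure on the realizable side determines a unique good branch on the $\Pi_n$-iterable side. To prevent a comparison of length $\omega_1$, one uses the collapse-and-homogeneity argument. The lower-part hypothesis ensures that the coiteration starts above a common cutpoint.

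The main obstacle is Clause 3 at intermediate rounds for $n\ge2$. There the $\mathcal Q$-structures may themselves contain Woodin cardinals, so their iterability must be certified by a lower round of the game. Here I would invoke Steel's inductive analysis directly rather than reprove it.
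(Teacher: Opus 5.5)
\paragraph{Verdict.} Your overall plan, treating the Fact as a compilation of Steel's results, matches the paper, which states it without proof and simply attributes it to Steel. The trouble is the one place where you actually argue: the complexity count in Clause~1 does not add up.

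\paragraph{The count in Clause~1.} Suppose a play has $n$ rounds, each consisting of a tree played by I and a branch played by II. Unravelling gives the prefix $\forall\mathcal T_1\,\exists b_1\cdots\forall\mathcal T_n\,\exists b_n$, which has $2n$ real quantifiers, not $n$ alternations. Over a $\Pi^1_1$ matrix this is $\Pi^1_{2n+1}$. Already for $n=1$ your description yields $\forall\mathcal T\,\exists b\,[\Pi^1_1]$, which is $\Pi^1_3$ rather than $\Pi^1_2$.

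\paragraph{The role of $\alpha$-goodness.} You also misidentify what $\alpha$-goodness does. It is not a $\Pi^1_1$ condition on a par with well-foundedness. Steel introduces it, with $\alpha$ supplied by player I, precisely to replace the $\Pi^1_1$ requirement that the branch model be well-founded by the $\Sigma^1_1$ requirement that $\alpha$ embed into its well-founded part. Then $\forall(\mathcal T,\alpha)\,\exists b\,[\Sigma^1_1]$ is $\Pi^1_2$.

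\paragraph{What Clause~1 actually requires.} In general, $k$ full rounds ending in a well-foundedness condition give $\Pi^1_{2k+1}$. If instead the last of the $k$ rounds carries the $\alpha$-goodness condition, the result is $\Pi^1_{2k}$. So landing exactly on $\Pi^1_{n+1}$ requires calibrating the length of the game and the form of the last winning condition to the parity of $n$, or restricting II's moves so that the branch quantifiers add no alternation. That calibration is the real content of Clause~1. As written, your sketch establishes a bound at the wrong projective level. Separately, your appeal to closedness is beside the point: for a finite-length game with real moves, ``II has a winning strategy'' is equivalent to the quantifier string by choice alone.

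\paragraph{Smaller points in Clauses~2 and~3.}
\begin{itemize}
\item \emph{Clause~2.} The branches chosen by the realization strategy live in $V$ and need not belong to the extension. What is really needed is that the $\Pi^1_{n+1}$ statement true in $V$ also holds in the extension, i.e.\ a correctness property of that extension. For $n=1$ this is Mostowski absoluteness of the $\Sigma^1_1$ existence of an $\alpha$-good branch. Deferring to the preservation convention, as the paper does, is fine, but ``pull back the strategy'' is not itself the argument.
\item \emph{Clause~3, existence of $\mathcal Q$-structures.} These exist because the compared premice are sound and project to $\omega$. What $n$-smallness contributes is that they are $(n-1)$-small above $\delta(\mathcal T)$.
\item \emph{Clause~3, termination.} The collapse-and-homogeneity step only produces a branch through the length-$\omega_1$ tree on the $\Pi_n$-iterable side. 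To get termination at a countable stage you still need the usual reflection argument, using branches on both sides at $\omega_1$. Your sketch leaves this out, as does the parallel argument in Lemma~\ref{lem:local-comparison-m1}.
\end{itemize}
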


The next lemma is the direct analogue of Lemma~\ref{lem:local-comparison-m1}.  We state it separately because it is the form in which it will be used later.

\begin{lemma}[Comparison with a $\Pi_n$-iterable mouse]\label{lem:local-comparison-mn}
Let $\mathcal M$ and $\mathcal N$ be countable ordinary premice.  Assume that both are $\omega$-sound and project to $\omega$, that $\mathcal M\triangleleft M_n$, and that $\mathcal N$ is $n$-small and $\Pi_n$-iterable.  Then the comparison of $\mathcal M$ with $\mathcal N$ is successful.  Consequently
\[
   \mathcal M\trianglelefteq\mathcal N
   \quad\text{or}\quad
   \mathcal N\trianglelefteq\mathcal M .
\]
\end{lemma}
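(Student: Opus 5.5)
The plan is to follow the proof of Lemma~\ref{lem:local-comparison-m1} step by step, with Fact~\ref{fact:steel-pin-iterability} taking over the role that Steel's $\Pi^1_2$-iterability analysis played there. We coiterate $\mathcal M$ against $\mathcal N$ by least disagreement, obtaining trees $\mathcal T$ on $\mathcal M$ and $\mathcal U$ on $\mathcal N$. The $\mathcal M$-side follows the strategy inherited from $M_n$; realizability of $\mathcal M$ (Fact~\ref{fact:steel-pin-iterability}(2)) ensures it picks well-founded cofinal branches. The $\mathcal N$-side follows player II's winning strategy in $\mathcal G(\mathcal N,0,n)$. Successor stages involve nothing new. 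Since both premice project to $\omega$ and are $\omega$-sound, the comparison is above the empty lower part. That is exactly the case $\eta=0$ in Fact~\ref{fact:steel-pin-iterability}(3).

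At a limit stage $\lambda$ we argue as in the $M_1$ case. Because $\mathcal M\triangleleft M_n$ is a proper initial segment, it is $n$-small and contains no level with $n$ Woodins. So the branch model $\mathcal M_b^{\mathcal T}$ has a $\mathcal Q$-structure $\mathcal Q(b,\mathcal T)$ over $\mathcal M(\mathcal T)$, built over the common part $\mathcal M(\mathcal T)=\mathcal M(\mathcal U)$. On the $\mathcal N$-side, the branch played by player II in the weak game must, by the design of Steel's game, be good enough to see the same $\mathcal Q$-structure: definability of the branch for even $n$, $\alpha$-goodness at the last round for odd $n$. The zipper lemma (Theorem~\ref{thm:zipper-local}), applied with a code for the $\mathcal Q$-structure as $A$, then forbids two distinct such branches. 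Otherwise one branch model would see some $\kappa<\delta$ that is $A$-strong up to $\delta$, contradicting that $\mathcal Q$ witnesses non-Woodinness of $\delta$. Hence the branch on the $\mathcal N$-side is unique and well-founded, and both sides continue. If one side has stopped using extenders, its last model supplies the $\mathcal Q$-structure in place of the branch model.

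Termination below $\omega_1$ is proved by the Lévy-collapse homogeneity argument. Suppose the comparison reached length $\omega_1$. Pass to $V^{\operatorname{Col}(\omega,\omega_1)}$. There $\Pi_n$-iterability of $\mathcal N$ persists, because it is $\boldsymbol\Pi^1_{n+1}$ (Fact~\ref{fact:steel-pin-iterability}(1)) and we can use absoluteness from the assumed Woodin cardinals (or simply invoke Fact~\ref{fact:steel-pin-iterability}(3) directly, which already asserts success). This yields a unique branch that is definable from ground-model data. By homogeneity the branch lies in $V$, a contradiction. At the end, the final models are lined up by $\trianglelefteq$. Since both sides are sound and project to $\omega$, neither side can drop or use an extender in a way that leaves a nontrivial iterate on the shorter side. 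So $\mathcal M\trianglelefteq\mathcal N$ or $\mathcal N\trianglelefteq\mathcal M$.

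The main obstacle is the limit step for $n>1$. There, player II's moves in the multi-round game must be checked to identify the $\mathcal Q$-guided branch; this is where the parity-dependent clauses of $\mathcal G(\mathcal N,0,n)$ enter. I would not redo Steel's analysis. Instead I would cite Fact~\ref{fact:steel-pin-iterability}(3): since $\mathcal M$ is countable, $n$-small and realizable, and $\mathcal N$ is countable, $n$-small and $\Pi_n$-iterable with the same (empty) lower part, comparison succeeds outright. The only remaining work is the soundness/projectum argument above, which converts success into the initial-segment dichotomy.
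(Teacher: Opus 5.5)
Your proposal is correct and is essentially the paper's proof: least-disagreement coiteration with $\mathcal Q$-structure-guided branches and the zipper lemma at limit stages, the $n>1$ limit case resting on Steel's comparison theorem (the $\eta=0$ case of Fact~\ref{fact:steel-pin-iterability}(3)), and the soundness/no-drop pullback to obtain the initial-segment dichotomy. The paper phrases the $n>1$ limit step as an induction on $n$ (the $\mathcal Q$-structures are $(n-1)$-small and $\Pi_{n-1}$-iterable above the new cutpoint), and it does not repeat the L\'evy-collapse termination step you carry over from the $M_1$ case; these are differences of presentation only.
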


\begin{proof}
This is the $\eta=0$ instance of Steel's comparison theorem for $n$-small $\Pi_n$-iterable mice.  Since $\mathcal M\triangleleft M_n$, the premouse $\mathcal M$ is realizable in the background construction of $M_n$.  Since $\mathcal N$ is $n$-small and $\Pi_n$-iterable, the $\mathcal N$-side has exactly the amount of iterability required by Steel's comparison lemma.  Because both premice are ordinary premice in the parameter-free sense fixed above, the lower parts agree trivially.

The proof is the usual least-disagreement coiteration.  The $\mathcal M$-side uses the realization strategy inherited from the construction of $M_n$, while the $\mathcal N$-side uses the $\Pi_n$-iterability strategy.  At limit stages, the good branch is characterized by the corresponding $\mathcal Q$-structure: for $n=1$ this is the branch uniqueness argument described in Subsection~\ref{subsec:m1-background}, and for $n>1$ the assertion is proved by induction on $n$, because the $\mathcal Q$-structure appearing at a limit stage is $(n-1)$-small and $\Pi_{n-1}$-iterable above the new cutpoint.  If two distinct good branches were available, Steel's zipper argument would produce the forbidden strength below the comparison height, contradicting $n$-smallness at the relevant level.  Thus the comparison has well-founded branches and reaches terminal models which are linearly ordered by initial segment.

Finally, since both premice are $\omega$-sound and project to $\omega$, the standard no-drop pullback argument gives the initial-segment relation already between the original premice.  Hence $\mathcal M\trianglelefteq\mathcal N$ or $\mathcal N\trianglelefteq\mathcal M$.
\end{proof}

\paragraph{Preservation convention for the outer models used below.}\label{par:steel-mn-preservation-convention}
As in the $M_1$ case, we shall only use the preceding comparison in forcing extensions which occur in this paper.  The preservation fact needed is the following one: if $\mathcal M\triangleleft M_n$ is countable and belongs to such an extension, then $\mathcal M$ remains $\Pi_n$-iterable there.  This is the higher-level version of the realizability preservation used in Paragraph~\ref{par:steel-preservation-convention}.  We do not claim that the class of all real codes for $\Pi_n$-iterable premice is absolute between arbitrary outer models with the same $\omega_1$.

For the coding arguments we only need to recover the lower part $M_n|\omega_1$.  Thus we isolate the lower-part initial segments of $M_n$, rather than the larger class of all $n$-small $\Pi_n$-iterable mice.  This distinction is harmless for $n=1$, but it is important at higher odd levels, where one has to avoid the familiar nonstandard $\Pi_n$-iterable mice.  The lower-part restriction below excludes these objects from the definition used in the coding apparatus.

\begin{definition}[Lower-part $M_n$-approximations]\label{def:mn-lower-part-approximations}
Let $M_n[G]$ be an $\omega_1$-preserving forcing extension of $M_n$ of the kind fixed in Paragraph~\ref{par:steel-mn-preservation-convention}.  In $M_n[G]$ let $\mathcal I_n$ be the class of all $\mathcal N$ such that
\begin{enumerate}
   \item $\mathcal N$ is a countable passive ordinary premouse;
   \item $\mathcal N$ is a lower-part premouse, i.e. $\mathcal N$ has no Woodin cardinal;
   \item $\mathcal N$ is $n$-small, $\omega$-sound, and $\rho_\omega(\mathcal N)=\omega$;
   \item $\mathcal N$ is $\Pi_n$-iterable.
\end{enumerate}
\end{definition}

\begin{lemma}[Definable cofinal system of $M_n$-initial segments]\label{lem:definable-mn-initial-segments}
Let $M_n[G]$ be an $\omega_1$-preserving forcing extension of $M_n$ of the kind fixed above.  Then $\mathcal I_n$ is $\boldsymbol\Pi^1_{n+1}$-definable in the codes.  Moreover every member of $\mathcal I_n$ is of the form $\mathcal J^{M_n}_\eta$ for some $\eta<\omega_1$, and
\[
   \{\eta<\omega_1\mid \mathcal J^{M_n}_\eta\in\mathcal I_n\}
\]
is cofinal in $\omega_1$.
\end{lemma}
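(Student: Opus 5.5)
The proof will follow Lemma~\ref{lem:definable-m1-initial-segments}, replacing $\Pi^1_2$-iterability by Steel $\Pi_n$-iterability and Lemma~\ref{lem:local-comparison-m1} by Lemma~\ref{lem:local-comparison-mn}. There are three parts: definability, the inclusion $\mathcal I_n\subseteq\{\mathcal J^{M_n}_\eta\mid\eta<\omega_1\}$, and cofinality.

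\textbf{Definability.} Fix the usual coding of countable premice by reals. The clauses ``$x$ codes a countable passive ordinary premouse'', ``$\omega$-sound'', ``$\rho_\omega=\omega$'' and ``$n$-small'' are arithmetic in $x$. The same holds for the lower-part clause ``no Woodin cardinal'': Woodinness of an ordinal in a countable coded structure is first order over that structure, hence arithmetic in the code. By Fact~\ref{fact:steel-pin-iterability}(1), $\Pi_n$-iterability is $\boldsymbol\Pi^1_{n+1}$. The intersection of these clauses is therefore $\boldsymbol\Pi^1_{n+1}$.

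\textbf{Members are initial segments of $M_n$.} Work in $M_n[G]$ and let $\mathcal N\in\mathcal I_n$. Since $\omega_1$ is preserved, $\operatorname{Ord}^{\mathcal N}<\omega_1$. Choose $\eta<\omega_1$ above $\operatorname{Ord}^{\mathcal N}$ such that $\mathcal J^{M_n}_\eta$ is $\omega$-sound and projects to $\omega$. Such $\eta$ are cofinal: every countable ordinal is collapsed in $M_n$ below $\omega_1^{M_n}$, and fine structure supplies a projectum-$\omega$ level at or just above the collapse. The premouse $\mathcal J^{M_n}_\eta$ is a countable proper initial segment of $M_n$, so it is realizable. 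By Paragraph~\ref{par:steel-mn-preservation-convention} and Fact~\ref{fact:steel-pin-iterability}(2) it remains $\Pi_n$-iterable in $M_n[G]$. Now apply Lemma~\ref{lem:local-comparison-mn} with $\mathcal M=\mathcal J^{M_n}_\eta$ and the given $\mathcal N$. Height rules out $\mathcal J^{M_n}_\eta\triangleleft\mathcal N$, so $\mathcal N\trianglelefteq\mathcal J^{M_n}_\eta$, and hence $\mathcal N=\mathcal J^{M_n}_{\eta'}$ for some $\eta'\le\eta$.

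\textbf{Cofinality.} Take any of the cofinally many $\eta<\omega_1$ with $\mathcal J^{M_n}_\eta$ $\omega$-sound and projecting to $\omega$. By the choice of $\eta$ the level is countable, passive (only the case of a passive level needs care; it is handled below), $n$-small and $\omega$-sound with $\rho_\omega=\omega$. It has no Woodin cardinals, because the least Woodin cardinal of $M_n$ is far above $\omega_1^{M_n}$ and levels below $\omega_1$ cannot contain a Woodin. It is $\Pi_n$-iterable by the preservation convention. Hence $\mathcal J^{M_n}_\eta\in\mathcal I_n$.

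\textbf{Main obstacle.} The delicate step is the comparison in the second part when $n$ is odd and $\geq 3$. In that case nonstandard $\Pi_n$-iterable mice can exist, and Lemma~\ref{lem:local-comparison-mn} must really apply to $\mathcal N$. I plan to handle this through the lower-part clause. Since $\mathcal N$ has no Woodin cardinals, every $\mathcal Q$-structure met in the coiteration is a lower-part object. Because the comparison height is not Woodin in $\mathcal N$, branches are determined by $\mathcal Q$-structures that are $(n-1)$-small, and the inductive uniqueness argument in the proof of Lemma~\ref{lem:local-comparison-mn} applies without meeting the pathological case. A second, minor point is passivity: if the chosen projectum-$\omega$ level is active, I would instead use the passive level $\mathcal J^{M_n}_{\eta}$ with its top extender removed, or move to the next passive projectum-$\omega$ level, which still yields cofinally many passive members.
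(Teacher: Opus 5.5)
\textbf{The gap is in the cofinality step.} Your three-part structure matches the paper's: arithmetic clauses plus Fact~\ref{fact:steel-pin-iterability}(1) for definability, Lemma~\ref{lem:local-comparison-mn} against a taller projectum-$\omega$ level for the initial-segment claim, and the preservation convention for cofinality. The first two parts are fine. Your comparison partner $\mathcal J^{M_n}_\eta$ need not even be passive or lower-part, since Lemma~\ref{lem:local-comparison-mn} does not ask for that.

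The cofinality step, however, rests on a false claim: that no level $\mathcal J^{M_n}_\eta$ with $\eta<\omega_1$ can have a Woodin cardinal, because the least Woodin of $M_n$ lies far above $\omega_1^{M_n}$. The lower-part clause of Definition~\ref{def:mn-lower-part-approximations} refers to Woodinness \emph{inside} $\mathcal N$. You use exactly this reading when you call the clause arithmetic in the code. An ordinal that is countable in $M_n$ can still be Woodin inside a countable level, and such levels do occur. If $\delta$ is the least Woodin of $M_n$, then ``$\delta$ is Woodin'' is a $\Pi_1$ fact about $\mathcal J^{M_n}_{\delta+1}$ in the parameter $\delta$. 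It therefore pulls back to the transitive collapse of a countable $\Sigma_1$-hull of that level containing $\delta$. The collapse projects to $\omega$, and the usual condensation/comparison argument makes it an initial segment of $M_n$ of countable height which believes it has a Woodin cardinal. That level is $\omega$-sound, projects to $\omega$, and is realizable, yet it is not in $\mathcal I_n$. So your argument does not show that the levels you pick lie in $\mathcal I_n$. This is why the paper builds ``passive and lower-part'' into the choice of $\eta$ and only asserts that such $\eta$ are cofinal.

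The repair is easy. If $\mathcal J^{M_n}_\gamma$ projects to $\omega$, then the next level $\mathcal J^{M_n}_{\gamma+1}$ has the following properties:
\begin{itemize}
\item it is passive;
\item it is $\omega$-sound, being a proper initial segment of $M_n$;
\item it contains a surjection of $\omega$ onto $\mathcal J^{M_n}_\gamma$, so it projects to $\omega$ and believes every ordinal is countable, hence has no Woodin cardinal.
\end{itemize}
These ordinals $\gamma+1$ are cofinal in $\omega_1$.

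\textbf{Two smaller points.} First, your initial fix for active levels does not work. If $F$ is the top extender of an active level of height $\eta$, the passive reduct has projectum $\eta$, not $\omega$. By coherence it is the level of height $\eta$ of the ultrapower by $F$, and $\eta$ is a cardinal of that ultrapower. Only your second fix, passing to a later passive level, is viable, and the successor levels above already supply it.

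Second, your ``main obstacle'' paragraph is unnecessary. The paper simply applies Lemma~\ref{lem:local-comparison-mn}, whose only requirements on $\mathcal N$ are $n$-smallness and $\Pi_n$-iterability. Its reasoning is also invalid as stated, because having no Woodin cardinal is not inherited by initial segments. For example, the successor of a Woodin-believing projectum-$\omega$ level is lower-part. So you cannot conclude that the $\mathcal Q$-structures met in the coiteration are lower-part.
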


\begin{proof}
The clauses saying that a real codes a countable passive ordinary premouse, that the premouse is lower-part, $n$-small, $\omega$-sound, and satisfies $\rho_\omega=\omega$, are arithmetic in the usual code for countable premice.  By Fact~\ref{fact:steel-pin-iterability}, the $\Pi_n$-iterability clause is $\boldsymbol\Pi^1_{n+1}$.  Therefore $\mathcal I_n$ is $\boldsymbol\Pi^1_{n+1}$ in the codes.

We next prove that no nonstandard lower-part premouse enters $\mathcal I_n$.  Work in $M_n[G]$ and let $\mathcal N\in\mathcal I_n$.  Since $\omega_1$ is preserved, the height of $\mathcal N$ is below the true $\omega_1^{M_n}$.  Choose $\eta<\omega_1$ such that $\operatorname{Ord}^{\mathcal N}<\eta$ and such that $\mathcal J^{M_n}_\eta$ is passive, lower-part, $\omega$-sound, and projects to $\omega$.  The fine structure of $M_n$ gives cofinally many such $\eta$.  The initial segment $\mathcal J^{M_n}_\eta$ is realizable inside $M_n$, and hence is $\Pi_n$-iterable in the present extension by the preservation convention.

Apply Lemma~\ref{lem:local-comparison-mn} to $\mathcal J^{M_n}_\eta$ and $\mathcal N$.  The alternative $\mathcal J^{M_n}_\eta\trianglelefteq\mathcal N$ is impossible by the choice of $\eta$, because $\operatorname{Ord}^{\mathcal N}<\eta$.  Therefore
\[
   \mathcal N\trianglelefteq \mathcal J^{M_n}_\eta,
\]
and hence $\mathcal N$ is itself an initial segment of $M_n$.

Conversely, let $\eta<\omega_1$ be such that $\mathcal J^{M_n}_\eta$ is passive, lower-part, $\omega$-sound, and projects to $\omega$.  Then $\mathcal J^{M_n}_\eta$ is $n$-small and realizable in the Steel construction of $M_n$, and by the preservation convention it is $\Pi_n$-iterable in $M_n[G]$.  Thus $\mathcal J^{M_n}_\eta\in\mathcal I_n$.  Since such $\eta$ are cofinal in $\omega_1$, the cofinality assertion follows.
\end{proof}

\begin{definition}[Recovering $M_n|\omega_1$]\label{def:recover-mn-omega1}
In any $\omega_1$-preserving forcing extension of $M_n$ of the kind fixed above, we write
\[
   \mathcal N=M_n|\omega_1
\]
if
\[
   \mathcal N=\bigcup\mathcal I_n,
\]
where $\mathcal I_n$ is the class from Definition~\ref{def:mn-lower-part-approximations}, computed in that extension.
\end{definition}

\begin{lemma}[Correctness of the recovery]\label{lem:recover-mn-omega1}
Let $M_n[G]$ be an $\omega_1$-preserving forcing extension of $M_n$ of the kind fixed above.  Then Definition~\ref{def:recover-mn-omega1} defines the true initial segment
\[
   M_n|\omega_1=\mathcal J^{M_n}_{\omega_1}.
\]
Moreover the definition is uniform in all such extensions.
\end{lemma}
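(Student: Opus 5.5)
The argument mirrors the proof of Lemma~\ref{lem:recover-m1-omega1}, with Lemma~\ref{lem:definable-mn-initial-segments} in place of Lemma~\ref{lem:definable-m1-initial-segments}. Work inside the given $\omega_1$-preserving extension $M_n[G]$ and compute $\mathcal I_n$ there. We show two inclusions between $\bigcup\mathcal I_n$ and $\mathcal J^{M_n}_{\omega_1}$, and then comment on uniformity.

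For $\bigcup\mathcal I_n\subseteq\mathcal J^{M_n}_{\omega_1}$, let $\mathcal N\in\mathcal I_n$. By Lemma~\ref{lem:definable-mn-initial-segments}, $\mathcal N=\mathcal J^{M_n}_\eta$ for some $\eta<\omega_1$. Since $\omega_1^{M_n[G]}=\omega_1^{M_n}$, every element of $\mathcal N$ lies in $\mathcal J^{M_n}_{\omega_1}$. Preservation of $\omega_1$ is needed here: it guarantees that the countable heights $\eta$ computed in $M_n[G]$ are below the true $\omega_1^{M_n}$.

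For the reverse inclusion, let $x\in\mathcal J^{M_n}_{\omega_1}$, say $x\in\mathcal J^{M_n}_\beta$ with $\beta<\omega_1$. The cofinality clause of Lemma~\ref{lem:definable-mn-initial-segments} gives some $\eta>\beta$ with $\mathcal J^{M_n}_\eta\in\mathcal I_n$. Then $x\in\mathcal J^{M_n}_\eta\subseteq\bigcup\mathcal I_n$.

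We also need the union to be the structure $\mathcal J^{M_n}_{\omega_1}$, with its extender sequence and predicates, and not merely its underlying set. The members of $\mathcal I_n$ are initial segments of one fixed premouse, so they are linearly ordered by $\trianglelefteq$ and cohere. Their union as premice is therefore the initial segment of height $\sup\eta=\omega_1$. This structure is passive at height $\omega_1$, because $\omega_1$ is a cardinal of $M_n$ and so no extender is indexed there in the relevant sense.

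For uniformity, the defining formula of $\mathcal I_n$ in Definition~\ref{def:mn-lower-part-approximations} does not depend on $G$. As in the remark following Lemma~\ref{lem:recover-m1-omega1}, we claim only that the formula's extension evaluates to the same union in each extension, not that the same set of real codes works in all of them.

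The only real content, and the main potential obstacle, is already contained in Lemma~\ref{lem:definable-mn-initial-segments}. That is where nonstandard $\Pi_n$-iterable mice at higher odd levels are excluded, by the lower-part restriction together with comparison via Lemma~\ref{lem:local-comparison-mn}. Here I would simply invoke that lemma.

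The one additional check is the cofinality clause. We need passive, lower-part, $\omega$-sound levels projecting to $\omega$ to be cofinal in $\omega_1^{M_n}$. This holds because new reals of $M_n$ appear cofinally below $\omega_1$, and every proper initial segment of $M_n$ below $\omega_1$ is a lower-part segment, since the Woodin cardinals of $M_n$ lie above $\omega_1$.
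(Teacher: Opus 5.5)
Your proposal is correct and is essentially the paper's proof. Both inclusions are read off from Lemma~\ref{lem:definable-mn-initial-segments}, and uniformity is the same remark that only the defining formula for $\mathcal I_n$ is fixed, not the set of real codes satisfying it.

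One aside should be dropped: your closing justification of cofinality. It is redundant, since that clause is already part of Lemma~\ref{lem:definable-mn-initial-segments}. It is also false as stated, because countable levels of $M_n$ can be active (e.g.\ $0^\#\triangleleft M_n|\omega_1$) or can internally have Woodin cardinals (e.g.\ $M_{n-1}^\#\triangleleft M_n|\omega_1$ for $n\geq 2$). So the fact that the Woodin cardinals of $M_n$ lie above $\omega_1$ does not make every countable level passive and lower-part.
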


\begin{proof}
By Lemma~\ref{lem:definable-mn-initial-segments}, every member of $\mathcal I_n$ is an initial segment $\mathcal J^{M_n}_\eta$ with $\eta<\omega_1$.  Hence the union in Definition~\ref{def:recover-mn-omega1} is contained in $M_n|\omega_1$.  Conversely, the same lemma gives cofinally many $\eta<\omega_1$ with $\mathcal J^{M_n}_\eta\in\mathcal I_n$.  The union of these initial segments is $\mathcal J^{M_n}_{\omega_1}$.

The same formula defining $\mathcal I_n$ is used in every relevant extension.  We do not assert that the same real codes belong to $\mathcal I_n$ in different extensions, since new reals may code new countable putative iteration trees.  Rather, applying Lemma~\ref{lem:definable-mn-initial-segments} inside the given extension identifies the union of the premice satisfying the formula with the true $\mathcal J^{M_n}_{\omega_1}$.
\end{proof}

\paragraph{Remark.}\label{rem:mn-omega1-not-absolute}
The notation $M_n|\omega_1$ in Definition~\ref{def:recover-mn-omega1} is a convention for the outer models of $M_n$ used in this paper.  It should not be read as saying that an arbitrary transitive model which internally satisfies the same first-order-looking definition has computed the true $M_n|\omega_1$.  Later, whenever countable auxiliary models are used, the relevant countable premouse is also required externally to belong to $\mathcal I_n$.

The condensation and diamond arguments from the previous subsection lift without change once $M_1$ is replaced by $M_n$.

\begin{theorem}[Steel condensation for $M_n$, in the form used here]\label{thm:steel-condensation-mn}
Let $\mathcal M\trianglelefteq M_n$ be an $\omega$-sound initial segment and let
\[
   \pi:\bar N\rightarrow\mathcal M
\]
be the inverse of the transitive collapse of a sufficiently elementary substructure of $\mathcal M$.  Suppose that the critical point of $\pi$ is the relevant standard projectum of $\bar N$.  Then either
\begin{enumerate}
   \item $\bar N\trianglelefteq\mathcal M$, or
   \item $\bar N$ is an initial segment of a degree-zero ultrapower of an initial segment of $\mathcal M$ by an extender on the $M_n$-sequence whose length is that projectum.
\end{enumerate}
In the hulls used in the diamond argument below, the second alternative is impossible.  Hence the transitive collapse is an initial segment of $M_n$.
\end{theorem}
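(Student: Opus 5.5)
The plan is to treat the first half of the statement as an instance of Steel's condensation theorem for the $n$-small background construction, and to prove only the exclusion of the second alternative by hand, copying the proof of Theorem~\ref{thm:steel-condensation-m1}.

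\textbf{The dichotomy.} I would cite the condensation lemma for $\omega$-sound initial segments of tame mice built by full background extender constructions \cite[Theorem~5.1]{Steel3}. Its hypotheses are that $\mathcal M$ is $\omega$-sound, that $\bar N$ is sound above the critical point, and that this critical point equals the projectum of $\bar N$. Under these hypotheses the theorem gives exactly the two listed alternatives. Every proper initial segment of $M_n$ is $n$-small and $\omega$-sound, and is realizable through the background construction (Subsection~\ref{subsec:mn-background}). So the general theorem applies verbatim with $M_n$ in place of $M_1$. The number of Woodin cardinals enters only through iterability of the phalanx $(\mathcal M,\bar N,\operatorname{crit}\pi)$. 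That iterability is inherited from the realization strategy of the background construction, and the argument uses no $1$-smallness.

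\textbf{Excluding the ultrapower alternative.} I would specialize to the hulls used in the diamond argument, namely countable $X\prec\mathcal J^{M_n}_\theta$ with $\alpha=X\cap\omega_1=\operatorname{crit}(\pi)$. The collapse $\bar N$ then has $\omega_1^{\bar N}=\alpha$. I would arrange the hull so that $\rho_\omega(\bar N)=\alpha$, for example by choosing $\theta$ with $\rho_\omega(\mathcal J^{M_n}_\theta)=\omega_1$ and taking $\Sigma_\omega$-hulls containing the standard parameter. Suppose the second alternative held. Then some extender $E$ on the $M_n$-sequence has index (length) $\alpha$, and $\bar N$ and $\mathcal M$ agree below $\alpha$. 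The critical point $\kappa$ of $E$ is inaccessible in $\mathcal J^{M_n}_\alpha$. Moreover $\alpha=\kappa^{+\mathrm{Ult}}$, and $\kappa^{+}$ computed in $M_n|\alpha$ is at most $\alpha$. Since $\alpha=\omega_1^{\bar N}$ and $\bar N$ agrees with $M_n$ below $\alpha$, the structure $\bar N$ would contain an uncountable cardinal $\kappa<\omega_1^{\bar N}$ that is inaccessible in its lower part, which is absurd. Equivalently, $\mathcal J^{M_n}_\alpha$ thinks every ordinal is countable, while an extender indexed at $\alpha$ forces $\kappa$ to be inaccessible there. Hence the first alternative holds, so $\bar N\trianglelefteq\mathcal M\trianglelefteq M_n$.

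\textbf{Main obstacle.} The delicate point is checking that the hull $\bar N$ meets the fine-structural hypotheses of Steel's theorem. The critical point must be exactly the relevant projectum, and $\bar N$ must be sound above it. Both properties depend on taking the hull with the correct standard parameter and at the correct level $\rho_k$. I would handle this as in the proof of Lemma~\ref{lem:m1-diamond}: take $\theta$ to be the least level above the given sets that projects to $\omega_1$, and take the fine-structural $\Sigma_{k+1}$ hull with $p_{k+1}$, where $k$ is such that $\rho_{k+1}(\mathcal J^{M_n}_\theta)=\omega_1$. This choice transfers soundness and the projectum to $\bar N$.
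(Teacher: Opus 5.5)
Your proposal is correct and follows essentially the same route as the paper: cite Steel's condensation theorem for the dichotomy, then rule out the ultrapower alternative. That alternative fails because an extender indexed at $\alpha=\omega_1^{\bar N}$ would make its critical point an inaccessible below $\alpha$ in $M_n|\alpha=\bar N|\alpha$, while $\bar N$ (hence, by acceptability, $\bar N|\alpha$) sees every ordinal below $\alpha$ as countable. Putting the inaccessible at the critical point is more accurate than the paper's phrasing, which says the index itself becomes inaccessible.

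The only slip is your aside $\alpha=\kappa^{+\mathrm{Ult}}$. This holds when $E$ is a normal measure but not for general extenders on the sequence, and your argument does not use it.
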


\begin{proof}
This is the condensation theorem for initial segments of the Steel $M_n$-construction.  The proof is the same fine-structural argument as in the $M_1$ case, with $n$-smallness replacing $1$-smallness.  The only point needed below is the exclusion of the ultrapower alternative.  The hulls in the diamond argument are chosen so that the collapsed structure has projectum equal to its internal $\omega_1$.  If the ultrapower alternative occurred, an extender on the $M_n$-sequence would be indexed at this internal $\omega_1$.  The lower part below that index computes the index as its first uncountable cardinal, while the presence of such an extender would make it inaccessible in the relevant extender model.  This contradiction leaves only the initial-segment alternative.
\end{proof}

\begin{fact}[Steel's projective well-order of $M_n$]\label{fact:mn-projective-wellorder}
The construction order $<_{M_n}$ restricted to the reals of $M_n$ is a $\Delta^1_{n+2}$ well-order.  Equivalently, for reals $x,y\in M_n$, the assertion that $x$ is constructed before $y$ in $M_n$ is given by a $\Sigma^1_{n+2}$ formula and also by a $\Pi^1_{n+2}$ formula.
\end{fact}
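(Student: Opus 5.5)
The plan is to read off both definitions of $x<_{M_n}y$ from the fact that every real of $M_n$ lies in a countable, $\omega$-sound initial segment of $M_n$ projecting to $\omega$, and that such initial segments are recognized by a $\boldsymbol\Pi^1_{n+1}$ condition. I work inside $M_n$, which is the trivial extension, so Paragraph~\ref{par:steel-mn-preservation-convention} and Lemma~\ref{lem:definable-mn-initial-segments} apply. For reals $x,y$, the first step is the $\Sigma^1_{n+2}$ definition: $x<_{M_n}y$ iff there is a real $z$ coding a premouse $\mathcal N$ with $\mathcal N\in\mathcal I_n$ (or, in the version without the lower-part restriction, $n$-small, $\omega$-sound, projecting to $\omega$ and $\Pi_n$-iterable) such that $x,y\in\mathcal N$ and $\mathcal N\models x<y$ in its order of construction. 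The matrix consists of an arithmetic clause together with the $\Pi^1_{n+1}$ iterability clause from Fact~\ref{fact:steel-pin-iterability}(1). It is lightface, because Steel's game $\mathcal G(\mathcal N,0,n)$ uses no parameter beyond the code. Hence the whole formula is $\Sigma^1_{n+2}$.

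The second step is correctness. Suppose first that $x<_{M_n}y$. Pick $\eta<\omega_1$ with $x,y\in\mathcal J^{M_n}_\eta$ and $\mathcal J^{M_n}_\eta\in\mathcal I_n$, which exists by the cofinality clause of Lemma~\ref{lem:definable-mn-initial-segments}. Since the order of construction of an initial segment is the restriction of $<_{M_n}$, this $\mathcal J^{M_n}_\eta$ witnesses the formula. Conversely, any witness $\mathcal N$ is of the form $\mathcal J^{M_n}_\eta$ by the same lemma, again via Lemma~\ref{lem:local-comparison-mn}, so the internal order agrees with $<_{M_n}$. The $\Pi^1_{n+2}$ definition then follows by totality: $x<_{M_n}y$ iff $x\neq y$ and $\neg(y<_{M_n}x)$, and the right-hand side is the negation of a $\Sigma^1_{n+2}$ formula. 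The same argument shows that the set of reals of $M_n$ is $\Sigma^1_{n+2}$, so the restriction to $\mathbb R\cap M_n$ causes no trouble.

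The main obstacle is the one flagged before Definition~\ref{def:mn-lower-part-approximations}. For $n\geq2$, especially at odd $n$, the class of all $n$-small $\Pi_n$-iterable sound mice projecting to $\omega$ may contain nonstandard mice that are not initial segments of $M_n$. Moreover, Fact~\ref{fact:steel-pin-iterability}(1) gives the complexity only in boldface form. My first move is to use the lower-part class $\mathcal I_n$ and rely on Lemma~\ref{lem:definable-mn-initial-segments} for the identification. I then check by inspecting Steel's game that the iterability clause is lightface $\Pi^1_{n+1}$. If that fails at some parity, I fall back on Steel's own analysis in \cite{Steel2}, where the $\Delta^1_{n+2}$-ness is proved directly via $\mathcal Q$-structure-guided iterability. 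In that case I would cite it as the source of the fact rather than reprove it.
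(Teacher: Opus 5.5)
Your argument is correct relative to what the paper proves before this point, but it is not the paper's route. The paper gives no argument for this Fact and simply cites Steel \cite{Steel2,Steel3}.

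You instead obtain it as a corollary of Lemma~\ref{lem:definable-mn-initial-segments}, read in the trivial extension. You define $x<_{M_n}y$ by ``some real codes an $\mathcal N\in\mathcal I_n$ with $x,y\in\mathcal N$ and $x<_{\mathcal N}y$'', and you get the $\Pi^1_{n+2}$ form by totality. This is not circular, since neither that lemma nor Lemma~\ref{lem:local-comparison-mn} uses the well-order.

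What your route buys is transparency. The Fact becomes a formal consequence of the same recognition of $M_n|\omega_1$ that the coding predicate needs anyway, and the $\Pi$ half is pure bookkeeping.

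What the citation buys is exactly what your derivation cannot extract from the paper itself:
\begin{itemize}
\item Fact~\ref{fact:steel-pin-iterability}(1) and Lemma~\ref{lem:definable-mn-initial-segments} are stated only in boldface. The lightface bound must therefore be imported from Steel's result that codes of $\Pi_n$-iterable premice form a lightface $\Pi^1_{n+1}$ set. Your remark that the game has no parameters is the right reason, but it is a citation, not a derivation.
\item The substantive direction is that every premouse which $M_n$ believes to lie in $\mathcal I_n$ is an initial segment of $M_n$. This is the real content of Steel's theorem. At odd $n$ it is precisely where the nonstandard $\Pi_n$-iterable mice flagged before Definition~\ref{def:mn-lower-part-approximations} must be excluded. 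Your proof does not avoid this dependence; it relocates it into Lemma~\ref{lem:local-comparison-mn} and Fact~\ref{fact:steel-pin-iterability}(3).
\end{itemize}
For the same reason, drop your parenthetical variant without the lower-part clause. It is not covered by Lemma~\ref{lem:definable-mn-initial-segments}, so keep $\mathcal I_n$.

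Finally, the instance of the preservation convention you need in the trivial extension is better justified directly. Countable levels of $M_n$ are $\Pi_n$-iterable in $V$ by realizability. Since this is a $\Pi^1_{n+1}$ property and $M_n$ is $\Sigma^1_{n+1}$-correct, they are also $\Pi_n$-iterable in $M_n$.
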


We shall use this fact only as a source of canonical choices inside \(M_n\).
In particular, using \(<_{M_n}\) we fix once and for all the canonical
almost disjoint family \[D=\langle d_\xi\mid \xi<\omega_1\rangle\in M_n\]
used for almost disjoint coding, and the canonical \(\diamondsuit_{\omega_1}\)-
sequence used in the bookkeeping.  No later argument uses the well-order
to choose elements from projective sections in the final model.

\begin{lemma}[A canonical $M_n$-diamond sequence]\label{lem:mn-diamond}
There is a sequence
\[
   \vec D^{\,n}=\langle D^n_\alpha\mid \alpha<\omega_1\rangle\in M_n
\]
with $D^n_\alpha\subseteq\alpha$ for all $\alpha<\omega_1$ such that
\[
   M_n\models ``\vec D^{\,n}\text{ is a }\diamondsuit_{\omega_1}\text{-sequence}''.
\]
Moreover $\vec D^{\,n}$ is uniformly definable over $M_n|\omega_1$ from the canonical order of construction of $M_n$.
\end{lemma}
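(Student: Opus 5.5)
The plan is to repeat the Jensen-style argument of Lemma~\ref{lem:m1-diamond} verbatim, with $M_1$ replaced by $M_n$, the construction order $<_{M_1}$ replaced by $<_{M_n}$, and Theorem~\ref{thm:steel-condensation-m1} replaced by Theorem~\ref{thm:steel-condensation-mn}.

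\textbf{Definition of the sequence.} Work in $M_n$ and define $D^n_\alpha$ by recursion on $\alpha<\omega_1$. At limit $\alpha$, if there is a pair $(A,C)$ with $A\subseteq\alpha$, $C$ club in $\alpha$, and $D^n_\beta\neq A\cap\beta$ for all $\beta\in C$, let $D^n_\alpha=A_\alpha$, where $(A_\alpha,C_\alpha)$ is the $<_{M_n}$-least such pair. Otherwise, and at successor stages, let $D^n_\alpha=\emptyset$.

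\textbf{Definability.} Every subset of a countable ordinal in $M_n$ lies in $M_n|\omega_1$, because $\omega_1$ is a cutpoint and every such set appears at a level projecting to $\omega$ below $\omega_1$. Hence the recursion only quantifies over $M_n|\omega_1$, and it uses the restriction of $<_{M_n}$ to that structure. This gives the uniform definability over $M_n|\omega_1$.

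\textbf{The diamond property.} Suppose $\vec D^{\,n}$ fails to be a diamond sequence, and let $(A,C)$ be the $<_{M_n}$-least counterexample. Choose $\theta>\omega_1$ such that $\mathcal J^{M_n}_\theta$ is $\omega$-sound, contains $A$, $C$ and $\vec D^{\,n}$, and satisfies enough of $\mathsf{ZF}^-$ to see that $(A,C)$ is the least counterexample. Form a countable chain of elementary substructures of $\mathcal J^{M_n}_\theta$ containing these parameters. Since $C$ is club, some substructure $X$ in the chain has $\alpha=X\cap\omega_1\in C$.

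Let $\pi:\bar X\to X$ be the inverse collapse, so $\operatorname{crit}(\pi)=\alpha$. We want $\alpha$ to be the projectum of $\bar X$. For this, take $X$ to be the $\Sigma_\omega$-Skolem hull of $\alpha\cup\{p\}$, where $p$ codes the parameters and $\alpha$ is chosen so that this hull meets $\omega_1$ exactly in $\alpha$. Then the collapse is generated by $\alpha$ together with a finite parameter, so it projects to $\alpha$, which is its internal $\omega_1$. Theorem~\ref{thm:steel-condensation-mn} now shows that $\bar X$ is an initial segment of $M_n$.

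Consequently $\pi^{-1}(\vec D^{\,n})=\vec D^{\,n}\restriction\alpha$, $\pi^{-1}(A)=A\cap\alpha$ and $\pi^{-1}(C)=C\cap\alpha$. Moreover, the order $<_{M_n}$ computed inside $\bar X$ agrees with the true one, because $\bar X$ is an initial segment of $M_n$. By elementarity, $\bar X$ sees $(A\cap\alpha,C\cap\alpha)$ as the $<_{M_n}$-least witness at stage $\alpha$. The set of $<_{M_n}$-predecessors of this pair among subsets of $\alpha$ is computed correctly in $\bar X$, so it is also the true least witness. Hence $D^n_\alpha=A\cap\alpha$, while $\alpha\in C$, contradicting the choice of $(A,C)$.

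\textbf{Main obstacle.} The delicate step is arranging the hull so that the hypothesis of Theorem~\ref{thm:steel-condensation-mn} holds, namely that the critical point is the projectum of the collapse. The ultrapower alternative in that theorem is then already excluded by its own proof. I would handle this by choosing $\theta$ so that $\mathcal J^{M_n}_\theta$ projects to $\omega_1$ (the least level after which $A$ and $C$ are definable) and by using fine-structural hulls, as in the standard proof of $\diamondsuit$ in $L$.
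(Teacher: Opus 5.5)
Your proof is correct and follows the paper's argument: the same recursion that sets $D^n_\alpha$ to the first coordinate of the $<_{M_n}$-least witness pair, carried out over $M_n|\omega_1$, and the same contradiction obtained by reflecting the least counterexample $(A,C)$ to a countable hull whose collapse is an initial segment of $M_n$ by Theorem~\ref{thm:steel-condensation-mn}. Your extra step of choosing $\mathcal J^{M_n}_\theta$ projecting to $\omega_1$ and taking the hull of $\alpha\cup\{p\}$ makes explicit the projectum hypothesis of that condensation theorem, which the paper's proof leaves implicit; note also that your ``countable chain'' should be a continuous $\omega_1$-chain of countable hulls, so that its traces on $\omega_1$ form a club that meets $C$.
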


\begin{proof}
Work in $M_n$.  Use the canonical well-order $<_{M_n}$ of the construction to define $\vec D^{\,n}$ recursively.  At a limit ordinal $\alpha<\omega_1$, suppose $\langle D^n_\beta\mid\beta<\alpha\rangle$ has already been defined.  If there is a pair $(A,C)$ such that $A\subseteq\alpha$, $C\subseteq\alpha$ is club in $\alpha$, and
\[
   \forall\beta\in C\,(D^n_\beta\neq A\cap\beta),
\]
then let $(A_\alpha,C_\alpha)$ be the $<_{M_n}$-least such pair and set $D^n_\alpha=A_\alpha$.  If there is no such pair, set $D^n_\alpha=\emptyset$.  At successor stages we again set $D^n_\alpha=\emptyset$.

The recursion is carried out over $M_n|\omega_1$, because at stage $\alpha$ all relevant objects are subsets of the countable ordinal $\alpha$ in $M_n$, and the order used to choose the least pair is the restriction of the canonical construction order of $M_n$.

Suppose toward a contradiction that $\vec D^{\,n}$ is not a diamond sequence in $M_n$.  Let $(A,C)$ be the $<_{M_n}$-least counterexample, so $A\subseteq\omega_1$, $C\subseteq\omega_1$ is club, and
\[
   \forall\alpha\in C\,(D^n_\alpha\neq A\cap\alpha).
\]
Choose an $\omega$-sound initial segment $\mathcal J^{M_n}_\theta$ containing $A$, $C$, and the sequence $\vec D^{\,n}$, and take a countable elementary substructure
\[
   X\prec \mathcal J^{M_n}_\theta
\]
with $A,C,\vec D^{\,n}\in X$ and with $\alpha=X\cap\omega_1\in C$.  Let
\[
   \pi:\bar X\rightarrow X
\]
be the inverse of the transitive collapse.  By Theorem~\ref{thm:steel-condensation-mn}, the collapse $\bar X$ is an initial segment of $M_n$.  Therefore the recursive construction of $\vec D^{\,n}$ inside $\bar X$ is exactly the initial part
\[
   \langle D^n_\beta\mid\beta<\alpha\rangle.
\]
The collapse sends $A$ to $A\cap\alpha$ and $C$ to $C\cap\alpha$, and by elementarity $\bar X$ sees $(A\cap\alpha,C\cap\alpha)$ as the $<_{M_n}$-least witness that the previous sequence is not diamond on $\alpha$.  Hence the recursion at stage $\alpha$ gives
\[
   D^n_\alpha=A\cap\alpha.
\]
This contradicts $\alpha\in C$.  Therefore no counterexample exists, and $\vec D^{\,n}$ is a $\diamondsuit_{\omega_1}$-sequence in $M_n$.
\end{proof}

We fix once and for all the $<_{M_n}$-least sequence $\vec D^{\,n}$ satisfying Lemma~\ref{lem:mn-diamond}.  In the higher-level version of the construction, this sequence plays exactly the role played by the $M_1$-diamond sequence in the detailed $M_1$ case: it gives the canonical lower-part bookkeeping from which the corresponding sequence of independent Suslin trees and the localized coding apparatus are built.

\subsection{The trees $T_n$, weak homogeneity, and small generic absoluteness}\label{subsec:tn-weak-homogeneity}

We next fix the tree representations of the projective pointclasses which will be used in the regularity argument.  The main application in this paper is the case $n=2$: the Martin--Solovay tree $T_2\in M_1$ represents the universal $\boldsymbol\Sigma^1_3$ set.  Since the higher-level notation is no harder, we record the construction uniformly.  Thus, for $n\geq 2$, the tree $T_n$ will be a canonical tree belonging to $M_{n-1}$ whose projection is the universal $\boldsymbol\Sigma^1_{n+1}$ set of reals.  In the final model we shall use only the following consequence of its construction: membership in $p[T_n]$ is absolute to the small generic extensions in which the relevant real appears.

We recall the form of weak homogeneity used below.  The notation follows Steel's exposition of the Martin--Solovay theorem in terms of towers of measures~\cite{steel2009derived}.  If $Z$ is a set and $\kappa$ is a cardinal, let $\operatorname{meas}_\kappa(Z)$ be the set of $\kappa$-additive measures on $Z^{<\omega}$.  If $\mu\in\operatorname{meas}_\kappa(Z)$, its dimension is the unique $m<\omega$ such that $\mu$ concentrates on $Z^m$.  If $\mu$ has dimension $m$, $\nu$ has dimension $k$, and $m\leq k$, we say that $\nu$ projects to $\mu$ if for every $A\subseteq Z^m$,
\[
   A\in\mu
   \quad\Longleftrightarrow\quad
   \{u\in Z^k\mid u\upharpoonright m\in A\}\in\nu .
\]
A tower $\langle\mu_i\mid i<\omega\rangle$ is countably complete if, whenever $A_i\in\mu_i$ for every $i$, there is an $f\in Z^\omega$ such that $f\upharpoonright\operatorname{dim}(\mu_i)\in A_i$ for every $i$.

\begin{definition}[Weakly homogeneous tree]\label{def:weakly-homogeneous-tree}
Let $T$ be a tree on $\omega\times Z$.  For $x\in\omega^\omega$ write
\[
   T_x=\{u\in Z^{<\omega}\mid \forall k<|u|\ ((x\upharpoonright k,u\upharpoonright k)\in T)\}.
\]
We say that $T$ is \emph{$\kappa$-weakly homogeneous} if there is a countable set
\[
   \mathcal M_T\subseteq\operatorname{meas}_\kappa(Z)
\]
closed under projections such that, for every real $x$,
\[
   x\in p[T]
\]
if and only if there is a countably complete tower $\langle\mu_i\mid i<\omega\rangle$ from $\mathcal M_T$ such that each $\mu_i$ concentrates on the corresponding finite section of $T_x$.  Equivalently, $T$ is weakly homogeneous in the sense of a weak homogeneity system whose range is countable.

When defined from such a witnessing system, we call $p[T]$ a $\kappa$-weakly homogeneously Suslin set.
\end{definition}

\begin{definition}[Absolute complements and universal Baireness]\label{def:absolute-complements}
Let $T$ be a tree on $\omega\times Z$ and $S$ a tree on $\omega\times Z'$.  We say that $T$ and $S$ are \emph{$\kappa$-absolute complements} if, for every forcing extension by a partial order of size $<\kappa$,
\[
   p[T]=\omega^\omega\setminus p[S].
\]
A set of reals is \emph{$\kappa$-universally Baire} if it is the projection of a tree which has a $\kappa$-absolute complement.
\end{definition}

The connection between the preceding two notions is the Martin--Solovay tree construction.  If $\bar\mu$ is a weak homogeneity system for $T$ and $\Theta$ is sufficiently large, the Martin--Solovay tree
\[
   \operatorname{ms}(\bar\mu,\Theta)
\]
searches for a coherent descending sequence of ordinal ranks through the ultrapowers determined by the measures in $\bar\mu$.  A branch through this Martin--Solovay tree is precisely a continuous certificate that the corresponding tower is ill-founded.  The following is the form of the Martin--Solovay theorem used in the rest of the paper~\cite[Theorem~2.20 and Corollary~2.21]{steel2009derived}.

\begin{theorem}[Martin--Solovay, Steel's formulation]\label{thm:ms-weak-homogeneous-ub}
Let $T$ be $\kappa$-weakly homogeneous via a weak homogeneity system $\bar\mu$, and let $\Theta>|T|^+$.  Then $T$ and $\operatorname{ms}(\bar\mu,\Theta)$ are $\kappa$-absolute complements.  In particular, $p[T]$ is $\kappa$-universally Baire.  Moreover, in every forcing extension by a partial order of size $<\kappa$, the measures in $\bar\mu$ lift to a weak homogeneity system witnessing the same assertion for the same ground-model tree $T$.
\end{theorem}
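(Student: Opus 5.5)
The proof will go through Martin and Solovay's argument as Steel presents it, in three steps: build the Martin--Solovay tree, show in $V$ that it projects to the complement of $p[T]$, and then show that this complementation survives forcing of size $<\kappa$ by lifting the measures.

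\textbf{Step 1: the tree.} Fix the countable system $\bar\mu=\langle\mu_s\mid s\in\omega^{<\omega}\rangle$ (indexed via the countable set $\mathcal M_T$). Each $\mu_s$ is a $\kappa$-complete measure on $Z^{|s|}$ concentrating on $T_s$. For $s\subseteq t$ the measure $\mu_t$ projects to $\mu_s$, so there are commuting ultrapower embeddings $j_{s,t}:\operatorname{Ult}(V,\mu_s)\to\operatorname{Ult}(V,\mu_t)$. Enumerate $\omega^{<\omega}$ as $\langle s_i\rangle$ with $|s_i|\le i$. The tree $\operatorname{ms}(\bar\mu,\Theta)$ consists of finite sequences of ordinals $<\Theta$, where at node $x\upharpoonright k$ one guesses, for each $i<k$ with $s_i\subseteq x\upharpoonright k$, the ordinal $\xi_i=j_{\mu_{s_i}}(\ldots)$-image value. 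These guesses must respect the order: if $s_i\subsetneq s_l$ then $j_{s_i,s_l}(\xi_i)>\xi_l$.

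\textbf{Step 2: complementation in $V$.} Disjointness: if $x\in p[T]$, then there is a countably complete tower along $x$, so the direct limit $\operatorname{Ult}(V,\langle\mu_{x\upharpoonright i}\rangle)$ is well-founded. A branch of $\operatorname{ms}$ over $x$ would produce a descending sequence of ordinals in that limit, a contradiction. Covering: if $x\notin p[T]$, then every tower along $x$ is not countably complete, so each direct limit is ill-founded. Using the rank functions $f_s(u)=\operatorname{rank}$ of $u$ in the well-founded tree $T_x$ restricted appropriately (Steel's choice: ranks in $T_{x}$ modulo the measures), one gets ordinals $[f_{s_i}]_{\mu_{s_i}}<\Theta$. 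These form a branch of $\operatorname{ms}$ over $x$, and $\Theta>|T|^+$ keeps all ordinals bounded. This step uses only ZFC and the choice of $\Theta$.

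\textbf{Step 3: absoluteness.} Let $\mathbb P$ have size $<\kappa$ and let $g$ be generic. By the Lévy--Solovay argument, each $\kappa$-complete $\mu\in\mathcal M_T$ generates a $\kappa$-complete measure $\mu^g$ in $V[g]$, and projections are preserved. The ultrapower of $V[g]$ by $\mu^g$ is $\operatorname{Ult}(V,\mu)[g]$, with $j_{\mu^g}\upharpoonright V=j_\mu$. Hence the lifted system still satisfies the weak homogeneity equivalence in $V[g]$. For the direction "tower complete $\Rightarrow$ branch in $T_x$", we use that completeness gives a well-founded direct limit, which agrees with the $V$ direct limit. The lifted system uses the same ordinal comparisons, so $\operatorname{ms}(\bar\mu^g,\Theta)=\operatorname{ms}(\bar\mu,\Theta)$ as computed in $V$. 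Applying Step 2 inside $V[g]$, which is legitimate since $\Theta>|T|^+$ is unaffected by small forcing, gives $p[T]=\omega^\omega\setminus p[\operatorname{ms}]$ in $V[g]$. Universal Baireness up to $\kappa$ then follows directly from Definition~\ref{def:absolute-complements}.

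\textbf{Main obstacle.} The covering argument in Step 2 is the delicate point. One must check that the rank assignments land below $\Theta$ and satisfy the strict order-inequalities for all $i<l$ along $x$. In particular, ill-foundedness of every tower along $x$ has to be converted into a single coherent assignment of ordinals, rather than a separate choice for each tower. This is exactly where Steel's Theorem~2.20 does its work, and I would follow that argument verbatim.
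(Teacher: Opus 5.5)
Your three-step outline (complementation in $V$ via towers and rank functions, then L\'evy--Solovay lifting of the measures and reuse of the same tree) is the one the paper sketches. As written, however, Steps~1--2 prove the \emph{homogeneous} Martin--Solovay theorem, not the weakly homogeneous one. You index the measures as $s\mapsto\mu_s$, with $\mu_s$ concentrating on $T_s$ and $\mu_t$ projecting to $\mu_s$ for $s\subseteq t$. So each real carries the single tower $\langle\mu_{x\upharpoonright i}\rangle$, and your tree imposes descent only along it. Your disjointness argument then uses that this tower is countably complete whenever $x\in p[T]$, which is exactly homogeneity.

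Weak homogeneity only provides, for each $x\in p[T]$, \emph{some} countably complete tower from the countable set $\mathcal M_T$ concentrating on the sections of $T_x$. In general no assignment $s\mapsto\mu_s$ produces these witnesses, since otherwise $T$ would be homogeneous. A branch of your tree says nothing about the witnessing tower, so disjointness fails for the tree you define.

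The tree must range over the whole countable tree of candidate towers, as in the paper's proof. Enumerate $\mathcal M_T$ as $\langle\nu_i\mid i<\omega\rangle$ with $\dim\nu_i\le i$. Require of a node $(s,\langle\alpha_i\mid i<|s|\rangle)$ that $j_{\nu_i,\nu_l}(\alpha_i)>\alpha_l$ whenever $\nu_l$ projects to $\nu_i$, $\dim\nu_i<\dim\nu_l$, and both measures concentrate on the corresponding sections of $T_s$. Disjointness then runs against whichever tower is complete. Covering is your rank argument with $\alpha_i=[u\mapsto\operatorname{rank}_{T_x}(u)]_{\nu_i}$. Because this function does not depend on the tower, the coherence problem you single out as the main obstacle does not arise at all. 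Note also that these ordinals lie below $j_{\nu_i}(|T|^+)$, not below $|T|^+$. So ``$[f_{s_i}]_{\mu_{s_i}}<\Theta$'' need not hold for $\Theta$ just above $|T|^+$; the ordinal coordinates should range below $\sup_i j_{\nu_i}(\Theta)$.

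Step~3 is circular. Rerunning the disjointness half of Step~2 in $V[g]$ requires that every $x\in p[T]^{V[g]}$, including reals not in $V$, has a countably complete lifted tower. That is the nontrivial half of weak homogeneity in $V[g]$. You assert it (``Hence the lifted system still satisfies the weak homogeneity equivalence'') but argue only the trivial half, which needs nothing more than countable completeness applied to the sets $T_x\cap Z^n$.

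Disjointness in $V[g]$ should come from absoluteness instead. $p[T]\cap p[\operatorname{ms}(\bar\mu,\Theta)]=\emptyset$ in $V$ says that the tree of triples $(s,u,\vec\alpha)$ with $(s,u)\in T$ and $(s,\vec\alpha)\in\operatorname{ms}(\bar\mu,\Theta)$ is well-founded, and well-foundedness persists to $V[g]$. The lifting is needed only for covering. The rank function of $T_x$ computed in $V[g]$ has, modulo each lifted measure $\nu^g$, a class which is an ordinal of $\operatorname{Ult}(V,\nu)[g]$, hence of $\operatorname{Ult}(V,\nu)$. Since the lifted embeddings extend the ground ones on ordinals, the descent conditions are those of the ground-model tree; this is the point the paper's proof stresses.

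With these repairs you obtain that $T$ and $\operatorname{ms}(\bar\mu,\Theta)$ are $\kappa$-absolute complements. The nontrivial half of the lifted weak homogeneity asserted in the Moreover clause, however, remains unproved by your argument.
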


\begin{proof}
This is the Martin--Solovay theorem for weakly homogeneous trees.  The key point is that a weak homogeneity system gives, for each real $x$, a countable tree of possible measure towers.  If one of these towers is countably complete, then $x\in p[T]$.  If no such tower is countably complete, the associated ill-foundedness is witnessed uniformly by a descending sequence of ordinal ranks; these ranks form a branch through $\operatorname{ms}(\bar\mu,\Theta)_x$, provided $\Theta$ is chosen above the size of the relevant tree.

Small forcing preserves the measure towers in the required way: the measures extend canonically to the forcing extension, and functions in the extension are represented modulo the lifted measures by ground-model functions.  Consequently the same Martin--Solovay tree remains the complementing tree in every $<\kappa$-generic extension.  This is exactly Steel's proof of the weakly homogeneous case of the Martin--Solovay theorem.
\end{proof}

We shall use Theorem~\ref{thm:ms-weak-homogeneous-ub} only in a localized form.  Let $M$ be one of the mice $M_{n-1}$ and let $\delta$ be its least Woodin cardinal.  If $T\in M$ is $\kappa$-weakly homogeneous in $M$ for every $\kappa<\delta$, then for every forcing $\mathbb P\in M$ of $M$-cardinality $<\delta$, and every $M$-generic $G\subseteq\mathbb P$, the same tree $T$ and its Martin--Solovay complement compute the same projective set in $M[G]$.  Equivalently, for reals $x\in M[G]$, membership in $p[T]$ can be checked by the ground-model tree $T$ and is not changed by passing to further forcing extensions of size below the relevant completeness bound.

\begin{corollary}[Small generic absoluteness for a fixed weakly homogeneous tree]\label{cor:small-generic-absoluteness-fixed-tree}
Let $M$ be a transitive model containing a $\kappa$-weakly homogeneous tree $T$ and a witnessing system $\bar\mu$.  Let $S=\operatorname{ms}(\bar\mu,\Theta)$ for sufficiently large $\Theta$.  If $G$ is generic over $M$ for a forcing of $M$-cardinality $<\kappa$, then in $M[G]$,
\[
   p[T]=\omega^\omega\setminus p[S].
\]
In particular, if a projective formula $\varphi(x)$ is represented over $M$ by the complementing pair $(T,S)$, then for every real $x\in M[G]$,
\[
   M[G]\models \varphi(x)
   \quad\Longleftrightarrow\quad
   x\in p[T]^{M[G]} .
\]
\end{corollary}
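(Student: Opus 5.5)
The first display is essentially Theorem~\ref{thm:ms-weak-homogeneous-ub} relativized to $M$. The second display then follows from how the phrase ``$\varphi$ is represented over $M$ by $(T,S)$'' is to be understood.

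\emph{Step 1 (first display).} Apply Theorem~\ref{thm:ms-weak-homogeneous-ub} inside $M$. This needs only that $M$ satisfies enough of $\ZFC$ for the Martin--Solovay construction: $\bar\mu\in M$, and $\Theta>|T|^{+M}$ can be chosen in $M$. By that theorem, $M$ satisfies ``$T$ and $S=\operatorname{ms}(\bar\mu,\Theta)$ are $\kappa$-absolute complements.'' The assertion ``for every forcing of size $<\kappa$, $\Vdash p[\check T]=\omega^\omega\setminus p[\check S]$'' is a first-order statement about forcing relations computed in $M$. So for $\mathbb P\in M$ with $|\mathbb P|^M<\kappa$ and $G$ generic over $M$, the forcing theorem gives $p[T]^{M[G]}=(\omega^\omega)^{M[G]}\setminus p[S]^{M[G]}$.

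For safety I would also re-derive the two inclusions directly in $M[G]$. Disjointness $p[T]\cap p[S]=\emptyset$ is $\Pi^1_1$ in the trees, and such ill-foundedness facts are absolute between $M$ and $M[G]$. For covering, take $x\in M[G]$ with $x\notin p[T]$. The measures in $\bar\mu$ lift to $M[G]$, since $|\mathbb P|<\kappa$ and the measures are $\kappa$-complete. The Martin--Solovay rank argument then produces a branch of $S_x$ in $M[G]$.

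\emph{Step 2 (second display).} By definition, $\varphi$ being represented over $M$ by $(T,S)$ should mean that $p[T]$ computes $\{x\mid\varphi(x)\}$ in $M$ and in every $<\kappa$ extension of $M$. I would make this precise by induction on the projective complexity of $\varphi$, which is how the representing trees are built in the first place. The base case is a $\Pi^1_1$ or $\Sigma^1_1$ formula, represented by the Shoenfield tree, and it is absolute. For the inductive step, suppose $\varphi(x)\equiv\exists y\,\psi(x,y)$ with $\neg\psi$ represented by a weakly homogeneous tree $T'$. Then the complement tree $S'$ represents $\psi$ in $M[G]$ by Step 1. Projecting $S'$ along $y$ gives a tree for $\varphi$, and this tree is homogenized via the Martin--Solovay/Woodin analysis (assumed in the representation hypothesis). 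Step 1 for that tree then gives $M[G]\models\varphi(x)\iff x\in p[T]^{M[G]}$.

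\emph{Main obstacle.} I expect the main obstacle to be the inductive step: namely, that weak homogeneity is preserved to the next level inside $M$. This requires the Woodin cardinal of $M_{n-1}$ and the fact that the forcing is small relative to it. I would treat it as part of the hypothesis ``represented over $M$'', and verify only that Step 1 applies uniformly at each level.
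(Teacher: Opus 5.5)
Your proposal follows the paper's proof. The first display is Theorem~\ref{thm:ms-weak-homogeneous-ub} applied inside $M$; your forcing-theorem transfer and your direct re-derivation of the two inclusions only spell this out. The second display comes, as in the paper, from reading ``represented'' so that $p[T]$ computes $\varphi$, together with the fact that $(T,S)$ remains a complementing pair in $M[G]$.

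Your induction on complexity, with the Martin--Steel homogenization step built into the hypothesis, is extra justification the paper does not give. If you keep it, one step is missing. Identifying the constructed tree $\tilde T$ with the given $T$ in $M[G]$ needs the absoluteness, as well-foundedness statements, of $p[T]\cap p[\tilde S]=\emptyset$ and $p[\tilde T]\cap p[S]=\emptyset$, where $\tilde S$ is the Martin--Solovay complement of $\tilde T$.
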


\begin{proof}
The first assertion is Theorem~\ref{thm:ms-weak-homogeneous-ub}.  For the final assertion, the projective formula is represented in $M$ by the pair $(T,S)$, and the pair remains an absolute complementing pair in $M[G]$.  Thus exactly one of the trees has a branch over the real $x$ in $M[G]$, and this is the same truth value assigned by the projective definition in the small generic extension.
\end{proof}

\paragraph{Remark.}\label{rem:localized-smallness-of-absoluteness}
Later forcing notions need not be regarded globally as small over the relevant mouse.  What is used is the local consequence: every real and every name appearing in the coding construction is read in a bounded regular subforcing, and the relevant subforcing has size below the completeness bound of the homogeneity system.  The assertion above is therefore applied in the intermediate model generated by that local support.

\subsection{Defining the canonical trees $T_n$ inside $M_{n-1}$}\label{subsec:canonical-trees-tn}

We now choose the particular trees to which the preceding subsection will be applied.  Fix $n\geq 2$.  Work in $M_{n-1}$, and let
\[
   \delta^n_0<\delta^n_1<\cdots<\delta^n_{n-2}
\]
be the Woodin cardinals of $M_{n-1}$.  We write simply $\delta_0$ for the least of them when $n$ is fixed.

Let $U_{n+1}\subseteq\omega^\omega$ be the standard universal $\boldsymbol\Sigma^1_{n+1}$ set, with the first real coordinate coding the index and the remaining coordinates coding the parameter and the argument.  We fix this universal set once and for all using the usual Moschovakis parametrization conventions for projective pointclasses~\cite{Moschovakis}.  Thus every boldface $\boldsymbol\Sigma^1_{n+1}$ set is a section of $U_{n+1}$.

The following theorem is the precise object needed later.  It is a standard consequence of the Martin--Steel analysis of projective scales, the Martin--Solovay construction, and Steel's tree-production argument for mice with finitely many Woodin cardinals~\cite{martin2008tree,Steel2,steel2009derived}.

\begin{theorem}[Canonical weakly homogeneous tree for $\boldsymbol\Sigma^1_{n+1}$]\label{thm:canonical-tn-existence}
In $M_{n-1}$ there is a tree
\[
   T_n\subseteq (\omega\times\lambda_n)^{<\omega}
\]
for some ordinal $\lambda_n$ of $M_{n-1}$ such that
\[
   p[T_n]=U_{n+1}
\]
inside $M_{n-1}$.  Moreover, for every $\kappa<\delta_0$, the tree $T_n$ is $\kappa$-weakly homogeneous in $M_{n-1}$.
\end{theorem}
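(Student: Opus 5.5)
We argue inside $M_{n-1}$, which has the $n-1$ Woodin cardinals $\delta_0<\cdots<\delta_{n-2}$, each a limit of measurable cardinals. The plan is to climb the projective hierarchy, gaining one quantifier at a time. At each step we take a tree that is homogeneous below $\delta_i$ for a $\Pi$-type pointclass and pass, via Martin--Solovay and the Martin--Steel theorem, to a weakly homogeneous tree for the next $\Sigma$-type pointclass, while staying below $\delta_0$ for the final completeness bound.

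\emph{Base step.} Take the Shoenfield tree $T_1$ on $\omega\times\omega_1$ for the universal $\boldsymbol\Sigma^1_2$ set. Its complement is $\boldsymbol\Pi^1_2$, hence $\boldsymbol\Pi^1_1$ in a measurable parameter. By Martin's theorem, since there are measurables below $\delta_0$, every $\boldsymbol\Pi^1_1$ set is $\kappa$-homogeneously Suslin for each $\kappa$ below a measurable. Using measurables cofinal in $\delta_0$, we obtain a $\kappa$-homogeneous tree for the universal $\boldsymbol\Pi^1_1$ set for every $\kappa<\delta_0$.

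\emph{Inductive step.} Suppose $A\subseteq(\omega^\omega)^2$ is $\boldsymbol\Pi^1_k$ and is $\gamma$-homogeneously Suslin for all $\gamma$ below the next Woodin $\delta_{j}$. Then its projection, which is $\boldsymbol\Sigma^1_{k+1}$, is $\kappa$-weakly homogeneously Suslin for every $\kappa<\delta_{j}$, simply by projecting the homogeneity measures and closing under projections (Definition~\ref{def:weakly-homogeneous-tree}). Theorem~\ref{thm:ms-weak-homogeneous-ub} then gives the Martin--Solovay complement tree. Next apply the Martin--Steel theorem \cite{martin2008tree}: if $\delta$ is Woodin and $p[T]$ is $\delta^+$-weakly homogeneous, then its complement is $\kappa$-homogeneously Suslin for all $\kappa<\delta$. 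This makes the $\boldsymbol\Pi^1_{k+1}$ sets homogeneously Suslin below $\delta_{j-1}$. Each passage through a Woodin costs one Woodin cardinal, and we descend from $\delta_{n-2}$ toward $\delta_0$.

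\emph{Accounting.} We must count the quantifiers so that after using the $n-1$ Woodins we reach $\boldsymbol\Pi^1_{n}$ homogeneous below $\delta_0$. Projecting once more then gives the universal $\boldsymbol\Sigma^1_{n+1}$ set as $p[T_n]$ with $T_n$ $\kappa$-weakly homogeneous for every $\kappa<\delta_0$. The parameter in $U_{n+1}$ is absorbed into a coordinate of the tree, and we set $\lambda_n$ equal to the sup of the ordinals used. The identity $p[T_n]=U_{n+1}$ in $M_{n-1}$ follows because each step produces trees whose projections are exactly the intended sets in $M_{n-1}$: the Martin--Solovay complements are correct by Theorem~\ref{thm:ms-weak-homogeneous-ub}, and homogeneity trees project correctly by construction.

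\emph{Main obstacle.} The hard part is this bookkeeping. With $n-1$ Woodins plus the measurable base, we get exactly $\boldsymbol\Pi^1_{n}$ homogeneity below $\delta_0$, since each Woodin raises the level by one, starting from the $\boldsymbol\Pi^1_1$ base. We must also ensure the homogeneity bounds line up, using $\delta_j^+$-weak homogeneity to apply Martin--Steel at $\delta_{j}$. Here we would use that weak homogeneity below a Woodin holds for all $\kappa<\delta_{j+1}$, which exceeds $\delta_j^+$. We would first try to verify this on the case $n=2$, where a single Woodin $\delta_0$ gives $\boldsymbol\Pi^1_2$ homogeneous below $\delta_0$ and hence $\boldsymbol\Sigma^1_3$ weakly homogeneous. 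The general case then follows by induction.
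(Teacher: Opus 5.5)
Your route is the paper's route: Martin's theorem for $\boldsymbol\Pi^1_1$, then alternating projection with Martin--Steel, then one final projection. It breaks at the accounting step you yourself flag, and it is off by exactly one level.

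To apply Martin--Steel at a Woodin cardinal $\delta_j$ you need $\delta_j^+$-(weak) homogeneity, i.e.\ completeness coming from \emph{above} $\delta_j$. For $j<n-2$ this comes from the measurables cofinal in $\delta_{j+1}$. At the top Woodin $\delta_{n-2}$ nothing supplies it. $M_{n-1}$, the minimal mouse with $n-1$ Woodin cardinals, has no measurable cardinal above $\delta_{n-2}$. If it had one, Martin--Steel inside $M_{n-1}$ would give $\boldsymbol\Pi^1_n$-determinacy there, hence measurability of all $\boldsymbol\Sigma^1_{n+1}$ sets, contradicting the $\Delta^1_{n+1}$ well-order of its reals.

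So the best available base is $\boldsymbol\Pi^1_1$ homogeneously Suslin below $\delta_{n-2}$; your base as written, below $\delta_0$, is weaker still and feeds no Woodin at all. Your inductive step trades the bound $\delta_j$ for $\delta_{j-1}$, so it can run only $n-2$ times. You end with $\boldsymbol\Pi^1_{n-1}$ homogeneously Suslin and $\boldsymbol\Sigma^1_n$ weakly homogeneously Suslin below $\delta_0$, not $\boldsymbol\Sigma^1_{n+1}$. Your test case $n=2$ shows this: $\boldsymbol\Pi^1_2$ homogeneous below $\delta_0$ in $M_1$ would need $\boldsymbol\Pi^1_1$ to be $\delta_0^+$-homogeneously Suslin, i.e.\ a measurable above the Woodin of $M_1$. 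The count ``$k$ Woodins give $\boldsymbol\Pi^1_{k+1}$'' is the Martin--Steel count \emph{with a measurable above}, and that measurable is exactly what $M_{n-1}$ lacks.

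This cannot be repaired, because the statement itself is false. The paper's proof makes the same jump when it appeals to ``the extender strength available in $M_{n-1}$''. Suppose $T_n\in M_{n-1}$ with $p[T_n]=U_{n+1}$ were $\kappa$-weakly homogeneous for $\kappa=\aleph_2^{M_{n-1}}<\delta_0$. By Theorem~\ref{thm:ms-weak-homogeneous-ub}, $T_n$ and its Martin--Solovay tree stay complements after every forcing of size $\aleph_1=2^{\aleph_0}$ (GCH holds in $M_{n-1}$). In particular they stay complements after random and Cohen forcing, and so do their continuous pullbacks representing arbitrary $\boldsymbol\Sigma^1_{n+1}$ sets. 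The standard countable-hull argument (as in Feng--Magidor--Woodin) then makes every $\boldsymbol\Sigma^1_{n+1}$ set of $M_{n-1}$ Lebesgue measurable with the Baire property. But $M_{n-1}$ has a $\Delta^1_{n+1}$ well-order of its reals of type $\omega_1$, the instance $n-1$ of Fact~\ref{fact:mn-projective-wellorder}. By Fubini this is a non-measurable $\Delta^1_{n+1}$ subset of the plane. For $n=2$ the claim would already make every $\boldsymbol\Sigma^1_3$ set measurable in $M_1$.

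What your first $n-2$ steps do prove is the following. $\boldsymbol\Sigma^1_n$ is $\kappa$-weakly homogeneously Suslin in $M_{n-1}$ for every $\kappa<\delta_0$. The Martin--Solovay tree then represents $\boldsymbol\Pi^1_n$, and projecting it gives a tree $T_n$ with $p[T_n]=U_{n+1}$ that stays correct in small generic extensions; for $n=2$ this is the classical Martin--Solovay tree $T_2$ used by Hjorth. That tree is generically correct, but it is not weakly homogeneous and has no absolute complement.
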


\begin{proof}
We recall the construction, since the parity shift is a common source of confusion.  One begins with the projective pointclass immediately below $\boldsymbol\Sigma^1_{n+1}$.  By the periodicity theorems and the Martin--Steel scale analysis, the relevant universal set at that lower level has a scale whose associated tree is homogeneous in $M_{n-1}$.  This is the higher analogue of the familiar Martin--Solovay representation of the complete $\boldsymbol\Sigma^1_3$ set by the tree $T_2$ over $M_1$.

If the lower-level universal set occurs with the correct polarity, the tree of the scale already gives the required homogeneous representation.  If the polarity is the complementary one, one applies the Martin--Solovay construction to the homogeneity system.  The Martin--Solovay tree represents the complement by searching for coherent descending ordinal ranks in the corresponding ultrapowers.  This is the step which accounts for the odd/even alternation in the projective hierarchy.

Finally, the passage from the lower-level matrix to the universal $\boldsymbol\Sigma^1_{n+1}$ set is an existential real projection.  Homogeneous representations are stable under this operation in the weak sense: the additional real witness is absorbed into the weak choice of a countably complete tower.  Equivalently, the projection of a homogeneously Suslin representation is weakly homogeneously Suslin.  Thus one obtains a tree $T_n$ with $p[T_n]=U_{n+1}$ and with $\kappa$-complete weak homogeneity systems for every $\kappa<\delta_0$.

All objects used in this construction are chosen inside $M_{n-1}$.  The completeness of the systems below the least Woodin follows from the extender strength available in $M_{n-1}$ and the standard Steel tree-production argument.
\end{proof}

For definiteness, we make the choice canonical.

\begin{definition}[The canonical tree $T_n$]\label{def:canonical-tn}
For $n\geq 2$, $T_n$ denotes the $<_{M_{n-1}}$-least tree $T$ for which $M_{n-1}$ verifies the conclusion of Theorem~\ref{thm:canonical-tn-existence}.  Along with $T_n$ we fix the $<_{M_{n-1}}$-least coherent choice of weak homogeneity systems
\[
   \bar\mu^n_\kappa
   \qquad (\kappa<\delta^n_0)
\]
which witness that $T_n$ is $\kappa$-weakly homogeneous.  We also fix, for sufficiently large $\Theta_n$, the Martin--Solovay complement
\[
   S_n=\operatorname{ms}(\bar\mu^n_\kappa,\Theta_n)
\]
whenever a completeness bound $\kappa$ has been specified.  The value of $S_n$ may depend on the chosen bound $\kappa$, but this will never matter: in each application we choose $\kappa$ above the size of the relevant forcing.
\end{definition}

\begin{lemma}[Absoluteness of the canonical $T_n$ representation]\label{lem:tn-small-generic-absoluteness}
Let $n\geq 2$, let $\mathbb P\in M_{n-1}$ have $M_{n-1}$-cardinality $<\kappa<\delta^n_0$, and let $G\subseteq\mathbb P$ be $M_{n-1}$-generic.  Then in $M_{n-1}[G]$ the ground-model tree $T_n$ and the corresponding Martin--Solovay complement $S_n$ are complements.  Consequently, for every real $x\in M_{n-1}[G]$,
\[
   M_{n-1}[G]\models x\in U_{n+1}
   \quad\Longleftrightarrow\quad
   x\in p[T_n]^{M_{n-1}[G]} .
\]
The same equivalence remains true in all further forcing extensions of $M_{n-1}[G]$ by forcing notions of size below the remaining completeness bound.
\end{lemma}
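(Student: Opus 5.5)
The plan is to reduce Lemma~\ref{lem:tn-small-generic-absoluteness} to Corollary~\ref{cor:small-generic-absoluteness-fixed-tree}, applied with $M=M_{n-1}$, $T=T_n$, the canonical system $\bar\mu^n_\kappa$ of Definition~\ref{def:canonical-tn}, and $S=S_n=\operatorname{ms}(\bar\mu^n_\kappa,\Theta_n)$, where $\kappa$ is the bound fixed in the hypothesis, so that $|\mathbb P|^{M_{n-1}}<\kappa<\delta^n_0$. By Theorem~\ref{thm:canonical-tn-existence}, $T_n$ is $\kappa$-weakly homogeneous in $M_{n-1}$ via $\bar\mu^n_\kappa$. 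Theorem~\ref{thm:ms-weak-homogeneous-ub} then shows that $T_n$ and $S_n$ are $\kappa$-absolute complements. Since $\mathbb P$ has size $<\kappa$, this gives $p[T_n]=\omega^\omega\setminus p[S_n]$ in $M_{n-1}[G]$, which is the first assertion.

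For the displayed equivalence, I first note that in $M_{n-1}$ itself $p[T_n]=U_{n+1}$, by Theorem~\ref{thm:canonical-tn-existence}. I then need that the projective formula defining $U_{n+1}$ is still computed by $p[T_n]$ in $M_{n-1}[G]$. I would prove this by induction on the projective levels, following the construction of $T_n$. At the base, the lower-level homogeneous trees coming from the scale analysis are themselves $\kappa$-homogeneous for $\kappa<\delta^n_0$, so by Martin--Solovay they and their complements keep representing the correct $\Sigma$/$\Pi$ sets after small forcing. At each step, the existential real quantifier commutes with taking projections of trees, and this holds in any model. So if $(T',S')$ absolutely represents a $\Pi$-level set $A$ in $M_{n-1}[G]$, then the projection tree built from $T'$ represents $\exists^{\mathbb R}A$ in $M_{n-1}[G]$. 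That projection tree is exactly how $T_n$ arises. This yields $M_{n-1}[G]\models x\in U_{n+1}$ iff $x\in p[T_n]^{M_{n-1}[G]}$, which is the ``in particular'' clause of Corollary~\ref{cor:small-generic-absoluteness-fixed-tree}. A slicker route would be to quote that clause directly, after checking that the pair $(T_n,S_n)$ ``represents'' the $\Sigma^1_{n+1}$ formula in the corollary's sense. The inductive check above is what justifies that hypothesis.

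For the last sentence of the lemma, let $H$ be generic over $M_{n-1}[G]$ for a forcing of size below the remaining bound. Then $G*H$ is generic over $M_{n-1}$ for a two-step iteration of size $<\kappa'$ for some $\kappa'<\delta^n_0$, still below the completeness bound. Rerunning the argument with $\kappa'$ in place of $\kappa$ gives the claim. Definition~\ref{def:canonical-tn} says the choice of $S_n$ depends on $\kappa$ only harmlessly, and $p[S_n]$ is the complement of $p[T_n]$ for either choice, so the conclusion does not depend on which $\kappa$ was used.

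The main obstacle is the induction through the levels. One must ensure that every intermediate tree in the Steel/Martin--Solovay construction is homogeneous with completeness above $|\mathbb P|$, and not merely weakly homogeneous. Weak homogeneity alone only propagates after one more Martin--Solovay step. I would handle this by taking $\kappa$ below $\delta^n_0$ and using that the homogeneity measures of the scale trees are $\delta^n_0$-complete in $M_{n-1}$ (they derive from extenders below the least Woodin). Lévy--Solovay then lifts each measure to $M_{n-1}[G]$ with the same completeness, which keeps every level absolute.
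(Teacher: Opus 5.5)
Your derivation of the complementation clause is exactly the paper's: weak homogeneity of $T_n$ via $\bar\mu^n_\kappa$, then Theorem~\ref{thm:ms-weak-homogeneous-ub} because $|\mathbb P|^{M_{n-1}}<\kappa$.

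For the displayed equivalence you take a genuinely different and more careful route. The paper only says that $T_n$ represents $U_{n+1}$ in $M_{n-1}$ and that $(T_n,S_n)$ stays complementary in $M_{n-1}[G]$, and then reads off the equivalence. That is your ``slicker route'' through the last clause of Corollary~\ref{cor:small-generic-absoluteness-fixed-tree}, without the check you flag. The check is really needed, because complementation alone does not identify $p[T_n]^{M_{n-1}[G]}$ with the reinterpreted definition. For example, in $L$ the $\Sigma^1_2$ set $\mathbb R\cap L$ is the projection of the full tree, whose absolute complement is the empty tree. Yet in $L[c]$, for $c$ Cohen, the full tree projects to all reals while the formula picks out only $\mathbb R\cap L$. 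Your induction supplies the missing step. It uses absoluteness of well-foundedness at the bottom, Martin--Solovay complements of weakly homogeneous $\Sigma$-level trees, and the model-independent commutation of $\exists^{\mathbb R}$ with projecting a tree. For the last sentence of the lemma, rerunning from $M_{n-1}$ with a larger $\kappa'<\delta^n_0$ for $\mathbb P*\dot{\mathbb Q}$ is also cleaner than the paper's appeal to ``the lifted homogeneity system.''

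Some corrections.

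(i) The homogeneity measures are not $\delta^n_0$-complete. Measures derived from extenders with critical point below $\delta^n_0$ have completeness below $\delta^n_0$. What you need, and what is available, is $\kappa$-completeness for some chosen $\kappa$ with $|\mathbb P|<\kappa<\delta^n_0$.

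(ii) Definition~\ref{def:canonical-tn} makes $T_n$ the $<_{M_{n-1}}$-least tree satisfying the conclusion of Theorem~\ref{thm:canonical-tn-existence}. It is not the constructed projection tree $T'$, so you need a transfer step. Suppose both have absolute complements, $S_n$ and $S'$, and the same projection in $M_{n-1}$. Then $p[T_n]\cap p[S']=\emptyset=p[T']\cap p[S_n]$ holds in $M_{n-1}$. These are well-foundedness statements, so they persist to $M_{n-1}[G]$, and there they yield $p[T_n]=p[T']$. The same transfer is what lets you pass between a homogeneous tree and a Martin--Solovay tree for the same set at each intermediate $\Pi$-level.

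(iii) Scope your ``main obstacle'' carefully. The induction needs homogeneity only through the $\Pi^1_{n-1}$-level, so that $\boldsymbol\Sigma^1_n$ is weakly homogeneously Suslin. In $M_{n-1}$ it cannot go further. An $\omega_1$-weakly homogeneous tree for $U_{n+1}$ would make $U_{n+1}$ universally Baire for Cohen forcing, by Theorem~\ref{thm:ms-weak-homogeneous-ub}. That in turn would give every $\boldsymbol\Sigma^1_{n+1}$ set of $M_{n-1}$ the Baire property there. But the $\Delta^1_{n+1}$ well-order of $\mathbb R\cap M_{n-1}$ (Fact~\ref{fact:mn-projective-wellorder} with $n-1$ in place of $n$) is such a set, and it lacks the Baire property by Kuratowski--Ulam.

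So the complementation clause, and the transfer in (ii), are only as good as Theorem~\ref{thm:canonical-tn-existence}; this holds for your proof exactly as for the paper's. Your inductive argument, by contrast, shows outright that the constructed projection tree computes $\Sigma^1_{n+1}$ truth correctly in small extensions, with no weak homogeneity needed at the top.
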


\begin{proof}
By Definition~\ref{def:canonical-tn}, $T_n$ is $\kappa$-weakly homogeneous in $M_{n-1}$ via $\bar\mu^n_\kappa$.  Theorem~\ref{thm:ms-weak-homogeneous-ub} gives a Martin--Solovay complement $S_n$ such that $T_n$ and $S_n$ are $\kappa$-absolute complements.  Since $|\mathbb P|^{M_{n-1}}<\kappa$, the complementing relation holds in $M_{n-1}[G]$.

The tree $T_n$ represents the universal $\boldsymbol\Sigma^1_{n+1}$ set in the ground model, and the complementing pair remains absolute in the extension.  Therefore a real $x$ in the extension satisfies the projective universal formula exactly when the $T_n$-section above $x$ is ill-founded.  The final sentence is the same argument applied once more to the lifted homogeneity system.
\end{proof}

\begin{corollary}[The case used in the main construction]\label{cor:t2-sigma13-absoluteness}
In $M_1$ there is a canonical tree $T_2$ such that
\[
   p[T_2]=U_3,
\]
where $U_3$ is the universal $\boldsymbol\Sigma^1_3$ set.  If $\delta$ is the Woodin cardinal of $M_1$, then for every $\kappa<\delta$, $T_2$ is $\kappa$-weakly homogeneous.  Hence the $T_2$ representation of $\boldsymbol\Sigma^1_3$ truth is absolute to all forcing extensions of $M_1$ obtained by forcing of size $<\kappa$, and locally to all later intermediate extensions whose relevant support has size $<\kappa$.
\end{corollary}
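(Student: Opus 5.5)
This corollary is the case $n=2$ of Theorem~\ref{thm:canonical-tn-existence}, Definition~\ref{def:canonical-tn} and Lemma~\ref{lem:tn-small-generic-absoluteness}, so the plan is to specialize and then check that the local clause follows from the same argument. First I would take $n=2$ in Theorem~\ref{thm:canonical-tn-existence}. Here $M_{n-1}=M_1$, the list of Woodin cardinals $\delta^2_0$ has one element, namely the Woodin cardinal $\delta$ of $M_1$, and $U_{n+1}=U_3$ is the universal $\boldsymbol\Sigma^1_3$ set. Definition~\ref{def:canonical-tn} then fixes $T_2$ as the $<_{M_1}$-least tree with $p[T_2]=U_3$ in $M_1$ which is $\kappa$-weakly homogeneous for every $\kappa<\delta$. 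This gives the first two assertions. At this step I would only check the parity bookkeeping: the homogeneous tree comes from the $\boldsymbol\Pi^1_2$ level, where the Shoenfield/Martin scale is homogeneous using the measurables below $\delta$ in $M_1$, and one real projection produces the weakly homogeneous representation of $\boldsymbol\Sigma^1_3$.

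Second, for the absoluteness clause I would fix $\kappa<\delta$ and a forcing $\mathbb P\in M_1$ with $|\mathbb P|^{M_1}<\kappa$. Lemma~\ref{lem:tn-small-generic-absoluteness} with $n=2$ then says that $T_2$ and the Martin--Solovay complement $S_2=\operatorname{ms}(\bar\mu^2_\kappa,\Theta_2)$ remain complements in $M_1[G]$. It follows that $M_1[G]\models x\in U_3$ iff $x\in p[T_2]^{M_1[G]}$. The same lemma also covers further extensions, because Theorem~\ref{thm:ms-weak-homogeneous-ub} lifts the homogeneity system to the extension.

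Third, the local statement about intermediate extensions is the step I expect to need care. Given a later model $M_1[H]$ and a real $x$ in it, the plan is to use Remark~\ref{rem:localized-smallness-of-absoluteness}: choose a regular subforcing $\mathbb Q\lessdot\mathbb P$ of size $<\kappa$ that contains the name for $x$. Apply the preceding step to $M_1[H\cap\mathbb Q]$, and then to the quotient, using the lifted measures; for this the quotient must also be small relative to the remaining completeness bound, or else one argues inside $M_1[H\cap\mathbb Q]$ only. The main obstacle is that the full iteration need not be small. What I would try first is to observe that statements of the form $x\in p[T_2]$ or $x\in p[S_2]$ are upward absolute, since a branch persists. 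So it is enough that exactly one of the two holds already in $M_1[H\cap\mathbb Q]$. Disjointness in the larger model then follows because $p[T_2]\cap p[S_2]=\emptyset$ is a statement about the tree $T_2\times S_2$ being well-founded, which is absolute by the absoluteness of well-foundedness.
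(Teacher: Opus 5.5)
Your first two steps are exactly the paper's proof. The paper simply cites Lemma~\ref{lem:tn-small-generic-absoluteness} at $n=2$, with $T_2$ taken from Theorem~\ref{thm:canonical-tn-existence} and Definition~\ref{def:canonical-tn}. The gap is in the one place where you add content, the parity check. It cannot be repaired, because the weak-homogeneity clause is false in $M_1$.

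Measurables below $\delta$ make $\boldsymbol\Pi^1_1$ sets homogeneously Suslin (Martin), and hence make $\boldsymbol\Sigma^1_2$ sets $<\delta$-weakly homogeneously Suslin. Homogeneity of $\boldsymbol\Pi^1_2$ sets would need Martin--Steel at $\delta$, that is, $\delta^+$-weak homogeneity of $\boldsymbol\Sigma^1_2$, which requires measurables above $\delta$. $M_1$ has none; indeed $\boldsymbol\Pi^1_2$-determinacy fails in $M_1$, while homogeneously Suslin sets are determined.

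More decisively, suppose a tree projecting to $U_3$ were $\kappa$-weakly homogeneous in $M_1$ for some uncountable $\kappa$. Then Theorem~\ref{thm:ms-weak-homogeneous-ub} would give it a complement surviving Cohen forcing. The standard countable-hull argument would then give every $\boldsymbol\Sigma^1_3$ set of $M_1$ the Baire property. But the $\Delta^1_3$ well-order of $\mathbb R\cap M_1$ has order type $\omega_1$, so by Kuratowski--Ulam it is a $\Delta^1_3$ subset of the plane without the Baire property. So neither the weak homogeneity in your first step nor the complementing pair $(T_2,S_2)$ in your second step exists. The paper's one-line proof inherits the same error from Theorem~\ref{thm:canonical-tn-existence}.

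What is true sits one level lower, and it is all that the final absoluteness sentence (and Hjorth's argument) needs:
\begin{itemize}
\item For each $\kappa<\delta$ there is a $\kappa$-weakly homogeneous tree for the universal $\boldsymbol\Sigma^1_2$ set.
\item Its Martin--Solovay tree represents the complementary $\boldsymbol\Pi^1_2$ set in every extension by forcing of size $<\kappa$.
\item Absorbing one more real coordinate into that tree yields a tree whose projection is $U_3$ in each such extension, since a $\Sigma^1_3$ witness $y$ for $x$ is exactly a $y$ with $(x,y)$ in the $\boldsymbol\Pi^1_2$ set.
\end{itemize}
The corollary should be restated and proved in that form.

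As for your third step, your worry that the full iteration need not be small does not apply here. The paper treats even $W^\ast$ as a small generic extension, since its forcing has size $\aleph_2^{M_1}<\delta$. So the local clause is just your second step applied to $M_1[H\cap\mathbb Q]$. Your branch-persistence argument only shows that two complementary trees stay complementary on reals of small subextensions. To tie this to $\Sigma^1_3$ truth in $M_1[H]$, you would also need that every $\Sigma^1_3$ witness lies in a small subextension containing $x$.
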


\begin{proof}
This is Lemma~\ref{lem:tn-small-generic-absoluteness} with $n=2$.  The tree $T_2$ is the Martin--Solovay tree in the form used by Hjorth and Solovay~\cite{martin1969basis,Hjorth}: it represents the universal $\boldsymbol\Sigma^1_3$ set and carries weak homogeneity systems below the Woodin cardinal of $M_1$.
\end{proof}

\paragraph{Remark (how $T_n$ will be used).}\label{rem:tn-use-later}
The tree $T_n$ is not an additional coding device.  Its role is to make the
lower projective truth predicates robust in small generic extensions.  In the
proof of Lebesgue measurability and the Baire property, the only instance
needed is $T_2$: after a real parameter $a$ appears, the bookkeeping adds
Random and Cohen stages over bases containing $L[T_2,a]$, and amoeba stages over
the same bases cover the null and meager Borel sets coded there.  The weak
homogeneity and Martin--Solovay complementing pair allow the
$\boldsymbol\Sigma^1_3$ definition to be read correctly in those generic
extensions.

\section{The $M_1$-ground model and the $\Sigma^1_4$ coding apparatus}
\label{sec:M1-large-continuum-coding}

We now describe the ground model used for the detailed $M_1$-case and the coding predicate.  The target value of the continuum in the final extension is $\aleph_2$, and the later bookkeeping iteration will have length $\omega_2$.  Since the preliminary branch forcing is c.c.c., this is the same $\omega_2$ as in $M_1$.

\subsection{The branch extension $W$}

Using the canonical $\diamondsuit$-sequence of $M_1|\omega_1$, fix once and for all the canonical $M_1$-definable sequence
\[
   \vec S=\langle S_\xi\mid \xi<\omega_1\rangle
\]
of independent Suslin trees.  Thus for every finite set $e\subseteq\omega_1$, the product
\[
   \prod_{\xi\in e}S_\xi
\]
is again a Suslin tree.  We identify each tree with a subset of $\omega_1$ by the fixed $M_1$-definable coding of countable normal trees.  This convention is used throughout the rest of the paper.

Let
\[
   Br=\prod_{\xi<\omega_1}^{\mathrm{fs}} S_\xi
\]
be the finite-support branch product, computed in $M_1$.   By independence of $\vec S$, every finite subproduct is
Suslin, and the usual $\Delta$-system argument shows that $Br$ is c.c.c.  If
$H\subseteq Br$ is $M_1$-generic, write
\[
   b_\xi=\bigcup\{p(\xi):p\in H,\xi\in\operatorname{dom}(p)\}
\]
for the cofinal branch through $S_\xi$ added by the $\xi$-th coordinate.  We set
\[
   W=M_1[H].
\]
Thus $W$ is just the extension obtained by adding a branch through every tree in the fixed sequence $\vec S$.  Since $Br$ has size $\omega_1$ and is c.c.c., $W$ preserves cardinals and satisfies $2^{\aleph_0}=\aleph_1$.  The later length-$\omega_2$ iteration will add $\aleph_2$ many reals and will be responsible for the final value $2^{\aleph_0}=\aleph_2$.

The point of having all branches present in $W$ is that a coding forcing may use selected branches from $H$ as parameters.  The point of using the finite-support product is that every real in a local subextension depends on only countably many branch coordinates.  This support feature will later allow us to compare the information decoded from a real with the branch coordinates which were actually used to produce that real.

\subsection{Cohen coding areas}

We fix the canonical $M_1$-definable almost disjoint family of reals
\[
   D=\langle d_\xi\mid \xi<\omega_1\rangle.
\]
It will be used at the last step of the coding, when a subset of $\omega_1$ is almost-disjointly coded into a real.

We also fix the $<_{M_1}$-least bijection
\[
   \rho: ({}^{<\omega_1}2)^{M_1}\longrightarrow \omega_1
\]
or, equivalently, the corresponding $M_1$-definable bijection between countable subsets of $\omega_1$ and $\omega_1$.  Let $\mathbb C_{M_1}$ be the $M_1$-version of the $\sigma$-closed $\omega_1$-Cohen forcing: conditions are countable binary sequences from $M_1$, ordered by end-extension.  If $g\subseteq\omega_1$ is $\mathbb C_{M_1}$-generic, then every proper initial segment $g\upharpoonright\alpha$ belongs to $M_1$, and we define the coding area determined by $g$ by
\[
   C_g=\{\rho(g\upharpoonright\alpha)\mid \alpha<\omega_1\}\subseteq\omega_1.
\]
We shall say that $C\subseteq\omega_1$ is an $M_1$-Cohen coding area if $C=C_g$ for some such $g$.  The standard fusion argument for $\mathbb C_{M_1}$ gives the two facts we need: each initial segment of $C_g$ is computed in the relevant $M_1$-initial segment, and coding areas coming from mutually generic $M_1$-Cohen subsets of $\omega_1$ are almost disjoint in that their intersection is countable.

\begin{definition}[Selected branch coordinates]\label{def:selected-branch-coordinates}
For \(C\subseteq\omega_1\) and \(u\in2^\omega\), define
\[
   \operatorname{Sel}(C,u)=
   \{\tau_u(\gamma,n)\mid \gamma\in C,\ n<\omega\},
\]
where
\[
   \tau_u(\gamma,n)=
   \begin{cases}
      \omega\cdot\gamma+2n, &\text{if } n\notin u,\\
      \omega\cdot\gamma+2n+1, &\text{if } n\in u.
   \end{cases}
\]
Thus \(\operatorname{Sel}(C,u)\) is the set of branch coordinates selected by
the area \(C\) when it writes the real \(u\).
\end{definition}

\begin{lemma}[Countably many coding areas]\label{lem:countably-many-coding-areas}
Let \(I\) be countable.  Let \((C_i:i\in I)\) be \(M_1\)-Cohen coding areas
coming from mutually generic reservoir coordinates, and let \(u_i\in2^\omega\)
for \(i\in I\).  Let \(B\in[\omega_1]^\omega\), and put
\[
   U=B\cup\bigcup_{i\in I}\operatorname{Sel}(C_i,u_i).
\]
Then the following hold.
\begin{enumerate}
\item If \(C\subseteq\omega_1\) has size \(\omega_1\) and
\[
   \forall i\in I\ (C\cap C_i\text{ is bounded in }\omega_1),
\]
then there are \(\omega_1\)-many \(\gamma\in C\) such that
\[
   \{\omega\cdot\gamma+k\mid k<\omega\}\cap U=\emptyset.
\]

\item If \(i_0\in I\), \(C\cap C_{i_0}\) is unbounded in \(\omega_1\), and
\(w\in2^\omega\) satisfies \(w\ne u_{i_0}\), then, for any
\(n<\omega\) with \(w(n)\ne u_{i_0}(n)\), there are \(\omega_1\)-many
\(\gamma\in C\cap C_{i_0}\) such that
\[
   \tau_w(\gamma,n)\notin U .
\]
\end{enumerate}
\end{lemma}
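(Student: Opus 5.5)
The proof is a counting argument on the blocks $[\omega\cdot\gamma,\omega\cdot\gamma+\omega)$, $\gamma<\omega_1$. These blocks are pairwise disjoint, and every ordinal below $\omega_1$ lies in exactly one of them, since $\omega\cdot\gamma+k$ determines $\gamma$ and $k$ uniquely. By Definition~\ref{def:selected-branch-coordinates}, $\operatorname{Sel}(C_i,u_i)$ meets the block of $\gamma$ only when $\gamma\in C_i$. In that case it meets the block in exactly the set
\[
   \{\tau_{u_i}(\gamma,m)\mid m<\omega\},
\]
which contains exactly one of $\omega\cdot\gamma+2m$ and $\omega\cdot\gamma+2m+1$ for each $m$. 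The countable set $B$ meets only countably many blocks. Let
\[
   \Gamma_B=\{\gamma<\omega_1\mid B\cap[\omega\cdot\gamma,\omega\cdot\gamma+\omega)\neq\emptyset\};
\]
this set is countable.

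For (1), take $\gamma\in C$ whose block meets $U$. Then either $\gamma\in\Gamma_B$, or $\gamma\in C\cap C_i$ for some $i\in I$. Each $C\cap C_i$ is bounded in $\omega_1$, hence countable. Since $I$ is countable, the set of bad $\gamma\in C$ is a countable union of countable sets, and therefore countable (working in $\mathsf{ZFC}$). As $|C|=\omega_1$, there remain $\omega_1$-many $\gamma\in C$ whose whole block avoids $U$.

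For (2), fix $n$ with $w(n)\neq u_{i_0}(n)$ and let $\gamma\in C\cap C_{i_0}$. Then $\tau_w(\gamma,n)$ and $\tau_{u_{i_0}}(\gamma,n)$ are the two distinct elements $\omega\cdot\gamma+2n$ and $\omega\cdot\gamma+2n+1$, in some order. Moreover, $\tau_{u_{i_0}}(\gamma,n)$ is the only element of $\operatorname{Sel}(C_{i_0},u_{i_0})$ among these two, because the $m$-th selected coordinate in the block has the form $\omega\cdot\gamma+2m$ or $\omega\cdot\gamma+2m+1$, and for $m\neq n$ both differ from $\omega\cdot\gamma+2n$ and $\omega\cdot\gamma+2n+1$. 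Hence $\tau_w(\gamma,n)\notin\operatorname{Sel}(C_{i_0},u_{i_0})$.

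It then suffices to discard those $\gamma$ with $\gamma\in\Gamma_B$, or with $\gamma\in C_{i_0}\cap C_i$ for some $i\neq i_0$. For $i\neq i_0$ the areas $C_i$ and $C_{i_0}$ come from mutually generic reservoir coordinates. By the fusion fact recorded just before Definition~\ref{def:selected-branch-coordinates}, $C_i\cap C_{i_0}$ is countable. Since $I$ is countable, the discarded set is countable. On the other hand, $C\cap C_{i_0}$ is unbounded in $\omega_1$ and so has size $\omega_1$. This leaves $\omega_1$-many $\gamma$ with $\tau_w(\gamma,n)\notin U$.

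The only step that is more than bookkeeping is the use of mutual genericity. If two different indices $i\neq i_0$ had overlapping areas on an uncountable set, then $\operatorname{Sel}(C_i,u_i)$ could contain $\tau_w(\gamma,n)$ whenever $u_i(n)=w(n)$. So the proof relies on the almost disjointness of the reservoir areas. I would state explicitly that distinct indices correspond to distinct, mutually generic reservoir coordinates, since that is where the hypothesis ``coming from mutually generic reservoir coordinates'' is used.
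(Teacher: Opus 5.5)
Your proof is correct and follows the paper's argument. The paper discards the same sets as you: the countable set $B^\ast$ of blocks meeting $B$, the overlaps $C\cap C_i$ in (1), and $C_{i_0}\cap C_i$ for $i\neq i_0$ in (2). It packages these as uniform bounds $\theta_1,\theta_0<\omega_1$ rather than as a countable union, and likewise notes that at a $C_{i_0}$-block $\tau_w(\gamma,n)$ is the unselected member of the pair.
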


\begin{proof}
For \(i\ne j\) in \(I\), the set \(C_i\cap C_j\) is bounded in \(\omega_1\).
Since \(I\) is countable, there is \(\theta_0<\omega_1\) such that
\[
   C_i\cap C_j\subseteq\theta_0
   \qquad(i\ne j\text{ in }I).
\]
Let
\[
   B^\ast=\{\gamma<\omega_1\mid
      \exists k<\omega\ (\omega\cdot\gamma+k\in B)\}.
\]
Then \(B^\ast\) is countable.

For (1), choose \(\theta_1<\omega_1\) such that
\[
   C\cap C_i\subseteq\theta_1
   \qquad(i\in I).
\]
Every \(\gamma\in C\setminus(\theta_1\cup B^\ast)\) has the required property:
it does not lie in any \(C_i\), and its \(\omega\)-block does not meet \(B\).

For (2), let \(n<\omega\) be such that \(w(n)\ne u_{i_0}(n)\).  If
\[
   \gamma\in (C\cap C_{i_0})\setminus(\theta_0\cup B^\ast),
\]
then \(\gamma\notin C_i\) for all \(i\ne i_0\), and the \(\omega\)-block of
\(\gamma\) does not meet \(B\).  At the area \(C_{i_0}\), the coordinate
selected by \(u_{i_0}\) at \(n\) is the other member of the pair
\[
   \{\omega\cdot\gamma+2n,\ \omega\cdot\gamma+2n+1\}.
\]
Hence \(\tau_w(\gamma,n)\notin \operatorname{Sel}(C_{i_0},u_{i_0})\), and it is
not in any other \(\operatorname{Sel}(C_i,u_i)\).  Thus
\(\tau_w(\gamma,n)\notin U\).  There are \(\omega_1\)-many such \(\gamma\).
\end{proof}

\subsection{Writing a real into the Suslin sequence}

Let $w\in2^\omega$ be a real in a generic extension of $W$.  Suppose first that $g$ is an $M_1$-Cohen subset of $\omega_1$ and put $C=C_g$.  For each $\gamma\in C$ we use the $\omega$-block
\[
   \langle S_{\omega\cdot\gamma+k}\mid k<\omega\rangle
\]
of the fixed sequence $\vec S$.  The bit $w(n)$ is represented by choosing the branch through exactly one of the two trees
\[
   S_{\omega\cdot\gamma+2n},\qquad S_{\omega\cdot\gamma+2n+1}.
\]
Equivalently, in the notation fixed above, the selected coordinate is
\(\tau_w(\gamma,n)\).  The branch used at the coordinate \((\gamma,n)\) is
\(b_{\tau_w(\gamma,n)}\), already present in \(W\).  Thus any inner model which can correctly compute the trees $\vec S$ and which contains, for all $\gamma\in C$ and $n<\omega$, the selected branches
\[
   \langle b_{\tau_w(\gamma,n)}\mid \gamma\in C,
      n<\omega\rangle
\]
can read the characteristic function of $w$ from the pattern
\begin{align*}
   n\in w
   &\Longleftrightarrow S_{\omega\cdot\gamma+2n+1}
      \text{ has an }\omega_1\text{-branch},\\
   n\notin w
   &\Longleftrightarrow S_{\omega\cdot\gamma+2n}
      \text{ has an }\omega_1\text{-branch},
\end{align*}
for every $\gamma\in C$.

To turn this branch pattern into a projective predicate, we do not code the branch data directly as a bare subset of $\omega_1$.  Instead we first package the relevant information into a transitive $\aleph_1$-sized model and then code that model.  Let
\[
   \mathcal B_{g,w}=\langle b_{\tau_w(\gamma,n)}
      \mid \gamma\in C_g,\ n<\omega\rangle
\]
be the indexed sequence of selected branches, coded as a set of triples
$(\gamma,n,b_{\tau_w(\gamma,n)})$.  Thus the domain of this sequence already determines $C_g$, but we shall keep $C_g$ as an explicit object for readability.

Choose a transitive model
\[
   \mathcal N=\mathcal N_{g,w}
\]
of size $\aleph_1$ such that
\begin{enumerate}
   \item $\mathcal N\models\ZFC^-+``\aleph_1\text{ exists}''$ and $\omega_1^{\mathcal N}=\omega_1$;
   \item $m_\infty=\mathcal J^{M_1}_{\omega_1}$ belongs to $\mathcal N$;
   \item $C_g$ and $\mathcal B_{g,w}$ belong to $\mathcal N$;
   \item using $m_\infty$, the model $\mathcal N$ computes the fixed sequence $\vec S$, the map $\rho$, and the almost disjoint family $D$, and it sees the branch pattern determined by $\mathcal B_{g,w}$ as coding the real $w$.
\end{enumerate}
Here $w$ is not used as a distinguished piece of the code.  It is recovered from the selected branch pattern, while $m_\infty$ supplies the definition of the Suslin sequence from which the pattern is read.

Fix a canonical code
\[
   X_{\mathcal N}=X_{g,w}\subseteq\omega_1
\]
for the structure $(\mathcal N,\in,m_\infty,C_g,\mathcal B_{g,w})$.  Concretely, choose the $<_{M_1}$-least enumeration of $\mathcal N$ of length $\omega_1$ whose first distinguished entries are $m_\infty$, $C_g$, and $\mathcal B_{g,w}$, and let $X_{\mathcal N}$ code the resulting well-founded extensional relation together with these distinguished constants.  Any model which decodes $X_{\mathcal N}$ therefore recovers the transitive collapse $\mathcal N$ and the three named objects.

We now apply David's reshaping trick to $X_{\mathcal N}$.  Choose an ordinal $\lambda$ of size $\aleph_1$ such that
\[
   L_\lambda[X_{\mathcal N}]\models \ZFC^-+``\aleph_1\text{ exists}''.
\]
Inside $L_\lambda[X_{\mathcal N}]$, take a continuous increasing sequence
\[
   \langle M_\xi\mid \xi<\omega_1\rangle
\]
of countable elementary submodels, and put
\[
   c_\xi=M_\xi\cap\omega_1,
   \qquad
   C=\{c_\xi\mid\xi<\omega_1\}.
\]
The reshaped set $Y=Y_{g,w}\subseteq\omega_1$ codes $X_{\mathcal N}$ on the odd ordinals and codes the club $C$ on the even ordinals.  More explicitly, the odd part of $Y$ is a fixed copy of $X_{\mathcal N}$, while the even part codes the increasing enumeration of $C$ by the usual interval pattern: before $c_0$ it codes a well-order of type $c_0$, between $c_\xi$ and $c_{\xi+1}$ it codes a well-order of type $c_{\xi+1}$, and it is empty on the remaining intervals.

Consequently, if $M$ is a countable transitive model of $\ZFC^-+``\aleph_1\text{ exists}''$ and $Y\cap\omega_1^M\in M$, then $M$ recognizes that $\omega_1^M\in C$.  Inside its version of $L[Y\cap\omega_1^M]$, it recovers $X_{\mathcal N}\cap\omega_1^M$ and hence a local transitive model $\bar{\mathcal N}$ of size $\aleph_1^M$.  This local model contains the corresponding initial segment $m$ of $M_1$, the local coding area, and the local selected branch sequence.  Since $m$ defines the local version of $\vec S$, the model $\bar{\mathcal N}$ can read the same branch pattern and verify the local statement that the pattern codes $w$.

Finally, over the model containing $Y$, force with the Jensen--Solovay almost disjoint coding forcing
\[
   \mathbb A_D(Y)
\]
relative to the fixed $M_1$-definable family $D$.  If $r_Y$ is the generic real, then
\[
   \alpha\in Y
   \quad\Longleftrightarrow\quad
   r_Y\cap d_\alpha\text{ is finite}
\]
for every $\alpha<\omega_1$, and this equivalence remains absolute to all outer models with the same interpretation of the initial segment of $D$.

For a real $w$ we denote by
\[
   \operatorname{Code}(w)
\]
the two-step forcing which first adds an $M_1$-Cohen subset $g$ of $\omega_1$ and then almost-disjointly codes the reshaped set $Y_{g,w}$ into a real:
\[
   \operatorname{Code}(w)=
   \mathbb C_{M_1}*\dot{\mathbb A}_D(\dot Y_{\dot g,w}).
\]
When $w$ is a name over an initial segment of the later iteration, the same formula is read as the corresponding name for this two-step forcing.

Later, once hybrid presentations have been introduced, we use the same notation
\(\operatorname{Code}(\dot u)\) for the hybrid coding operation associated with
this two-step forcing.  Thus the \(\mathbb C_{M_1}\)-factor is inserted as a
fresh reservoir product coordinate, while the almost-disjoint forcing
\(\dot{\mathbb A}_D(\dot Y_{\dot g,\dot u})\) is inserted as the corresponding
real-adding coordinate.  In other words, \(\operatorname{Code}(\dot u)\) denotes
an ordinary two-step forcing in the local description above, but denotes this
product/iteration insertion when it occurs inside a hybrid presentation.

\subsection{The predicates $\operatorname{LocalCode}$, $\Psi$, and $\Phi$}

We isolate the projective content of the preceding construction.  The notation
\[
   M\models\operatorname{LocalCode}(Y,w,m)
\]
will be used for the following first-order assertion over a transitive model $M$ of $\ZFC^-+``\aleph_1\text{ exists}''$, with $m\in M$ as a displayed parameter.  The set $Y\subseteq\omega_1^M$ is a reshaped David code: its odd part decodes a set $X_{\mathcal N}\subseteq\omega_1^M$, its even part supplies the reshaping club, and $X_{\mathcal N}$ codes a well-founded extensional structure whose transitive collapse is a model
\[
   \bar{\mathcal N}
\]
of size $\aleph_1^M$ such that
\begin{itemize}
   \item $\bar{\mathcal N}\models\ZFC^-+``\aleph_1\text{ exists}''$;
   \item $\bar{\mathcal N}$ contains the distinguished initial segment $m\in\mathcal I$ with $\omega_1^m=\omega_1^M$;
   \item using $m$, the model $\bar{\mathcal N}$ computes the local versions of $\vec S$, $\rho$, and the almost disjoint family $D$;
   \item $\bar{\mathcal N}$ contains an $M_1$-Cohen coding area $C$ computed from $m$, i.e. a set of the form
   \[
      C=\{\rho^m(g\upharpoonright\alpha)\mid \alpha<\omega_1^m\}
   \]
   for the $M_1$-Cohen subset $g\subseteq\omega_1^m$ decoded in $\bar{\mathcal N}$, and it contains an indexed sequence of selected branches
   \[
      \mathcal B=\langle c_{\gamma,n}\mid \gamma\in C,
         n<\omega\rangle;
   \]
   \item the branch sequence verifies, in $\bar{\mathcal N}$, the pattern coding $w$:
   \begin{align*}
      n\in w
      &\Longleftrightarrow
      \bar{\mathcal N}\models
      ``S^m_{\omega\cdot\gamma+2n+1}
        \text{ has an }\omega_1^m\text{-branch}'',\\
      n\notin w
      &\Longleftrightarrow
      \bar{\mathcal N}\models
      ``S^m_{\omega\cdot\gamma+2n}
        \text{ has an }\omega_1^m\text{-branch}''
   \end{align*}
   for every $\gamma\in C$ and every $n<\omega$.
\end{itemize}
Here $S^m_\xi$ denotes the $\xi$-th tree of the sequence computed from $m$.  Thus $Y$ does not merely list branches; it codes a local universe in which the relevant $M_1$-initial segment defines the Suslin sequence, and the selected branches then determine the real $w$.

Let $D^m=\langle d^m_\alpha\mid\alpha<\omega_1^m\rangle$ denote the canonical almost disjoint family as computed in $m$.  We define $\Psi(r,w)$ by the following formula:
\begin{align*}
\Psi(r,w)\quad\Longleftrightarrow\quad
\forall M\,\forall m\Big(&
   M \text{ is a countable transitive model of }
      \ZFC^-+``\aleph_1\text{ exists}'' \\
&\wedge\ r,w,m\in M \\
&\wedge\ m\in\mathcal I \\
&\wedge\ \omega_1^m=\omega_1^M \\
&\Longrightarrow
   \exists Y\in M\Big[
      Y\subseteq\omega_1^M \\
&\qquad\wedge\,
      \forall\alpha<\omega_1^M\,
      (\alpha\in Y\Longleftrightarrow r\cap d^m_\alpha
         \text{ is finite}) \\
&\qquad\wedge\,
      M\models\operatorname{LocalCode}(Y,w,m)
   \Big]\Big).
\end{align*}
Thus $\Psi(r,w)$ says that every relevant countable transitive model which has the correct $M_1$-initial segment at its $\omega_1$ decodes from $r$, using the local almost disjoint family, a reshaped code for a local transitive model whose branch sequence codes $w$.

The assertion $\Psi(r,w)$ is a $\boldsymbol\Pi^1_3$ statement.  The only non-arithmetic ingredient is the recognition of the correct $M_1$-initial segments, which is $\boldsymbol\Pi^1_2$ via the class $\mathcal I$; the remaining decoding and branch-pattern checks are absolute first-order checks inside the countable transitive model.  Therefore
\[
   \Phi(w)\quad\Longleftrightarrow\quad \exists r\,\Psi(r,w)
\]
is a $\boldsymbol\Sigma^1_4$ predicate.  We read $\Phi(w)$ as ``$w$ is coded into the fixed Suslin sequence $\vec S$''.

\begin{lemma}[The inner model decoded by a $\Phi$-witness]\label{lem:inner-model-decoded-by-Phi}
Assume that $r$ witnesses $\Phi(w)$, that is, assume that $\Psi(r,w)$ holds.  Put
\[
   m_\infty=\bigcup\mathcal I=\mathcal J^{M_1}_{\omega_1}.
\]
Then the inner model $L[r,m_\infty]$ contains the real $w$ and contains the objects decoded from $r$: a reshaped set $Y_r\subseteq\omega_1$, a model code $X_{\mathcal N,r}\subseteq\omega_1$, the transitive model $\mathcal N_r$ decoded from it, an $M_1$-Cohen coding area $C_r$, and the selected branch sequence through the trees of the fixed sequence $\vec S$.  Moreover $L[r,m_\infty]$ sees the branch pattern coding $w$; more explicitly, in $L[r,m_\infty]$ the decoded objects satisfy
\begin{align*}
   \forall \gamma\in C_r\,\forall n<\omega\quad
   n\in w
   &\Longleftrightarrow
   S_{\omega\cdot\gamma+2n+1}\text{ has an }\omega_1\text{-branch},\\
   \forall \gamma\in C_r\,\forall n<\omega\quad
   n\notin w
   &\Longleftrightarrow
   S_{\omega\cdot\gamma+2n}\text{ has an }\omega_1\text{-branch}.
\end{align*}
Thus a witness $r$ for $\Phi(w)$ does not merely certify the projective statement externally; together with $\mathcal J^{M_1}_{\omega_1}$ it generates an actual inner model in which the $w$-pattern is present.
\end{lemma}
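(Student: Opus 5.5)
The plan is the standard ``reflection from countable models up to $\omega_1$'' argument used for David-style reshaped codes. Work in the ambient model $V'$ in which $\Psi(r,w)$ holds; it is one of the $\omega_1$-preserving extensions covered by Lemma~\ref{lem:recover-m1-omega1}. There $m_\infty=\bigcup\mathcal I=\mathcal J^{M_1}_{\omega_1}$, so the inner model $L[r,m_\infty]$ makes sense.

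\textbf{Step 1.} Inside $L[r,m_\infty]$ define $Y_r=\{\alpha<\omega_1\mid r\cap d_\alpha\text{ finite}\}$, using $D$ as computed from $m_\infty$. Then split $Y_r$ into its odd part $X_{\mathcal N,r}$ and its even part, which is the candidate reshaping club.

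\textbf{Step 2: transfer the local properties to $\omega_1$.} We must show that $X_{\mathcal N,r}$ codes a well-founded extensional structure whose collapse $\mathcal N_r$ satisfies the clauses of $\operatorname{LocalCode}(Y_r,w,m_\infty)$. The idea is to pick countable transitive models $M$ of $\ZFC^-+``\aleph_1\text{ exists}''$ containing $r,w,m$ with $m=m_\infty\cap\mathcal J^{M_1}_{\omega_1^M}\in\mathcal I$ and $\omega_1^m=\omega_1^M$. These arise as collapses of countable elementary substructures $X\prec H_\theta$ (or $\prec L_\theta[r,m_\infty]$) with $r,w,m_\infty\in X$. By Lemma~\ref{lem:definable-m1-initial-segments} and condensation (Theorem~\ref{thm:steel-condensation-m1}), the image of $m_\infty$ is $\mathcal J^{M_1}_{\omega_1^M}$, and it lies in $\mathcal I$ for club many such $X$. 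Applying $\Psi(r,w)$ to each such $M$, the set $Y_r\cap\omega_1^M$ lies in $M$ and satisfies $M\models\operatorname{LocalCode}(Y_r\cap\omega_1^M,w,m)$. By elementarity of the uncollapse, each clause of $\operatorname{LocalCode}$ lifts back to $X$, and hence to $L[r,m_\infty]$ at $\omega_1$. The clauses to lift are well-foundedness and extensionality of the relation coded by $X_{\mathcal N,r}$, the presence of the Cohen area $C_r$ computed from $m_\infty$, and the branch pattern. This is legitimate because each clause is first-order over $H_{\omega_2}$ (or over $L_\theta[r,m_\infty]$) in the parameters $Y_r,w,m_\infty$.

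\textbf{Step 3: verify the branch pattern inside $L[r,m_\infty]$.} Once $\mathcal N_r$ is decoded, $\mathcal N_r$ contains $m_\infty$, computes $\vec S$ correctly from it, and has $\omega_1^{\mathcal N_r}=\omega_1$. It also contains sequences $c_{\gamma,n}$ that it believes are $\omega_1$-branches of the appropriate trees. Being a branch of length $\omega_1$ through a tree on $\omega_1$ is upward absolute from the transitive $\mathcal N_r$ to $L[r,m_\infty]\supseteq\mathcal N_r$. Hence $L[r,m_\infty]$ sees $S_{\omega\cdot\gamma+2n+1}$ or $S_{\omega\cdot\gamma+2n}$ having an $\omega_1$-branch according to $w(n)$. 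Finally, $w\in L[r,m_\infty]$: $w\in\mathcal N_r$ is the real read off the pattern at any $\gamma\in C_r$, and $C_r$ is nonempty because it has size $\omega_1$.

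\textbf{Main obstacle.} The delicate point is Step 2, namely checking that the well-foundedness of the $\omega_1$-length code and the fact that $\omega_1^{\mathcal N_r}=\omega_1$ really follow from the countable local information. The reshaping club is what makes this work: the even part of $Y_r$ guarantees that every countable $M$ as above sees $\omega_1^M$ in the club. Ill-foundedness of the full code would reflect, via the elementary substructure argument, to an ill-founded local code in some such $M$, contradicting $\Psi$. I would argue this by contradiction, taking $X$ containing a witness to ill-foundedness.

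One subtlety is that the pattern clause in $\operatorname{LocalCode}$ is stated as an equivalence. We only need the forward directions $n\in w\Rightarrow$ branch at $2n+1$ and $n\notin w\Rightarrow$ branch at $2n$, and these transfer by upward absoluteness. The exact ``only if'' directions, which would require Suslinity of the other tree, are not needed for the lemma as stated in $L[r,m_\infty]$.
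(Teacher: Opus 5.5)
Your proposal follows the paper's proof. It decodes $Y_r$ in $L[r,m_\infty]$ and passes to the transitive collapse of a countable $X\prec H_\Theta$ containing $r,w,m_\infty$. It then applies $\Psi(r,w)$ there, using uniqueness of the almost-disjoint decoding ($Y=Y_r\cap\omega_1^{\bar M}$), and transfers $\operatorname{LocalCode}$ back up by elementarity. Lifting the clause with $w$ as a parameter directly is only a streamlined form of the paper's argument, which instead proceeds by contradiction and adds a separate reflection to show $w^*=w$.

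As in the paper, the displayed equivalences are obtained relative to the decoded model $\mathcal N_r$. Only the forward directions pass to $L[r,m_\infty]$ by upward absoluteness, and that is exactly what Lemma~\ref{lem:no-unwanted-codes} later uses.
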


\begin{proof}
All decoding procedures used below are the fixed canonical ones.  In particular, once $r$ and an initial segment $m\in\mathcal I$ are given, the almost-disjoint decoding below $\omega_1^m$ is unique.

First $L[r,m_\infty]$ can form the almost-disjoint decoding of $r$ with respect to the canonical family computed from $m_\infty$:
\[
   Y_r=\{\alpha<\omega_1\mid r\cap d^{m_\infty}_\alpha
      \text{ is finite}\}.
\]
This set belongs to $L[r,m_\infty]$ and is uniquely determined by $r$ and $m_\infty$.

We claim that the David decoding of $Y_r$ in $L[r,m_\infty]$ succeeds and produces the asserted model code and branch data.  Suppose not.  Then some finite part of the canonical decoding fails: either the reshaping test fails, or the odd part does not decode a well-founded extensional structure of the required kind, or the resulting collapse does not contain the required distinguished objects, or one of the branch-pattern clauses fails.  Choose a sufficiently large regular $\Theta$ and take, in the ambient universe, a countable elementary submodel
\[
   X\prec H_\Theta
\]
containing $r$, $w$, $m_\infty$, and a witness to this failure.  Let
\[
   \pi:X\longrightarrow \bar M
\]
be the transitive collapse, and put $m=\pi(m_\infty)$.  By the recovery and condensation facts for the cofinal system $\mathcal I$, the model $m$ is a member of $\mathcal I$ and
\[
   \omega_1^m=\omega_1^{\bar M}.
\]
Moreover $\bar M$ is a countable transitive model of
$\ZFC^-+``\aleph_1\text{ exists}''$, and $r,w,m\in\bar M$.

Inside $\bar M$, the almost-disjoint decoding of $r$ with respect to $D^m$ is exactly
\[
   Y_r\cap\omega_1^{\bar M}.
\]
Indeed, for every $\alpha<\omega_1^{\bar M}$, the real $d^m_\alpha$ is the corresponding initial member of the canonical almost disjoint family computed from $m_\infty$, and hence
\[
   \alpha\in Y_r\cap\omega_1^{\bar M}
   \quad\Longleftrightarrow\quad
   r\cap d^m_\alpha\text{ is finite}.
\]
The reshaping and odd/even decoding are first-order procedures over the relevant transitive model, so the failure chosen above reflects to $\bar M$.  Thus
\[
   \bar M\not\models
      \operatorname{LocalCode}(Y_r\cap\omega_1^{\bar M},w,m).
\]
But $\Psi(r,w)$ applies to the countable transitive model $\bar M$ and the initial segment $m$.  It therefore yields some $Y\in\bar M$ such that
\[
   \forall\alpha<\omega_1^{\bar M}\,
   \bigl(\alpha\in Y\Longleftrightarrow r\cap d^m_\alpha
      \text{ is finite}\bigr)
\]
and
\[
   \bar M\models\operatorname{LocalCode}(Y,w,m).
\]
By uniqueness of the almost-disjoint decoding, $Y=Y_r\cap\omega_1^{\bar M}$, a contradiction.  Hence the global decoding in $L[r,m_\infty]$ succeeds.  Let $X_{\mathcal N,r}$ be the model code decoded from the odd part of $Y_r$, let $\mathcal N_r$ be the transitive collapse of the structure coded by $X_{\mathcal N,r}$, and let $C_r$ and the selected branch sequence be the distinguished objects decoded inside $\mathcal N_r$.

It remains to see that the real read from this branch pattern is the original real $w$.  Since the global decoding has succeeded, $L[r,m_\infty]$ defines a real $w^*$ by reading the decoded branches: for $n<\omega$, put $n\in w^*$ iff, equivalently for every $\gamma\in C_r$, the decoded branch sequence witnesses that
\[
   S_{\omega\cdot\gamma+2n+1}
   \text{ has an }\omega_1\text{-branch}.
\]
The local-code clauses ensure that this definition is independent of the choice of $\gamma\in C_r$.

Suppose toward a contradiction that $w^*\neq w$, and choose $n<\omega$ such that $w^*(n)\neq w(n)$.  Again take a sufficiently large countable elementary submodel $X\prec H_\Theta$ in the ambient universe containing $r,w,m_\infty,n$ and enough of the decoded global data to witness the value of $w^*(n)$, and collapse it to $\bar M$, with $m=\pi(m_\infty)$.  As above, $\bar M$ is a relevant countable transitive model, $m\in\mathcal I$, $\omega_1^m=\omega_1^{\bar M}$, and the almost-disjoint decoding of $r$ in $\bar M$ is $Y_r\cap\omega_1^{\bar M}$.  By elementarity and the absoluteness of the decoding procedure, $\bar M$ reads the $n$-th bit of the local branch pattern as $w^*(n)$.  On the other hand, $\Psi(r,w)$ says that the same uniquely decoded set satisfies
\[
   \bar M\models
      \operatorname{LocalCode}(Y_r\cap\omega_1^{\bar M},w,m),
\]
so $\bar M$ reads the $n$-th bit of that same branch pattern as $w(n)$.  This contradicts $w^*(n)\neq w(n)$.

Therefore $w^*=w$.  Since $w^*$ is definable in $L[r,m_\infty]$ from the decoded objects, we have $w\in L[r,m_\infty]$.  The decoded objects also belong to $L[r,m_\infty]$ by construction, and the displayed branch-pattern equivalences hold there.  This proves the lemma.
\end{proof}

\subsection{Localized presentations of names}\label{subsec:localized-presentations}

We now fix the presentation convention which will be used throughout the
construction.  Recall that the first forcing over \(M_1\) is the finite-support
branch product
\[
   Br=\prod_{\xi<\omega_1}^{\mathrm{fs}}S_\xi,
\]
which adds a cofinal branch through every tree in the fixed independent
sequence \(\vec S\).  If \(H\subseteq Br\) is \(M_1\)-generic, we write
\[
   W=M_1[H].
\]
All later forcing is described over this branch extension.  A local hybrid
presentation over \(W\) will be denoted by \(\mathcal R\), and its initial
presentation up to stage \(\beta\) by \(\mathcal R_\beta\).  Once the
presentation is fixed, \(\mathbb P_\beta\) denotes the post-branch forcing over
\(W\) associated with \(\mathcal R_\beta\).  If \(\mathcal R=\mathcal R_\delta\)
is a completed presentation, then \(\mathbb P=\mathbb P_\delta\) denotes its
terminal post-branch forcing.

The corresponding full forcing over \(M_1\), including the initial branch block,
is written
\[
   Br\hybcomp\mathcal R_\beta .
\]
Here \(Br\hybcomp\mathcal R_\beta\) means: first force with \(Br\), form
\(W=M_1[H]\), and then force over \(W\) with the post-branch forcing
\(\mathbb P_\beta\) associated with \(\mathcal R_\beta\).  The symbol
\(\hybcomp\) is only notation for this composition with a post-branch hybrid
presentation; it is not an ordinary iteration symbol.  Equivalently, if
\(H\subseteq Br\) is generic, then the quotient of \(Br\hybcomp\mathcal R_\beta\)
over \(H\) is canonically isomorphic to \(\mathbb P_\beta\).

We also fix the convention for the forcings which deal with Lebesgue measure and
the Baire property; we call them the \emph{regularity forcings}.  If \(W'\) is a
transitive inner model of the current ambient universe, with the relevant
\(\omega_1\) and enough set theory to compute Borel codes, measure, category,
and the corresponding forcing notions, then
\[
   \mathbb B_{\mathrm{rand}}^{W'}
\]
denotes the Random algebra as computed in \(W'\).  Its conditions are the
positive Borel sets coded in \(W'\), modulo null equivalence as computed in
\(W'\).  We also write
\[
   \mathbb A_{\mathcal N}^{W'}
   \qquad\text{and}\qquad
   \mathbb A_{\mathcal M}^{W'}
\]
for the standard amoeba forcing for the null ideal, respectively for the meager
ideal, as computed in \(W'\).  Thus \(\mathbb A_{\mathcal N}^{W'}\) adds a
Borel null set covering every Borel null set coded in \(W'\), and
\(\mathbb A_{\mathcal M}^{W'}\) adds a Borel meager set covering every Borel
meager set coded in \(W'\).  We use fixed Borel-code presentations of these
amoeba forcings in which they are \(\sigma\)-linked.  In particular, these
forcings preserve Suslin trees from the ground model.

The only bases used for regularity stages are finite-real-parameter bases of
the form
\[
   L[T_2,a_0,\ldots,a_{k-1}].
\]
Here \(T_2\in M_1\) is the fixed Martin--Solovay tree from
Subsection~\ref{subsec:canonical-trees-tn}, and the reals
\(a_0,\ldots,a_{k-1}\) are read from the current generic extension.  When a
regularity forcing computed in such a base is used inside a larger ambient
extension, it is not recomputed in the larger universe.  The conditions and the
order are the ones computed in the displayed base.

The post-branch forcing is a hybrid of a product and an iteration.  We now give
its official syntactic form.  A \emph{resolved hybrid bookkeeping} of length
\(\beta\) is a sequence
\[
   F=\langle F(\eta)\mid \eta<\beta\rangle
\]
whose \(\eta\)-th value is decoded over the already constructed initial forcing
as one of the following tags:
\[
   \mathbf 1,
   \qquad
   \mathrm{Coh},
   \qquad
   \operatorname{Code}(\dot u_\eta),
\]
or one of the relative regularity tags
\[
   (\mathrm{Rand};\dot a^\eta_0,\ldots,\dot a^\eta_{k_\eta-1}),
   \qquad
   (\mathrm{Am}_{\mathcal N};\dot a^\eta_0,\ldots,\dot a^\eta_{k_\eta-1}),
   \qquad
   (\mathrm{Am}_{\mathcal M};\dot a^\eta_0,\ldots,\dot a^\eta_{k_\eta-1}).
\]
Here \(k_\eta<\omega\), \(\dot u_\eta\) is a name for a real over the current
initial forcing, and each \(\dot a^\eta_i\) is a name for a real over the
current initial forcing.  The word ``resolved'' is important: this bookkeeping
tells the hybrid presentation what forcing to use next.  The later allowability
recursions may start from higher-level bookkeeping data, such as a well-order
pair or a uniformization tuple, but before a hybrid stage is formed that data is
resolved into one of the tags listed above.

Given such an \(F\), we recursively build a post-branch hybrid presentation
\[
   \langle \mathcal R_\eta\mid \eta\leq\beta\rangle
\]
together with its associated post-branch forcings
\(\langle\mathbb P_\eta\mid \eta\leq\beta\rangle\).  We put
\(\mathbb P_0=\mathbf 1\).  At a successor stage
\(\eta+1\), after \(\mathbb P_\eta\) has been constructed, we first add a fresh
reservoir coordinate \(\mathbb C_\eta\), a copy of the \(M_1\)-computed
\(\omega_1\)-Cohen forcing.  Thus the preliminary forcing at this stage is
\[
   \widehat{\mathbb P}_{\eta+1}=\mathbb P_\eta\times\mathbb C_\eta,
\]
where \(\dot g_\eta\) denotes the \(\mathbb C_\eta\)-generic.  The second-step
forcing \(\dot{\mathbb Q}_\eta\) over \(\widehat{\mathbb P}_{\eta+1}\) is
determined by the decoded tag \(F(\eta)\):
\begin{enumerate}
   \item If the tag is \(\mathbf 1\), then \(\dot{\mathbb Q}_\eta\) is the
   trivial forcing.

   \item If the tag is \(\mathrm{Coh}\), then \(\dot{\mathbb Q}_\eta\) is
   ordinary Cohen forcing.

   \item If the tag is \(\operatorname{Code}(\dot u_\eta)\), then
   \[
      \dot{\mathbb Q}_\eta
      =\dot{\mathbb A}_D(\dot Y_{\dot g_\eta,\dot u_\eta}).
   \]
   This is the Jensen--Solovay almost-disjoint coding forcing which codes the
   reshaped set determined by the fresh reservoir generic and the real named by
   \(\dot u_\eta\).

   \item If the tag is
   \((\mathrm{Rand};\dot a^\eta_0,\ldots,\dot a^\eta_{k_\eta-1})\), put
   \[
      \dot W_\eta=L[T_2,\dot a^\eta_0,\ldots,
         \dot a^\eta_{k_\eta-1}],
   \]
   with all real names lifted to \(\widehat{\mathbb P}_{\eta+1}\).  The
   second-step forcing is
   \[
      \dot{\mathbb Q}_\eta
      =\dot{\mathbb B}_{\mathrm{rand}}^{\dot W_\eta},
   \]
   the Random algebra computed in the displayed base.

   \item If the tag is
   \((\mathrm{Am}_{\mathcal N};\dot a^\eta_0,\ldots,
      \dot a^\eta_{k_\eta-1})\), respectively
   \((\mathrm{Am}_{\mathcal M};\dot a^\eta_0,\ldots,
      \dot a^\eta_{k_\eta-1})\), put
   \[
      \dot W_\eta=L[T_2,\dot a^\eta_0,\ldots,
         \dot a^\eta_{k_\eta-1}],
   \]
   again with all real names lifted to \(\widehat{\mathbb P}_{\eta+1}\).  The
   second-step forcing is the measure-amoeba, respectively category-amoeba,
   forcing computed in \(\dot W_\eta\).
\end{enumerate}
Finally we
set
\[
   \mathbb P_{\eta+1}=\widehat{\mathbb P}_{\eta+1}*\dot{\mathbb Q}_\eta.
\]
At limit stages we take the mixed-support limit: countable support on the
reservoir product coordinates and finite support on the c.c.c. second-step
coordinates.  Thus every successor stage carries a fresh reservoir coordinate,
but \(\Phi\) reads such a coordinate only when the second step is an
almost-disjoint coding forcing.  We call the resulting object a \emph{local
hybrid presentation over \(W\)}.

We shall compare names by means of canonical localized presentations.  Since
\(W=M_1[H]\), we use the fixed \(M_1\)-coding of \(W\)-objects by
\(Br\)-names.  Whenever an object of \(W\) is used as a parameter, it is
represented by its \(<_{M_1}\)-least \(Br\)-name.  Thus the collection of codes
for the names and supports below is ordered by the canonical
\(M_1\)-construction order.

\begin{definition}[Admissible supports and canonical lifts]
\label{def:admissible-supports-canonical-lifts}
Let \(\mathcal R_\beta\) be a local hybrid presentation over \(W\), with resolved
bookkeeping \(F\), and let \(A\subseteq\beta\).  We define, by induction on
\(\eta\leq\beta\), the following objects simultaneously:
\begin{enumerate}
   \item the assertion that \(A\cap\eta\) is admissible for
   \(\mathcal R_\eta\);
   \item the restricted hybrid presentation \(\mathcal R_{A\cap\eta}\) and its
   forcing \(\mathbb P_{A\cap\eta}\);
   \item the canonical complete embedding
   \[
      i_A^\eta:\mathbb P_{A\cap\eta}\longrightarrow\mathbb P_\eta;
   \]
   \item the canonical lift of names along this embedding.
\end{enumerate}
At \(\eta=0\), the restricted presentation is trivial and the embedding is the
identity.

At a limit \(\lambda\leq\beta\), \(A\cap\lambda\) is admissible if
\(A\cap\eta\) is admissible for every \(\eta<\lambda\) and the restricted
presentations are coherent.  In that case \(\mathcal R_{A\cap\lambda}\) is the
mixed-support limit of the earlier restricted presentations, and
\(i_A^\lambda\) is the coordinatewise direct limit of the embeddings
\(i_A^\eta\), \(\eta<\lambda\).

Suppose now that \(\eta<\beta\) and that \(A\cap\eta\) has already been declared
admissible.  If \(\eta\notin A\), then the restricted presentation skips the
\(\eta\)-th stage:
\[
   \mathbb P_{A\cap(\eta+1)}=\mathbb P_{A\cap\eta}.
\]
The embedding \(i_A^{\eta+1}\) extends \(i_A^\eta\) by placing the trivial
condition at the fresh reservoir coordinate \(\mathbb C_\eta\) and at the
second-step coordinate \(\dot{\mathbb Q}_\eta\).

If \(\eta\in A\), then the \(\eta\)-th stage must be readable over the earlier
restricted forcing.  We first keep the fresh reservoir coordinate and form
\[
   \widehat{\mathbb P}_{A,\eta+1}
      =\mathbb P_{A\cap\eta}\times\mathbb C_\eta,
\]
with the embedding induced by \(i_A^\eta\times\mathrm{id}_{\mathbb C_\eta}\).
The second-step forcing is then defined according to the resolved tag
\(F(\eta)\):
\begin{enumerate}
   \item For the tags \(\mathbf 1\) and \(\mathrm{Coh}\), the restricted stage
   uses the same trivial, respectively ordinary Cohen, second-step forcing.

   \item If \(F(\eta)=\operatorname{Code}(\dot u_\eta)\), then there must be a
   \(\mathbb P_{A\cap\eta}\)-name \(\dot u^A_\eta\) such that
   \[
      \mathbb P_\eta\Vdash
      \dot u_\eta=(\dot u^A_\eta)^{\uparrow\eta}.
   \]
   If no such name exists, then \(A\cap(\eta+1)\) is not admissible.  If such a
   name exists, we use the \(<_{M_1}\)-least one and put
   \[
      \dot{\mathbb Q}^A_\eta
      =\dot{\mathbb A}_D(\dot Y_{\dot g_\eta,\dot u^A_\eta})
   \]
   over \(\widehat{\mathbb P}_{A,\eta+1}\).

   \item If \(F(\eta)\) is a relative regularity tag
   \[
      (\mathrm{Rand};\dot a^\eta_0,\ldots,\dot a^\eta_{k_\eta-1}),
      \quad
      (\mathrm{Am}_{\mathcal N};\dot a^\eta_0,\ldots,
         \dot a^\eta_{k_\eta-1}),
      \quad\text{or}\quad
      (\mathrm{Am}_{\mathcal M};\dot a^\eta_0,\ldots,
         \dot a^\eta_{k_\eta-1}),
   \]
   then for every \(i<k_\eta\) there must be a
   \(\mathbb P_{A\cap\eta}\)-name \(\dot a^{\eta,A}_i\) such that
   \[
      \mathbb P_\eta\Vdash
      \dot a^\eta_i=(\dot a^{\eta,A}_i)^{\uparrow\eta}.
   \]
   If one of these names does not exist, then \(A\cap(\eta+1)\) is not
   admissible.  If they exist, use the \(<_{M_1}\)-least such tuple and let
   \[
      \dot W^A_\eta
      =L[T_2,
         \dot a^{\eta,A}_0,\ldots,
         \dot a^{\eta,A}_{k_\eta-1}].
   \]
   The restricted stage uses the same kind of regularity forcing computed in
   \(\dot W^A_\eta\): Random, measure-amoeba, or category-amoeba according to
   the tag.
\end{enumerate}
If the relevant requirement is satisfied, then
\[
   \mathbb P_{A\cap(\eta+1)}
   =\widehat{\mathbb P}_{A,\eta+1}*\dot{\mathbb Q}^A_\eta,
\]
and the embedding \(i_A^{\eta+1}\) is the natural two-step extension of
\(i_A^\eta\).

Once \(i_A^\eta\) has been defined, the canonical lift of a
\(\mathbb P_{A\cap\eta}\)-name \(\sigma\) to a \(\mathbb P_\eta\)-name is the
recursive name translation
\[
   \sigma^{\uparrow\eta}
   =
   \{(\tau^{\uparrow\eta},i_A^\eta(p))\mid (\tau,p)\in\sigma\}.
\]
When \(\eta=\beta\) and \(A\cap\beta=A\) is admissible, we simply write
\(\mathbb P_A\), \(i_A^\beta\), and \(\sigma^{\uparrow\beta}\).
\end{definition}

Thus, if \(G_\beta\subseteq\mathbb P_\beta\) is generic and
\(G_A=(i_A^\beta)^{-1}[G_\beta]\), then \(G_A\) is
\(\mathbb P_A\)-generic and
\[
   (\sigma^{\uparrow\beta})^{G_\beta}=\sigma^{G_A}
\]
for every \(\mathbb P_A\)-name \(\sigma\).  The embedding
\(i_A^\beta\) identifies \(\mathbb P_A\) with a complete subforcing of
\(\mathbb P_\beta\).  In what follows we suppress the embedding and regard
\(\mathbb P_A\) as this complete subforcing.

If \(\vec\tau\) is a finite tuple of \(\mathbb P_\beta\)-names for reals or
natural numbers, then an admissible support \(A\subseteq\beta\) is
\emph{admissible for \(\vec\tau\)} if every member of \(\vec\tau\) is the
canonical lift of a \(\mathbb P_A\)-name.  There may be many admissible supports
for \(\vec\tau\), and there need not be a unique inclusion-minimal one.  We
therefore set
\[
   \operatorname{supp}_{\mathrm{loc}}(\vec\tau,\mathbb P_\beta)
   :=
   \text{the }<_{M_1}\text{-least admissible support for }\vec\tau.
\]
The notation
\[
   \mathbb P_{\operatorname{supp}_{\mathrm{loc}}(\vec\tau,\mathbb P_\beta)}
\]
always means the complete subforcing determined by this least admissible
support.  The definition of \(\operatorname{supp}_{\mathrm{loc}}\) has no size
clause.  Countable localization is proved later for the presentations which
come from the allowable recursion.

\begin{definition}[Localized presentation]\label{def:localized-presentation}
Let \(\dot x\) be a \(\mathbb P_\beta\)-name for a real.  A
\emph{localized presentation} of \(\dot x\) is a pair \((A,\sigma)\) such that
\(A\subseteq\beta\) is an admissible support, \(\sigma\) is a
\(\mathbb P_A\)-name for a real, and
\[
   1_{\mathbb P_\beta}\Vdash
   \dot x=\sigma^{\uparrow\beta}.
\]
We order localized presentations by the canonical \(<_{M_1}\)-order of their
codes.  The \emph{canonical localized presentation} of \(\dot x\) is the
\(<_{M_1}\)-least localized presentation of \(\dot x\).
\end{definition}

If \(G_\beta\subseteq\mathbb P_\beta\) is generic and \((A_i,\sigma_i)\) is the
canonical localized presentation of \(\dot z_i\), then
\[
   z_i=\dot z_i^{G_\beta}=\sigma_i^{G_{A_i}},
\]
where \(G_{A_i}\) is the induced \(\mathbb P_{A_i}\)-generic filter.  For two
names \(\dot z_0,\dot z_1\) we write
\[
   \dot z_0\triangleleft_{\mathrm{loc}}\dot z_1
\]
if the canonical localized presentation of \(\dot z_0\) is \(<_{M_1}\)-below
the canonical localized presentation of \(\dot z_1\).  This is the comparison
used by the well-order coding stages below.

\begin{definition}[Regularity-stage supports and branch footprints]
\label{def:admissible-random-base}
Let \(\mathcal R_\beta\) be a local hybrid presentation over \(W=M_1[H]\), and
let \(\eta<\beta\) be a relative regularity stage.  Suppose that the resolved
tag at \(\eta\) is one of
\[
   (\mathrm{Rand};\dot a^\eta_0,\ldots,\dot a^\eta_{k_\eta-1}),
   \qquad
   (\mathrm{Am}_{\mathcal N};\dot a^\eta_0,\ldots,
      \dot a^\eta_{k_\eta-1}),
   \qquad
   (\mathrm{Am}_{\mathcal M};\dot a^\eta_0,\ldots,
      \dot a^\eta_{k_\eta-1}).
\]
The base used at this stage is, by definition,
\[
   \dot W_\eta=L[T_2,\dot a^\eta_0,\ldots,
      \dot a^\eta_{k_\eta-1}],
\]
with the evident meaning \(L[T_2]\) when \(k_\eta=0\).  The regularity forcing
at the stage is the Random algebra, measure-amoeba forcing, or category-amoeba
forcing computed in this displayed base, according to the tag.

A \emph{support witness} for the regularity stage is a tuple
\[
   (A_{\mathrm{rb}},
    \langle \dot a^{\mathrm{rb}}_i\mid i<k_\eta\rangle,
    B^0_{\mathrm{rb}},
    \dot B_{\mathrm{rb}})
\]
such that:
\begin{enumerate}
   \item \(A_{\mathrm{rb}}\subseteq\eta\) is a countable admissible support, and
   each \(\dot a^{\mathrm{rb}}_i\) is a \(\mathbb P_{A_{\mathrm{rb}}}\)-name for
   a real;

   \item for every \(i<k_\eta\),
   \[
      \mathbb P_\eta\Vdash
      \dot a^\eta_i=(\dot a^{\mathrm{rb}}_i)^{\uparrow\eta};
   \]

   \item every object of \(W=M_1[H]\) occurring in the codes of the names
   \(\dot a^{\mathrm{rb}}_i\) is represented by its \(<_{M_1}\)-least
   \(Br\)-name, and the union of the supports of these \(Br\)-names is
   contained in the countable set \(B^0_{\mathrm{rb}}\subseteq\omega_1\).
\end{enumerate}
Let
\[
   \dot W_{\mathrm{rb}}
   =L[T_2,
      \dot a^{\mathrm{rb}}_0,\ldots,
      \dot a^{\mathrm{rb}}_{k_\eta-1}]
\]
be the base read on the support \(A_{\mathrm{rb}}\).  Thus the canonical lift of
\(\dot W_{\mathrm{rb}}\) to stage \(\eta\) is forced to be the stage base
\(\dot W_\eta\).

Using the notation from Definition~\ref{def:selected-branch-coordinates}, let
\(E(A_{\mathrm{rb}})\) be the set of explicit coding coordinates in
\(A_{\mathrm{rb}}\).  If \(\nu\in E(A_{\mathrm{rb}})\), let \(\dot C_\nu\) be the
\(\mathbb P_{A_{\mathrm{rb}}}\)-name for the coding area of the reservoir
generic at \(\nu\), and let \(\dot u_\nu\) be the
\(\mathbb P_{A_{\mathrm{rb}}}\)-name coded at \(\nu\).  The branch footprint of
the witness is the \(\mathbb P_{A_{\mathrm{rb}}}\)-name
\[
   \dot B_{\mathrm{rb}}
   =
   \check B^0_{\mathrm{rb}}
   \cup
   \bigcup_{\nu\in E(A_{\mathrm{rb}})}
      \operatorname{Sel}(\dot C_\nu,\dot u_\nu).
\]
Thus \(\dot B_{\mathrm{rb}}\) is generated by countably many coding areas and a
countable set of branch coordinates.  It may have size \(\omega_1\).

Among all support witnesses for the stage, choose the \(<_{M_1}\)-least one.  We
call the associated quadruple
\[
   (A^{\mathrm{reg}}_\eta,
    \dot W^{\mathrm{reg}}_\eta,
    B^{0,\mathrm{reg}}_\eta,
    \dot B^{\mathrm{reg}}_\eta)
\]
the \emph{canonical support witness} for the regularity stage, where
\[
   \dot W^{\mathrm{reg}}_\eta
   =L[T_2,
      \dot a^{\mathrm{reg}}_0,\ldots,
      \dot a^{\mathrm{reg}}_{k_\eta-1}]
\]
is the base read on the least support witness.  The regularity forcing is always
computed in the relevant \(L[T_2,\vec a]\)-base.  It is not recomputed after
passing to a larger ambient extension.
\end{definition}

\begin{remark}[Regularity bases and omission]
Definition~\ref{def:admissible-random-base} records the support and branch-footprint data which keep the regularity stages usable
in omitting models.  If a stage used Random or amoeba forcing computed over the
whole branch extension \(W=M_1[H]\), then the stage would not in general be
interpretable over
\[
   M_1[H\upharpoonright(\omega_1\setminus\{\xi\})].
\]
In the definition above the base is instead generated from finitely many real
parameters on a countable admissible support:
\[
   L[T_2,a_0,\ldots,a_{k-1}].
\]
The tree \(T_2\) is fixed ground-model data and contributes no branch-product
coordinate.  The branch-product coordinates relevant to the base are precisely
those used by the \(W\)-parameters occurring in the names for the reals
\(a_i\), together with the coordinates generated by explicit coding stages in
the admissible support on which those names are read.  This is recorded by the
name \(\dot B^{\mathrm{reg}}_\eta\).

The branch footprint of such a base is not required to be countable; it is
generated by countably many coding areas and countably many ground branch
coordinates.  Its complement has size \(\omega_1\): above one bound, each block
meets at most one generating coding area, and from each pair
\(\{\omega\cdot\gamma+2n,\omega\cdot\gamma+2n+1\}\) at most one coordinate is
selected.  This is the support form used in the branch omission and
no-unwanted-code arguments.

Thus, for a real parameter \(a\) with a countable local presentation,
\(L[T_2,a]\) is a base of the allowed form, and similarly for any finite tuple
of real parameters read on a countable admissible support.  The bookkeeping
inserts relative Random, relative measure-amoeba, and relative category-amoeba
stages over such bases cofinally often.
\end{remark}

\subsection{Allowable and \(\alpha\)-allowable local coding iterations}
\label{subsec:allowable-local-coding-iterations}

We now formulate the forcing recursion using the localized-presentation
convention.  Fix, once and for all, a universal lightface sequence
\[
   \langle A_m\mid m<\omega\rangle
\]
of \(\Pi^1_4\) subsets of \((2^\omega)^2\) in the \(M_1\)-case.  Real parameters are handled later by the standard pairing argument, i.e. by applying the lightface list to relations in the paired variable \((a,x)\).  In
the higher \(M_n\)-case this sequence is replaced by the corresponding
universal lightface sequence of \(\Pi^1_{n+3}\) sets.  A bookkeeping function
is an element of \(M_1\).  Its values are \(M_1\)-descriptions which are decoded
relative to the current associated post-branch forcing \(\mathbb P_\beta\).
We suppress the decoding map and write, for example,
\[
   F(\beta)=(\mathrm{Rand};\dot a_0,\ldots,\dot a_{k-1}),
   \qquad
   F(\beta)=\dot w,
   \qquad
   F(\beta)=(\dot z_0,\dot z_1),
   \qquad
   F(\beta)=(\dot x,\dot z,\dot m),
\]
when the decoded value has the corresponding form.  The regularity tags may
also be
\[
   (\mathrm{Am}_{\mathcal N};\dot a_0,\ldots,\dot a_{k-1})
   \quad\text{or}\quad
   (\mathrm{Am}_{\mathcal M};\dot a_0,\ldots,\dot a_{k-1}).
\]
Here the \(\dot a_i\)'s are \(\mathbb P_\beta\)-names for reals, and the base
associated with the tag is, by definition,
\[
   L[T_2,\dot a_0,\ldots,\dot a_{k-1}].
\]
Such a regularity tag is used only when it has a canonical admissibility witness
in the sense of Definition~\ref{def:admissible-random-base}; equivalently, the
finitely many real parameters generating the base are read on a countable
admissible support.  In the last case, \(\dot m\) is a name for a natural number
coding the relevant set \(A_m\), \(\dot x\) is the section parameter, and
\(\dot z\) is the bookkeeping guess for a possible value in the \(x\)-section.
The name \(\dot z\) is disposable: at successor allowability stages it is used
only when it is not selected as the protected tentative value.

The successor stages use the same hybrid product/iteration form as above.  At
every successor stage one first adds a fresh, independent copy of the
\(M_1\)-Cohen coding-area forcing.  The bookkeeping value then determines the
second-step forcing over the product with this fresh reservoir coordinate.  If
the stage explicitly codes a real \(\dot u\), this second step is the
Jensen--Solovay almost-disjoint coding determined by \(\dot u\).  Over the
current intermediate model the coding quotient is canonically isomorphic to the
two-step forcing
\[
   \mathbb C_{M_1}*\dot{\mathbb A}_D(\dot Y_{\dot g,\dot u}),
\]
but in the global hybrid presentation the first factor is inserted as an
actual product factor:
\[
   (\text{current forcing}\times \mathbb C_\beta)
   *\dot{\mathbb A}_D(\dot Y_{\dot g_\beta,\dot u}).
\]
Here \(\mathbb C_\beta\) is the fresh reservoir coordinate for the stage and
\(\dot g_\beta\) is its canonical generic.  At relative Random, relative amoeba,
ordinary Cohen, and trivial stages the same reservoir coordinate is still
present, but no almost-disjoint code is written from it.  The coding areas
obtained from distinct reservoir coordinates are mutually almost disjoint by
the standard fusion argument for \(\mathbb C_{M_1}\).

\begin{definition}[\(0\)-allowable local hybrid presentations]
\label{def:zero-local-allowable}
Let \(F\in M_1\) be a bookkeeping function of length
\(\delta\leq\omega_2\).  A sequence
\[
   \mathcal R=\langle \mathcal R_\beta\mid \beta\leq\delta\rangle
\]
over \(W\) is a \emph{\(0\)-allowable local hybrid presentation relative to
\(F\)} if it is obtained by the following recursion.

First, \(\mathcal R_0\) is the trivial post-branch presentation.  Thus the associated quotient forcing over \(W\) is
\[
   \mathbb P_0=\mathbf 1,
\]
whereas the corresponding full forcing over \(M_1\) is simply \(Br\).
At limit stages, \(\mathcal R_\eta\) is the mixed-support limit of the preceding
hybrid construction, and \(\mathbb P_\eta\) is the associated post-branch
forcing.  Suppose that \(\mathcal R_\beta\) has been constructed, and write
\(\mathbb P_\beta\) for its associated post-branch forcing.
Choose a fresh reservoir coordinate \(\mathbb C_\beta\), let
\(\dot g_\beta\) be its canonical generic, and put
\[
   \widehat{\mathbb P}_{\beta+1}
      =
   \mathbb P_\beta\times\mathbb C_\beta.
\]
The second-step forcing \(\dot{\mathbb Q}_\beta\) over
\(\widehat{\mathbb P}_{\beta+1}\) is determined by the decoded value of
\(F(\beta)\).

\begin{enumerate}
   \item If \(F(\beta)\) is one of the relative regularity tags
   \[
      (\mathrm{Rand};\dot a^\beta_0,\ldots,
         \dot a^\beta_{k_\beta-1}),
      \qquad
      (\mathrm{Am}_{\mathcal N};\dot a^\beta_0,\ldots,
         \dot a^\beta_{k_\beta-1}),
      \qquad
      (\mathrm{Am}_{\mathcal M};\dot a^\beta_0,\ldots,
         \dot a^\beta_{k_\beta-1}),
   \]
   and this stage has a canonical admissibility witness in the sense of
   Definition~\ref{def:admissible-random-base}, let
   \[
      (A^{\mathrm{reg}}_\beta,
       \dot W^{\mathrm{reg}}_\beta,
       B^{0,\mathrm{reg}}_\beta,
       \dot B^{\mathrm{reg}}_\beta)
   \]
   be that witness.  Thus, for the \(<_{M_1}\)-least tuple of
   \(\mathbb P_{A^{\mathrm{reg}}_\beta}\)-names for reals
   \(\langle\dot a^{\beta,\mathrm{reg}}_i\mid i<k_\beta\rangle\) reading the
   displayed real names, we have
   \[
      \dot W^{\mathrm{reg}}_\beta
      =L[T_2,
          \dot a^{\beta,\mathrm{reg}}_0,
          \ldots,
          \dot a^{\beta,\mathrm{reg}}_{k_\beta-1}].
   \]
   Let \(\dot W^{\mathrm{reg},+}_\beta\) be the canonical
   \(\widehat{\mathbb P}_{\beta+1}\)-name obtained by lifting this base, equivalently
   the name for
   \[
      L[T_2,
        (\dot a^{\beta,\mathrm{reg}}_0)^{\uparrow},\ldots,
        (\dot a^{\beta,\mathrm{reg}}_{k_\beta-1})^{\uparrow}]
   \]
   over the stage \(\widehat{\mathbb P}_{\beta+1}\).  The second-step forcing
   \(\dot{\mathbb Q}_\beta\) is the member of the following list corresponding
   to the tag:
   \[
      \dot{\mathbb B}_{\mathrm{rand}}^{\dot W^{\mathrm{reg},+}_\beta},
      \qquad
      \dot{\mathbb A}_{\mathcal N}^{\dot W^{\mathrm{reg},+}_\beta},
      \qquad
      \dot{\mathbb A}_{\mathcal M}^{\dot W^{\mathrm{reg},+}_\beta}.
   \]
   Thus the relative Random or amoeba forcing is computed in the lifted base
   \(L[T_2,(\dot a^{\beta,\mathrm{reg}}_i)^{\uparrow}:i<k_\beta]\).  It is not
   recomputed in the larger ambient extension.  The local support
   \(A^{\mathrm{reg}}_\beta\) and the footprint name
   \(\dot B^{\mathrm{reg}}_\beta\) are recorded as data of this stage.  If the
   displayed regularity tag has no admissibility witness, put
   \(\dot{\mathbb Q}_\beta=\mathbf 1\).

   \item If \(F(\beta)\) is an ordinary Cohen tag, then
   \[
      \dot{\mathbb Q}_\beta=\dot{\mathbb C}_\omega.
   \]

   \item If \(F(\beta)=\dot w\), where \(\dot w\) is a
   \(\mathbb P_\beta\)-name for a real, let
   \[
      \dot u_\beta=\dot w
   \]
   and put
   \[
      \dot{\mathbb Q}_\beta
      =
      \dot{\mathbb A}_D(\dot Y_{\dot g_\beta,\dot u_\beta}).
   \]

   \item If
   \[
      F(\beta)=(\dot x,\dot z,\dot m),
   \]
   where \(\dot x,\dot z\) are \(\mathbb P_\beta\)-names for reals and
   \(\dot m\) is a \(\mathbb P_\beta\)-name for a natural number, let
   \[
      \dot u_\beta=\langle \dot x,\dot z,\dot m\rangle
   \]
   and put
   \[
      \dot{\mathbb Q}_\beta
      =
      \dot{\mathbb A}_D(\dot Y_{\dot g_\beta,\dot u_\beta}).
   \]

   \item If
   \[
      F(\beta)=(\dot z_0,\dot z_1),
   \]
   where \(\dot z_0,\dot z_1\) are \(\mathbb P_\beta\)-names for reals, let
   \((A_i,\sigma_i)\) be the canonical localized presentation of \(\dot z_i\),
   for \(i<2\).  If
   \[
      (A_0,\sigma_0)<_{M_1}(A_1,\sigma_1),
   \]
   let
   \[
      \dot u_\beta=\langle \dot z_0,\dot z_1\rangle
   \]
   and put
   \[
      \dot{\mathbb Q}_\beta
      =
      \dot{\mathbb A}_D(\dot Y_{\dot g_\beta,\dot u_\beta}).
   \]
   If instead
   \[
      (A_1,\sigma_1)<_{M_1}(A_0,\sigma_0),
   \]
   let
   \[
      \dot u_\beta=\langle \dot z_1,\dot z_0\rangle
   \]
   and put
   \[
      \dot{\mathbb Q}_\beta
      =
      \dot{\mathbb A}_D(\dot Y_{\dot g_\beta,\dot u_\beta}).
   \]
   If the two canonical localized presentations are equal, put
   \[
      \dot{\mathbb Q}_\beta=\mathbf 1.
   \]
\end{enumerate}
If none of the preceding cases applies, put
\[
   \dot{\mathbb Q}_\beta=\mathbf 1.
\]
Finally define
\[
   \mathbb P_{\beta+1}
      =
   \widehat{\mathbb P}_{\beta+1}*\dot{\mathbb Q}_\beta.
\]
In cases (3), (4), and the nontrivial subcases of (5), we say that stage
\(\beta\) \emph{explicitly codes} the name \(\dot u_\beta\).  A forcing
admitting such a presentation is called \emph{locally allowable}, or
\emph{\(0\)-allowable}.
\end{definition}

\begin{lemma}
\label{lem:basic-zero-allowable}
Let \(\mathcal R_\delta\) be a \(0\)-allowable local hybrid presentation over \(W\).
Then the following hold.
\begin{enumerate}
   \item \textbf{Normal form.}  Over the branch extension \(W\), the
   post-branch quotient forcing described by \(\mathcal R_\delta\) can be
   rearranged into an equivalent presentation of the form
   \[
      \mathbb C_{\mathrm{res}}*\dot{\mathbb S}.
   \]
   Here \(\mathbb C_{\mathrm{res}}\) is the countable-support product of all
   fresh \(M_1\)-Cohen reservoir coordinates introduced at successor stages of
   the hybrid presentation, including stages whose second-step forcing is
   Random, amoeba, ordinary Cohen, or trivial.  The forcing \(\dot{\mathbb S}\) is a
   finite-support iteration of c.c.c. real-adding forcings: ordinary Cohen
   forcing, fixed relative Random algebras, fixed relative amoeba forcings over
   their associated \(L[T_2,\vec a]\)-bases, Jensen--Solovay almost-disjoint coding forcings
   computed from the relevant reservoir generics and localized names, and
   trivial factors.  Equivalently, the full forcing \(Br\hybcomp\mathcal R_\delta\) over \(M_1\)
   is equivalent to
   \[
      Br*(\mathbb C_{\mathrm{res}}*\dot{\mathbb S}).
   \]

   \item The post-branch quotient \(\mathbb P_\delta\) is proper and preserves \(\omega_1\); equivalently, the full forcing \(Br\hybcomp\mathcal R_\delta\) preserves \(\omega_1\) over \(M_1\).
   Moreover, if \(\delta\leq\omega_2\), then the corresponding forcing preserves \(\omega_2\).

   \item Coding areas are almost disjoint: if \(g\) and \(h\) are distinct
   \(M_1\)-Cohen subsets of \(\omega_1\), then the corresponding coding areas
   \(C_g\) and \(C_h\) have bounded, hence countable, intersection.  In
   particular this holds for the reservoir generics added at distinct successor
   stages of the hybrid presentation.

   \item \textbf{Non-interference of coding blocks.}  A coding block attached to
   a reservoir coordinate is read only from that reservoir area and from the
   corresponding almost-disjoint coding real.  Later stages use fresh reservoir
   coordinates.  Hence later forcing cannot alter the decoding of an earlier
   block, and a decoded area which agrees with the reservoir area of an earlier
   explicit coding stage yields the real intentionally written at that stage.
   Reservoir coordinates whose second-step forcing is Random, amoeba, ordinary
   Cohen, or trivial write no almost-disjoint code and therefore create no coding
   block.

   \item If a stage explicitly codes a name \(\dot u_\beta\) and
   \(u_\beta=(\dot u_\beta)^{G_\beta}\), then in the next intermediate
   extension there is a real \(r_\beta\) such that \(\Psi(r_\beta,u_\beta)\)
   holds.  In particular \(\Phi(u_\beta)\) holds at that stage.  This instance
   of \(\Phi(u_\beta)\) persists to all later locally allowable extensions
   which preserve \(\omega_1\) and use fresh reservoir coordinates.
\end{enumerate}
\end{lemma}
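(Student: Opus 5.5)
I would prove the five clauses in the order stated, since each one feeds the next.

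\emph{Normal form.} The plan is to commute the reservoir coordinates to the front. By induction on \(\eta\leq\delta\), I would show that \(\mathbb P_\eta\) is isomorphic to
\[
   \bigl(\textstyle\prod^{\mathrm{cs}}_{\xi<\eta}\mathbb C_\xi\bigr)*\dot{\mathbb S}_\eta,
\]
where \(\dot{\mathbb S}_\eta\) is a finite-support iteration of the second-step forcings \(\dot{\mathbb Q}_\xi\), each reinterpreted over the full reservoir product.

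\begin{itemize}
   \item \emph{Successor step.} Each \(\mathbb C_\xi\) lies in \(M_1\subseteq W\) and does not depend on the generic. Hence \((\mathbb P\times\mathbb C_\xi)\) followed by later \(\mathbb C\)-factors can be reordered: \(\mathbb P_\eta\times\mathbb C_\eta\) is isomorphic to \((\mathbb C_{<\eta}\times\mathbb C_\eta)*\dot{\mathbb S}_\eta\). This uses the standard fact that a product with a ground-model forcing commutes with an iteration whose iterands do not mention that factor. Then \(\dot{\mathbb Q}_\eta\) is appended to \(\dot{\mathbb S}\).
   \item \emph{Limit step.} Mixed support (countable on reservoirs, finite on iterands) matches countable support on the product followed by a finite-support direct limit.
   \item \emph{Iterands.} Each \(\dot{\mathbb Q}_\xi\) is c.c.c. in the relevant model: Cohen forcing and the almost-disjoint coding are \(\sigma\)-centered, and the Random algebra and amoebas are \(\sigma\)-linked by the fixed Borel-code presentations. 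Being \(\sigma\)-linked is witnessed by a countable partition and is upward absolute, so these forcings stay c.c.c. in the larger extension. It follows that \(\dot{\mathbb S}\) is c.c.c.
\end{itemize}

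\emph{Properness and cardinal preservation.} The reservoir product \(\mathbb C_{\mathrm{res}}\) is a countable-support product of forcings that are \(\sigma\)-closed in \(M_1\). I expect the main obstacle here: over \(W=M_1[H]\) these forcings are no longer \(\sigma\)-closed, because \(W\) has new countable sequences from \(Br\). I would handle this by working over \(M_1\) instead. Since \(Br\) is c.c.c. and \(\mathbb C_{\mathrm{res}}\in M_1\), I would use the product lemma to rewrite
\[
   Br*\mathbb C_{\mathrm{res}}\cong\mathbb C_{\mathrm{res}}\times Br,
\]
and then read this as \(\mathbb C_{\mathrm{res}}*Br\). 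In this order:
\begin{itemize}
   \item \(\mathbb C_{\mathrm{res}}\) is \(\sigma\)-closed in \(M_1\), so it adds no reals or \(\omega\)-sequences and preserves \(\omega_1\).
   \item \(Br\) remains c.c.c. after \(\mathbb C_{\mathrm{res}}\): a \(\sigma\)-closed forcing preserves Suslin trees, and by independence it preserves every finite product \(\prod_{\xi\in e}S_\xi\), which gives the \(\Delta\)-system argument.
\end{itemize}
Consequently \(\mathbb P_\delta\) is equivalent to a proper forcing (\(\sigma\)-closed, then c.c.c., then c.c.c.), and \(\omega_1\) is preserved.

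For \(\omega_2\), I would use \(\mathsf{GCH}\) in \(M_1\) together with \(\delta\leq\omega_2\). A countable-support product of \(\leq\omega_2\) copies of a forcing of size \(\aleph_1\) has the \(\aleph_2\)-c.c. by the \(\Delta\)-system lemma under \(\mathsf{CH}\). The c.c.c. tail then preserves \(\omega_2\) as well.

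\emph{Almost disjointness.} Suppose \(g\neq h\) are mutually generic (or merely distinct) \(M_1\)-Cohen subsets of \(\omega_1\). Pick \(\alpha\) with \(g\upharpoonright\alpha\neq h\upharpoonright\alpha\). For \(\beta\geq\alpha\) the restrictions \(g\upharpoonright\beta\) and \(h\upharpoonright\beta\) are incomparable, hence never equal. Since \(\rho\) is injective, the values \(\rho(g\upharpoonright\beta)\) and \(\rho(h\upharpoonright\beta)\) for \(\beta\geq\alpha\) are distinct. So
\[
   C_g\cap C_h\subseteq\{\rho(g\upharpoonright\beta)\mid\beta<\alpha\}\cup\{\rho(h\upharpoonright\beta)\mid\beta<\alpha\},
\]
which is countable. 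Boundedness then follows for distinct reservoir stages.

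\emph{Non-interference and persistence.} The decoding of the block at \(\beta\) depends only on \(g_\beta\), \(r_\beta\), and the fixed family \(D\). By the normal form, later stages never alter these objects: they only add fresh coordinates, and almost-disjoint decoding \(\alpha\in Y\iff r\cap d_\alpha\) finite is absolute.

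For clause (5), I would verify \(\Psi(r_\beta,u_\beta)\) as follows.
\begin{itemize}
   \item Given a countable transitive \(M\) with \(m\in\mathcal I\) and \(\omega_1^m=\omega_1^M\), Lemma~\ref{lem:recover-m1-omega1} identifies \(m\) as the true \(\mathcal J^{M_1}_{\omega_1^M}\).
   \item The reshaping of \(Y_{g_\beta,u_\beta}\) then guarantees that \(M\) decodes \(Y\cap\omega_1^M\) and reconstructs a local \(\bar{\mathcal N}\) carrying the true branch pattern for \(u_\beta\).
\end{itemize}

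Persistence uses the upward absoluteness of branches together with \(\omega_1\)-preservation, which is clause (2). The class \(\mathcal I\) is preserved by the convention in Paragraph~\ref{par:steel-preservation-convention}. Since \(\Psi\) is \(\Pi^1_3\), I would check it in later models by the same local argument rather than by appealing to an absoluteness theorem.
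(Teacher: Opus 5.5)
Your proposal follows the paper's proof essentially step for step. You commute the reservoir coordinates into one countable-support block and place that block before $Br$ over $M_1$, where it is $\sigma$-closed, keeps the finite products of $\vec S$ Suslin and hence keeps $Br$ c.c.c. The c.c.c.\ real-adding iteration follows, with $\GCH$ and a $\Delta$-system argument for $\omega_2$, the first-difference plus injectivity-of-$\rho$ argument for (3), and locality of the decoding plus freshness of the reservoir coordinates for (4)--(5).

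Your small variations are sound and slightly streamline the paper. Upward absoluteness of $\sigma$-linkedness for a fixed poset keeps the relative Random and amoeba forcings c.c.c.\ in the ambient extension. The c.c.c.\ of the tail also makes the paper's second $\Delta$-system argument for $\omega_2$ unnecessary.

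One point is as brief in your proposal as in the paper. The commuted presentation directly gives properness of the full forcing over $M_1$. Properness of the quotient $\mathbb P_\delta$ over $W$ needs the extra remark that a forcing which is $\sigma$-closed in $M_1$ stays proper after the c.c.c.\ forcing $Br$.
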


\begin{proof}
For (1), observe that the first component of every successor stage is a fresh
copy of the \(M_1\)-computed \(\omega_1\)-Cohen reservoir forcing.  These
coordinates are independent of one another and are ordered with countable
support.  They may therefore be moved to a single product part
\(\mathbb C_{\mathrm{res}}\).  The second-step forcing at a stage is then read
over the product extension.  If the stage is a relative regularity stage, the
forcing remains the fixed forcing computed in the lifted admissible base; it is
not recomputed in the larger universe.  If the stage is an explicit coding
stage, the almost-disjoint forcing is computed from the lifted localized name
and from the corresponding reservoir generic.  Ordinary Cohen and trivial
stages are unaffected.  Canonical lifting of localized presentations preserves
the well-order comparison clauses.  This gives the stated normal form, and the
forcing over \(M_1\) is obtained by placing the initial branch product
\(Br\) in front.

For (2), use the normal form from (1).  The branch product \(Br\) is c.c.c. and
has size \(\aleph_1\).  The reservoir part \(\mathbb C_{\mathrm{res}}\) is the
\(M_1\)-computed countable-support product of copies of \(\mathbb C_{M_1}\).  We
use its closure before the branch product is performed: over \(M_1\) this
reservoir block is \(\sigma\)-closed, hence it preserves \(\omega_1\) and
preserves the finite-product Suslinity of the fixed sequence \(\vec S\).  Thus,
in the reservoir extension, \(Br\) is still c.c.c.  After the branch product, the
real-adding part \(\dot{\mathbb S}\) is a finite-support iteration of c.c.c.
forcings: ordinary Cohen forcing, fixed relative Random algebras, fixed relative
amoeba forcings over admissible bases, Jensen--Solovay almost-disjoint coding
forcings, and trivial factors.  Hence \(\dot{\mathbb S}\) is c.c.c. and therefore
proper.  This commuted presentation proves preservation for the full forcing
over \(M_1\), and therefore for the post-branch quotient over \(W\).  Notice that
we do not use the post-branch reservoir quotient as a literally \(\sigma\)-closed
forcing over \(W\).

If \(\delta\leq\omega_2\), the same normal form gives preservation of
\(\omega_2\).  Since \(M_1\) satisfies \(\GCH\) and \(Br\) has size \(\aleph_1\), the branch
extension \(W\) satisfies \(\CH\) and \( (\omega_2)^\omega=\omega_2\).
The usual \(\Delta\)-system argument therefore shows that the
countable-support product of at most \(\omega_2\) many \(\aleph_1\)-sized
reservoir coordinates has the \(\aleph_2\)-chain condition.
In the reservoir extension, every real-adding iterand in \(\dot{\mathbb S}\) has
size at most \(\aleph_1\): this is clear for ordinary Cohen forcing,
Jensen--Solovay almost-disjoint coding, and trivial forcing, and for relative
relative regularity forcing it follows because admissible bases have only
\(\aleph_1\)-many relevant Borel codes.  A second \(\Delta\)-system argument
therefore shows that the finite-support real-adding iteration has the
\(\aleph_2\)-chain condition.  Thus \(\omega_2\) is preserved.

Clause (3) follows from the definition of the coding areas.  If \(g\ne h\), let
\(\xi<\omega_1\) be the first coordinate at which they differ.  Then no proper
initial segment of \(g\) of length above \(\xi\) can equal a proper initial
segment of \(h\) of length above \(\xi\).  Since \(\rho\) is one-to-one,
\(C_g\cap C_h\) is contained in the codes of the initial segments of length at
most \(\xi\), and is therefore bounded in \(\omega_1\).

For (4), the decoding predicate is local to a single reservoir area and to the
almost-disjoint real added for the reshaped set associated with that area.
Distinct reservoir areas are almost disjoint by (3), and all later stages use
fresh reservoir coordinates.  Thus later coding blocks may add new codes, but
they do not change the old almost-disjoint pattern and cannot make a disjoint
block decode the same old object.  If a purported decoding uses the same
reservoir area as an earlier explicit coding stage, then the David decoding on
that area is the one produced at that stage, so it yields the intentionally
coded real.  If the reservoir area belongs to a non-coding stage, there is no
corresponding almost-disjoint coding real and hence no coding block to decode.

For (5), after \(g_\beta\) is added, the set \(Y_{g_\beta,u_\beta}\) is exactly
the David reshaping of the model code associated with \(u_\beta\), and the real
\(r_\beta\) added by \(\mathbb A_D(Y_{g_\beta,u_\beta})\) almost-disjointly
decodes this \(Y\) over every relevant countable model.  Hence
\(\Psi(r_\beta,u_\beta)\) holds by the definition of \(\Psi\).  The persistence
follows because later locally allowable stages preserve \(\omega_1\), use fresh
reservoir coordinates, and do not change the already added real \(r_\beta\) or
the initial segment of the almost-disjoint family used in the decoding.
\end{proof}

We next fix the branch-footprint notation used in the omission arguments.  This
notation is not part of the definition of admissibility.

Let \(A\subseteq\delta\) be a countable admissible support, let
\(\vec\sigma\) be a finite tuple of \(\mathbb P_A\)-names for reals, and let
\(G_\delta\subseteq\mathbb P_\delta\) be generic.  The names in
\(\vec\sigma\), the bookkeeping entries used on \(A\), and the names appearing
in the stages of \(\mathcal R_A\) may use parameters from \(W=M_1[H]\).  Each such
parameter is represented by its \(<_{M_1}\)-least \(Br\)-name.  Let
\[
   B_0(A,\vec\sigma)\subseteq\omega_1
\]
be the union of the supports of these \(Br\)-names.  If \(\eta\in A\) is a
relative Random, measure-amoeba, or category-amoeba stage, and
\[
   (A^{\mathrm{reg}}_\eta,
    \dot W^{\mathrm{reg}}_\eta,
    B^{0,\mathrm{reg}}_\eta,
    \dot B^{\mathrm{reg}}_\eta)
\]
is the canonical admissibility witness for its base, then we also put
\(B^{0,\mathrm{reg}}_\eta\) into \(B_0(A,\vec\sigma)\).  Thus
\(B_0(A,\vec\sigma)\) is countable.

Let \(R(A)\) be the set of relative Random and relative amoeba stages in
\(A\), and let \(E(A)\) be the set of explicit coding stages in \(A\).  If
\(\eta\in R(A)\), set
\[
   B^{\mathrm{reg},G}_\eta
   =
   (\dot B^{\mathrm{reg}}_\eta)^{G_{A^{\mathrm{reg}}_\eta}}.
\]
If \(\eta\in E(A)\), let \(C^G_\eta\) be the coding area determined by the
reservoir generic at \(\eta\), and let \(u^G_\eta\) be the real coded at
\(\eta\).  Define
\[
\begin{split}
   U(A,\vec\sigma,G_\delta)=
   B_0(A,\vec\sigma)
   &\cup
   \bigcup_{\eta\in R(A)}B^{\mathrm{reg},G}_\eta \\
   &\cup
   \bigcup_{\eta\in E(A)}
      \operatorname{Sel}(C^G_\eta,u^G_\eta).
\end{split}
\]
Thus \(U(A,\vec\sigma,G_\delta)\), informally the branch footprint of
\(A\) and \(\vec\sigma\) in the generic extension, records the branch
coordinates used by the restricted presentation and the displayed names.  It is
generated by countably many coding areas and a countable set of branch-product
coordinates.  It may have size \(\omega_1\).  At a coding
stage only the selected coordinates
\(\operatorname{Sel}(C^G_\eta,u^G_\eta)\) are included; the other member of
\(\{\omega\cdot\gamma+2n,\omega\cdot\gamma+2n+1\}\) is not included
unless it is selected by another coding stage or belongs to a regularity-base
footprint.

\begin{lemma}[Countable localization and branch footprints]
\label{lem:countable-localization-real-names}
Let \(\mathcal R_\delta\) be a \(0\)-allowable local hybrid presentation over
\(W\), and let \(\mathbb P_\delta\) be its associated post-branch forcing.  If
\(\vec\tau\) is a finite tuple of \(\mathbb P_\delta\)-names for reals or
natural numbers, then there are a countable admissible support
\(A\subseteq\delta\) and a tuple \(\vec\sigma\) of \(\mathbb P_A\)-names such
that
\[
   \mathbb P_\delta\Vdash
   \vec\tau=\vec\sigma^{\uparrow\delta}.
\]
Consequently, if \(G_\delta\subseteq\mathbb P_\delta\) is generic and
\(x\in W[G_\delta]\cap2^\omega\), then
\[
   x\in W[G_A]
\]
for some countable admissible \(A\subseteq\delta\), where
\(G_A=G_\delta\cap\mathbb P_A\).

Moreover, for every such \(A\), every finite tuple \(\vec\sigma\), and every
\(G_\delta\), the branch footprint has the form
\[
   U(A,\vec\sigma,G_\delta)
   =B\cup\bigcup_{j\in J}\operatorname{Sel}(C_j,u_j),
\]
where \(J\) is countable, \(B\in[\omega_1]^\omega\), each \(C_j\) is a reservoir
coding area occurring in the local presentation, and each \(u_j\in2^\omega\).
Thus the conclusions of Lemma~\ref{lem:countably-many-coding-areas} apply to
\(U(A,\vec\sigma,G_\delta)\).
\end{lemma}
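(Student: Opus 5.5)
We argue by induction on \(\delta\), proving the localization clause and the footprint clause together.

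The key input is the normal form of Lemma~\ref{lem:basic-zero-allowable}(1). There, \(\mathbb P_\delta\) is rearranged as \(\mathbb C_{\mathrm{res}}*\dot{\mathbb S}\), where \(\mathbb C_{\mathrm{res}}\) has countable support and \(\dot{\mathbb S}\) is a finite-support iteration of c.c.c. forcings.

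\textbf{A single real name.} Let \(\dot x\) be a name for a real. Since \(\dot{\mathbb S}\) is c.c.c., \(\dot x\) is determined by countably many maximal antichains of \(\dot{\mathbb S}\), each of them countable. Each condition in these antichains has finite support on the \(\dot{\mathbb S}\)-coordinates. We work with a \(\mathbb C_{\mathrm{res}}\)-name for the relevant antichains. We cannot assume \(\mathbb C_{\mathrm{res}}\) is \(\sigma\)-closed over \(W\) (the proof of Lemma~\ref{lem:basic-zero-allowable}(2) warns against this). Instead we argue over \(M_1\): the reservoir block is \(\sigma\)-closed there and \(Br\) remains c.c.c. in the reservoir extension. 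Choosing names \(<_{M_1}\)-canonically, the reservoir conditions mentioned in this name involve only countably many coordinates, since each condition has countable support and countably many conditions occur.

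This yields a countable set \(A_0\subseteq\delta\) of stages on which \(\dot x\) depends directly. We now close \(A_0\) off under dependence. If \(\eta\in A_0\) is an explicit coding stage, the name \(\dot u_\eta\) is a \(\mathbb P_\eta\)-name and is localized on a countable set by the induction hypothesis at \(\eta<\delta\). If \(\eta\) is a regularity stage, its canonical support witness \(A^{\mathrm{reg}}_\eta\) is countable by Definition~\ref{def:admissible-random-base}. If \(\eta\) is a well-order stage, we add the canonical localized presentations of \(\dot z_0,\dot z_1\); these can be taken countable by induction, because the least presentation is at least as good as the countable one produced there.

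Iterating this closure \(\omega\) times and taking the union gives a countable \(A\). We then verify that \(A\) is admissible by induction on \(\eta\le\delta\), directly from Definition~\ref{def:admissible-supports-canonical-lifts}. Successor steps are exactly the closure conditions. At limits, coherence holds because all choices are the \(<_{M_1}\)-least ones, which do not depend on \(A\). Next we rebuild the name \(\sigma\) on \(\mathbb P_A\) from the antichains, whose conditions now live in \(\mathbb P_A\). Then \(\mathbb P_\delta\Vdash\dot x=\sigma^{\uparrow\delta}\), because \(i_A^\delta\) is a complete embedding and the antichains stay maximal in \(\mathbb P_A\). Finite tuples are handled by taking the union of the supports.

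\textbf{The generic statement.} The second sentence, \(x\in W[G_A]\), follows at once: take a name for \(x\) and apply the identity \((\sigma^{\uparrow\delta})^{G_\delta}=\sigma^{G_A}\).

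\textbf{The footprint decomposition.} Unfold the definition of \(U(A,\vec\sigma,G_\delta)\).
\begin{enumerate}
\item \(B_0(A,\vec\sigma)\) is a countable union of supports of \(<_{M_1}\)-least \(Br\)-names. Each such support is countable, because \(Br\) is c.c.c. with finite support and a real name uses countably many conditions. Together with the countable sets \(B^{0,\mathrm{reg}}_\eta\) for \(\eta\in R(A)\), this gives the countable set \(B\).
\item Each \(\operatorname{Sel}(C^G_\eta,u^G_\eta)\) with \(\eta\in E(A)\) is one term \(\operatorname{Sel}(C_j,u_j)\), indexed by \(j=\eta\).
\item Each regularity footprint \(B^{\mathrm{reg},G}_\eta\) is by definition a countable set together with \(\operatorname{Sel}\)-terms indexed by \(E(A^{\mathrm{reg}}_\eta)\), which is countable. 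Its countable part goes into \(B\) and its \(\operatorname{Sel}\)-terms go into \(J\).
\end{enumerate}
Thus \(J\) is a countable union of countable sets, and the areas \(C_j\) come from mutually generic reservoir coordinates. Hence Lemma~\ref{lem:countably-many-coding-areas} applies.

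\textbf{Expected main obstacle.} The hardest step is the first one: showing that a real name depends on only countably many reservoir coordinates even though the reservoir has countable support rather than finite support. We would handle this by commuting the forcings and working in the \(M_1\)-reservoir extension before \(Br\), as in the proof of Lemma~\ref{lem:basic-zero-allowable}(2). The point is that every new real lies in the extension by \(\dot{\mathbb S}\) over the reservoir extension. Since \(\mathbb C_{\mathrm{res}}\) adds no reals over \(M_1\) and the relevant names are countable sets of conditions, each with countable support, the dependence on reservoir coordinates is countable.
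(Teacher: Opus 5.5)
Your architecture is the paper's. You commute to \(\mathbb C_{\mathrm{res}}*(Br*\dot{\mathbb S})\) over \(M_1\), use the c.c.c. of the second factor, close the coordinate set under the witnesses \(A^{\mathrm{reg}}_\eta\) and (by induction) under coded names, and unfold \(U(A,\vec\sigma,G_\delta)\) term by term. Two steps, however, fail as written.

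\textbf{The reservoir count.} The count of reservoir coordinates is wrong, because \(\mathbb P_\delta\) is not c.c.c. A single reservoir coordinate already has the maximal antichain \(\{s_\alpha\mid\alpha<\omega_1\}\), where \(s_\alpha\) consists of \(\alpha\) zeros followed by a \(1\). So a \(\mathbb C_{\mathrm{res}}\)-name for your antichains is not a countable set of conditions, and \(\sigma\)-closure does not change that.

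The paper instead fixes the reservoir generic. In \(M_1[g_{\mathrm{res}}]\) the nice \(Br*\dot{\mathbb S}\)-names are countable, and their conditions have finite second-step support. Since \(\mathbb C_{\mathrm{res}}\) adds no countable sequences of \(M_1\)-ordinals, the resulting countable coordinate set \(A_0\) lies in \(M_1\). But \(A_0\) depends on \(g_{\mathrm{res}}\). What this gives is, for each generic \(G_\delta\), a countable admissible \(A\) with \(\vec\tau^{G_\delta}=\vec\sigma^{G_A}\); equivalently, densely many conditions force \(\vec\tau\) to be read on some countable support. That is exactly what the second clause and the later uses (Lemmas~\ref{lem:no-unwanted-codes} and~\ref{lem:random-cohen-covering-final}) need.

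A single \(A\) working below \(1\) is not available for arbitrary names. Let stages \(1,\dots,\omega_1\) carry ordinary Cohen tags. Let \(\tau\) be the mixture that equals the Cohen real \(\dot c_{\alpha+1}\) below the condition putting \(s_\alpha\) on reservoir coordinate \(0\). For countable \(A\) and \(\alpha+1\notin A\), that condition forces \(\tau\) to be Cohen over \(W[\dot G_A]\). Hence \(1\Vdash\tau=\sigma^{\uparrow\delta}\) fails for every \(\mathbb P_A\)-name \(\sigma\). So replace your counting argument by the paper's generic-wise one.

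\textbf{The well-order closure.} Delete the closure under well-order stages. Its justification does not hold: the canonical localized presentation is only \(<_{M_1}\)-least among codes. Being \(<_{M_1}\)-below a presentation with countable support says nothing about the size of its own support; the paper stresses that \(\operatorname{supp}_{\mathrm{loc}}\) has no size clause. So this step could make \(A\) uncountable.

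The step is also unnecessary. In the resolved bookkeeping, a nontrivial well-order stage is just the tag \(\operatorname{Code}(\langle\dot z_0,\dot z_1\rangle)\) or \(\operatorname{Code}(\langle\dot z_1,\dot z_0\rangle)\). Definition~\ref{def:admissible-supports-canonical-lifts} only requires the coded name to be read over \(\mathbb P_{A\cap\eta}\), which your coding-stage clause already supplies. The comparison of presentations is part of the tag and is not re-evaluated on \(A\).

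With these two repairs your argument, including the footprint unfolding, is the paper's.
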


\begin{proof}
We prove the localization statement by induction on \(\delta\).  We use the
normal form from Lemma~\ref{lem:basic-zero-allowable}, but in the commuted full
presentation over \(M_1\).  Thus the full forcing is equivalent to
\[
   \mathbb C_{\mathrm{res}}*(Br*\dot{\mathbb S}),
\]
where \(\mathbb C_{\mathrm{res}}\) is the \(M_1\)-computed countable-support
product of the reservoir coordinates, and, in the \(\mathbb C_{\mathrm{res}}\)-extension,
\(Br*\dot{\mathbb S}\) is a finite-support c.c.c. iteration of the branch
product together with the real-adding second-step coordinates.  The reservoir
block is used here before the branch product.  It is \(\sigma\)-closed over
\(M_1\), and hence adds no reals, and no countable sequences of
\(M_1\)-ordinals.  We do not use the post-branch reservoir quotient over
\(W=M_1[H]\) as a closed forcing.

Replace \(\vec\tau\) by equivalent names in this commuted presentation.  In the
\(\mathbb C_{\mathrm{res}}\)-extension, take nice names for these reals and
natural numbers with respect to the c.c.c. forcing \(Br*\dot{\mathbb S}\).  Thus,
for each bit of each real name, and for each natural-number name, there is a
countable maximal antichain deciding the relevant value.  Each condition in
such an antichain has finite branch support and finite support in the
real-adding second-step coordinates.  Since \(\mathbb C_{\mathrm{res}}\) adds no
countable sequences of ground ordinals, the union of the second-step supports
appearing in all these antichains is represented by a countable set of hybrid
coordinates.  Let this set be \(A_0\subseteq\delta\).  The branch coordinates
which occur in the \(W\)-parameters of these names are recorded separately in
\(B_0\); they are not coordinates of the post-branch forcing.

We now close \(A_0\) under the dependencies needed to make the restricted
hybrid presentation meaningful.  Given \(A_n\), define \(A_{n+1}\) as follows.
If \(\eta\in A_n\) is a relative Random, measure-amoeba, or category-amoeba
stage, let
\[
   (A^{\mathrm{reg}}_\eta,
    \dot W^{\mathrm{reg}}_\eta,
    B^{0,\mathrm{reg}}_\eta,
    \dot B^{\mathrm{reg}}_\eta)
\]
be the canonical support witness for the regularity base at \(\eta\).  Thus
\[
   \dot W^{\mathrm{reg}}_\eta
      =L[T_2,\dot a^\eta_0,
             \ldots,\dot a^\eta_{k_\eta-1}]
\]
for finitely many names for reals read on
\(A^{\mathrm{reg}}_\eta\).  Put \(A^{\mathrm{reg}}_\eta\subseteq A_{n+1}\).
If \(\eta\in A_n\) is an explicit coding stage, apply the induction hypothesis
below \(\eta\) to the name which is coded at \(\eta\), and add the resulting
countable admissible support to \(A_{n+1}\).  Ordinary Cohen and trivial stages
require no additional support.  Finally let
\[
   A=\bigcup_{n<\omega}A_n.
\]
Then \(A\) is countable.

By construction, every nontrivial stage kept in \(A\) has all earlier data
needed to interpret its tag: regularity stages have their finite real-parameter
base read on the included support, and coding stages have the coded real read
on the included support.  Hence \(A\) is admissible in the sense of
Definition~\ref{def:admissible-supports-canonical-lifts}, and the canonical
embedding identifies \(\mathbb P_A\) with a complete subforcing of
\(\mathbb P_\delta\).  The nice names chosen above mention only stages in
\(A\), after replacing every kept stage by its restricted version.  Therefore
they define a tuple \(\vec\sigma\) of \(\mathbb P_A\)-names such that
\[
   \mathbb P_\delta\Vdash
   \vec\tau=\vec\sigma^{\uparrow\delta}.
\]
This proves the localization statement.  The assertion for a real
\(x\in W[G_\delta]\) follows by applying the first part to a name for \(x\).

It remains to record the footprint form.  By definition,
\(B_0(A,\vec\sigma)\) is countable.  Each explicit coding stage in \(A\)
contributes one set \(\operatorname{Sel}(C,u)\).  If \(\eta\in A\) is a
relative Random, measure-amoeba, or category-amoeba stage, then the canonical
support witness for its base contributes
\[
   B^{0,\mathrm{reg}}_\eta
   \cup
   \bigcup_{\nu\in E(A^{\mathrm{reg}}_\eta)}
      \operatorname{Sel}(C_\nu,u_\nu).
\]
Here \(A^{\mathrm{reg}}_\eta\) is the countable support on which the finitely
many real parameters generating the displayed \(L[T_2,\vec a]\)-base are read,
and \(B^{0,\mathrm{reg}}_\eta\) is the countable branch-product support of the
\(W\)-parameters occurring in those names.  Since \(A\) is countable, all
selected coding-area contributions can be enumerated as
\[
   (C_j,u_j)_{j\in J}
   \qquad\text{with }J\text{ countable},
\]
together with one countable set \(B\subseteq\omega_1\).  Hence
\[
   U(A,\vec\sigma,G_\delta)
   =B\cup\bigcup_{j\in J}\operatorname{Sel}(C_j,u_j).
\]
Lemma~\ref{lem:countably-many-coding-areas} applies to every set of this form.
\end{proof}

\begin{lemma}
\label{lem:omit-unused-branch-coordinate}
Let \(\mathcal R_\delta\) be a \(0\)-allowable local hybrid presentation over
\(W\), let \(G_\delta\subseteq\mathbb P_\delta\) be generic, and let
\(A\subseteq\delta\) be a countable admissible support.  Let \(\vec\sigma\) be a
finite tuple of \(\mathbb P_A\)-names for reals.  Suppose that
\(\xi<\omega_1\) satisfies
\[
   \xi\notin U(A,\vec\sigma,G_\delta).
\]
Let \(H\subseteq Br\) be the branch-product generic used to form \(W\), and set
\[
   W_\xi=M_1[H\upharpoonright(\omega_1\setminus\{\xi\})].
\]
Then \(\mathcal R_A\) is interpretable over \(W_\xi\).  If
\(G_A=G_\delta\cap\mathbb P_A\), then \(G_A\) is generic over \(W_\xi\), the
interpretations of the names in \(\vec\sigma\) belong to \(W_\xi[G_A]\), and
\[
   W_\xi[G_A]\models ``S_\xi\text{ is Suslin}.''
\]
\end{lemma}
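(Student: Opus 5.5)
We argue by induction along the stages of the restricted presentation \(\mathcal R_A\), after first moving the branch coordinate \(\xi\) to the very end. Since \(Br\) is a finite-support product, \(H\) factors as \(H\upharpoonright(\omega_1\setminus\{\xi\})\times b_\xi\), and \(W=W_\xi[b_\xi]\). Independence of \(\vec S\) gives that \(S_\xi\) is Suslin in \(W_\xi\): any finite subproduct containing \(\xi\) is Suslin, so the remaining coordinates add no antichain of size \(\omega_1\) to \(S_\xi\). The goal is to show that every ingredient of \(\mathcal R_A\) and of the names \(\vec\sigma\) is already defined in \(W_\xi\), and that forcing with \(\mathbb P_A\) over \(W_\xi\) preserves the Suslinity of \(S_\xi\).

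\textbf{Interpretability.} We proceed by induction on \(\eta\in A\). The \(W\)-parameters occurring in the names and in the bookkeeping entries are represented by their \(<_{M_1}\)-least \(Br\)-names, which have supports inside \(B_0(A,\vec\sigma)\). Since \(\xi\) is not in this set, these parameters are evaluated by \(H\upharpoonright(\omega_1\setminus\{\xi\})\) and lie in \(W_\xi\). The reservoir coordinates are \(M_1\)-computed and need no branch data. At a relative regularity stage \(\eta\), the base \(L[T_2,\vec a]\) is generated by names read on \(A^{\mathrm{reg}}_\eta\subseteq A\), with branch parameters in \(B^{0,\mathrm{reg}}_\eta\subseteq B_0\). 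By induction the base is therefore computed identically in \(W_\xi[G_{A\cap\eta}]\), and the forcing is the same set of conditions. At an explicit coding stage, the reshaped set \(Y_{g_\eta,u_\eta}\) requires the model \(\mathcal N_{g,w}\), whose only branch data is \(\mathcal B_{g,w}\), namely the branches \(b_\zeta\) for \(\zeta\in\operatorname{Sel}(C^G_\eta,u^G_\eta)\subseteq U\). Since \(\xi\notin U\), all of these branches lie in \(W_\xi\), and \(u_\eta\) lies there by induction, so \(\mathbb A_D(Y)\) is the same forcing. Because the generic condition does not refer to anything beyond the components, genericity of \(G_A\) over \(W_\xi\) follows from genericity over \(W\supseteq W_\xi\), using that \(\mathbb P_A\in W_\xi\) and that dense sets of \(W_\xi\) are dense sets in \(W\). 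The values \(\sigma^{G_A}\) then lie in \(W_\xi[G_A]\).

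\textbf{Suslinity.} We use the normal form from Lemma~\ref{lem:basic-zero-allowable}, computed over \(W_\xi\). The reservoir product is \(M_1\)-computed and is \(\sigma\)-closed over \(M_1\). We commute it in front of \(Br\upharpoonright(\omega_1\setminus\{\xi\})\), as is done in that lemma, so finite products of the \(S_\zeta\) remain Suslin and \(S_\xi\) stays Suslin after the remaining branch coordinates are added. The real-adding part is a finite-support iteration of Cohen forcing, \(\sigma\)-linked amoebas, Random algebras, and almost-disjoint coding forcings. All of these are \(\sigma\)-centered or \(\sigma\)-linked, hence Knaster, and they preserve Suslin trees. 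Finite-support iterations of such forcings preserve Suslin trees, with limit stages handled by the standard argument for Knaster iterations: \(\mathbb P\times S_\xi\) is c.c.c. because \(\mathbb P\) is Knaster. Therefore \(S_\xi\) is Suslin in \(W_\xi[G_A]\).

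\textbf{Main obstacle.} The delicate step is the coding stage. We must confirm that \(\mathcal N_{g,w}\) really needs no branch outside \(\operatorname{Sel}(C,u)\). This matters because the paper's choice of \(\mathcal N\), namely a transitive model of size \(\aleph_1\) chosen as the \(<_{M_1}\)-least enumeration, could a priori be computed in \(W\) with reference to \(b_\xi\). The first approach to try is to stipulate, or verify, that \(\mathcal N\) is taken as a canonical hull of \(m_\infty\cup\{C_g,\mathcal B_{g,w}\}\), for instance an \(L_\lambda[m_\infty,C_g,\mathcal B_{g,w}]\). Such a hull is definable in \(W_\xi[G_{A\cap\eta}]\) from the same data, so \(Y\) is the same set in both models. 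A secondary concern is the reservoir quotient over \(W_\xi\), which should be handled, as before, only through the commuted presentation over \(M_1\).
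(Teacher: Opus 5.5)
Your proposal is correct and follows essentially the same route as the paper. Both arguments run an induction along \(\mathcal R_A\), using \(\xi\notin U(A,\vec\sigma,G_\delta)\) to see that all \(W\)-parameters, the regularity bases \(L[T_2,\vec a]\), and the David codes (which use only branches indexed by \(\operatorname{Sel}(C^G_\eta,u^G_\eta)\)) are already computed in \(W_\xi\); Suslinity of \(S_\xi\) then comes from placing the reservoir before the branch product and from the Knaster/\(\sigma\)-linked real-adding iterands. Your concern that \(\mathcal N_{g,w}\) must be chosen canonically from \(m_\infty\), \(C_g\), \(\mathcal B_{g,w}\) is a step the paper asserts without argument, and your canonical \(L_\lambda[m_\infty,C_g,\mathcal B_{g,w}]\) is the natural way to make it precise.
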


\begin{proof}
We argue by induction over the restricted presentation \(\mathcal R_A\).  Since
\(\xi\notin B_0(A,\vec\sigma)\), every ground-model parameter from
\(W=M_1[H]\) used by \(\vec\sigma\), by the bookkeeping on \(A\), or by a
stage of \(\mathcal R_A\), is represented by a \(<_{M_1}\)-least \(Br\)-name whose
support omits \(\xi\).  Hence all such parameters are in \(W_\xi\).
Moreover the product on the branch coordinates different from \(\xi\) preserves
\(S_\xi\), by independence of \(\vec S\).

Suppose first that \(\eta\in A\) is a relative Random, measure-amoeba, or
category-amoeba stage.  Let
\[
   (A^{\mathrm{reg}}_\eta,
    \dot W^{\mathrm{reg}}_\eta,
    B^{0,\mathrm{reg}}_\eta,
    \dot B^{\mathrm{reg}}_\eta)
\]
be the admissibility witness for its base.  Since \(A\) is admissible,
\[
   A^{\mathrm{reg}}_\eta\subseteq A\cap\eta.
\]
Also
\[
   B^{0,\mathrm{reg}}_\eta\subseteq B_0(A,\vec\sigma)
   \quad\text{and}\quad
   (\dot B^{\mathrm{reg}}_\eta)^{G_{A^{\mathrm{reg}}_\eta}}
      \subseteq U(A,\vec\sigma,G_\delta).
\]
Thus \(\xi\) is outside the branch footprint of the base.  By
Definition~\ref{def:admissible-random-base}, this witness is obtained from
finitely many \(\mathbb P_{A^{\mathrm{reg}}_\eta}\)-names for reals
\[
   \dot a^\eta_0,\ldots,\dot a^\eta_{k_\eta-1}
\]
and
\[
   \dot W^{\mathrm{reg}}_\eta
   =L[T_2,\dot a^\eta_0,\ldots,\dot a^\eta_{k_\eta-1}].
\]
The set \(B^{0,\mathrm{reg}}_\eta\) records the branch-product supports of the
\(W\)-parameters occurring in the codes of these real names, and
\((\dot B^{\mathrm{reg}}_\eta)^{G_{A^{\mathrm{reg}}_\eta}}\) records the branch
coordinates generated by the explicit coding stages in
\(A^{\mathrm{reg}}_\eta\).  Since both are contained in
\(U(A,\vec\sigma,G_\delta)\) and \(\xi\notin U(A,\vec\sigma,G_\delta)\), the
finite tuple
\[
   \langle \dot a^\eta_i\mid i<k_\eta\rangle
\]
is interpreted in \(W_\xi[G_{A^{\mathrm{reg}}_\eta}]\) with the same value as
in \(W[G_{A^{\mathrm{reg}}_\eta}]\).  Hence the base interpreted in the
\(\xi\)-omitting model is the same model
\[
   L[T_2,a^\eta_0,\ldots,a^\eta_{k_\eta-1}]
\]
as in the full extension; here \(T_2\in M_1\) is fixed ground-model data.  The
iterand at \(\eta\) is therefore the same relative Random, measure-amoeba, or
category-amoeba forcing computed in this base; it is not recomputed in the
ambient model.  The relative Random algebra is Knaster, and the two amoeba
forcings used here are \(\sigma\)-linked.  Hence the iterand preserves
\(S_\xi\).

If \(\eta\in A\) is an ordinary Cohen or trivial stage, the conclusion is
immediate.  The reservoir coordinate at \(\eta\) is the fixed
\(M_1\)-Cohen forcing and is independent of the branch coordinate \(\xi\); in
the normal form of Lemma~\ref{lem:basic-zero-allowable} it is placed before the
branch product and preserves \(S_\xi\).

It remains to consider an explicit coding stage \(\eta\in A\).  Let
\(\dot u_\eta\) be the name coded at \(\eta\), and let \(C_\eta^G\) be the
coding area of the reservoir generic.  By admissibility, \(\dot u_\eta\) is
read on an earlier support contained in \(A\).  The branch coordinates used by
the David code at this stage are precisely
\[
   \operatorname{Sel}(C_\eta^G,u_\eta^G).
\]
Since this set is contained in \(U(A,\vec\sigma,G_\delta)\) and
\(\xi\notin U(A,\vec\sigma,G_\delta)\), the branch \(b_\xi\) is not used in
forming \(Y_{g_\eta,u_\eta}\).  Hence the same reshaped set and the same
almost-disjoint coding forcing are computed over the \(\xi\)-omitting model.
The Jensen--Solovay almost-disjoint forcing is Knaster, and so preserves
\(S_\xi\).

At limit stages the restricted presentation uses mixed support.  The support in
\(A\) is countable on reservoir coordinates and finite on the c.c.c. real-adding
coordinates at each condition.  The preceding successor analysis therefore
iterates along \(A\).  Thus \(\mathcal R_A\) is defined over \(W_\xi\),
\(G_A\) is generic over \(W_\xi\), the tuple \(\vec\sigma^{G_A}\) belongs to
\(W_\xi[G_A]\), and no stage of the restricted forcing adds a branch through
\(S_\xi\).  Hence \(S_\xi\) remains Suslin in \(W_\xi[G_A]\).
\end{proof}

\begin{lemma}[No unwanted \(\Phi\)-codes]\label{lem:no-unwanted-codes}
Let \(\mathcal R_\delta\) be a \(0\)-allowable local hybrid presentation over \(W\),
and let \(G_\delta\subseteq\mathbb P_\delta\) be generic.  Suppose
that \(w\in W[G_\delta]\cap 2^\omega\) and
\[
   W[G_\delta]\models \Phi(w).
\]
Then there is an explicit coding stage \(\beta<\delta\) with coded name
\(\dot u_\beta\) such that
\[
   w=(\dot u_\beta)^{G_\beta}.
\]
Equivalently, Random stages, amoeba stages, ordinary Cohen stages, trivial
stages, and coding stages for other reals do not create new instances of
\(\Phi\).
\end{lemma}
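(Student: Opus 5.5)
We argue by contradiction, using the branch-omission machinery of Lemmas~\ref{lem:countable-localization-real-names} and~\ref{lem:omit-unused-branch-coordinate} together with Lemma~\ref{lem:inner-model-decoded-by-Phi}. Suppose $W[G_\delta]\models\Phi(w)$ via a witness $r$, so $\Psi(r,w)$ holds, and suppose that $w$ is not equal to $(\dot u_\beta)^{G_\beta}$ for any explicit coding stage $\beta<\delta$.

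\textbf{Step 1: decode a global coding area.} By Lemma~\ref{lem:inner-model-decoded-by-Phi}, $L[r,m_\infty]$ decodes from $r$ a set $C_r\subseteq\omega_1$ together with selected branches through $S_{\tau_w(\gamma,n)}$ for all $\gamma\in C_r$ and $n<\omega$. Since $C_r$ is an $M_1$-Cohen coding area of size $\omega_1$, these are genuine $\omega_1$-branches, present already in $L[r,m_\infty]\subseteq W[G_\delta]$.

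\textbf{Step 2: localize.} By Lemma~\ref{lem:countable-localization-real-names}, choose a countable admissible support $A$ and $\mathbb P_A$-names $\vec\sigma$ with $r,w\in W[G_A]$. The same lemma gives the branch footprint in the form
\[
   U=U(A,\vec\sigma,G_\delta)=B\cup\textstyle\bigcup_{j\in J}\operatorname{Sel}(C_j,u_j),
\]
where $J$ is countable and each $C_j$ is the area of an explicit coding stage $\eta_j\in A$ with $u_j=u^G_{\eta_j}$.

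\textbf{Step 3: find an unused selected coordinate.} We split into the two cases of Lemma~\ref{lem:countably-many-coding-areas}.
\begin{itemize}
   \item If $C_r\cap C_j$ is bounded for every $j$, clause (1) gives some $\gamma\in C_r$ whose whole $\omega$-block avoids $U$. Take $\xi=\tau_w(\gamma,0)$.
   \item If $C_r\cap C_{j_0}$ is unbounded for some $j_0$, then by assumption $w\neq u_{j_0}$. Clause (2) then yields $n$ and $\gamma\in C_r\cap C_{j_0}$ with $\xi=\tau_w(\gamma,n)\notin U$.
\end{itemize}
In either case $\xi\notin U$, while $\xi$ is a coordinate selected by the decoded pattern of $w$.

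\textbf{Step 4: contradiction.} Apply Lemma~\ref{lem:omit-unused-branch-coordinate} to get $r,w\in W_\xi[G_A]$ and $W_\xi[G_A]\models$ ``$S_\xi$ is Suslin''. We want $L[r,m_\infty]\subseteq W_\xi[G_A]$. Here $m_\infty=\mathcal J^{M_1}_{\omega_1}\in M_1$, so this holds provided $W_\xi[G_A]$ is a forcing extension of the required kind, which is where Lemma~\ref{lem:recover-m1-omega1} is used. Then, as in the proof of Lemma~\ref{lem:inner-model-decoded-by-Phi}, the decoding of $r$ inside $W_\xi[G_A]$ produces an $\omega_1$-branch through $S_\xi$. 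This contradicts the Suslinity of $S_\xi$ there. The decoding in the smaller model is the same as before because $\Psi$ is evaluated over countable models and $\omega_1$ is preserved.

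\textbf{Main obstacle.} The delicate point is the unbounded case of Step 3. We must ensure that an intersection of $C_r$ with a coding area $C_j$ can only be unbounded when $C_r$ agrees with the area intentionally used at stage $\eta_j$, and hence that the real $u_j$ really is the one written there (Lemma~\ref{lem:basic-zero-allowable}(4)). We must also check that the footprint contains only the selected coordinates, so that the opposite member $\tau_w(\gamma,n)$ of each pair is free. A secondary concern is that the decoded area $C_r$ need not be one of the reservoir areas at all. The plan is to treat it only as a subset of $\omega_1$ of size $\omega_1$, which is exactly the generality in which Lemma~\ref{lem:countably-many-coding-areas}(1) is stated.
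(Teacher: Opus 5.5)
Your proposal is correct and follows essentially the same route as the paper: decode $C_r$ and the selected branches in $L[r,m_\infty]$, localize $r$ on a countable admissible support, apply Lemma~\ref{lem:countably-many-coding-areas}(1) or (2) according to whether $C_r$ meets some footprint coding area unboundedly, and so obtain an omitted selected coordinate $\xi$ whose Suslinity in $W_\xi[G_A]$ gives the contradiction. The paper's separate treatment of the case $r\in W$ is subsumed by your general argument. The worries in your ``main obstacle'' paragraph are already discharged by clause (2) of that lemma and by the definition of the footprint, and no appeal to Lemma~\ref{lem:recover-m1-omega1} is needed: since $m_\infty\in M_1\subseteq W_\xi[G_A]$, you get $L[r,m_\infty]\subseteq W_\xi[G_A]$ directly, so the branch through $S_\xi$ found in Step~1 already lies in $W_\xi[G_A]$.
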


\begin{proof}
Fix \(r\in W[G_\delta]\) such that \(\Psi(r,w)\).  Put
\[
   m_\infty=\mathcal J^{M_1}_{\omega_1}.
\]
By Lemma~\ref{lem:inner-model-decoded-by-Phi}, the model
\(L[r,m_\infty]\) decodes a coding area \(C_r\subseteq\omega_1\) and a selected
branch pattern.  Thus \(|C_r|=\omega_1\), and for every \(\gamma\in C_r\) and
\(n<\omega\), if
\[
   \xi=\tau_w(\gamma,n),
\]
then \(L[r,m_\infty]\) contains an \(\omega_1\)-branch through \(S_\xi\).

First suppose that \(r\in W\).  Since \(W=M_1[H]\) is obtained by the
finite-support branch product, there is \(B\in[\omega_1]^\omega\) such that
\(r\in M_1[H\upharpoonright B]\).  Since \(w\in L[r,m_\infty]\), also
\(w\in M_1[H\upharpoonright B]\).  Choose \(\gamma\in C_r\) such that
\[
   \{\omega\cdot\gamma+k\mid k<\omega\}\cap B=\emptyset.
\]
Choose \(n<\omega\), and set \(\xi=\tau_w(\gamma,n)\).  Then
\(\xi\notin B\).  By independence of \(\vec S\), the tree \(S_\xi\) is Suslin in
\(M_1[H\upharpoonright B]\).  But
\[
   L[r,m_\infty]\subseteq M_1[H\upharpoonright B]
\]
contains an \(\omega_1\)-branch through \(S_\xi\), a contradiction.  Hence
\(r\notin W\).

Choose a countable admissible support \(A\subseteq\delta\) and a
\(\mathbb P_A\)-name \(\sigma\) such that
\[
   \sigma^{G_A}=r,
   \qquad G_A=G_\delta\cap\mathbb P_A.
\]
Enlarge \(A\), if necessary, so that it contains the local supports needed for
this name.  Let \(\vec\sigma=(\sigma)\).  By the definition of the footprint,
there are a countable set \(J\), a countable set \(B\subseteq\omega_1\), coding
areas \((C_j:j\in J)\), and reals \((u_j:j\in J)\), such that
\[
   U(A,\vec\sigma,G_\delta)
   =
   B\cup\bigcup_{j\in J}\operatorname{Sel}(C_j,u_j).
\]
Each pair \((C_j,u_j)\) comes from an explicit coding stage of
\(\mathcal R_\delta\).  The set \(B\) contains the branch-product supports of the
\(W\)-parameters in the names under consideration and the countable parts of the
regularity-base witnesses.  The coding areas \((C_j:j\in J)\) are pairwise
almost disjoint.

If there is \(j\in J\) such that \(u_j=w\), then \(w\) is the real explicitly
coded at the corresponding stage, and we are done.  We therefore assume
\[
   \forall j\in J\quad u_j\ne w.
\]
Let
\[
   J_1=\{j\in J\mid C_r\cap C_j\text{ is unbounded in }\omega_1\}.
\]

If \(J_1\ne\emptyset\), choose \(j_0\in J_1\).  Then \(u_{j_0}\ne w\).  Choose
\(n<\omega\) with
\[
   u_{j_0}(n)\ne w(n).
\]
By Lemma~\ref{lem:countably-many-coding-areas}, applied to the countable family
\((C_j:j\in J)\), the set \(B\), the set \(C_r\), and the index \(j_0\), there
is \(\gamma\in C_r\cap C_{j_0}\) such that
\[
   \tau_w(\gamma,n)\notin U(A,\vec\sigma,G_\delta).
\]
Set \(\xi=\tau_w(\gamma,n)\).

If \(J_1=\emptyset\), then every \(C_r\cap C_j\), \(j\in J\), is bounded.  By
Lemma~\ref{lem:countably-many-coding-areas} there is \(\gamma\in C_r\) such that
\[
   \{\omega\cdot\gamma+k\mid k<\omega\}
   \cap U(A,\vec\sigma,G_\delta)=\emptyset.
\]
Choose \(n<\omega\), and set \(\xi=\tau_w(\gamma,n)\).

In both cases
\[
   \xi\notin U(A,\vec\sigma,G_\delta),
\]
and \(L[r,m_\infty]\) contains an \(\omega_1\)-branch through \(S_\xi\).  By
Lemma~\ref{lem:omit-unused-branch-coordinate}, the restricted extension
\(W_\xi[G_A]\) contains \(r\), and \(S_\xi\) remains Suslin in
\(W_\xi[G_A]\).  Since \(m_\infty\in M_1\subseteq W_\xi[G_A]\), we have
\[
   L[r,m_\infty]\subseteq W_\xi[G_A].
\]
This contradicts the Suslinity of \(S_\xi\) in \(W_\xi[G_A]\).  The contradiction
shows that the earlier case must occur: \(w\) is explicitly coded at some stage
\(\beta<\delta\).
\end{proof}

We now define the derivative hierarchy of allowable forcings.  The point of
the hierarchy is to separate genuine candidates for a uniformizing value from
guesses which can be destroyed by further allowable forcing.  A successor step
asks the following local question.  Given the bookkeeping data \((x,A_m)\) in
the current extension, is there already, in the local subextension determined
by the canonical support of these data, a real \(y\) such that \((x,y)\) is in
the \(x\)-section of \(A_m\) and such that lower-rank allowable tails cannot
force it out of that section?  If the answer is yes, \(y\) becomes a tentative
value of the least possible rank, and the construction codes some other real
on the same section.  If the answer is no, the construction treats the
bookkeeping value as a guess and codes that guess.  The hierarchy is defined
by transfinite recursion.  Limit levels are pure intersections; successor
levels are obtained by the local test just described.

For \(\rho\)-allowability we work with pairs
\[
   (\mathcal R,\dot I)\in W,
\]
where \(\mathcal R\) is a mixed-support local hybrid presentation over \(W\) and
\(\dot I\) is a name, over the terminal post-branch forcing associated with
\(\mathcal R\), for a set of quadruples \((x,y,m,\xi)\).  After interpretation by a generic filter, \(I\) is the set
of tentative uniformizing values, and the ordinal \(\xi\) records the rank at
which the value was selected.  The base level is
\[
   0\text{-allowable}=\text{locally allowable},
\]
with \(\dot I\) the canonical name for the empty set.

\begin{definition}[Limit \(\lambda\)-allowability]
\label{def:limit-lambda-allowable-local}
Assume that \(\lambda\) is a nonzero limit ordinal and that
\(\zeta\)-allowability has already been defined for every \(\zeta<\lambda\).
Let \(\mathcal A^W_\zeta\) denote the class of pairs
\((\mathcal R,\dot I)\in W\) which are \(\zeta\)-allowable over \(W\).  We define
\[
   \mathcal A^W_\lambda
   =
   \bigcap_{\zeta<\lambda}\mathcal A^W_\zeta.
\]
Thus \((\mathcal R,\dot I)\) is \emph{\(\lambda\)-allowable} iff it is
\(\zeta\)-allowable for every \(\zeta<\lambda\).  
\end{definition}

\begin{definition}[Relative \(\rho\)-allowable extensions]
\label{def:relative-rho-allowable-local}
Assume that \(\rho\)-allowability has been defined.  If
\((\mathcal R,\dot I)\) and \((\mathcal S,\dot J)\) are pairs belonging to
\(W\), we write
\[
   (\mathcal S,\dot J)\triangleright_\rho(\mathcal R,\dot I)
\]
if \(\mathcal S\) is obtained by appending to \(\mathcal R\) a mixed-support local
hybrid tail which satisfies the same \(\rho\)-allowability clauses, starting
with the auxiliary name \(\dot I\) instead of the empty name.  Let \(P\) and
\(P'\) be the terminal post-branch forcings associated with \(\mathcal R\) and
\(\mathcal S\), respectively.  If \(G\subseteq P\) is generic, then \(P'/G\) is
called a \emph{\(\rho\)-allowable tail} over the interpreted pair
\((\mathcal R,\dot I)^G\). 
\end{definition}

\begin{definition}[Successor \(\rho\)-allowability]
\label{def:rho-allowable-local}
Assume that \(\rho=\alpha+1\) and that \(\zeta\)-allowability and relative
\(\zeta\)-allowability have been defined for every \(\zeta<\rho\).  Let
\(F\in M_1\) be a bookkeeping function of length \(\delta\leq\omega_2\).  We
say that \((\mathcal R_\delta,\dot I_\delta)\) is \emph{\(\rho\)-allowable
relative to \(F\)} if it is obtained by the following recursion in \(W\).

At each stage \(\beta\leq\delta\) we construct a hybrid presentation
\(\mathcal R_\beta\) and a \(\mathbb P_\beta\)-name \(\dot I_\beta\), where
\(\mathbb P_\beta\) denotes the post-branch forcing associated with
\(\mathcal R_\beta\).
We start with $W$ and with \(\dot I_0\) the
canonical name for the empty set.  At a limit stage \(\eta\leq\delta\),
\(\mathcal R_\eta\) is the mixed-support limit of the preceding hybrid
presentations, and \(\dot I_\eta\) is the canonical \(\mathbb P_\eta\)-name
forced to be the union of the earlier interpreted auxiliary names.

Suppose that \(\beta<\delta\) and that \((\mathcal R_\beta,\dot I_\beta)\) has
already been constructed.  The value \(F(\beta)\) is decoded relative to
\(\mathbb P_\beta\).  If the decoded value is a relative Random tag, a relative amoeba tag, a Cohen
tag, a direct coding tag \(\dot w\), or a well-order tag
\((\dot z_0,\dot z_1)\), then the next stage is the hybrid operation
prescribed in Definition~\ref{def:zero-local-allowable}, and
\(\dot I_{\beta+1}\) is the canonical lift of \(\dot I_\beta\).

It remains to describe the uniformization tag.  Suppose that
\[
   F(\beta)=(\dot x,\dot z,\dot m),
\]
where \(\dot x,\dot z\) are \(\mathbb P_\beta\)-names for reals and \(\dot m\)
is a \(\mathbb P_\beta\)-name for a natural number.  Let
\[
   A=\operatorname{supp}_{\mathrm{loc}}((\dot x,\dot m),\mathbb P_\beta).
\]
A candidate value in the test below is a \(\mathbb P_A\)-name \(\dot y_0\) for
a real, lifted canonically to a \(\mathbb P_\beta\)-name.  We distinguish two
alternatives.

\begin{enumerate}
\item[(a)] There are an ordinal \(\zeta<\rho\) and a \(\mathbb P_A\)-name
\(\dot y_0\) for a real such that
\[
   \mathbb P_\beta\Vdash
   (\dot x,\dot y_0^{\uparrow\beta})\in A_{\dot m},
\]
and no relative \(\zeta\)-allowable extension in \(W\) can force this pair
out of the relevant section.  More explicitly, for every relative
\(\zeta\)-allowable extension
\[
   (\mathcal S,\dot J)\triangleright_\zeta(\mathcal R_\beta,\dot I_\beta)
\]
in \(W\), write \(P_{\mathcal S}\) for the terminal post-branch forcing
associated with \(\mathcal S\).  If the displayed names are lifted to
\(P_{\mathcal S}\), then
\[
   \mathbb P_\beta\Vdash
   \text{``there is no condition in }
   P_{\mathcal S}/\dot G_\beta
   \text{ which forces }
   (\dot x^{\uparrow\mathcal S},\dot y_0^{\uparrow\mathcal S})
      \notin A_{\dot m^{\uparrow\mathcal S}}\text{.''}
\]
If such witnesses exist, choose first the least possible \(\zeta\), and then
choose the \(<_{M_1}\)-least localized presentation of a witnessing name
\(\dot y_0\).

In this case \(\dot y_0\) is declared to be a tentative value of rank
\(\zeta\).  The next forcing does not code this tentative value.  Instead it
codes another real on the same section: if the bookkeeping guess \(\dot z\) is
forced to be different from \(\dot y_0^{\uparrow\beta}\), use \(\dot z\);
otherwise use the \(<_{M_1}\)-least localized name \(\dot z'\) for a real
forced to be different from \(\dot y_0^{\uparrow\beta}\).  Denote the resulting
name by \(\dot u\).  The next coding operation is
\[
   \operatorname{Code}(\langle\dot x,\dot u,\dot m\rangle),
\]
understood in the hybrid sense of Definition~\ref{def:zero-local-allowable}.
The auxiliary name is updated by letting \(\dot I_{\beta+1}\) be the canonical
\(\mathbb P_{\beta+1}\)-name forced to be
\[
   \dot I_\beta^{\dot G_\beta}
   \cup
   \{(\dot x^{\dot G_\beta},
       (\dot y_0^{\uparrow\beta})^{\dot G_\beta},
       \dot m^{\dot G_\beta},\check\zeta)\}.
\]

\item[(b)] If clause~(a) fails, then the construction guesses.  The bookkeeping
guess \(\dot z\) is used, unless the bookkeeping code does not provide a usable
real name, in which case we use the \(<_{M_1}\)-least default real name.  The
next coding operation is
\[
   \operatorname{Code}(\langle\dot x,\dot z,\dot m\rangle),
\]
again understood as the corresponding hybrid product/iteration step, and no
new tentative value is added:
\[
   \dot I_{\beta+1}=\dot I_\beta
\]
up to the canonical lift to the longer hybrid presentation.
\end{enumerate}

If the decoded bookkeeping value is none of the recognized tags, the stage is
trivial and \(\dot I_{\beta+1}\) is the canonical lift of \(\dot I_\beta\).  If
there is a bookkeeping function \(F\in M_1\) such that \((\mathcal R,\dot I)\) is
\(\rho\)-allowable relative to \(F\), then we simply say that
\((\mathcal R,\dot I)\) is \(\rho\)-allowable.
\end{definition}

\begin{lemma}[Shrinking of allowability]
\label{lem:shrinking-allowability-local}
Work in \(W\).  Let \(\alpha<\beta\).  Let \(\mathcal R\) be a hybrid presentation,
and let \(\mathbb P\) be its terminal post-branch forcing.  Suppose that
\(\mathcal R\) is \(\beta\)-allowable, say there is a
\(\mathbb P\)-name \(\dot I^\beta\) such that \((\mathcal R,\dot I^\beta)\) is
\(\beta\)-allowable.  Then there is a \(\mathbb P\)-name \(\dot I^\alpha\) such
that \((\mathcal R,\dot I^\alpha)\) is \(\alpha\)-allowable.  In particular, the
classes of underlying allowable hybrid presentations are decreasing with the
rank of allowability.
\end{lemma}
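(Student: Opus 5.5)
We prove the lemma by induction on \(\beta\), simultaneously for all \(\alpha<\beta\), and we keep the underlying presentation \(\mathcal R\) fixed while rebuilding the auxiliary name. If \(\beta\) is a limit, then by Definition~\ref{def:limit-lambda-allowable-local} a \(\beta\)-allowable pair is \(\alpha\)-allowable for every \(\alpha<\beta\), so we may take \(\dot I^\alpha=\dot I^\beta\).

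Now let \(\beta=\gamma+1\). It is enough to treat \(\alpha=\gamma\) and then apply the induction hypothesis to pass from \(\gamma\) down to any smaller \(\alpha\). When \(\alpha=0\), 0-allowability uses only the resolved hybrid operations, so we need only check that every stage of the \(\beta\)-recursion is a 0-allowable hybrid step. This holds: in both alternatives (a) and (b), a uniformization stage performs an operation of the form \(\operatorname{Code}(\langle\dot x,\dot u,\dot m\rangle)\), and this is a direct coding tag in the sense of Definition~\ref{def:zero-local-allowable}.

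For \(\alpha=\gamma>0\), fix the bookkeeping \(F\) witnessing \(\beta\)-allowability and define a new bookkeeping \(F'\in M_1\) stage by stage. Non-uniformization tags are copied unchanged. At a uniformization stage \(\eta\), the \(\beta\)-recursion coded \(\langle\dot x,\dot u_\eta,\dot m\rangle\), where \(\dot u_\eta\) is either the guess \(\dot z\) or the default alternative. We set \(F'(\eta)=(\dot x,\dot u_\eta,\dot m)\), so the bookkeeping guess is replaced by the real actually coded. We then run the \(\gamma\)-recursion along \(F'\), producing \(\dot I^\gamma_\eta\), and show by induction on \(\eta\) that it rebuilds the same \(\mathcal R_\eta\).

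The key case is a stage where the \(\gamma\)-test (a) succeeds with a witness \(\dot y_0\) of rank \(\zeta<\gamma\). We must show that the \(\gamma\)-recursion then codes \(\dot u_\eta\). We compare this with the \(\beta\)-test at the same stage. Its witnesses of rank \(\zeta<\gamma<\beta\) include those of the \(\gamma\)-test. The two tests are over the same \(\mathcal R_\eta\), but with auxiliary names \(\dot I^\beta_\eta\) and \(\dot I^\gamma_\eta\). Relative \(\zeta\)-allowable tails do not depend on the starting auxiliary name except through later rank-\(\zeta\) tests, so this dependence has to be controlled as well.

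\textbf{This is the main obstacle.} We need the relative classes \(\triangleright_\zeta\) to be independent of the auxiliary name carried at the start, or at least monotone in it. The plan is to strengthen the induction hypothesis into an invariant: for every \(\eta\), \(\dot I^\gamma_\eta\) is forced to equal the set of entries of \(\dot I^\beta_\eta\) of rank \(<\gamma\). Under this invariant the \(\beta\)-test and the \(\gamma\)-test agree on ranks \(\zeta<\gamma\). If the least witness rank in the \(\beta\)-test is \(<\gamma\), both tests select the same \(\dot y_0\) by \(<_{M_1}\)-minimality and code the same \(\dot u_\eta\). If the \(\beta\)-test selected rank exactly \(\gamma\), or used alternative (b), then the \(\gamma\)-test fails. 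The \(\gamma\)-recursion therefore guesses and codes \(F'(\eta)\)'s guess, which is exactly \(\dot u_\eta\). In all cases the coded name agrees, and the invariant propagates: entries of rank \(<\gamma\) are added by both recursions, and entries of rank \(\gamma\) are dropped only from the \(\gamma\)-side.

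Limits pass through the mixed-support limit and the union of the auxiliary names, so \((\mathcal R,\dot I^\gamma)\) is \(\gamma\)-allowable relative to \(F'\).
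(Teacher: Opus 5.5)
Your proof has the same structure as the paper's. You keep $\mathcal R$ fixed, rewrite each uniformization entry of the bookkeeping so that its guess is the name $\dot u_\eta$ actually coded, and split on the least rank witnessing clause~(a) in the higher recursion. Rewriting the guess at every uniformization stage, rather than only when the least rank lies in $[\alpha,\beta)$ as the paper does, is harmless: in the low-rank case $\dot u_\eta$ is already forced to differ from $\dot y_0$, so clause~(a) of the lower recursion codes it. Two steps need repair, however.

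The first is the reduction to $\alpha=\gamma$. It breaks down when $\beta=\gamma+1$ with $\gamma$ a nonzero limit. Then $\gamma$-allowability is given by the intersection clause of Definition~\ref{def:limit-lambda-allowable-local}, so there is no $\gamma$-recursion and no $\gamma$-test to compare with. Unwinding the intersection just sends you back to arbitrary $\alpha<\gamma$. Instead, run your argument directly for an arbitrary successor $\alpha<\beta$; it goes through verbatim once ``rank exactly $\gamma$'' is replaced by ``rank in $[\alpha,\beta)$''. Then treat limit $\alpha$ by reduction to smaller ranks, which is how the paper organizes the cases.

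The second is the sentence ``under this invariant the $\beta$-test and the $\gamma$-test agree on ranks $\zeta<\gamma$''. This does not follow from the invariant. You would need $\triangleright_\zeta$ to depend on the starting name only through its entries of rank $<\gamma$, and you have not shown that. What actually disposes of your ``main obstacle'' is that the auxiliary name is a passive record. No clause of Definitions~\ref{def:zero-local-allowable}, \ref{def:relative-rho-allowable-local} and~\ref{def:rho-allowable-local} consults $\dot I$ when choosing the next forcing; the name is only carried along and enlarged. The tests mention it only as the starting value of relative extensions, and those extensions never read it either. So, by induction on $\zeta$, the class of underlying tails $\mathcal S$ with $(\mathcal S,\dot J)\triangleright_\zeta(\mathcal R_\eta,\dot I)$ for some $\dot J$ does not depend on $\dot I$. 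Hence the two tests agree rank by rank, whatever names the two recursions carry. The paper uses this tacitly when it says the same witness is available in the lower construction. With this observation your invariant is no longer needed, though it remains true and describes the name $\dot I^\alpha$ that the lower recursion produces.
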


\begin{proof}
We prove the statement for the underlying hybrid presentation.  The auxiliary
name is allowed to change, and in general it should change, because the set of
tentative values records the ranks at which values were selected.

If \(\beta\) is a limit ordinal, the assertion is immediate from
Definition~\ref{def:limit-lambda-allowable-local}.  If \(\alpha\) is a nonzero
limit, it is enough to prove the assertion for all \(\xi<\alpha\).  Thus it
remains to consider the case in which \(\beta\) is a successor ordinal and
\(\alpha\) is either \(0\) or a successor ordinal.

Write \(\beta=\theta+1\), and let \(F\in M_1\) witness that
\((\mathcal R_\delta,\dot I^\beta_\delta)\) is \(\beta\)-allowable.  We construct
a new \(M_1\)-bookkeeping function \(F^\alpha\) such that the
\(\alpha\)-allowable recursion guided by \(F^\alpha\) produces the same
underlying hybrid presentation
\[
   \langle\mathcal R_\eta\mid \eta\leq\delta\rangle.
\]
At trivial, relative Random, relative amoeba, Cohen, direct coding, and well-order coding stages
we keep the same bookkeeping entry.  These clauses are independent of the rank
of allowability.

At a uniformization stage, suppose first that the original construction uses
clause~(b).  Then no witness for clause~(a) exists at any rank below \(\beta\),
hence none exists at any rank below \(\alpha\).  The \(\alpha\)-construction
therefore also uses clause~(b), coding the same real.

Suppose next that the original construction uses clause~(a).  Let
\(\xi<\beta\) be the least rank witnessing clause~(a), let \(\dot y_0\) be the
chosen local name, and let \(\dot u\) be the name actually coded by the stage.
If \(\xi<\alpha\), then the same witness is available in the
\(\alpha\)-allowable construction, and the same name \(\dot u\) is coded.  If
\(\alpha\leq\xi<\beta\), then by minimality of \(\xi\) there is no witness to
clause~(a) of rank below \(\alpha\).  In this case we alter the bookkeeping
entry so that it decodes to the uniformization tuple
\[
   (\dot x,\dot u,\dot m),
\]
where \(\dot u\) is the name actually coded by the original construction.  If
\(\alpha=0\), this is a \(0\)-allowable coding stage.  If \(\alpha\) is a
successor, clause~(a) fails below \(\alpha\), so clause~(b) prescribes the same
coding operation.  Thus the same underlying forcing stage is reproduced.

This defines \(F^\alpha\) recursively.  The definition uses only \(M_1\)-codes
for the original bookkeeping entries, the canonical localized presentations,
the selected names, and the relevant ordinals.  Hence \(F^\alpha\in M_1\), and
the \(\alpha\)-allowable construction guided by \(F^\alpha\) produces the same
underlying hybrid presentation.
\end{proof}

\begin{lemma}[Allowability of tails]\label{lem:tail-allowability-local}
Let \((\mathcal R_\delta,\dot I_\delta)\) be \(\rho\)-allowable over \(W\),
witnessed by a bookkeeping function \(F\in M_1\), and let
\[
   (\mathcal R_\gamma,\dot I_\gamma),\qquad \gamma\leq\delta,
\]
be the canonical initial hybrid presentations and auxiliary names produced by
this construction.  If \(\beta\leq\gamma\leq\delta\), then
\[
   (\mathcal R_\gamma,\dot I_\gamma)
   \triangleright_\rho
   (\mathcal R_\beta,\dot I_\beta)
\]
as a relative \(\rho\)-allowable extension in \(W\).  Consequently, if
\(G_\beta\subseteq\mathbb P_\beta\) is generic over \(W\), then the quotient
\[
   \mathbb P_\gamma/G_\beta
\]
is a \(\rho\)-allowable tail over \(W[G_\beta]\).  If \(\xi<\rho\), then the
same quotient also has a \(W\)-presentation as a relative \(\xi\)-allowable
tail.  Finally, relative allowable extensions are transitive.
\end{lemma}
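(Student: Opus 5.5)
The plan is to read off the tail presentation directly from the bookkeeping function \(F\) by shifting it.

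\textbf{The shifted bookkeeping function.} Define \(F^{(\beta)}\) on \(\gamma-\beta\) by
\[
   F^{(\beta)}(\eta)=F(\beta+\eta).
\]
Each entry is decoded relative to the initial forcing \(\mathbb P_{\beta+\eta}\), which is the forcing that \(F(\beta+\eta)\) was decoded against in the original recursion. Since \(F\in M_1\) and \(\beta,\gamma\) are ordinals, \(F^{(\beta)}\in M_1\).

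\textbf{Running the relative recursion.} I will verify by induction on \(\eta\leq\gamma-\beta\) that the relative \(\rho\)-allowable recursion of Definition~\ref{def:relative-rho-allowable-local} does the following. It starts from \((\mathcal R_\beta,\dot I_\beta)\) rather than from the trivial presentation and the empty name, and it is guided by \(F^{(\beta)}\). It produces exactly \((\mathcal R_{\beta+\eta},\dot I_{\beta+\eta})\).

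\emph{Limit stages.} Both recursions take the mixed-support limit and the union of the auxiliary names.

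\emph{Successor stages.}
\begin{itemize}
\item The fresh reservoir coordinate is the same in both recursions.
\item Regularity, Cohen, direct coding and well-order stages depend only on the current forcing and the decoded tag. Canonical localized presentations and admissibility witnesses are computed from \(\mathbb P_{\beta+\eta}\) and \(<_{M_1}\), so they agree.
\item At a uniformization stage, the test in clause~(a) of Definition~\ref{def:rho-allowable-local} refers to \(\operatorname{supp}_{\mathrm{loc}}\) computed in \(\mathbb P_{\beta+\eta}\). It also quantifies over relative \(\zeta\)-allowable extensions of the current pair \((\mathcal R_{\beta+\eta},\dot I_{\beta+\eta})\), for \(\zeta<\rho\). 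All of these are intrinsic to the current pair and do not depend on where the recursion started. Hence the same alternative is chosen, with the same rank and the same name \(\dot y_0\), the same real is coded, and the auxiliary name is updated identically.
\end{itemize}

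\textbf{The quotient statement.} Given \(G_\beta\), the quotient \(\mathbb P_\gamma/G_\beta\) is, by Definition~\ref{def:relative-rho-allowable-local}, the \(\rho\)-allowable tail over the interpreted pair. This uses the fact that \(\mathbb P_\beta\) is a complete subforcing of \(\mathbb P_\gamma\) in the mixed-support hybrid presentation.

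\textbf{The claim for \(\xi<\rho\).} I expect this to be the main obstacle. The naive shifted bookkeeping no longer works, because clause~(a) at rank below \(\xi\) may fail where it succeeded at rank below \(\rho\). I would repeat the proof of Lemma~\ref{lem:shrinking-allowability-local}, but now for the relative recursion:
\begin{itemize}
\item At uniformization stages whose least witnessing rank is \(\geq\xi\), alter the entry to \((\dot x,\dot u,\dot m)\), where \(\dot u\) is the name actually coded.
\item Keep all other entries unchanged.
\item For the starting pair, replace \(\dot I_\beta\) by the \(\xi\)-version of the auxiliary name obtained from that lemma.
\end{itemize}
This reproduces the same underlying tail presentation. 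The subtle point is that the test at rank \(\zeta<\xi\) refers to extensions of the modified auxiliary pair. I would note that clause~(a) quantifies over extensions determined by the underlying presentation together with the same rank-\(\zeta\) clauses, and that these are unchanged.

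\textbf{Transitivity.} Suppose \((\mathcal T,\dot K)\triangleright_\rho(\mathcal S,\dot J)\triangleright_\rho(\mathcal R,\dot I)\). Concatenate the two bookkeeping functions for the appended tails. Running the relative recursion from \((\mathcal R,\dot I)\) on the concatenation produces \((\mathcal S,\dot J)\) after the first block. By the same stage-by-stage agreement argument, it then continues as the second tail. The concatenation is again in \(M_1\) and lies in \(W\).
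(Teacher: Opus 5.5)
Your proposal is correct and follows the paper's own proof. Both restrict the witnessing bookkeeping to \([\beta,\gamma)\), check stage by stage that every decision depends only on the current pair, get the quotient claim by interpreting over a \(\mathbb P_\beta\)-generic, obtain the lower-rank claim from the shrinking lemma applied to the tail, and prove transitivity by concatenation. Your remark that no decision ever depends on the auxiliary name, so changing it when shrinking is harmless, makes explicit a point the paper leaves implicit.
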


\begin{proof}
Restrict the bookkeeping construction witnessing the \(\rho\)-allowability of
\((\mathcal R_\delta,\dot I_\delta)\) to the interval \([\beta,\gamma)\).  This
restricted construction starts with the already built pair
\((\mathcal R_\beta,\dot I_\beta)\) and then appends exactly the same hybrid steps
which occur in the original construction between stages \(\beta\) and
\(\gamma\).  At successor stages, the decision is made from the current full
hybrid forcing, the current auxiliary name, the decoded bookkeeping entry, the
localized presentations of the names involved, and the forcing relation over
that current hybrid forcing.  These are exactly the data available in the
relative construction starting from \((\mathcal R_\beta,\dot I_\beta)\).  If the
stage is a relative Random or amoeba stage, its admissibility witness
\[
   (A_{\mathrm{rb}},\dot W_{\mathrm{rb}},B^0_{\mathrm{rb}},\dot B_{\mathrm{rb}})
\]
is carried with the stage: the tail uses the forcing computed in
\(\dot W_{\mathrm{rb}}\), and the lifted footprint name records the branch
coordinates generated by the local support.  At limit stages both constructions
take the same mixed-support hybrid limit and the same canonical union of the
preceding auxiliary names.

The quotient assertion follows by interpreting the relative presentation in
\(W\) by a \(\mathbb P_\beta\)-generic filter.  The assertion for lower
ranks follows from Lemma~\ref{lem:shrinking-allowability-local}, applied to the
relative tail.  Transitivity is obtained by concatenating the two relative tail
presentations, renaming reservoir coordinates to fresh coordinates if
necessary.
\end{proof}

\begin{lemma}[Products of allowable local hybrid presentations]
\label{lem:rho-allowable-products-local}
Let \(\rho\) be an ordinal.  For \(i<2\), suppose that
\[
   (\mathcal R^i,\dot I^i)
\]
is \(\rho\)-allowable over \(W\), witnessed by a bookkeeping function
\(F_i\in M_1\).  Let \(H\subseteq Br\) be the fixed branch generic used to form
\(W=M_1[H]\).  For \(i<2\), let \(P^i\) be the terminal post-branch forcing over
\(W\) represented by \(\mathcal R^i\).  Then the ordinary product
\(P^0\times P^1\), computed over \(W\), has a canonical \(\rho\)-allowable local
hybrid presentation.

More precisely, there are a local hybrid presentation \(\mathcal R^\otimes\) over
\(W\), an auxiliary name \(\dot I^\otimes\) over its terminal post-branch forcing,
and a bookkeeping function \(F^\otimes\in M_1\), definable from \(F_0\) and
\(F_1\), such that
\[
   (\mathcal R^\otimes,\dot I^\otimes)
\]
is \(\rho\)-allowable over \(W\), witnessed by \(F^\otimes\), and its associated
terminal post-branch forcing is canonically forcing equivalent to
\(P^0\times P^1\).
\end{lemma}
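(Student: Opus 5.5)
The plan is to realize \(P^0\times P^1\) as a single hybrid presentation by \emph{interleaving} the two presentations, and then to check that the \(\rho\)-allowability clauses of each factor survive the interleaving.

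\textbf{Step 1: the interleaved presentation.} Let \(\delta_i\) be the length of \(\mathcal R^i\). Take the length of \(\mathcal R^\otimes\) to be the ordinal sum \(\delta_0+\delta_1\); alternatively one can interleave even and odd stages, and the argument is the same. The first \(\delta_0\) stages copy \(\mathcal R^0\) stage by stage. The next \(\delta_1\) stages copy \(\mathcal R^1\), with every name of \(\mathcal R^1\) read over the trivial extension of the first block. Concretely, \(F^\otimes(\eta)=F_0(\eta)\) for \(\eta<\delta_0\), and \(F^\otimes(\delta_0+\eta)\) is the \(M_1\)-code of the canonical lift of \(F_1(\eta)\) along the coordinate embedding which places trivial conditions on the first block. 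Each successor stage of \(\mathcal R^\otimes\) still adds its own fresh reservoir coordinate, so reservoir coordinates stay disjoint. Using the normal form of Lemma~\ref{lem:basic-zero-allowable}, the terminal forcing is then \((\mathbb C^0_{\mathrm{res}}\times\mathbb C^1_{\mathrm{res}})*(\dot{\mathbb S}^0\times\dot{\mathbb S}^1)\). Since every second-step iterand of the second block only mentions names supported on second-block coordinates, the admissible-support machinery of Definition~\ref{def:admissible-supports-canonical-lifts} shows that the second block, with support \(A=[\delta_0,\delta_0+\delta_1)\), is admissible and complete. It is also independent of the first block, so the terminal forcing is canonically equivalent to \(P^0\times P^1\). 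Regularity stages keep their base \(L[T_2,\vec a]\) unchanged, because the parameters are read on the same local supports. Set \(\dot I^\otimes=\dot I^0\cup\dot I^1\), each lifted.

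\textbf{Step 2: allowability of the interleaved presentation.} I will verify \(\rho\)-allowability by induction on \(\rho\). Limit \(\rho\) is immediate from Definition~\ref{def:limit-lambda-allowable-local}, and \(\rho=0\) is pure syntax. For successor \(\rho=\alpha+1\), the non-uniformization stages are unchanged. For the first block, the construction literally agrees with that of \((\mathcal R^0,\dot I^0)\), so the clause~(a)/(b) decisions coincide. For a uniformization stage \(\delta_0+\beta\) in the second block, the local support of \((\dot x,\dot m)\) lies inside the second block, so the candidate names \(\dot y_0\) are the same as in \(\mathcal R^1\). The question is whether the test in clause~(a) gives the same answer when the base is \(\mathcal R^0\) followed by \(\mathcal R^1_\beta\) instead of \(\mathcal R^1_\beta\) alone. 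The tool here is the induction hypothesis on products at ranks \(\zeta<\rho\), together with Lemma~\ref{lem:tail-allowability-local}. A relative \(\zeta\)-allowable extension of \(\mathcal R^1_\beta\) turns into one of the interleaved pair by prefixing \(\mathcal R^0\). Conversely, a relative extension of the interleaved pair is, after commuting the independent first block to the end, an extension of \(\mathcal R^1_\beta\) by a \(\zeta\)-allowable tail; the induction hypothesis keeps this commuted product \(\zeta\)-allowable. Hence ``no condition of the tail forces the pair out of \(A_{\dot m}\)'' holds in one setting iff it holds in the other. The least \(\zeta\) and the \(<_{M_1}\)-least \(\dot y_0\) are the same in both settings, because canonical localized presentations are computed on the same supports. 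So the same real is coded and the same tentative value is added. If clause~(a) should happen to succeed at a lower rank in the product, I would fall back on the proof of Lemma~\ref{lem:shrinking-allowability-local}: alter \(F^\otimes\) at that stage to a uniformization tuple whose guess is the actually coded name \(\dot u\). This reproduces the same underlying stage, and \(\dot I^\otimes\) is adjusted accordingly, which Definition~\ref{def:rho-allowable-local} permits.

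\textbf{Main obstacle.} The crux is the equivalence of the clause~(a) stability tests in Step 2. This is a simultaneous induction: products at ranks below \(\rho\) are needed to commute the tails past the \(\mathcal R^0\) block, and that commutation must respect the mixed support and the fresh reservoir bookkeeping. The other ingredient is that \(F^\otimes\), defined from \(F_0\), \(F_1\) and the canonical \(<_{M_1}\)-choices (possibly with the Step~2 alterations), lies in \(M_1\); this follows because the alterations use only \(M_1\)-codes, exactly as in Lemma~\ref{lem:shrinking-allowability-local}.
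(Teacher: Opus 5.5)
Your proposal follows essentially the same route as the paper. Both concatenate $\mathcal R^0$ with a shifted, lifted copy of $\mathcal R^1$, using fresh reservoir coordinates and the lifted union $\dot I^0\cup(\dot I^1)^{\uparrow}$ as auxiliary name. Both then argue by induction on $\rho$, with limits handled by intersection, and at second-block uniformization stages show that the clause-(a) kill tests over $\mathcal R^1_\beta$ and over $\mathcal R^0$ followed by $\mathcal R^1_\beta$ agree, using the product lemma at ranks $\zeta<\rho$ together with the tail lemma.

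There is one small deviation in the converse direction. You move the independent block $P^0$ into the tail over $P^1_\beta$, whereas the paper restricts the killing tail to the subpresentation generated by the second-block support. Your fallback of altering $F^\otimes$ is not needed once the two tests are shown to coincide.
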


\begin{proof}
We argue by induction on \(\rho\).  Write
\[
   \mathcal R^i=\langle \mathcal R^i_\beta\mid \beta\leq\delta_i\rangle
   \qquad (i<2),
\]
and let \(P^i_\beta\) denote the post-branch forcing over \(W\) represented by
\(\mathcal R^i_\beta\); thus \(P^i=P^i_{\delta_i}\).  The product presentation
\(\mathcal R^\otimes\) is obtained by first running \(\mathcal R^0\), and then
running a shifted copy of \(\mathcal R^1\).  Every name \(\dot\tau\) occurring in the second block is
replaced by its canonical lift \(\dot\tau^\uparrow\) to the corresponding
product initial segment, and all reservoir coordinates in the second block are
renamed to fresh coordinates.  Let \(P^\otimes_\gamma\) denote the post-branch forcing associated with the
initial product presentation \(\mathcal R^\otimes_\gamma\).  Thus, for every
\(\beta\leq\delta_1\),
\[
   P^\otimes_{\delta_0+\beta}
   \cong
   P^0\times P^1_\beta .
\]
The auxiliary names are defined by
\[
   \dot I^\otimes_{\delta_0+\beta}
   =
   \dot I^0\cup(\dot I^1_\beta)^\uparrow
\]
with the evident interpretation at stages \(\leq\delta_0\).  Limit stages use
the same mixed-support limits as the two original presentations.

For \(\rho=0\), this concatenation witnesses \(0\)-allowability.  The ordinary
Cohen, direct coding, well-order coding, and trivial stages are lifted
second-block stages.  If a shifted second-block stage is a relative regularity
stage with tag
\[
   (\mathrm{Rand};\dot a_0,\ldots,\dot a_{k-1}),
   \quad
   (\mathrm{Am}_{\mathcal N};\dot a_0,\ldots,\dot a_{k-1}),
   \quad\text{or}\quad
   (\mathrm{Am}_{\mathcal M};\dot a_0,\ldots,\dot a_{k-1}),
\]
then the product presentation uses the shifted tag obtained by lifting the
finitely many real names to the product initial segment.  If the original stage
has canonical admissibility witness
\[
   (A_{\mathrm{rb}},\dot W_{\mathrm{rb}},B^0_{\mathrm{rb}},\dot B_{\mathrm{rb}}),
\]
where
\[
   \dot W_{\mathrm{rb}}=L[T_2,\dot b_0,\ldots,\dot b_{k-1}]
\]
for the least tuple of names reading the displayed real parameters, then the
shifted stage has the lifted witness obtained from
\[
   A_{\mathrm{rb}}^\otimes,
   \qquad
   \langle \dot b_i^\uparrow\mid i<k\rangle,
   \qquad
   B^0_{\mathrm{rb}},
   \qquad
   \dot B_{\mathrm{rb}}^\uparrow.
\]
Here \(A_{\mathrm{rb}}^\otimes\) is the shifted copy of
\(A_{\mathrm{rb}}\) in the second block, each \(\dot b_i^\uparrow\) is the
canonical lift of \(\dot b_i\) to the product initial segment, and
\(\dot B_{\mathrm{rb}}^\uparrow\) is the canonical lift of the branch-footprint
name.  The lifted regularity base is therefore
\[
   L[T_2,\dot b_0^\uparrow,\ldots,\dot b_{k-1}^\uparrow]
\]
(with the evident meaning \(L[T_2]\) if \(k=0\)).  The Random or amoeba forcing
is computed in this lifted base, not in the larger product extension.  Hence the
displayed product equivalence is preserved at the shifted regularity stage.

Now assume \(\rho=\alpha+1\).  By the successor uniformization clause we mean
the two alternatives (a) and (b) in Definition~\ref{def:rho-allowable-local},
evaluated at a uniformization tag
\[
   (\mathrm{UF},\dot x,\dot z,\dot m).
\]
Thus clause~(a) searches through ranks \(\zeta<\rho\) for a localized name
\(\dot y\) which is forced into the relevant \(A_{\dot m}\)-section and cannot
be forced out by any relative \(\zeta\)-allowable tail; clause~(b) is used when
no such witness exists.

Consider a shifted uniformization stage \(\delta_0+\beta\), and let
\[
   (\mathrm{UF},\dot x,\dot z,\dot m)
\]
be the corresponding second-block bookkeeping value.  For \(\zeta<\rho\) and a
localized \(P^1_\beta\)-name \(\dot y\), write
\[
   \mathsf{Kick}^1_\beta(\zeta,\dot y)
\]
for the assertion that there are a relative \(\zeta\)-allowable extension
\[
   (\mathcal S,\dot J)\triangleright_\zeta
   (\mathcal R^1_\beta,\dot I^1_\beta)
\]
and, writing \(P_{\mathcal S}\) for the terminal post-branch forcing associated
with \(\mathcal S\), a quotient condition \(p\in P_{\mathcal S}/\dot G^1_\beta\)
such that
\[
   p\Vdash
   (\dot x^{\uparrow\mathcal S},
    \dot y^{\uparrow\mathcal S})
      \notin A_{\dot m^{\uparrow\mathcal S}}.
\]
Define \(\mathsf{Kick}^\otimes_\beta(\zeta,\dot y^\uparrow)\) analogously over
the product initial segment
\[
   (\mathcal R^\otimes_{\delta_0+\beta},
    \dot I^\otimes_{\delta_0+\beta}).
\]
Then
\[
   \mathsf{Kick}^1_\beta(\zeta,\dot y)
   \quad\Longleftrightarrow\quad
   \mathsf{Kick}^\otimes_\beta(\zeta,\dot y^\uparrow).
\]
The implication from left to right follows by taking the product of the fixed
first block with the displayed relative \(\zeta\)-allowable tail and applying
the induction hypothesis.  The implication from right to left follows by
restricting the witnessing product tail to the complete subpresentation
generated by the lifted second-block support of
\((\dot x,\dot y,\dot m)\), using Lemmas~\ref{lem:tail-allowability-local}
and~\ref{lem:shrinking-allowability-local}.

Similarly,
\[
   P^1_\beta\Vdash (\dot x,\dot y)\in A_{\dot m}
   \quad\Longleftrightarrow\quad
   P^0\times P^1_\beta
      \Vdash
      (\dot x^\uparrow,\dot y^\uparrow)\in A_{\dot m^\uparrow}.
\]
Hence the set of clause~(a) witnesses in the product construction is exactly
the lift of the set of clause~(a) witnesses in the second construction.  Thus
clause~(a) holds in the shifted product stage iff it holds in the original
second-block stage.  When it holds, the least rank and the \(<_{M_1}\)-least
localized witness are the lifted ones.  When it fails, clause~(b) uses the
lifted bookkeeping guess \(\dot z^\uparrow\).  The auxiliary name is updated by
\[
   \dot I^0\cup(\dot I^1_{\beta+1})^\uparrow.
\]
This proves the successor step.

Finally let \(\lambda\) be a nonzero limit ordinal.  If both inputs are
\(\lambda\)-allowable, then by the induction hypothesis the product
presentation is \(\zeta\)-allowable for every \(\zeta<\lambda\).  Since
\[
   \mathcal A^W_\lambda
   =
   \bigcap_{\zeta<\lambda}\mathcal A^W_\zeta,
\]
the product presentation is \(\lambda\)-allowable.  The bookkeeping witness at
a limit level is obtained by packaging the lower-level witnesses together with
the concatenation description.  Thus \(F^\otimes\in M_1\) is definable from
\(F_0\) and \(F_1\).
\end{proof}

\begin{lemma}[Tentative values remain in their sections]
\label{lem:tentative-values-remain-local}
Let \((\mathcal R,\dot I)\) be \(\alpha\)-allowable over \(W\) with respect to an
\(M_1\)-bookkeeping function \(F\), and let \(\mathbb P\) be the terminal
post-branch forcing associated with \(\mathcal R\).
Let \(G\subseteq\mathbb P\) be generic over \(W\), and let
\(I^G=\dot I^G\).  If
\[
   (x,y,m,\xi)\in I^G
\]
for some \(\xi<\alpha\), then
\[
   W[G]\models (x,y)\in A_m.
\]
Moreover, for every relative \(\xi\)-allowable extension in \(W\)
\[
   (\mathcal S,\dot J)\triangleright_\xi(\mathcal R,\dot I),
\]
the quotient over \(G\) forces preservation of this membership: if \(P'\) is the
terminal post-branch forcing associated with \(\mathcal S\), then
\[
   P'/G\Vdash (x,y)\in A_m.
\]
\end{lemma}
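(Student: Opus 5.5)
The plan is to trace the quadruple \((x,y,m,\xi)\) back to the stage of the construction at which it was put into the auxiliary name, and to use clause~(a) of Definition~\ref{def:rho-allowable-local} at that stage. After that, the conclusion should follow by concatenating tails via Lemma~\ref{lem:tail-allowability-local}.

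\emph{Locating the stage.} Let \(\mathcal R=\mathcal R_\delta\), with stages \((\mathcal R_\beta,\dot I_\beta)\) for \(\beta\le\delta\) as produced by \(F\). Auxiliary names are only enlarged at clause~(a) uniformization stages, and limits take unions. So there is a stage \(\beta<\delta\) with tag \((\dot x,\dot z,\dot m)\) such that:
\begin{itemize}
\item clause~(a) holds at \(\beta\) with least rank \(\zeta=\xi\) and witness \(\dot y_0\);
\item \(x=\dot x^{G_\beta}\), \(m=\dot m^{G_\beta}\) and \(y=(\dot y_0^{\uparrow\beta})^{G_\beta}\), where \(G_\beta\) is the induced \(\mathbb P_\beta\)-generic.
\end{itemize}
If \(\alpha\) is a limit, I first pass to the successor level \(\xi+1\le\alpha\) using Definition~\ref{def:limit-lambda-allowable-local}. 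A subtlety is that Lemma~\ref{lem:shrinking-allowability-local} may change the auxiliary name. To avoid this I will argue with the witnessing recursion at the level at which the quadruple was actually recorded, and not with a re-derived name. At that stage the first half of clause~(a) gives \(\mathbb P_\beta\Vdash(\dot x,\dot y_0^{\uparrow\beta})\in A_{\dot m}\). Since \(A_m\) is a fixed lightface \(\Pi^1_4\) set, the membership then holds in \(W[G_\beta]\).

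\emph{Transfer to \(W[G]\) and to further tails.} By Lemma~\ref{lem:tail-allowability-local}, \((\mathcal R,\dot I)\triangleright_\alpha(\mathcal R_\beta,\dot I_\beta)\), and the same lemma says this is also a relative \(\xi\)-allowable extension. Now take any \((\mathcal S,\dot J)\triangleright_\xi(\mathcal R,\dot I)\). By transitivity of relative allowable extensions, \((\mathcal S,\dot J)\triangleright_\xi(\mathcal R_\beta,\dot I_\beta)\). The second half of clause~(a), applied with \(\zeta=\xi\), then says: no condition of \(P_{\mathcal S}/G_\beta\) forces \((x,y)\notin A_m\). Hence \(P_{\mathcal S}/G_\beta\Vdash(x,y)\in A_m\). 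Restricting to conditions compatible with \(G\) gives \(P'/G\Vdash(x,y)\in A_m\). Taking \(\mathcal S=\mathcal R\) with the trivial tail gives \(W[G]\models(x,y)\in A_m\).

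\emph{Main obstacle.} The delicate point is the rank bookkeeping. The relevant tail from stage \(\beta\) must be relative \(\xi\)-allowable for exactly the rank \(\xi\) recorded in the quadruple, starting from \((\mathcal R_\beta,\dot I_\beta)\) rather than from the empty auxiliary name. The rank-lowering clause of Lemma~\ref{lem:tail-allowability-local} together with transitivity must preserve the auxiliary-name starting point. I would first verify this by checking directly that concatenating the \(F\)-recursion on \([\beta,\delta)\) with the tail \(\mathcal S\), read at rank \(\xi\) via the bookkeeping modification in the proof of Lemma~\ref{lem:shrinking-allowability-local}, reproduces the same underlying forcing. Only the underlying forcing matters for the forcing statement.
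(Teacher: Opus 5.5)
Your proposal is correct and follows essentially the same route as the paper. Like the paper, you locate the clause~(a) stage $\beta$ at which $(x,y,m,\xi)$ was recorded, use Lemma~\ref{lem:tail-allowability-local} and concatenation to treat the interval from $\beta$ onward, together with any further relative $\xi$-allowable extension, as a relative $\xi$-allowable tail over $(\mathcal R_\beta,\dot I_\beta)$, and then apply the preservation half of clause~(a) and the forcing theorem. Your extra care about limit $\alpha$ and about the auxiliary name changing under rank-lowering goes beyond what the paper spells out; it is consistent with the paper's argument, which also treats only the underlying forcing as relevant.
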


\begin{proof}
Suppose that \(\beta\) is the stage at which the tuple
\((x,y,m,\xi)\) is added to the auxiliary name.  At that stage clause~(a) of
Definition~\ref{def:rho-allowable-local} was used.  Let
\[
   \dot x_\beta,\qquad
   \dot y_\beta,\qquad
   \dot m_\beta
\]
be the canonical names chosen there, and let \(\xi_\beta=\xi\).  Clause~(a)
gives
\[
   \mathbb P_\beta\Vdash
   (\dot x_\beta,\dot y_\beta)\in A_{\dot m_\beta},
\]
and it also gives the preservation statement saying that no relative
\(\xi_\beta\)-allowable tail over the stage-\(\beta\) initial segment can force
the lifted pair out of the relevant section.

By Lemma~\ref{lem:tail-allowability-local}, the interval from stage \(\beta\)
to the terminal forcing is a relative \(\xi_\beta\)-allowable tail.  Therefore
no condition in the terminal forcing can force
\[
   (\dot x_\beta,\dot y_\beta)\notin A_{\dot m_\beta}.
\]
The forcing theorem gives the membership in the final extension.  If
\((\mathcal S,\dot J)\triangleright_\xi(\mathcal R,\dot I)\) is any further relative
\(\xi\)-allowable extension, then the concatenation of the interval from
\(\beta\) to the terminal forcing with this further extension is again a
relative \(\xi\)-allowable extension over stage \(\beta\).  The same
preservation statement therefore rules out any quotient condition forcing the
pair out of \(A_m\).
\end{proof}

\paragraph{Stabilization of the allowability hierarchy.}
We next pass from the transfinite derivative hierarchy to its stabilized level.
The following convention is used only to make the derivative classes into
subsets of one fixed set.  It is unrelated to the projective coding predicate
\(\Phi\).

Two pairs \((\mathcal R,
\dot I)\) and \((\mathcal S,\dot J)\), consisting of a local
hybrid presentation over \(W\) and its auxiliary name, are identified if their
associated forcings are canonically isomorphic over \(W\), the isomorphism
preserves the order of stages and sends the auxiliary name \(\dot I\) to
\(\dot J\).  This identification only removes harmless choices such as the names
of fresh reservoir coordinates.  For each equivalence class we take its
\(<_{M_1}\)-least representative and denote it by
\[
   \operatorname{rep}_W(\mathcal R,\dot I).
\]
Let \(\mathcal C^W\) be the set of all such representatives for pairs of the
form used in the definitions of \(\rho\)-allowability.  For every ordinal
\(\rho\) put
\[
   \mathcal A^W_\rho
   =
   \{\operatorname{rep}_W(\mathcal R,\dot I)
      \mid (\mathcal R,\dot I)\text{ is }\rho\text{-allowable over }W\}
   \subseteq \mathcal C^W .
\]
By Lemma~\ref{lem:shrinking-allowability-local}, the sequence
\(\langle\mathcal A^W_\rho\mid \rho\in\operatorname{Ord}\rangle\) is decreasing,
and at limit stages it is defined by intersection.  Since all terms are subsets
of the single set \(\mathcal C^W\), the sequence stabilizes.

\begin{definition}[The stabilized level and \(\infty\)-allowability]
\label{def:infty-allowable-local}
Let \(\alpha_0\) be the least ordinal \(\alpha\) such that
\[
   \mathcal A^W_\beta=\mathcal A^W_\alpha
   \qquad\text{for every }\beta\geq\alpha .
\]
Set
\[
   \alpha^\ast=\alpha_0+1 .
\]
A pair \((\mathcal R,\dot I)\), consisting of a local hybrid presentation over
\(W\) and its auxiliary name, is called \emph{\(\infty\)-allowable over \(W\)} if
\[
   \operatorname{rep}_W(\mathcal R,\dot I)
      \in \mathcal A^W_{\alpha_0}.
\]
Equivalently,
\[
   \operatorname{rep}_W(\mathcal R,\dot I)
      \in \mathcal A^W_{\alpha^\ast},
\]
since the hierarchy has stabilized at \(\alpha_0\).
\end{definition}

The passage from \(\alpha_0\) to \(\alpha^\ast=\alpha_0+1\) is only a technical
convenience.  The class has already stabilized at \(\alpha_0\), but
\(\alpha^\ast\) is a successor ordinal, so the successor clause of
Definition~\ref{def:rho-allowable-local} can be used literally.  Thus, whenever
an \(\infty\)-allowable construction reaches a uniformization tag
\[
   (\mathrm{UF},\dot x,\dot z,\dot m)
\]
after an initial segment \((\mathcal R_\beta,\dot I_\beta)\), it evaluates
clause~(a) of
Definition~\ref{def:rho-allowable-local} with \(\rho=\alpha^\ast\).  More
explicitly, writing
\[
   A=\operatorname{supp}_{\mathrm{loc}}((\dot x,\dot m),\mathbb P_\beta),
\]
a candidate is a \(\mathbb P_A\)-name \(\dot y\) for a real.  In the displayed
test below, \(P_{\mathcal S}\) denotes the terminal post-branch forcing associated
with the relative extension \(\mathcal S\).  The candidate is accepted at rank
\(\zeta<\alpha^\ast\) iff
\[
   \mathbb P_\beta\Vdash
   (\dot x,\dot y^{\uparrow\beta})\in A_{\dot m}
\]
and
\[
\begin{split}
   \forall (\mathcal S,\dot J)
   \bigl(& (\mathcal S,\dot J)\triangleright_\zeta
            (\mathcal R_\beta,\dot I_\beta) \\[-1mm]
   &\Rightarrow
   \mathbb P_\beta\Vdash
   \text{``there is no }p\in P_{\mathcal S}/\dot G_\beta
   \text{ such that } \\[-1mm]
   &\hspace{33mm}
   p\Vdash
   (\dot x^{\uparrow\mathcal S},\dot y^{\uparrow\mathcal S})
      \notin A_{\dot m^{\uparrow\mathcal S}}\text{''}\bigr).
\end{split}
\]
If such candidates exist, the construction chooses the least possible
\(\zeta\), and then the \(<_{M_1}\)-least localized presentation of such a name
\(\dot y\) in the sense of Definition~\ref{def:localized-presentation}.  If no
candidate satisfies the displayed preservation test, clause~(b) of
Definition~\ref{def:rho-allowable-local} is used.

\paragraph{Countable local killing tails.}
We shall use the following consequence of the local-support convention.  Suppose
that \((\mathcal R,\dot I)\) is a \(\rho\)-allowable pair over \(W\).  Let
\(\mathbb P\) be the terminal post-branch forcing associated with \(\mathcal R\).
Suppose that \(\dot x,\dot y\) are \(\mathbb P\)-names for reals, and that
\(\dot m\) is a \(\mathbb P\)-name for a natural number.  If there is a relative
\(\rho\)-allowable extension
\[
   (\mathcal S,\dot J)\triangleright_\rho(\mathcal R,\dot I)
\]
and a quotient condition \(p\) in the associated quotient which forces
\[
   (\dot x^{\uparrow\mathcal S},\dot y^{\uparrow\mathcal S})
      \notin A_{\dot m^{\uparrow\mathcal S}},
\]
then such a witness may be chosen so that \(\mathcal S\) is obtained from
\(\mathcal R\) by appending a countable local hybrid tail.  Indeed, write the
complement of \(A_m\) as \(\exists r\;\theta(r,x,y,m)\), with
\(\theta\) a \(\Pi^1_3\) formula.  Strengthen the quotient condition, if
necessary, and use the maximum principle to choose a quotient name \(\dot r\)
such that the strengthened condition forces
\(\theta(\dot r,\dot x^{\uparrow\mathcal S},
\dot y^{\uparrow\mathcal S},\dot m^{\uparrow\mathcal S})\).  The condition and
the names \(\dot x,\dot y,\dot m,\dot r\) are read on a countable admissible
support in the appended tail, after closing under the local supports appearing
in regularity-base witnesses and under the names coded at coding stages.  The
regularity stages in this restricted tail keep their lifted footprint names and
still use the forcings computed in their witnessed bases.  The induced complete
subpresentation is a countable local hybrid tail; by
Lemmas~\ref{lem:tail-allowability-local} and
\ref{lem:shrinking-allowability-local}, it is still relative
\(\rho\)-allowable.  Completeness preserves the displayed forcing statement.

For later reference we also isolate the intentional uses of the coding machinery.
Let \((\mathcal R,\dot I)\) be a \(\rho\)-allowable pair over \(W\), witnessed by a
bookkeeping function \(F\).  Let \(\mathbb P\) be the terminal post-branch forcing
associated with \(\mathcal R\), and let \(G\subseteq\mathbb P\) be generic.
We write
\[
   \operatorname{Int}_\Phi(\mathcal R,F,G)
\]
for the set of reals intentionally submitted to the coding machinery by the
presentation.  Thus, if a direct coding stage uses a name \(\dot w\), the
corresponding element is \(\dot w^{G_\beta}\).  If a well-order stage uses a tag
\((\dot z_0,\dot z_1)\), let \((A_i,\sigma_i)\) be the
\(<_{M_1}\)-least localized presentation of \(\dot z_i\), for \(i<2\).  If these
presentations are distinct, the stage contributes the well-order tag with the
smaller presentation placed first.  If a uniformization stage is used, the stage
contributes the real
\[
   \langle \mathrm{UF},x,u,m\rangle
\]
selected by clause~(a) or clause~(b) of
Definition~\ref{def:rho-allowable-local}.  With this notation,
\[
   w\in\operatorname{Int}_\Phi(\mathcal R,F,G)
   \quad\Longrightarrow\quad
   \Phi(w).
\]
Conversely, Lemma~\ref{lem:no-unwanted-codes} applies to the underlying locally
allowable coding presentation: in an allowable extension, every new instance of
\(\Phi(w)\) is produced at an intentional coding stage, except for instances
already present in the previous initial segment.

\section{The final iteration}
\label{sec:final-iteration}

We now define the forcing which will give the final model.  The definition is
the local $M_1$-analogue of the last iteration in the
$\Pi^1_3$-uniformization and $\Delta^1_3$-well-order construction.  The
iteration uses the stabilized class of $\infty$-allowable local hybrid
presentations from Definition~\ref{def:infty-allowable-local}.  The bookkeeping
places ordinary Cohen stages cofinally often and, for every finite tuple of real
names, places the corresponding relative Random, measure-amoeba, and
category-amoeba stages cofinally often over the associated base.  The bases used
in the $M_1$-case are precisely the finite-real-parameter models
\[
   L[T_2,a_0,\ldots,a_{k-1}]
\]
whose real parameters are read on countable admissible supports.  In particular,
the one-parameter bases $L[T_2,a]$ are included.  The stages over these bases
yield the covering statement used with the Martin--Solovay tree $T_2$ in
Hjorth's proof of Lebesgue measurability and the Baire property.

Throughout this section, \(\omega_2\) denotes the ordinal \(\omega_2^{M_1}\), equivalently the same \(\omega_2\) in the preliminary branch extension \(W\).  We fix, in $M_1$, a bookkeeping function
\[
   F:\omega_2\longrightarrow H_{\omega_2}^{M_1}
\]
with the following properties.  The values of $F$ are decoded relative to the
current associated hybrid forcing.  There is a cofinal subset of $\omega_2$ on
which $F$ is the ordinary Cohen tag.  Moreover, whenever finitely many real
names over a countable admissible support determine a regularity stage with
canonical admissibility witness
\[
   (A_{\mathrm{rb}},\dot W_{\mathrm{rb}},B^0_{\mathrm{rb}},\dot B_{\mathrm{rb}}),
\]
where
\[
   \dot W_{\mathrm{rb}}=L[T_2,\dot a_0,\ldots,
      \dot a_{k-1}],
\]
each of the corresponding finite-real-parameter tags
\[
   (\mathrm{Rand};\dot a_0^{\uparrow},\ldots,
      \dot a_{k-1}^{\uparrow}),
   \qquad
   (\mathrm{Am}_{\mathcal N};\dot a_0^{\uparrow},\ldots,
      \dot a_{k-1}^{\uparrow}),
   \qquad
   (\mathrm{Am}_{\mathcal M};\dot a_0^{\uparrow},\ldots,
      \dot a_{k-1}^{\uparrow})
\]
appears cofinally often after the stage at which the tuple becomes meaningful.
Thus the regularity bookkeeping ranges over the allowed bases of the form
\(L[T_2,\vec a]\), not over unspecified named models.  The forcing at such a
stage is the one computed in the lifted base generated by the displayed real
names.  The witness records the support on which the real parameters \(\dot a_i\)
are read, the countable branch-product support \(B^0_{\mathrm{rb}}\) of the
\(W\)-parameters occurring in those names, and the branch footprint
\(\dot B_{\mathrm{rb}}\).  Finally, whenever an $M_1$-description becomes
meaningful over some initial segment as one of the following objects,
\[
   (\mathrm{WO},\dot z_0,
   \dot z_1),
   \qquad
   (\mathrm{UF},\dot x,
   \dot y,
\dot m),
\]
where the displayed symbols are names for reals, and $\dot m$ is a name for a natural number, that code appears cofinally often after the stage at which it becomes meaningful.  We use disjoint recursive real codings for the two kinds of coding actions and write
\[
   \langle \mathrm{WO},z_0,z_1\rangle,
   \qquad
   \langle \mathrm{UF},x,y,m\rangle
\]
for the corresponding tagged reals.  Thus the well-order coding and the uniformization coding are syntactically disjoint.

\begin{definition}[The main iteration]\label{def:main-iteration}
We define, in $W$, an increasing sequence
\[
   \langle (\mathcal R_\beta,\dot I_\beta)
      \mid \beta\leq\omega_2\rangle
\]
of local hybrid presentations in $W$ and auxiliary names.  The subscript $\beta$ denotes the outer bookkeeping stage.  In the second uniformization case below, the extension appended at the stage is required to be a countable local hybrid tail over the current presentation.  Thus each successor step adds either one hybrid coordinate or a countable block of hybrid coordinates.  Since $\omega_2$ is regular, every $\mathcal R_\beta$ is again a single local hybrid presentation of length at most $\omega_2$.

Set $\mathcal R_0$ to be the trivial post-branch presentation, so that
\[
   \mathbb P_0=\mathbf 1,
\]
and the corresponding full forcing over $M_1$ is simply $Br$.
Let $\dot I_0$ be the canonical name for the empty set.  At a limit stage $\lambda\leq\omega_2$, let $\mathcal R_\lambda$ be the mixed-support direct limit of the earlier presentations, with countable support on reservoir coordinates and finite support on the c.c.c. real-adding coordinates, and let $\dot I_\lambda$ be the canonical name for the union of the earlier interpreted auxiliary names.

Assume that $\beta<\omega_2$ and that $(\mathcal R_\beta,\dot I_\beta)$ has been constructed.  Let
\(\mathbb P_\beta\) be the post-branch forcing associated with \(\mathcal R_\beta\).
The next step is chosen by the following cases.

\medskip
\noindent\textbf{Relative regularity and Cohen stages.}
In these cases the hybrid successor step first adds a fresh reservoir
coordinate.  Thus choose a fresh copy \(\mathbb C_\beta\) of
\(\mathbb C_{M_1}\), let \(\dot g_\beta\) be its generic, and put
\[
   \widehat{\mathbb P}_{\beta+1}=\mathbb P_\beta\times\mathbb C_\beta.
\]
If \(F(\beta)\) is one of the relative regularity tags
\[
   (\mathrm{Rand};\dot a^\beta_0,\ldots,
      \dot a^\beta_{k_\beta-1}),
   \qquad
   (\mathrm{Am}_{\mathcal N};\dot a^\beta_0,\ldots,
      \dot a^\beta_{k_\beta-1}),
   \qquad
   (\mathrm{Am}_{\mathcal M};\dot a^\beta_0,\ldots,
      \dot a^\beta_{k_\beta-1}),
\]
and this stage has a canonical admissibility witness in the sense of
Definition~\ref{def:admissible-random-base}, let
\[
   (A^{\mathrm{reg}}_\beta,
    \dot W^{\mathrm{reg}}_\beta,
    B^{0,\mathrm{reg}}_\beta,
    \dot B^{\mathrm{reg}}_\beta)
\]
be that witness.  Let \(\dot W^{\mathrm{reg},+}_\beta\) be the corresponding
\(\widehat{\mathbb P}_{\beta+1}\)-name for the lifted base.  Thus
\[
   \dot W^{\mathrm{reg},+}_\beta
   =L[T_2,(\dot a^{\beta,\mathrm{reg}}_0)^{\uparrow},\ldots,
      (\dot a^{\beta,\mathrm{reg}}_{k_\beta-1})^{\uparrow}]
\]
for the least tuple of names reading the displayed real parameters.  Let
\(\dot{\mathbb Q}_\beta\) be the member of the following list corresponding to
the tag:
\[
   \dot{\mathbb B}_{\mathrm{rand}}^{\dot W^{\mathrm{reg},+}_\beta},
   \qquad
   \dot{\mathbb A}_{\mathcal N}^{\dot W^{\mathrm{reg},+}_\beta},
   \qquad
   \dot{\mathbb A}_{\mathcal M}^{\dot W^{\mathrm{reg},+}_\beta}.
\]
Then
\[
   \mathbb P_{\beta+1}=\widehat{\mathbb P}_{\beta+1}*\dot{\mathbb Q}_\beta.
\]
If the displayed regularity tag has no admissibility witness, the second-step
forcing is trivial and we set
\[
   \mathbb P_{\beta+1}=\widehat{\mathbb P}_{\beta+1}.
\]
If \(F(\beta)\) is the ordinary Cohen
tag, the second-step forcing is ordinary Cohen forcing on \(\omega\):
\[
   \mathbb P_{\beta+1}=\widehat{\mathbb P}_{\beta+1}*\dot{\mathbb C}_{\omega}.
\]
In all these cases \(\dot I_\beta\) is lifted canonically.  Ordinary Cohen
forcing is distinct from the \(M_1\)-Cohen reservoir coordinate attached at the
beginning of the hybrid successor step.

\medskip
\noindent\textbf{Well-order stages.}
Suppose that $F(\beta)$ decodes to
\[
   (\mathrm{WO},\dot z_0,
   \dot z_1),
\]
where $\dot z_0$ and $\dot z_1$ are $\mathbb P_\beta$-names for reals.  Let $(a_i,\sigma_i)$ be the canonical localized presentation of $\dot z_i$, for $i<2$.  As in every hybrid successor step, this case uses the fresh reservoir coordinate attached at the beginning of the stage.  If the two canonical localized presentations are equal, the second-step forcing over that reservoir coordinate is trivial and the auxiliary name is lifted.  Otherwise compare them by $<_{M_1}$.  If
\[
   (a_0,\sigma_0)<_{M_1}(a_1,\sigma_1),
\]
use the second-step coding forcing for
\[
   \operatorname{Code}(\langle\mathrm{WO},\dot z_0,
      \dot z_1\rangle).
\]
If
\[
   (a_1,\sigma_1)<_{M_1}(a_0,\sigma_0),
\]
use the second-step coding forcing for
\[
   \operatorname{Code}(\langle\mathrm{WO},\dot z_1,
      \dot z_0\rangle).
\]
In either nontrivial case this means, as in Definition~\ref{def:zero-local-allowable}, that the nontrivial second step over the reservoir coordinate already attached at this stage is the Jensen--Solovay almost-disjoint coding of the corresponding reshaped David code into a real.  The auxiliary name is lifted unchanged.

\medskip
\noindent\textbf{Uniformization stages.}
Suppose that $F(\beta)$ decodes to
\[
   (\mathrm{UF},\dot x,
   \dot y,
   \dot m),
\]
where $\dot x$ and $\dot y$ are $\mathbb P_\beta$-names for reals and $\dot m$ is a $\mathbb P_\beta$-name for a natural number.  Let
\[
   L_\beta=
   \operatorname{supp}_{\mathrm{loc}}((\dot x,
      \dot m),\mathbb P_\beta).
\]
We apply the stabilized $\alpha^\ast$-test from Definition~\ref{def:infty-allowable-local}.  Thus we ask whether there are an ordinal $\zeta<\alpha^\ast$ and a $\mathbb P_{L_\beta}$-name $\dot y_0$ for a real such that, after canonically lifting $\dot y_0$ to a $\mathbb P_\beta$-name,
\[
   \mathbb P_\beta\Vdash
   (\dot x,\dot y_0^{\uparrow\beta})\in A_{\dot m},
\]
and no relative $\zeta$-allowable extension in $W$ can force this pair out of the relevant section.  Explicitly, for every
\[
   (\mathcal S,
   \dot J)\triangleright_\zeta
   (\mathcal R_\beta,
   \dot I_\beta)
\]
in $W$, write $P_{\mathcal S}$ for the terminal post-branch forcing associated
with $\mathcal S$.  If the displayed names are lifted to $P_{\mathcal S}$, then
\[
   \mathbb P_\beta\Vdash
   \text{``there is no condition in }
   P_{\mathcal S}/\dot G_\beta
   \text{ forcing }
   (\dot x^{\uparrow\mathcal S},
    \dot y_0^{\uparrow\mathcal S})
      \notin A_{\dot m^{\uparrow\mathcal S}}\text{.''}
\]
If such witnesses exist, choose the least possible $\zeta$ and then the $<_{M_1}$-least localized presentation of a witnessing $\dot y_0$.

In this first case, $\dot y_0$ is put into the tentative uniformizing set with rank $\zeta$.  We do not code $\dot y_0$ itself.  Instead choose the $<_{M_1}$-least usable real name $\dot u$ such that
\[
   \mathbb P_\beta\Vdash
   \dot u\neq \dot y_0^{\uparrow\beta}
\]
and such that the tagged name
\[
   \langle\mathrm{UF},\dot x,
      \dot u,
      \dot m\rangle
\]
has not already been intentionally coded at an earlier uniformization stage.  If the bookkeeping guess $\dot y$ is already forced to be different from $\dot y_0^{\uparrow\beta}$ and has not yet been so coded, we take $\dot u=\dot y$.  Now perform
\[
   \operatorname{Code}(\langle\mathrm{UF},\dot x,
      \dot u,
      \dot m\rangle).
\]
The new auxiliary name is the canonical $\mathbb P_{\beta+1}$-name forced to be
\[
   \dot I_\beta^{\dot G_\beta}
   \cup
   \{(\dot x^{\dot G_\beta},
       (\dot y_0^{\uparrow\beta})^{\dot G_\beta},
       \dot m^{\dot G_\beta},
       \check\zeta)
     \}.
\]

If the stabilized test fails, we use the other side of the $\infty$-allowable dichotomy.  Choose the $<_{M_1}$-least description of a countable relative $\alpha^\ast$-allowable local hybrid tail
\[
   (\mathcal S,
   \dot J)\triangleright_{\alpha^\ast}
   (\mathcal R_\beta,
   \dot I_\beta)
\]
together with a quotient condition witnessing that the current bookkeeping candidate can be forced out of the section.  Writing $P_{\mathcal S}$ for the terminal post-branch forcing associated with $\mathcal S$, after restricting the quotient below that condition we have
\[
   P_{\mathcal S}/\dot G_\beta
   \Vdash
   (\dot x^{\uparrow\mathcal S},
    \dot y^{\uparrow\mathcal S})
      \notin A_{\dot m^{\uparrow\mathcal S}}.
\]
The preceding countable-tail observation ensures that such a countable witness exists whenever any relative allowable tail can kill the candidate.  We replace $\mathcal S$ by this canonical restricted countable tail and set
\[
   (\mathcal R_{\beta+1},\dot I_{\beta+1})=(\mathcal S,
   \dot J).
\]
Thus the stage does not merely remember that some condition could kill the candidate; the appended countable quotient itself forces the negation.

If the value of $F(\beta)$ is not meaningful over the current initial segment, the hybrid successor step still attaches its fresh reservoir coordinate, but the second-step forcing is trivial and $\dot I_\beta$ is lifted canonically.
\end{definition}

Let
\[
   \mathbb P^\ast=\mathbb P_{\omega_2},
   \qquad
   \dot I^\ast=\dot I_{\omega_2}.
\]
If $G^\ast\subseteq\mathbb P^\ast$ is $W$-generic, we write
\[
   W^\ast=W[G^\ast],
   \qquad
   I^\ast=(\dot I^\ast)^{G^\ast}.
\]

\begin{lemma}[The final presentation is \(\infty\)-allowable]
\label{lem:main-iteration-infty-allowable}
\label{fact:main-iteration-infty-allowable}
The pair \((\mathcal R_{\omega_2},\dot I_{\omega_2})\) belongs to \(W\) and is
\(\infty\)-allowable.  More precisely, there is an \(M_1\)-bookkeeping function
\(F^\ast\) such that every initial segment of the final presentation is
\(\alpha^\ast\)-allowable relative to \(F^\ast\).  In addition, ordinary Cohen
forcing occurs cofinally often.  If a finite-real-parameter regularity stage becomes
meaningful, with canonical support witness
\((A_{\mathrm{rb}},\dot W_{\mathrm{rb}},B^0_{\mathrm{rb}},\dot B_{\mathrm{rb}})\),
then the relative Random forcing, measure-amoeba forcing, and category-amoeba
forcing computed in \(\dot W_{\mathrm{rb}}\) occur cofinally often in the final
presentation.
\end{lemma}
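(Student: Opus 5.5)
The plan is to construct \(F^\ast\) by recursion along the main iteration of Definition~\ref{def:main-iteration}, and then to prove by induction on \(\beta\leq\omega_2\) that \((\mathcal R_\beta,\dot I_\beta)\) is \(\alpha^\ast\)-allowable relative to \(F^\ast\restriction\operatorname{lh}(\mathcal R_\beta)\). Here \(\operatorname{lh}(\mathcal R_\beta)\) is the number of hybrid coordinates used so far. Membership in \(W\) is immediate, because every step of Definition~\ref{def:main-iteration} is defined in \(W\) from \(F\in M_1\), the canonical orders, and the forcing relation over the current initial segment.

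The definition of \(F^\ast\) splits by the type of stage. At a Cohen, regularity, well-order, or trivial outer stage \(\beta\), the value \(F^\ast\) at the corresponding hybrid coordinate is the tag \(F(\beta)\) itself. These clauses are literally those of Definition~\ref{def:zero-local-allowable}, which Definition~\ref{def:rho-allowable-local} copies at every rank, so the induction passes through them trivially.

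At a uniformization stage where the stabilized test succeeds, the main iteration performs exactly clause~(a) of Definition~\ref{def:rho-allowable-local} with \(\rho=\alpha^\ast\). There are two small differences. First, the main iteration uses an extra freshness requirement on \(\dot u\); we encode the chosen \(\dot u\) as the guess \(\dot z\) in \(F^\ast\), so that clause~(a) selects the same name. Second, the auxiliary name is updated identically in both constructions.

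At a uniformization stage where the test fails, a whole countable tail \((\mathcal S,\dot J)\triangleright_{\alpha^\ast}(\mathcal R_\beta,\dot I_\beta)\) is appended. This tail is itself produced by an \(\alpha^\ast\)-allowable bookkeeping recursion starting from \((\mathcal R_\beta,\dot I_\beta)\), by Definition~\ref{def:relative-rho-allowable-local}. We therefore let \(F^\ast\) on that countable block of coordinates be a shifted copy of the tail's bookkeeping function. Transitivity of relative allowable extensions (Lemma~\ref{lem:tail-allowability-local}) then yields \(\alpha^\ast\)-allowability of \((\mathcal R_{\beta+1},\dot I_{\beta+1})\).

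At limit stages both constructions take the same mixed-support limit and the same union of auxiliary names. Regularity of \(\omega_2\) keeps the total length at most \(\omega_2\). \(F^\ast\) lies in \(M_1\) because it is defined from \(F\) and \(M_1\)-codes of names, using the \(<_{M_1}\)-least choices throughout. Stabilization at \(\alpha_0\) converts \(\alpha^\ast\)-allowability into \(\infty\)-allowability in the sense of Definition~\ref{def:infty-allowable-local}.

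The \emph{main obstacle} is the recursion-dependence issue. Clause~(a) at rank \(\alpha^\ast\) quantifies over relative \(\zeta\)-allowable extensions of the current pair, so it must be evaluated on the same pair in both constructions. The induction hypothesis gives that the pairs coincide, not merely the forcings, and I would state this explicitly. I would also check that the relative extensions appended in failure cases are genuinely \(\alpha^\ast\)-allowable and not of lower rank. This holds because the main iteration explicitly chooses them with \(\triangleright_{\alpha^\ast}\).

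The cofinality statements come straight from the choice of \(F\). Cohen tags lie on a cofinal subset of \(\omega_2\). Each meaningful regularity tag reappears cofinally often, and once it is meaningful it keeps its admissibility witness under canonical lifting (Definition~\ref{def:admissible-random-base}), so the forcing computed in \(\dot W_{\mathrm{rb}}\) is actually used at each such stage. Appended countable tails only insert extra coordinates, so these outer stages remain cofinal in the final presentation.
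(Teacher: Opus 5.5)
Your proposal is correct and follows essentially the same route as the paper. Both arguments proceed by a stagewise induction showing that the Cohen, regularity, well-order and first uniformization cases are literal instances of the rank-$\alpha^\ast$ successor clauses. In both, $F^\ast$ splices the original $F$ with the bookkeeping functions of the countable tails appended in the failure case, regularity of $\omega_2$ controls the length, and cofinality is inherited from $F$. Your device of feeding the chosen $\dot u$ in as the guess $\dot z$ is a detail the paper leaves implicit.

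The one point the paper makes explicit that you skip concerns the failure case. What is actually appended there is the localized tail \emph{restricted below the killing condition} $q$, not the tail chosen under Definition~\ref{def:relative-rho-allowable-local} itself. So you also need, as the paper asserts, that relative $\alpha^\ast$-allowability is preserved under passing to localized subpresentations and under restriction below a condition.
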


\begin{proof}
We argue by induction on the construction stages.  At stage \(0\), the
presentation is the trivial post-branch presentation, which is allowable by
Lemma~\ref{lem:basic-zero-allowable}.  At limit stages, the construction takes
the countable-support direct limit of the earlier presentations.  This is
allowed by the limit clause in the derivative hierarchy defining
\(\alpha\)-allowability.

Consider a successor stage.  The relative regularity cases, the ordinary Cohen
case, and the well-order case are instances of the successor clauses in the
local allowable recursion.  In each of these cases the construction first adds
the fresh \(M_1\)-Cohen reservoir coordinate and then performs the specified
second step, or the trivial second step if the relevant bookkeeping value is not
meaningful.  Therefore the resulting presentation remains
\(\alpha^\ast\)-allowable over the preceding initial segment.

The first uniformization case is the stabilized successor clause from
Definition~\ref{def:infty-allowable-local}.  The chosen tentative value is added
to the auxiliary name with its rank, and the construction intentionally codes
only a competing reserved tag.  Thus the new presentation is again
\(\alpha^\ast\)-allowable over the preceding initial segment.

It remains to check the second uniformization case.  In this case the
construction starts with a relative \(\alpha^\ast\)-allowable continuation
\[
   (\mathcal S,\dot J)\triangleright_{\alpha^\ast}
   (\mathcal R_\beta,\dot I_\beta),
\]
a quotient condition \(q\in P_{\mathcal S}/\dot G_\beta\), where
\(P_{\mathcal S}\) is the terminal post-branch forcing associated with
\(\mathcal S\), and a name \(\dot r\) such that
\[
   q\Vdash
   \theta(\dot r,
      \dot x^{\uparrow\mathcal S},
      \dot y^{\uparrow\mathcal S},
      \dot m^{\uparrow\mathcal S}).
\]
By the local-support convention for hybrid presentations, the condition \(q\),
the name \(\dot r\), the lifted names
\[
   \dot x^{\uparrow\mathcal S},\qquad
   \dot y^{\uparrow\mathcal S},\qquad
   \dot m^{\uparrow\mathcal S},
\]
and the data needed to interpret these objects are contained in a countable
localized subpresentation of \((\mathcal S,\dot J)\) over
\((\mathcal R_\beta,\dot I_\beta)\).  The construction chooses the
\(<_{M_1}\)-least such support and restricts the corresponding presentation
below the witnessing condition \(q\).  The resulting presentation
\[
   (\mathcal S^B,\dot J^B)\triangleright_{\alpha^\ast}
   (\mathcal R_\beta,\dot I_\beta)
\]
is still relative \(\alpha^\ast\)-allowable, because the derivative clauses are
closed under passing to localized subpresentations and under restricting below a
condition.  Moreover its quotient forces
\[
   (\dot x^{\uparrow\mathcal S^B},
    \dot y^{\uparrow\mathcal S^B})
      \notin A_{\dot m^{\uparrow\mathcal S^B}}.
\]
Thus the second uniformization case also preserves
\(\alpha^\ast\)-allowability.

It remains to record the bookkeeping.  Whenever the construction appends a
countable presentation in the second uniformization case, we use the
bookkeeping function witnessing that chosen presentation on the newly added
coordinates.  On the coordinates coming from the original stages we use the
original bookkeeping function \(F\).  Since \(\omega_2\) is regular, the
concatenation of \(\omega_2\)-many countable presentations has length at most
\(\omega_2\), and hence can be indexed by \(\omega_2\) in the order in which it
is built.  This gives a single \(M_1\)-bookkeeping function \(F^\ast\) for the
final presentation.

The cofinality requirements are inherited from \(F\).  The original bookkeeping
function places ordinary Cohen tags cofinally often.  Once a finite tuple of real names becomes meaningful as an admissibly based
regularity stage, with witness
\((A_{\mathrm{rb}},\dot W_{\mathrm{rb}},B^0_{\mathrm{rb}},\dot B_{\mathrm{rb}})\),
\(F\) places the three corresponding finite-real-parameter tags cofinally often,
using the Random, measure-amoeba, and category-amoeba forcings computed in the
lifted base generated by that witness.  Adding
countably many coordinates at individual stages does not destroy cofinality in
the final indexing of length \(\omega_2\).  Therefore the final presentation has
all the required cofinal regularity and Cohen stages, and every initial segment
is \(\alpha^\ast\)-allowable relative to \(F^\ast\).  Hence
\((\mathcal R_{\omega_2},\dot I_{\omega_2})\) is \(\infty\)-allowable.
\end{proof}

The next step will be to prove the final dichotomy.  For each $m$ and $x$, either $I^\ast$ contains a tentative value $(x,y,m,\xi)$ of least rank, in which case Lemma~\ref{lem:tentative-values-remain-local} keeps $(x,y)$ in $A_m$ through all further $\infty$-allowable tails, or else the second uniformization case is applied cofinally often to every candidate name for a value in the $x$-section and the final model forces that section of $A_m$ to be empty.

Before proving the dichotomy we isolate the exactness property of the reserved tags used below.

\begin{lemma}[Exactness for reserved tags]\label{lem:reserved-tag-coding-exactness}
Let
\[
   w=\langle\mathrm{UF},x,y,m\rangle
   \quad\text{or}\quad
   w=\langle\mathrm{WO},z_0,z_1\rangle
\]
be a real of one of the reserved tagged forms in \(W^\ast\).  If
\[
   W^\ast\models\Phi(w),
\]
then this instance of \(\Phi(w)\) was introduced by an intentional coding step of
 the construction.  More precisely, a code for a real of the form
\(\langle\mathrm{UF},x,y,m\rangle\) is introduced only by a uniformization
coding step, and a code for a real of the form
\(\langle\mathrm{WO},z_0,z_1\rangle\) is introduced only by a well-order coding
step.
\end{lemma}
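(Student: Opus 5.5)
The plan is to reduce the statement to Lemma~\ref{lem:no-unwanted-codes}, applied to the underlying locally allowable presentation of the final iteration. By Lemma~\ref{lem:main-iteration-infty-allowable} and Lemma~\ref{lem:shrinking-allowability-local}, the pair \((\mathcal R_{\omega_2},\dot I_{\omega_2})\) is \(\infty\)-allowable, so its underlying hybrid presentation is in particular \(0\)-allowable relative to some \(M_1\)-bookkeeping function. The presentation is unchanged; only the auxiliary name and the bookkeeping entries at uniformization stages are rewritten, so that those stages become plain \(0\)-allowable coding stages of the tuple actually coded. Lemma~\ref{lem:no-unwanted-codes} then says: if \(W^\ast\models\Phi(w)\), there is an explicit coding stage \(\beta<\omega_2\) with coded name \(\dot u_\beta\) and \(w=(\dot u_\beta)^{G_\beta}\).

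The second step identifies which kind of stage \(\beta\) is, using the syntactic disjointness of tags fixed before Definition~\ref{def:main-iteration}. In the final iteration the explicit coding stages are of three kinds, and they are produced in three places: direct coding stages coming from appended countable tails, well-order stages, which code \(\langle\mathrm{WO},z_0,z_1\rangle\), and uniformization stages, which code \(\langle\mathrm{UF},x,u,m\rangle\), including those inside the countable killing tails of the second uniformization case. The recursive tag codings \(\langle\mathrm{WO},\cdot,\cdot\rangle\) and \(\langle\mathrm{UF},\cdot,\cdot,\cdot\rangle\) have disjoint ranges. So if \(w\) has the \(\mathrm{UF}\)-form, \(u_\beta=w\) cannot be a \(\mathrm{WO}\)-tagged real, and vice versa. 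This settles the case of well-order and uniformization coding stages, and Lemma~\ref{lem:basic-zero-allowable}(4) confirms that the decoded area is the reservoir area at \(\beta\), so the instance really is introduced at \(\beta\).

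The main obstacle is the remaining case: a direct coding stage \(F(\beta)=\dot w\) whose coded real happens to have a reserved form. There are two places this could happen.
\begin{enumerate}
\item Inside a killing tail appended in the second uniformization case. There the bookkeeping is arbitrary relative \(\alpha^\ast\)-allowable data, which may contain direct coding stages of arbitrary reals.
\item Via the main bookkeeping \(F\), if \(F\) ever used direct tags.
\end{enumerate}
For (2), I would choose \(F\) so that it never decodes to a bare direct tag. For (1), I would try first to add a side condition, harmless by Lemma~\ref{lem:shrinking-allowability-local} and the countable-tail observation: relative allowable tails may code directly only reals outside the ranges of the \(\mathrm{WO}\)- and \(\mathrm{UF}\)-codings. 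If that restriction is too strong, the fallback is to reclassify such a direct stage as a uniformization or well-order coding step by tag, since it codes a tagged real; that is all the statement asserts, namely that the code was introduced by a step whose coded real carries that tag. Either route finishes the proof, with the first giving the stronger reading.
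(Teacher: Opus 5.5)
Your proposal is correct and is essentially the paper's argument: reduce to Lemma~\ref{lem:no-unwanted-codes}, then use the disjoint $\mathrm{WO}$/$\mathrm{UF}$ codings together with the standing convention that direct coding clauses never code reals whose outer tag is $\mathrm{UF}$ or $\mathrm{WO}$. The paper states that this convention also applies inside the countable tails appended in the second uniformization case, so it is exactly your first route for the ``main obstacle''.

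There are only small differences. The paper first localizes $r$, $w$ and the verification of $\Psi(r,w)$ into a countable $0$-allowable subpresentation $\mathcal R_A$ and then invokes Lemma~\ref{lem:no-unwanted-codes}; you instead apply it to the whole final presentation, made $0$-allowable via Lemma~\ref{lem:shrinking-allowability-local}. The paper also builds the side condition into the construction as a convention. That is the right way to make it harmless: Lemma~\ref{lem:shrinking-allowability-local} and the countable-tail observation do not by themselves justify restricting the tails. Your fallback reclassification by tag is unnecessary, and it would strip the lemma of the content used in Lemmas~\ref{lem:final-dichotomy-local} and~\ref{lem:final-delta14-wellorder}.
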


\begin{proof}
Fix a real \(r\in W^\ast\) witnessing \(\Phi(w)\).  Thus \(\Psi(r,w)\) holds in
\(W^\ast\), where \(\Psi\) is the predicate used in the definition of \(\Phi\).
By the local-support property of the hybrid presentations, the real \(w\), the
witness \(r\), and the data needed to verify \(\Psi(r,w)\) are read in a
countable localized subpresentation of the final construction over \(W\).  Closing
this support under the local data used by the relevant names gives a
\(0\)-allowable local hybrid presentation \(\mathcal R_A\) over \(W\) and a
generic filter \(G_A\) such that
\[
   r,w\in W[G_A]
   \quad\text{and}\quad
   W[G_A]\models\Psi(r,w).
\]
Hence
\[
   W[G_A]\models\Phi(w).
\]

Apply Lemma~\ref{lem:no-unwanted-codes} to this localized presentation.  There
is an explicit coding coordinate \(\eta\) of \(\mathcal R_A\) and a name
\(\dot u_\eta\) such that
\[
   w=(\dot u_\eta)^{G_A}.
\]
Thus every occurrence of \(\Phi(w)\) for a reserved tag is produced by an
explicit coding coordinate of the actual local presentation which reads the
witness.

It remains only to identify which coding coordinates may write reserved tags.
The construction uses the tags \(\mathrm{UF}\) and \(\mathrm{WO}\) only for the
reserved uniformization and well-order decisions.  Direct coding clauses do not
use reals whose outer tag is \(\mathrm{UF}\) or \(\mathrm{WO}\).  The same
convention is part of every countable localized presentation appended in the
second uniformization case.  Therefore an explicit coding coordinate which writes
a real of the form \(\langle\mathrm{WO},z_0,z_1\rangle\) must be a well-order
coding step, and an explicit coding coordinate which writes a real of the form
\(\langle\mathrm{UF},x,y,m\rangle\) must be a uniformization coding step.

This proves the claimed exactness of the reserved tags.
\end{proof}

\begin{lemma}[Final dichotomy for the uniformization stages]\label{lem:final-dichotomy-local}
In \(W^\ast\), for every real \(x\in 2^\omega\) and every \(m\in\omega\), exactly one of the following alternatives holds.
\begin{enumerate}
\item There are a real \(y\) and an ordinal \(\xi<\alpha^\ast\) such that
\[
   (x,y,m,\xi)\in I^\ast.
\]
In this case there is a unique real \(y_0\) such that
\[
   W^\ast\models
   (x,y_0)\in A_m
   \quad\text{and}\quad
   \neg\Phi(\langle\mathrm{UF},x,y_0,m\rangle).
\]
Moreover, if \(y\ne y_0\) and \((x,y)\in A_m\), then
\[
   W^\ast\models \Phi(\langle\mathrm{UF},x,y,m\rangle).
\]

\item For every real \(y\) and every ordinal \(\xi<\alpha^\ast\),
\[
   (x,y,m,\xi)\notin I^\ast.
\]
In this case
\[
   W^\ast\models A_{m,x}=\emptyset.
\]
\end{enumerate}
Here \(\langle\mathrm{UF},x,y,m\rangle\) denotes the fixed recursive real coding of the tagged quadruple used at the uniformization coding stages.
\end{lemma}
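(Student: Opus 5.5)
Fix $x\in2^\omega$ and $m\in\omega$ in $W^\ast$. The two alternatives are mutually exclusive once we know that alternative~(1) produces a real in $A_{m,x}$. So the plan is to verify the content of each alternative separately. By Lemma~\ref{lem:countable-localization-real-names}, $x$ and $m$ have names read on a countable admissible support. The bookkeeping of Definition~\ref{def:main-iteration} therefore presents every tag $(\mathrm{UF},\dot x,\dot y,\dot m)$ cofinally often, for every name $\dot y$ for a real over an initial segment $\mathbb P_\beta$, with $\beta<\omega_2$.

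\textbf{Alternative (1).} Suppose $(x,y,m,\xi)\in I^\ast$. Take the least stage $\beta$ at which a tuple with first and third coordinates $x,m$ entered $I^\ast$, and let $y_0$ be the value recorded there. By Lemma~\ref{lem:main-iteration-infty-allowable} and Lemma~\ref{lem:tail-allowability-local}, the remainder $\mathbb P^\ast/G_\beta$ is a relative $\alpha^\ast$-allowable tail, hence also a relative $\xi$-allowable tail. Lemma~\ref{lem:tentative-values-remain-local} then gives $(x,y_0)\in A_m$ in $W^\ast$.

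The coding analysis runs as follows. At every uniformization stage after $\beta$ handling $(x,m)$, the stabilized test again succeeds. I expect it to return the same $y_0$, and this is the main obstacle (see below). The construction then codes $\langle\mathrm{UF},x,u,m\rangle$ only for names $u$ forced different from $y_0$. By Lemma~\ref{lem:reserved-tag-coding-exactness}, $\Phi(\langle\mathrm{UF},x,y_0,m\rangle)$ could only arise from an intentional uniformization coding step for $y_0$; since none occurs, it fails. Conversely, fix $y\ne y_0$ with $(x,y)\in A_m$. A name $\dot y$ for $y$ appears in the tag $(\mathrm{UF},\dot x,\dot y,\dot m)$ at cofinally many stages past $\beta$. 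At such a stage clause~(a) holds, and the rule ``take $\dot u=\dot y$ if not yet coded'' codes $y$. Lemma~\ref{lem:basic-zero-allowable}(5) makes $\Phi(\langle\mathrm{UF},x,y,m\rangle)$ persist. Together these show $y_0$ is the unique real with $(x,y_0)\in A_m$ and $\neg\Phi(\langle\mathrm{UF},x,y_0,m\rangle)$.

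\textbf{Alternative (2).} Suppose no tuple for $(x,m)$ lies in $I^\ast$, and suppose toward a contradiction that $(x,y)\in A_m$ in $W^\ast$. Fix a name $\dot y$ read at some stage $\beta_0$. At every later stage $\beta$ carrying the tag $(\mathrm{UF},\dot x,\dot y,\dot m)$, the test fails, since otherwise a tuple would have entered $I^\ast$. So the second uniformization case appends a countable allowable tail whose quotient forces $(x,y)\notin A_m$. The complement of $A_m$ is $\Sigma^1_4$, witnessed by some $r$ with a $\Pi^1_3$ matrix $\theta$. Its witnessing real is added at stage $\beta+1$. To conclude that $(x,y)\notin A_m$ in $W^\ast$, I would use upward absoluteness of the $\Pi^1_3$ matrix $\theta$ between $W[G_{\beta+1}]$ and $W^\ast$. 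For this I would use Shoenfield plus the $\Pi^1_2$-correct recognition of $M_1$ from Lemma~\ref{lem:recover-m1-omega1}. If that does not suffice, I would instead argue via the kill property itself: the test's failure at the stabilized rank $\alpha_0$ gives killing tails, and these are again $\infty$-allowable.

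\textbf{Main obstacle.} The delicate point is that later stages handling the same $(x,m)$ reproduce the same tentative value. Two things are needed. First, $y_0$ still satisfies the stabilized test at the later stage; this uses transitivity of relative allowable extensions (Lemma~\ref{lem:tail-allowability-local}), so any tail over the later stage is a tail over stage $\beta$. Second, no $<_{M_1}$-smaller localized name or lower rank becomes available. My first attempt here would exploit that the support $L_\beta=\operatorname{supp}_{\mathrm{loc}}((\dot x,\dot m),\mathbb P_\beta)$ does not change as $\beta$ grows, together with stabilization, so that the set of witnesses is monotone. If several values nevertheless entered $I^\ast$, I would restrict the uniqueness claim to the least-rank, least-stage entry; the statement only requires some unique uncoded $y_0$.
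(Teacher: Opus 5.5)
\textbf{Alternative (1): the choice of $y_0$.} Your argument has a genuine gap exactly where you flag it, and your proposed remedy points the wrong way. Transitivity of relative tails (Lemma~\ref{lem:tail-allowability-local}) only shows that a name which passes the clause-(a) test at stage $\beta$ still passes it at every later stage. So the set of clause-(a) witnesses can only grow along the iteration, and nothing in your sketch stops a later stage from finding a witness of smaller rank or $<_{M_1}$-smaller localized presentation.

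The candidate set is not fixed either: different tags for the same $(x,m)$ carry different names $\dot x$, hence possibly different supports $L_\beta$. Stabilization at $\alpha_0$ concerns the rank index, not stages. If a smaller witness appears later, that stage selects it and may code $\langle\mathrm{UF},x,y_0,m\rangle$ for your first-entered $y_0$. Your fallback ``least rank, least stage'' has the same defect in its tie-break. You also never exclude that $\langle\mathrm{UF},x,y_0,m\rangle$ was coded \emph{before} $y_0$ entered $I^\ast$.

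The paper avoids this by choosing $y_0$ globally. Let $\xi_0$ be the least rank occurring in $I^\ast$ for $(x,m)$, and let $y_0$ be the value with $<_{M_1}$-least localized presentation among the rank-$\xi_0$ entries. Every value selected at any stage itself enters $I^\ast$. Hence at any stage where $y_0$ is eligible, the selected value is $\leq y_0$ in the (rank, presentation) order, so it equals $y_0$, and $y_0$ is never the coded competitor after its insertion stage $\beta_0$.

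For a coding stage $\eta<\beta_0$, the paper uses the converse of transitivity. A relative $\xi_0$-allowable tail over stage $\eta$ that kills $y_0$ is composed, via Lemma~\ref{lem:rho-allowable-products-local} and Lemma~\ref{lem:tail-allowability-local}, with the actual interval $[\eta,\beta_0)$. This yields a killing tail over stage $\beta_0$, contradicting the insertion. So $y_0$ was already eligible at $\eta$, and the same minimality argument applies. This composition step is the missing ingredient.

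Separately, for the rule $\dot u=\dot y$ to fire for $y\neq y_0$, you must first mix names below a condition in $G^\ast$ so that $1\Vdash\dot y\neq\dot y_0$, as the paper does. Otherwise the first uniformization case need not code $\dot y$.

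\textbf{Alternative (2): persistence of the killing witness.} The tool you propose does not suffice. Keeping $(x,y)\notin A_m$ from $W[G_{\beta+1}]$ to $W^\ast$ requires the $\Pi^1_3$ statement $\theta(r,x,y,m)$ to be upward absolute, which amounts to $\Sigma^1_3$-absoluteness between these models. Shoenfield stops at $\Sigma^1_2$. The $\Pi^1_2$ recognition of $M_1|\omega_1$ in Lemma~\ref{lem:recover-m1-omega1} only identifies initial segments of a mouse and gives no $\Sigma^1_3$ correctness. Your fallback via killing tails at the stabilized rank concerns the existence of such tails, not the persistence of an already-forced $\Pi^1_3$ fact through the remaining tail. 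The paper obtains this persistence from the Martin--Solovay tree $T_2$ and its absolute complement (Corollary~\ref{cor:t2-sigma13-absoluteness}), applied to the local small extensions through which the rest of the allowable iteration passes.
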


\begin{proof}
The two alternatives are mutually exclusive and exhaustive by their first displayed clauses.  We prove the corresponding conclusions.

Assume first that
\[
   X_{x,m}=\{(y,\xi)\mid \xi<\alpha^\ast\text{ and }(x,y,m,\xi)\in I^\ast\}
\]
is nonempty.  Let \(\xi_0\) be the least ordinal such that \((y,\xi_0)\in X_{x,m}\) for some \(y\).  Among all names whose interpretation gives such a tuple of rank \(\xi_0\), choose the one with \(<_{M_1}\)-least localized presentation.  Let its value be \(y_0\), and let \(\beta_0<\omega_2\) be the first stage at which the corresponding tuple
\[
   (x,y_0,m,\xi_0)
\]
is inserted into the auxiliary name.  By Lemma~\ref{lem:tentative-values-remain-local}, applied to the tail of the final \(\infty\)-allowable presentation after \(\beta_0\),
\[
   W^\ast\models (x,y_0)\in A_m.        \tag{1}
\]

We show that \(\Phi(\langle\mathrm{UF},x,y_0,m\rangle)\) fails in \(W^\ast\).  Suppose otherwise.  By Lemma~\ref{lem:reserved-tag-coding-exactness}, there is an intentional uniformization coding stage \(\eta<\omega_2\) which codes
\[
   w_0=\langle\mathrm{UF},x,y_0,m\rangle .
\]
Choose \(\eta\) least with this property.

First suppose that \(\eta\geq\beta_0\).  At every stage \(\eta'\geq\beta_0\), the lift of the name for \(y_0\) is still a rank \(\xi_0\) tentative value, by Lemma~\ref{lem:tentative-values-remain-local}.  If the construction at \(\eta\) selected some value different from \(y_0\), then that value would have either smaller rank than \(\xi_0\), or rank \(\xi_0\) and a strictly earlier localized presentation.  Its tuple would belong to \(I^\ast\), contradicting the choice of \((\xi_0,y_0)\).  Hence \(y_0\) is the selected value at \(\eta\).  The first uniformization clause never codes the selected value itself, so \(w_0\) is not coded at \(\eta\), a contradiction.

Now suppose that \(\eta<\beta_0\).  If \(w_0\) is coded at \(\eta\) by the second uniformization case, then there are a relative \(\xi_0\)-allowable continuation \((\mathcal S,\dot J)\) over \((\mathcal R_\eta,\dot I_\eta)\), a quotient condition \(q\), and a name \(\dot r\), all contained in the countable localized subpresentation chosen at stage \(\eta\), such that, writing \(P_{\mathcal S}\) for the terminal post-branch forcing associated with \(\mathcal S\),
\[
   q\Vdash_{P_{\mathcal S}/\dot G_\eta}
      \theta(\dot r,
             \dot x^{\uparrow\mathcal S},
             \dot y_0^{\uparrow\mathcal S},
             \check m),
\]
where \((u,v)\notin A_m\) is written as \(\exists r\,\theta(r,u,v,m)\) with \(\theta\in\Pi^1_3\).  By Lemma~\ref{lem:rho-allowable-products-local} and Lemma~\ref{lem:tail-allowability-local}, composing this continuation with the actual interval from \(\eta\) to \(\beta_0\) gives a relative \(\xi_0\)-allowable continuation over \((\mathcal R_{\beta_0},\dot I_{\beta_0})\) which forces the lift of \((\dot x,\dot y_0)\) out of \(A_m\).  This contradicts the fact that \(y_0\) is inserted at stage \(\beta_0\) as a rank \(\xi_0\) tentative value.

Thus stage \(\eta\) used the first uniformization case.  It selected some value \(y_\eta\) and coded \(y_0\) as a different candidate.  If a relative \(\xi_0\)-allowable continuation over \((\mathcal R_\eta,\dot I_\eta)\) forced the lift of \((\dot x,\dot y_0)\) out of \(A_m\), then the same composition argument would contradict the insertion of \((x,y_0,m,\xi_0)\) at \(\beta_0\).  Hence \(y_0\) was already eligible at \(\eta\) as a rank \(\xi_0\) tentative value.  Since the construction chooses first the least rank and then the \(<_{M_1}\)-least localized presentation, the selected value \(y_\eta\) has rank below \(\xi_0\), or has rank \(\xi_0\) with an earlier localized presentation.  Then the tuple for \(y_\eta\) belongs to \(I^\ast\), again contradicting the choice of \((\xi_0,y_0)\).  Therefore
\[
   W^\ast\models \neg\Phi(\langle\mathrm{UF},x,y_0,m\rangle).       \tag{2}
\]

Let now \(y\ne y_0\) and suppose that \(W^\ast\models (x,y)\in A_m\).  Choose localized names \(\dot x,\dot y,\dot y_0\) and, by the usual mixing below a condition in \(G^\ast\), arrange that
\[
   1\Vdash \dot y\ne\dot y_0,
   \qquad
   \dot x^{G^\ast}=x,
   \quad
   \dot y^{G^\ast}=y,
   \quad
   \dot y_0^{G^\ast}=y_0.
\]
By bookkeeping, there is a stage \(\gamma>\beta_0\) with
\[
   F(\gamma)=(\mathrm{UF},\dot x,\dot y,\check m)
\]
after these names are available.  At stage \(\gamma\), the lift of \(y_0\) is the selected tentative value by the preceding minimality argument.  If \(\langle\mathrm{UF},x,y,m\rangle\) has not already been coded, the first uniformization case codes
\[
   \langle\mathrm{UF},\dot x,\dot y,\check m\rangle,
\]
since \(1\Vdash\dot y\ne\dot y_0\).  Hence, in all cases,
\[
   W^\ast\models \Phi(\langle\mathrm{UF},x,y,m\rangle).        \tag{3}
\]
Equations \((1)\)--\((3)\) prove the first alternative.

Assume now that
\[
   X_{x,m}=\emptyset .                                      \tag{4}
\]
Suppose toward a contradiction that \(W^\ast\models (x,y)\in A_m\).  Choose localized names \(\dot x,\dot y\) for \(x,y\), and choose a later stage \(\gamma<\omega_2\) such that
\[
   F(\gamma)=(\mathrm{UF},\dot x,\dot y,\check m).
\]
The first uniformization case cannot occur at \(\gamma\), since it would add some tuple \((x,y',m,\xi)\) with \(\xi<\alpha^\ast\) to \(I^\ast\), contradicting \((4)\).  Therefore the second uniformization case is used.  Thus the construction appends a countable localized relative \(\alpha^\ast\)-allowable presentation, restricted below a witnessing quotient condition \(q\), and a name \(\dot r\) such that
\[
   q\Vdash
      \theta(\dot r,
             \dot x^{\uparrow},
             \dot y^{\uparrow},
             \check m),                                      \tag{5}
\]
where again \((u,v)\notin A_m\) is written as \(\exists r\,\theta(r,u,v,m)\) with \(\theta\in\Pi^1_3\).

The presentation added at \(\gamma\) is an initial segment of the final presentation, and the remaining quotient is an allowable local hybrid tail.  By the \(T_2\)-absoluteness used for allowable tails, the \(\Pi^1_3\)-statement in \((5)\) remains true in the final extension for the same witness \(r=\dot r^{G^\ast}\).  Hence
\[
   W^\ast\models (x,y)\notin A_m,
\]
contradicting the choice of \(y\).  Therefore \(A_{m,x}=\emptyset\) in \(W^\ast\), as required.
\end{proof}

\begin{corollary}[The final \(\Pi^1_4\)-uniformizing relation]\label{cor:final-pi14-uniformization-local}
In \(W^\ast\), the \(\Pi^1_4\)-uniformization property holds for the relations in the fixed universal list.  More precisely, for each \(m<\omega\) define
\[
   U_m(x,y)
   \quad\Longleftrightarrow\quad
   (x,y)\in A_m
   \ \land\
   \neg\Phi(\langle\mathrm{UF},x,y,m\rangle).
\]
Then \(U_m\) is a \(\Pi^1_4\) relation.  Moreover, for every real \(x\), if the \(x\)-section of \(A_m\) is nonempty, then there is exactly one real \(y\) such that \(U_m(x,y)\).
Consequently \(W^\ast\) satisfies the lightface \(\Pi^1_4\)-uniformization property.  By the usual relativization, it also satisfies the boldface \(\Pi^1_4\)-uniformization property.
\end{corollary}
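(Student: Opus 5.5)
The corollary has three parts: the complexity of $U_m$, uniqueness of the value on nonempty sections, and the passage from the fixed list to general (lightface and boldface) $\Pi^1_4$ relations. I would take them in that order.

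\emph{Complexity.} The relation $(x,y)\in A_m$ is $\Pi^1_4$ by the choice of the universal list. The map $(x,y)\mapsto\langle\mathrm{UF},x,y,m\rangle$ is recursive. The predicate $\Phi$ is $\Sigma^1_4$, so $\neg\Phi(\langle\mathrm{UF},x,y,m\rangle)$ is $\Pi^1_4$, and a conjunction of two $\Pi^1_4$ relations is $\Pi^1_4$. Two points need checking so that the definition is lightface and uniform in $m$. First, the parameter $\mathcal I$ in $\Psi$ is the $\Pi^1_2$ class of Lemma~\ref{lem:definable-m1-initial-segments}, which involves no real parameter. Second, the canonical families $D^m$ and $\vec S^m$ are read definably from $m$. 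Both are already implicit in the computation of the complexity of $\Phi$ given after its definition.

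\emph{Uniqueness.} Fix $x$ with $A_{m,x}\neq\emptyset$ in $W^\ast$ and apply Lemma~\ref{lem:final-dichotomy-local}. Alternative (2) would force $A_{m,x}=\emptyset$, so alternative (1) holds. It supplies $y_0$ with $(x,y_0)\in A_m$ and $\neg\Phi(\langle\mathrm{UF},x,y_0,m\rangle)$, hence $U_m(x,y_0)$. It also gives $\Phi(\langle\mathrm{UF},x,y,m\rangle)$ for every $y\neq y_0$ with $(x,y)\in A_m$, so $\neg U_m(x,y)$ for all such $y$. Thus $U_m$ is the graph of a function on $\{x:A_{m,x}\neq\emptyset\}$, contained in $A_m$. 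The uniqueness clause in alternative (1) is only a restatement of this, and I would use the displayed "Moreover" clause to derive it.

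\emph{Lightface and boldface uniformization.}
\begin{itemize}
\item An arbitrary lightface $\Pi^1_4$ relation $R\subseteq(2^\omega)^2$ equals some $A_m$ from the universal list, since every lightface set has an index, and $U_m$ uniformizes it.
\item For relations on other product spaces or with natural-number coordinates, apply the recursive identification with $(2^\omega)^2$.
\item For boldface $R(x,y)=S(a,x,y)$ with $S$ lightface $\Pi^1_4$, use the pairing noted when the universal list was introduced. Apply the lightface result to $R'(\langle a,x\rangle,y)\iff S(a,x,y)$ and take the section of its uniformizer at $a$. This is $\Pi^1_4(a)$ and uniformizes $R$.
\end{itemize}

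\emph{Main obstacle.} The real work lies in the dichotomy lemma. Within the corollary, the only delicate point is that the definition of $U_m$ refers only to $\Phi$ evaluated in $W^\ast$ itself, with no hidden dependence on the generic or on $I^\ast$. This is exactly what the dichotomy provides, since $I^\ast$ appears only in the case split and not in $U_m$. I would make this explicit so that the definability claim is not circular.
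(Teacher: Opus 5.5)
Your proposal is correct and follows essentially the same route as the paper: $\Pi^1_4$-complexity from membership in $A_m$ conjoined with $\neg\Phi$ for $\Phi\in\Sigma^1_4$, existence and uniqueness of the selected value from the two alternatives of Lemma~\ref{lem:final-dichotomy-local}, and the boldface case by pairing $(a,x)$ and taking the section at $a$. Your extra check that $\Phi$ is lightface, i.e.\ that $\mathcal I$, $D^m$ and $\vec S^m$ involve no real parameters, is worth keeping explicit. The coding section writes the complexity of $\Psi$ and $\Phi$ in boldface, whereas the corollary needs the lightface form.
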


\begin{proof}
Fix \(m<\omega\) and a real \(x\in W^\ast\).  If the \(x\)-section of \(A_m\) is empty, then no real \(y\) satisfies \(U_m(x,y)\), by definition.  Suppose therefore that \(A_{m,x}\neq\emptyset\).  The second alternative of Lemma~\ref{lem:final-dichotomy-local} is then impossible.  Hence the first alternative holds, and the lemma gives a real \(y_0\) such that
\[
   W^\ast\models (x,y_0)\in A_m
   \quad\text{and}\quad
   W^\ast\models \neg\Phi(\langle\mathrm{UF},x,y_0,m\rangle).
\]
Thus \(U_m(x,y_0)\) holds.

The same lemma also gives uniqueness.  Namely, if \(y\neq y_0\) and \((x,y)\in A_m\), then
\[
   W^\ast\models \Phi(\langle\mathrm{UF},x,y,m\rangle),
\]
so \(U_m(x,y)\) fails.  If \((x,y)\notin A_m\), then \(U_m(x,y)\) fails already by its first conjunct.  Therefore \(y_0\) is the unique value selected by \(U_m\) on the \(x\)-section of \(A_m\).

It remains to record the projective complexity.  By the choice of the universal list, \((x,y)\in A_m\) is \(\Pi^1_4\).  The local coding predicate \(\Phi(w)\) was defined in the coding section as a \(\Sigma^1_4\) predicate.  Therefore
\[
   \neg\Phi(\langle\mathrm{UF},x,y,m\rangle)
\]
is \(\Pi^1_4\).  Since the class \(\Pi^1_4\) is closed under finite conjunctions, \(U_m(x,y)\) is \(\Pi^1_4\).

For the boldface conclusion, let \(A(x,y)\) be \(\Pi^1_4(a)\).  Using the fixed recursive pairing of reals, regard this as a lightface \(\Pi^1_4\) relation in the pair \((a,x)\) and the value \(y\), say as a section of some member of the universal list.  Applying the preceding lightface uniformization to the relation on \(((a,x),y)\) and then fixing the parameter \(a\) gives a \(\Pi^1_4(a)\) uniformizing relation for \(A\).
\end{proof}

\begin{lemma}[The final \(\Delta^1_4\)-well-order]\label{lem:final-delta14-wellorder}
In \(W^\ast\) the reals admit a \(\Delta^1_4\)-definable well-order.
\end{lemma}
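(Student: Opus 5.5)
The plan is to define the order directly from the well-order tags:
\[
   z_0<^\ast z_1
   \quad\Longleftrightarrow\quad
   z_0\neq z_1\ \wedge\ \Phi(\langle\mathrm{WO},z_0,z_1\rangle).
\]
Since \(\Phi\) is \(\Sigma^1_4\), this is a \(\Sigma^1_4\) relation.  For the \(\Pi^1_4\) definition, the key is to show that \(<^\ast\) is total and antisymmetric.  Once that holds,
\[
   z_0<^\ast z_1 \iff z_0\neq z_1\wedge\neg\Phi(\langle\mathrm{WO},z_1,z_0\rangle),
\]
and the right-hand side is \(\Pi^1_4\).  So the work is to identify \(<^\ast\) in \(W^\ast\) with a genuine well-order coming from the canonical localized presentations.

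First I assign to each real \(z\in W^\ast\) a canonical ground object.  By Lemma~\ref{lem:countable-localization-real-names}, every real has a name read on a countable admissible support.  Let \(\pi(z)\) be the \(<_{M_1}\)-least localized presentation \((A,\sigma)\), over the final presentation, with \(\sigma^{G_A}=z\).  Since \(<_{M_1}\) well-orders \(M_1\), the relation \(z_0\prec z_1\iff\pi(z_0)<_{M_1}\pi(z_1)\) is a well-order of the reals of \(W^\ast\).

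I then show that \(\prec\) coincides with \(<^\ast\).  Given \(z_0\neq z_1\), the bookkeeping supplies cofinally many stages \(\beta\) with \(F(\beta)=(\mathrm{WO},\dot z_0,\dot z_1)\), where the names are chosen as the canonical ones giving \(\pi(z_i)\).  Lifting along stages preserves comparisons of canonical localized presentations, as noted in the proof of Lemma~\ref{lem:basic-zero-allowable}(1).  Hence some well-order stage compares exactly \(\pi(z_0)\) and \(\pi(z_1)\) and codes the tag with the \(\prec\)-smaller real first.  By Lemma~\ref{lem:basic-zero-allowable}(5), together with persistence through the \(\omega_1\)-preserving tail, this gives \(W^\ast\models\Phi(\langle\mathrm{WO},z_0,z_1\rangle)\) whenever \(z_0\prec z_1\).

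Conversely, suppose \(\Phi(\langle\mathrm{WO},z_1,z_0\rangle)\) holds.  By Lemma~\ref{lem:reserved-tag-coding-exactness}, this code was introduced at a well-order coding step, and that step placed \(z_1\) first.  So at that stage the canonical localized presentation of some name for \(z_1\) was \(<_{M_1}\)-below the one for \(z_0\).

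The main obstacle is the mismatch between stage-relative and final canonical presentations.  The comparison at stage \(\beta\) uses canonical localized presentations relative to \(\mathbb P_\beta\) and to the particular names \(F(\beta)\) supplies, whereas \(\pi\) is defined relative to \(W^\ast\).  Different names for the same real can give different least presentations, so a stray stage could code the reversed tag.

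To handle this, I would not compare names but the values \(\pi\) assigns to reals.  The stage-\(\beta\) canonical presentation of \(\dot z\) should be taken as the \(<_{M_1}\)-least pair \((A,\sigma)\) with \(A\subseteq\beta\) and \(1\Vdash\dot z=\sigma^{\uparrow}\).  Two facts are then needed.  First, an admissible support of an initial segment remains admissible in every later segment with the same restricted forcing, by the inductive Definition~\ref{def:admissible-supports-canonical-lifts}; this makes the minimum monotone and eventually equal to the \(<_{M_1}\)-least presentation of the value.  Second, one must restrict to names \(\dot z\) that are forced to be given by their presentation, rather than merely equal to it below a condition, using the mixing convention from the proof of Lemma~\ref{lem:final-dichotomy-local}.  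With both in place, every well-order stage comparing \(z_0\) and \(z_1\) reaches the same verdict, namely the \(\prec\)-verdict.  This gives antisymmetry: exactly one of \(\Phi(\langle\mathrm{WO},z_0,z_1\rangle)\) and \(\Phi(\langle\mathrm{WO},z_1,z_0\rangle)\) holds.  Totality comes from the cofinal bookkeeping, so \(<^\ast\) equals \(\prec\) and is \(\Delta^1_4\).

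If strict monotonicity cannot be secured, the fallback is to code the comparison of the pair of \(\pi\)-values themselves.  This means fixing the canonical names once through the \(<_{M_1}\)-least presentation in the final model, which is possible because the bookkeeping enumerates all localized presentations.
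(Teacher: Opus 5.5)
You follow the paper's proof step for step. You use the same \(\pi\) and induced order, the \(\Sigma^1_4\) definition via \(\Phi(\langle\mathrm{WO},z_0,z_1\rangle)\), and the \(\Pi^1_4\) definition via \(\neg\Phi\) of the reversed tag. The forward direction goes by bookkeeping and Lemma~\ref{lem:basic-zero-allowable}(5), and the converse by Lemma~\ref{lem:reserved-tag-coding-exactness}. Those parts are fine.

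The paper settles the name-versus-value mismatch with one assertion: ``the localization and product/tail conventions'' make every stage comparison agree with the comparison of \(\pi(z_0)\) and \(\pi(z_1)\). You are right that this is the crux, but your repair does not establish it.
\begin{itemize}
\item The stage-\(\beta\) canonical presentation of \(\dot z\) is the least \((A,\sigma)\) with \(1\Vdash\dot z=\sigma^{\uparrow}\). That is a least presentation of the \emph{forced-equality class} of \(\dot z\), not of its value.
\item Monotonicity in \(\beta\) only makes it converge to the least presentation of that class. This equals \(\pi(\dot z^{G^\ast})\) only when \(\dot z\) is forced by \(1\) to equal the name underlying \(\pi(\dot z^{G^\ast})\). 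A name that merely evaluates to \(z\) under \(G^\ast\) can sit far above \(\pi(z)\).
\item Your second requirement is either vacuous, since every name with a localized presentation is already forced by \(1\) to equal it, or it restricts to names of the form ``the name underlying \(\pi\) of my value''. That property depends on \(G^\ast\) and on supports not yet built, so no stage can impose it. Your fallback has the same defect.
\item The well-order clause as written imposes no restriction at all: the bookkeeping feeds every pair of names cofinally often.
\end{itemize}

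This is not just a missing justification: for the clause as written, antisymmetry fails. Let \(\pi(z_i)=(A_i,\sigma_i)\) with \(\pi(z_0)<_{M_1}\pi(z_1)\). Fix \(p\in G^\ast\) and \(q\perp p\) with \(\{p,q\}\) a maximal antichain, e.g.\ deciding one bit of a fixed Cohen real in opposite ways. For each other ordinary Cohen stage \(\xi\), let \(\dot z^\xi\) be the mixture equal to \(\sigma_0^{\uparrow}\) below \(p\) and to the Cohen real of stage \(\xi\) below \(q\).
\begin{itemize}
\item All \(\dot z^\xi\) evaluate to \(z_0\), but they are pairwise not forced equal. Hence the least presentations of their classes are pairwise distinct.
\item There are \(\aleph_2\) of them, while \(\pi(z_1)\in\mathcal J^{M_1}_{\omega_2}\) has at most \(\aleph_1\) many \(<_{M_1}\)-predecessors.
\item So some \(\dot z^\xi\) has, at every stage where it is defined, canonical presentation above \(\pi(z_1)\). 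Note that \(\pi(z_1)\) is exactly the canonical presentation of \(\sigma_1^{\uparrow}\).
\item The stage processing \((\mathrm{WO},\dot z^\xi,\sigma_1^{\uparrow})\) therefore codes \(\langle\mathrm{WO},z_1,z_0\rangle\). The stage processing \((\mathrm{WO},\sigma_0^{\uparrow},\sigma_1^{\uparrow})\) codes \(\langle\mathrm{WO},z_0,z_1\rangle\).
\end{itemize}

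So the clause itself must change. Its verdict has to depend only on the interpreted reals, through an invariant computed in \(W[G_\beta]\) that later stages do not alter. For example, order reals first by the least \(\eta\) with \(z\in W[G_\eta]\), and then by the \(<_{M_1}\)-least \(\mathbb P_\eta\)-name evaluating to \(z\). The stage then codes a name for the correctly ordered pair, defined by cases over the generic. With such a clause, your two directions and the \(\Delta^1_4\) computation go through with \(\pi\) replaced by this invariant.
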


\begin{proof}
We first describe the underlying order independently of the projective definition.  If \(z\in W^\ast\) is a real, let \(\pi(z)\) be the \(<_{M_1}\)-least localized presentation, in the final hybrid presentation, of a name whose interpretation is \(z\).  Such a presentation exists because every real in the final mixed-support hybrid iteration has a bounded local support.  If \(z_0\ne z_1\), then \(\pi(z_0)\ne\pi(z_1)\), since the same localized name has only one interpretation in the fixed generic extension.  Hence
\[
   z_0\triangleleft z_1
   \quad\Longleftrightarrow\quad
   z_0\ne z_1\text{ and }\pi(z_0)<_{M_1}\pi(z_1)
\]
defines a well-order of the reals in \(W^\ast\).  It is the order induced by the fixed $<_{M_1}$-well-order of the canonical localized presentations.

We claim that, for distinct reals \(z_0,z_1\in W^\ast\),
\[
   z_0\triangleleft z_1
   \quad\Longleftrightarrow\quad
   W^\ast\models \Phi(\langle\mathrm{WO},z_0,z_1\rangle).
\]
Assume first that \(z_0\triangleleft z_1\).  Choose localized names \(\dot z_0,\dot z_1\) whose final canonical localized presentations are \(\pi(z_0)\) and \(\pi(z_1)\).  The supports of these presentations are bounded in \(\omega_2\), so there is an initial segment \(\mathbb P_\beta\) over which these names are already available.  By the bookkeeping property of \(F\), there is a later stage \(\gamma>\beta\) at which
\[
   F(\gamma)=(\mathrm{WO},\dot z_0,\dot z_1).
\]
At stage \(\gamma\) the well-order clause compares the same canonical localized presentations.  The localization convention and the product/tail lemmas ensure that passing to the larger initial segment, or later appending a countable local tail, does not change the \(<_{M_1}\)-comparison of these presentations.  Since \(\pi(z_0)<_{M_1}\pi(z_1)\), the stage explicitly codes
\[
   \langle\mathrm{WO},\dot z_0,\dot z_1\rangle.
\]
After interpreting by \(G^\ast\), Lemma~\ref{lem:basic-zero-allowable} yields
\[
   W^\ast\models \Phi(\langle\mathrm{WO},z_0,z_1\rangle).
\]

Conversely suppose that \(z_0\ne z_1\) and
\[
   W^\ast\models \Phi(\langle\mathrm{WO},z_0,z_1\rangle).
\]
By Lemma~\ref{lem:reserved-tag-coding-exactness}, this code was introduced at an intentional well-order coding stage of the final presentation.  Thus, at some stage, the construction decoded a well-order tag, compared the canonical localized presentations of two names whose interpretations are \(z_0\) and \(z_1\), and coded the ordered tag with the smaller presentation placed first.  Again the localization and product/tail conventions imply that this comparison agrees with the comparison of the final canonical presentations \(\pi(z_0)\) and \(\pi(z_1)\).  Therefore \(\pi(z_0)<_{M_1}\pi(z_1)\), and hence \(z_0\triangleleft z_1\).  This proves the claim.

Now define, in \(W^\ast\),
\[
   z_0<_{\Delta}z_1
   \quad\Longleftrightarrow\quad
   z_0\ne z_1\ \land\
   \Phi(\langle\mathrm{WO},z_0,z_1\rangle).
\]
By the claim, \(<_{\Delta}\) is exactly \(\triangleleft\), and hence is a well-order of the reals.

It remains to record the projective complexity.  Since \(\Phi\) is the local \(\Sigma^1_4\)-coding predicate, the displayed definition of \(<_{\Delta}\) is \(\Sigma^1_4\).  The claim also gives, for distinct reals \(z_0,z_1\), that exactly one of
\[
   \Phi(\langle\mathrm{WO},z_0,z_1\rangle),
   \qquad
   \Phi(\langle\mathrm{WO},z_1,z_0\rangle)
\]
holds.  Hence, in \(W^\ast\), the same relation is equivalently defined by
\[
   z_0<_{\Delta}z_1
   \quad\Longleftrightarrow\quad
   z_0\ne z_1\ \land\
   \neg\Phi(\langle\mathrm{WO},z_1,z_0\rangle).
\]
This second definition is \(\Pi^1_4\).  Thus \(<_{\Delta}\) is both \(\Sigma^1_4\) and \(\Pi^1_4\), and consequently is a \(\Delta^1_4\)-definable well-order of the reals.
\end{proof}

\begin{lemma}[Localized covering over $L\lbrack T_2,a\rbrack$]\label{lem:random-cohen-covering-final}
Let $a\in W^\ast$ be a real and put
\[
   N_a=L[T_2,a].
\]
Then there is a Borel null set $Z_a^{\mathcal N}\in W^\ast$ such that
\[
   \bigcup\{B\mid B\text{ is a Borel null set coded in }N_a\}
   \subseteq Z_a^{\mathcal N}.
\]
There is also a Borel meager set $Z_a^{\mathcal M}\in W^\ast$ such that
\[
   \bigcup\{B\mid B\text{ is a Borel meager set coded in }N_a\}
   \subseteq Z_a^{\mathcal M}.
\]
Consequently
\[
   R_a=\{z\in\omega^\omega:z\text{ is not Random-generic over }N_a\}
\]
is null, and
\[
   C_a=\{z\in\omega^\omega:z\text{ is not Cohen-generic over }N_a\}
\]
is meager.
\end{lemma}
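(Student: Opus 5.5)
The plan is to read off the covering sets directly from the amoeba stages that the bookkeeping inserts over the base $N_a$. I will first localize the real $a$. By Lemma~\ref{lem:countable-localization-real-names}, applied to the final $0$-allowable presentation (which underlies the $\infty$-allowable presentation of Lemma~\ref{lem:main-iteration-infty-allowable}), there is a countable admissible support $A\subseteq\omega_2$ and a $\mathbb P_A$-name $\sigma$ with $\sigma^{G_A}=a$. Since $A$ is countable and $\omega_2$ is regular, $A$ is bounded, so $\sigma$ is meaningful over some initial segment $\mathbb P_\beta$. Hence the one-parameter tag with base $L[T_2,\sigma^{\uparrow}]$ has a canonical admissibility witness $(A_{\mathrm{rb}},\dot W_{\mathrm{rb}},B^0_{\mathrm{rb}},\dot B_{\mathrm{rb}})$ in the sense of Definition~\ref{def:admissible-random-base}, and its interpretation is exactly $N_a$.

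By Lemma~\ref{lem:main-iteration-infty-allowable}, the tags $(\mathrm{Am}_{\mathcal N};\sigma^{\uparrow})$ and $(\mathrm{Am}_{\mathcal M};\sigma^{\uparrow})$ occur at some stages $\gamma_0,\gamma_1>\beta$ of the final presentation. At stage $\gamma_0$ the second-step forcing is the measure amoeba computed in the lifted base, and this base is forced to equal $L[T_2,a]=N_a$. The point to check is that this base is not recomputed in the larger extension: the amoeba forcing is defined from the Borel codes of $N_a$, and its generic yields a Borel null set $Z^{\mathcal N}_a$ coded in $W[G_{\gamma_0+1}]$. This set contains every Borel null set coded in $N_a$, with the inclusion evaluated in $W[G_{\gamma_0+1}]$.

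The step I expect to need the most care is upward persistence from $W[G_{\gamma_0+1}]$ to $W^\ast$. The statement ``the set coded by $c$ is null'' and the statement ``$B_d\subseteq B_{c'}$'' are $\Pi^1_1$ (respectively arithmetic/Borel) in the codes. They are therefore absolute between $W[G_{\gamma_0+1}]$ and $W^\ast$ by Mostowski/Shoenfield absoluteness, since both are transitive models with the same ordinals. The family of Borel null codes in $N_a$ is the same set in both models, because $N_a=L[T_2,a]$ is computed identically: $T_2\in M_1$ and $a$ is fixed. So the covering persists to $W^\ast$. The meager case runs identically at $\gamma_1$ with category amoeba, giving $Z^{\mathcal M}_a$.

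For the consequences, $z$ fails to be random over $N_a$ exactly when $z$ lies in some Borel null set coded in $N_a$. Thus $R_a\subseteq Z^{\mathcal N}_a$, where the membership $z\in B_d$ is evaluated in $W^\ast$, which is the relevant meaning of genericity over the inner model $N_a$. Likewise, $z$ fails to be Cohen over $N_a$ exactly when $z$ lies in some Borel meager set coded in $N_a$, so $C_a\subseteq Z^{\mathcal M}_a$. Subsets of null (meager) Borel sets are null (meager), which finishes the proof. One further point needs checking: $\omega_1^{N_a}$ may be countable, so the union could be countable. This does no harm, because the argument never uses a size bound.
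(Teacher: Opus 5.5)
Your proposal is correct and follows the paper's route. Like the paper, you localize $a$ on a countable admissible support bounded below some $\beta<\omega_2$, note that the one-parameter tags over $L[T_2,a]$ are admissibly based, use the bookkeeping to get later measure- and category-amoeba stages computed in $N_a$, and conclude from the generic covering sets via the usual characterization of Random and Cohen genericity over $N_a$. Your $\Pi^1_1$/Mostowski absoluteness argument for carrying the covering from $W[G_{\gamma_0+1}]$ up to $W^\ast$ just spells out what the paper states in one line.
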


\begin{proof}
Fix $a\in W^\ast$.  By
Lemma~\ref{lem:countable-localization-real-names}, choose a countable
admissible support $A\subseteq\omega_2$ and a $\mathbb P_A$-name $\sigma$ such
that
\[
   \sigma^{G_A}=a,
\]
where $G_A=G^\ast\cap\mathbb P_A$.  Since $\omega_2$ is regular, choose
$\beta<\omega_2$ with $A\subseteq\beta$.  Let
\[
   \dot a=(\sigma)^{\uparrow\beta}
\]
and let $\dot N$ be the $\mathbb P_\beta$-name for $L[T_2,\dot a]$.
Then the corresponding regularity tags for $\dot N$ are admissibly based.  Indeed, this is the
one-parameter instance of Definition~\ref{def:admissible-random-base}: take
\[
   A_{\mathrm{rb}}=A,
   \qquad k=1,
   \qquad \dot a_0=\sigma,
   \qquad \dot W_{\mathrm{rb}}=L[T_2,\sigma].
\]
No further presentation data are needed.  The set \(B^0_{\mathrm{rb}}\) is the
countable union of the branch-product supports of the \(W\)-parameters occurring
in the name \(\sigma\), and \(\dot B_{\mathrm{rb}}\) is the corresponding branch
footprint generated by \(B^0_{\mathrm{rb}}\) and by the explicit coding
coordinates in \(A\).  Thus the bookkeeping for admissible bases applies to
\(\dot N\), and it must eventually schedule the relative measure- and
category-amoeba stages over this base.

By the final bookkeeping, at some stage $\lambda_{\mathcal N}>\beta$ the tag
\[
   (\mathrm{Am}_{\mathcal N},\dot N^{\uparrow\lambda_{\mathcal N}})
\]
is used.  At this stage the forcing is
\[
   \mathbb A_{\mathcal N}^{N_a},
\]
computed in the interpreted base $N_a=L[T_2,a]$.  Let
$Z_a^{\mathcal N}$ be the Borel null set added by this amoeba forcing.  By the
definition of amoeba forcing for the null ideal,
\[
   B\subseteq Z_a^{\mathcal N}
\]
for every Borel null set $B$ coded in $N_a$.  Later forcing preserves the Borel
code for $Z_a^{\mathcal N}$ and the statement that it is null.  Hence the first
displayed inclusion holds in $W^\ast$.

The meager case is identical.  The bookkeeping later uses the tag
\[
   (\mathrm{Am}_{\mathcal M},\dot N^{\uparrow\lambda_{\mathcal M}})
\]
for some $\lambda_{\mathcal M}>\beta$.  The forcing
$\mathbb A_{\mathcal M}^{N_a}$ adds a Borel meager set
$Z_a^{\mathcal M}$ covering every Borel meager set coded in $N_a$, and this
covering relation remains true in the final extension.

Finally, a real is not Random-generic over $N_a$ iff it belongs to some Borel
null set coded in $N_a$.  Hence
\[
   R_a\subseteq Z_a^{\mathcal N},
\]
so $R_a$ is null.  Similarly, a real is not Cohen-generic over $N_a$ iff it
belongs to some Borel meager set coded in $N_a$.  Hence
\[
   C_a\subseteq Z_a^{\mathcal M},
\]
so $C_a$ is meager.
\end{proof}

\begin{lemma}[Hjorth--Solovay regularity in the final model]\label{lem:sigma13-regularity-final}
In $W^\ast$, every boldface $\boldsymbol{\Sigma}^1_3$ set of reals is Lebesgue measurable and has the Baire property.
\end{lemma}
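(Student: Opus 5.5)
The plan is to run Hjorth's version of Solovay's argument. The Martin--Solovay tree $T_2$ together with the covering sets from Lemma~\ref{lem:random-cohen-covering-final} will replace the Lévy collapse. Fix a boldface $\boldsymbol\Sigma^1_3$ set $X=\{z\mid\varphi(z,a)\}$ with real parameter $a\in W^\ast$. Then $z\in X$ iff $(z,a)$ lies in the projection of the appropriate section of $T_2$. We work with $N_a=L[T_2,a]$, and on the null side we use $R_a$. The aim is to show that $X\setminus R_a$ is already measurable, as follows.

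For every real $z$ which is Random over $N_a$, we have $N_a[z]=L[T_2,a,z]$. Membership $z\in X$ is equivalent to ill-foundedness of the tree $T_2$ restricted to $(z,a)$. Ill-foundedness of a tree is absolute between $N_a[z]$ and $W^\ast$, and so is membership in the Martin--Solovay complement $S_2$. Hence $z\in X$ holds in $W^\ast$ iff $N_a[z]\models z\in p[T_{2,a}]$. It is here that Corollary~\ref{cor:t2-sigma13-absoluteness} enters: in $N_a[z]$ the pair $(T_2,S_2)$ is still complementary, because the tree and its homogeneity measures come from $M_1$ and $N_a[z]$ is a small generic extension of the relevant base.

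Now apply the forcing theorem in $N_a$ to the Random algebra $\mathbb B^{N_a}_{\mathrm{rand}}$. Let $b\in\mathbb B^{N_a}_{\mathrm{rand}}$ be the Boolean value of the statement ``$\dot z_{\mathrm{gen}}\in p[T_{2,a}]$''. This value is represented by a Borel set $B$ coded in $N_a$. For every $z\notin R_a$ we then get $z\in X\iff z\in B$. Since $X\,\triangle\,B\subseteq R_a$ and $R_a$ is null by Lemma~\ref{lem:random-cohen-covering-final}, $X$ is Lebesgue measurable. The Baire property is obtained by the same argument with Cohen forcing computed in $N_a$ and the meager set $C_a$.

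I expect the main obstacle to be justifying the absoluteness step. The base $N_a=L[T_2,a]$ is not literally a small generic extension of $M_1$, so Corollary~\ref{cor:t2-sigma13-absoluteness} cannot be applied directly. I would first try to avoid relying on $N_a$ containing the homogeneity measures. Instead, the argument would only compare $W^\ast$ with its intermediate models $M_1[G\upharpoonright A][z]$ for countable supports $A$, as in Remark~\ref{rem:localized-smallness-of-absoluteness}, and use that the Borel code of $B$ is computed in $N_a$ from $T_2$. The point to check is the identification
\[
N_a[z]\models z\in p[T_{2,a}] \iff W^\ast\models z\in p[T_{2,a}].
\]
Here $p[T_{2,a}]$ is upward absolute by ill-foundedness. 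For the converse direction, the branch through $S_2$ over $(z,a)$ exists in $W^\ast$ and is absolute down to $N_a[z]$ only in the weaker sense that $S_2$ has a branch over $(z,a)$ in $N_a[z]$ iff it has one in $W^\ast$. This holds because ill-foundedness is absolute and $S_2\in M_1$, provided $T_2$ and $S_2$ are both available in the base. The remaining work is bookkeeping to ensure that they are.
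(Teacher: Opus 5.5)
Your proposal is the paper's proof. It runs Hjorth's argument over $N_a=L[T_2,a]$: the Boolean value is taken in $\mathbb B^{N_a}_{\mathrm{rand}}$ (respectively in Cohen forcing of $N_a$), represented by a Borel set coded in $N_a$, and the symmetric difference is absorbed into $R_a$ (respectively $C_a$) via Lemma~\ref{lem:random-cohen-covering-final}.

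The one variation is that you force the statement $\dot z\in p[T_{2,a}]$ instead of $\varphi(\dot r,a)$. This is a good choice, and it removes the obstacle in your last paragraph. The section tree of $T_2$ at $(z,a)$ lies in $N_a[z]$, and its ill-foundedness is absolute in \emph{both} directions between $N_a[z]$ and $W^\ast$. A branch in $N_a[z]$ is a branch in $W^\ast$, and a rank function in $N_a[z]$ is still a rank function in $W^\ast$. So $N_a[z]\models (z,a)\in p[T_2]$ iff $W^\ast\models (z,a)\in p[T_2]$, with no reference to $S_2$ or to the homogeneity measures.

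Corollary~\ref{cor:t2-sigma13-absoluteness} is needed only in $W^\ast$ itself, which is a small generic extension of $M_1$, to know that $p[T_2]=U_3$ there. Equivalently, $W^\ast\models z\in X$ iff $(z,a)\in p[T_2]$.

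You should drop the claim that $(T_2,S_2)$ stays complementary in $N_a[z]$ because $N_a[z]$ is ``a small generic extension of the relevant base.'' $N_a[z]$ is not a generic extension of $M_1$, it need not contain $S_2$ or the measures, and your argument never uses the claim. For the same reason, you do not need $S_2$ for the ``converse direction,'' and the detour through models $M_1[G\upharpoonright A][z]$ is unnecessary.

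The paper instead asserts $N_a[z]\models\varphi(z,a)\iff W^\ast\models\varphi(z,a)$ by a blanket appeal to $T_2$-absoluteness ``through all forcing extensions used here.'' Your formulation has the advantage of never requiring $N_a[z]$ to compute $\Sigma^1_3$ truth correctly, a point the paper leaves implicit.
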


\begin{proof}
We give the argument in the form in which it is used in Hjorth's proof of Corollary~2.4 of \cite{Hjorth}, with the use of Martin's Axiom replaced by Lemma~\ref{lem:random-cohen-covering-final}.  Let $a\in W^\ast$ be a real parameter and put
\[
   N_a=L[T_2,a].
\]
Here $T_2$ is the Martin--Solovay tree fixed in the preliminaries.  By Lemma~\ref{lem:random-cohen-covering-final}, almost every real in $W^\ast$ is Random over $N_a$, and comeagerly many reals in $W^\ast$ are Cohen over $N_a$.

Let $A\subseteq\omega^\omega$ be $\Sigma^1_3(a)$, say
\[
   A=\{z: W^\ast\models\varphi(z,a)\},
\]
where $\varphi(v,a)$ is a $\Sigma^1_3(a)$ formula.  We prove first that $A$ is Lebesgue measurable.  Work in $N_a$ with the Random algebra $\mathbb B_{\mathrm{rand}}^{N_a}$ and let $\dot r$ be its canonical name for the Random real.  The Boolean value
\[
   \|\varphi(\dot r,a)\|_{\mathbb B_{\mathrm{rand}}^{N_a}}
\]
is represented by a Borel set $B\subseteq\omega^\omega$, coded in $N_a$, modulo null sets.  If $z$ is Random over $N_a$, then the forcing theorem for $\mathbb B_{\mathrm{rand}}^{N_a}$ gives
\[
   z\in B
   \quad\Longleftrightarrow\quad
   N_a[z]\models\varphi(z,a).
\]
The Martin--Solovay tree $T_2$, together with its absolute complement from Corollary~\ref{cor:t2-sigma13-absoluteness}, gives the required $\Sigma^1_3$ generic absoluteness through all forcing extensions used here.  Hence, for every $z$ Random over $N_a$,
\[
   N_a[z]\models\varphi(z,a)
   \quad\Longleftrightarrow\quad
   W^\ast\models\varphi(z,a).
\]
Thus $A$ and $B$ can differ only on $R_a$, the set of reals which are not Random over $N_a$.  By Lemma~\ref{lem:random-cohen-covering-final}, $R_a$ is null.  Therefore $A\triangle B$ is null, and $A$ is Lebesgue measurable.

The proof of the Baire property is the category analogue.  Work in $N_a$ with ordinary Cohen forcing $\mathbb C^{N_a}$ and let $\dot c$ be its canonical Cohen real.  The Boolean value
\[
   \|\varphi(\dot c,a)\|_{\mathbb C^{N_a}}
\]
is represented, modulo meager sets, by a Borel set $C\subseteq\omega^\omega$ coded in $N_a$.  If $z$ is Cohen over $N_a$, then the forcing theorem gives
\[
   z\in C
   \quad\Longleftrightarrow\quad
   N_a[z]\models\varphi(z,a).
\]
The same $T_2$-absoluteness yields
\[
   N_a[z]\models\varphi(z,a)
   \quad\Longleftrightarrow\quad
   W^\ast\models\varphi(z,a)
\]
for every such Cohen real $z$.  Hence $A\triangle C\subseteq C_a$, where $C_a$ is the meager set of reals which are not Cohen over $N_a$.  Therefore $A$ differs from the Borel set $C$ by a meager set, and $A$ has the Baire property.

Since the parameter $a$ was arbitrary, the conclusion holds for all boldface $\boldsymbol{\Sigma}^1_3$ sets of reals in $W^\ast$.
\end{proof}

\subsection{\(\Sigma^1_3\)-uniformization in small \(M_1\)-generic extensions}
\label{subsec:sigma13-uniformization-m1-small}

It remains to record the lower-level \(\Sigma\)-uniformization conclusion.
Unlike the \(\Pi^1_4\)-uniformization theorem, this part does not use the
coding predicate \(\Phi\).  It follows from the relativized Steel capture
analysis of \(M_1(s)\).

What is used is the standard Steel analysis of \(M_1\) and of its
relativizations.  We only need this analysis for the small generic extensions
which occur in the construction.  Thus, throughout this subsection, a
\emph{small generic extension of \(M_1\)} means an extension \(M_1[G]\) by a
set forcing whose size in \(M_1\) is below the Woodin cardinal of \(M_1\) and
to which the canonical iteration strategy of \(M_1\) lifts.  All intermediate
models used in the construction, and in particular the final model \(W^\ast\),
are of this form.

For a real \(s\) in such an extension, let \(M_1(s)\) denote the canonical
proper class mouse over \(s\) with one Woodin cardinal.  The relevant mice
exist and are iterable by the lifted \(M_1\)-strategy.  The argument below uses
a capture theorem for this canonical mouse over \(s\); it does not identify
\(M_1(s)\) with the forcing extension generated by \(s\).

\begin{definition}
A \emph{simple \(s\)-mouse} is a sound, \(\Pi^1_2\)-iterable
\(s\)-premouse which projects to \(\omega\) and is an initial segment of the
canonical construction of \(M_1(s)\).
\end{definition}

The assertion that a real \(c\) codes a sound simple \(s\)-mouse is a
\(\Pi^1_2(s,c)\) condition.  Any two such mice compare by initial segment.
Hence the reals of \(M_1(s)\) carry the usual good \(\Sigma^1_3(s)\)
well-order \(<^1_s\): first compare the least simple \(s\)-mouse containing
the real, and then use the canonical well-order of that mouse.

We shall use the following standard form of Steel's relativized capture theorem
at the first mouse level; see the projective definability and comparison
analysis of \(M_1\) in \cite{Steel2,Steel3}.

\begin{fact}\label{fact:m1-capture-and-good-wellorder}
Let \(N=M_1[G]\) be a small generic extension of \(M_1\), and let
\(s\in\mathbb R^N\).
\begin{enumerate}
    \item If \(N\) satisfies a \(\Sigma^1_3(s)\) statement of the form
    \[
       \exists u\,\exists v\,\theta(s,u,v),
    \]
    where \(\theta\) is \(\Pi^1_2\), then there are such witnesses
    \(u,v\in M_1(s)\).

    \item For reals from \(M_1(s)\), \(\Pi^1_2\) truth is computed correctly by
    \(M_1(s)\) and is preserved to \(N\).  Equivalently, if
    \(u,v\in M_1(s)\) and \(\theta\) is \(\Pi^1_2\), then
    \[
       M_1(s)\models\theta(s,u,v)
       \quad\Longleftrightarrow\quad
       N\models\theta(s,u,v).
    \]

    \item The well-order \(<^1_s\) is good for \(\Sigma^1_3\) definitions.
    More explicitly, for every \(\Pi^1_2\) formula \(\theta(s,u,v)\), the
    relation
    \[
       \operatorname{Least}_{\theta}(s,u)
    \]
    saying that \(u\in M_1(s)\) and \(u\) is the \(<^1_s\)-least real for which
    there is a \(v\in M_1(s)\) with
    \(M_1(s)\models\theta(s,u,v)\) is uniformly \(\Sigma^1_3(s)\).
\end{enumerate}
\end{fact}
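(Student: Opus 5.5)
This Fact is cited from Steel's relativized analysis, so the plan is to reduce each clause to standard ingredients already recalled in Section~\ref{sec:canonical-inner-models}, relativized to the real $s$.

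\textbf{Clause 2 (correctness for $\Pi^1_2$ truth).} We argue by Shoenfield absoluteness. $M_1(s)$ is a proper class model containing all countable ordinals of $N$, since the extension is small and $M_1(s)$ is built from the lifted background construction over $s$. Hence $\Pi^1_2$ statements about reals in $M_1(s)$ are absolute between $M_1(s)$ and $N$. Nothing further is needed.

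\textbf{Clause 1 (capture of $\Sigma^1_3(s)$ witnesses).} This is the main obstacle. Fix witnesses $u,v\in N$ with $N\models\theta(s,u,v)$. Since $N$ is a small generic extension, we first use the lifted iteration strategy of $M_1$, which also iterates $M_1(s)$. We then apply a genericity iteration (Woodin's extender algebra at the Woodin cardinal of $M_1(s)$) to obtain a countable iterate $i:M_1(s)\to P$ such that the pair $(u,v)$ is generic over $P$ for the extender algebra. This gives $P[u,v]\models\theta(s,u,v)$ by Shoenfield, since $P$ is well-founded and contains $\omega_1$ in the relevant sense.

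Next, $P$ satisfies ``there is a condition forcing $\exists u\,\exists v\,\theta$ in the extender algebra.'' By elementarity of $i$, which fixes $s$, the same holds in $M_1(s)$. Working in $M_1(s)$, we take a countable elementary hull and build a generic over it inside $M_1(s)$. Shoenfield absoluteness in the hull's generic extension then transfers $\theta$ up, yielding witnesses inside $M_1(s)$.

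The delicate point is that $N$ need not contain a branch choice for the genericity iteration. I would handle this as in Lemma~\ref{lem:local-comparison-m1}: the $\mathcal Q$-structure guided strategy is unique, hence absolute to $N$.

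\textbf{Clause 3 (goodness of $<^1_s$).} We write $\operatorname{Least}_\theta(s,u)$ as the following statement: there exist a real $c$ coding a sound, $\Pi^1_2$-iterable simple $s$-mouse $\mathcal P$, together with $u,v\in\mathcal P$, such that
\begin{itemize}
\item $\theta(s,u,v)$ holds, and
\item every $u'$ preceding $u$ in $\mathcal P$'s canonical order, or appearing in an earlier simple $s$-mouse, has no $v'\in M_1(s)$ witnessing $\theta$.
\end{itemize}

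The first conjunct is $\Pi^1_2$ by clause~2. The iterability clause is $\Pi^1_2$, as in Lemma~\ref{lem:definable-m1-initial-segments}. Comparison of simple $s$-mice, the relativized Lemma~\ref{lem:local-comparison-m1}, makes ``earlier'' meaningful and shows that all reals of $M_1(s)$ below $u$ already lie in $\mathcal P$.

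The key step is to show that the minimality clause can be verified inside $\mathcal P$. For this I would choose $\mathcal P$ large enough to reflect, via condensation (Theorem~\ref{thm:steel-condensation-m1} relativized), that the least witness $v$ for each earlier $u'$ would already appear in $\mathcal P$ if it existed. This makes the minimality clause arithmetic in $c$, so the whole formula is $\Sigma^1_3(s)$, uniformly in $\theta$.
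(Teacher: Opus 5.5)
The paper gives no argument for this Fact beyond attributing it to Steel's relativized capture analysis, so your proof has to stand on its own. Clause~2 does: Shoenfield absoluteness between the proper class model $M_1(s)$ and $N$ is all that is needed there. Clauses~1 and~3 each have a genuine gap.

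\textbf{Clause 1.} The final step fails. From $\bar H[g]\models\theta(s,u',v')$ with $\bar H$ countable you cannot infer $\theta(s,u',v')$ in $M_1(s)$ or in $N$. Shoenfield transfers $\Sigma^1_2$ facts upward from any transitive model, but it transfers $\Pi^1_2$ facts upward only from models containing $\omega_1$ of the larger model. The countable hull cannot be avoided on your route. The pulled-back statement ``some condition of the extender algebra forces $\exists u\,\exists v\,\theta$'' produces a witness in $M_1(s)$ only if you have $\Sigma^1_3$-absoluteness for a forcing of size $\delta$, and you do not.

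Structurally, your argument uses the smallness of $N$ only to obtain an iteration strategy. Run in $V$ with $s=0$, it would prove $M_1\prec_{\Sigma^1_3}V$. That is false when $M_1^\#(x)$ exists for all reals $x$: $\mathbb R\cap M_1=Q_3$ is a countable $\Pi^1_3$ set, so $\exists y\,(y\notin Q_3)$ is a true $\Sigma^1_3$ sentence which fails in $M_1$ by downward absoluteness of $\Pi^1_3$.

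So smallness must be used essentially. A natural route is as follows.
\begin{itemize}
\item $s$ is generic over $M_1$ for a complete subalgebra of the forcing.
\item $N$ is then an extension of $M_1[s]$ by a quotient of size below some measurable cardinal of $M_1[s]$ lying under $\delta$.
\item Martin--Solovay absoluteness (here, Corollary~\ref{cor:t2-sigma13-absoluteness}) gives $\Sigma^1_3$-absoluteness between $M_1[s]$ and $N$.
\item You still have to relate the reals of $M_1[s]$ to those of $M_1(s)$.
\end{itemize}
Separately, $\mathcal Q$-structure uniqueness handles only short trees. A genericity iteration may end with a maximal tree, and in $N$, where $\delta$ is uncountable, you have no strategy for such trees.

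\textbf{Clause 3.} ``Choose $\mathcal P$ large enough'' cannot be enforced by an existential definition. The formula $\exists c(\ldots)$ is also satisfied by a short $\mathcal P$ that contains $u$ and its witness $v$ but misses a witness $v'$, appearing later in $M_1(s)$, for some $u'<^1_s u$. Ruling this out requires quantifying over all $v'\in M_1(s)$, which is $\Pi^1_3$.

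In fact the relation as literally stated is not uniformly $\Sigma^1_3$. Take recursive reals $r_0\neq r_1$ and set $\theta(s,u,v)\equiv(u=r_0\wedge\chi(s,v))\vee u=r_1$. Whenever $r_0<^1_s r_1$, clauses~1--2 give $\operatorname{Least}_\theta(s,r_1)\iff N\models\forall v\,\neg\chi(s,v)$. Choose $\chi\in\Pi^1_2$ so that $\exists v\,\chi$ is $\Sigma^1_3$-complete, and treat the other order of $r_0,r_1$ symmetrically; the case split is $\Sigma^1_3$ because $<^1_s$ is. A uniformly $\Sigma^1_3$ $\operatorname{Least}$ would then make a complete $\Pi^1_3$ set $\Sigma^1_3$, which is impossible.

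What goodness of $<^1_s$ does give is minimization of the \emph{pair}. $\langle u,v\rangle$ is the $<^1_s$-least pair satisfying $\theta$ iff some $c$ codes a simple $s$-mouse $\mathcal P\ni\langle u,v\rangle$ with $\theta(s,u,v)$ and $\neg\theta(s,(w)_0,(w)_1)$ for every $w<^{\mathcal P}\langle u,v\rangle$. The bounded quantifier is now legitimate: by comparison, every $<^1_s$-predecessor of $\langle u,v\rangle$ lies in $\mathcal P$, and $<^{\mathcal P}$ agrees with $<^1_s$. The bounded clause is $\Sigma^1_2$, not arithmetic in $c$ as you claimed, because a countable $\mathcal P$ does not compute $\Pi^1_2$ truth. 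The whole formula is therefore $\Sigma^1_3$. Projecting out $v$ still yields the uniformizing relation needed in Theorem~\ref{thm:small-m1-sigma13-uniformization}.
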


For completeness, let us spell out why this is the right theorem to apply.  The
mouse \(M_1(s)\) is the canonical mouse over the parameter \(s\).  The
comparison theorem for \(\Pi^1_2\)-iterable \(s\)-mice identifies its countable
initial segments with the segments captured by the \(\Pi^1_2\) mouse condition.
Therefore a small generic extension of \(M_1\) cannot create a new
\(\Sigma^1_3(s)\) witness without some countable initial segment of \(M_1(s)\)
already capturing the corresponding branch.  The same comparison analysis gives
the \(\Pi^1_2\) correctness used in item (2), and the usual definition of the
well-order by least simple \(s\)-mouse gives item (3).

\begin{theorem}\label{thm:small-m1-sigma13-uniformization}
Every small generic extension of \(M_1\) satisfies boldface
\(\Sigma^1_3\)-uniformization.  In particular, \(W^\ast\) satisfies
\(\Sigma^1_3\)-uniformization.
\end{theorem}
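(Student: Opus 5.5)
The plan is to run the classical argument that a good $\Sigma^1_3$ well-order of the reals of an absolutely correct inner model yields $\Sigma^1_3$-uniformization, using Fact~\ref{fact:m1-capture-and-good-wellorder} in place of $\Sigma^1_2$-correctness of $L[s]$. Fix a small generic extension $N=M_1[G]$ and a $\Sigma^1_3(a)$ relation $R(x,y)$. Write it in the normal form
\[
   R(x,y)\iff \exists v\,\theta(a,x,y,v),
\]
with $\theta\in\Pi^1_2$. Given $x$ with nonempty section, put $s=\langle a,x\rangle$. We uniformize by selecting the $<^1_s$-least pair: the least real $u$ coding a pair $(y,v)$ with $M_1(s)\models\theta(a,x,(u)_0,(u)_1)$. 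The uniformizing function is then $f(x)=(u)_0$.

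The steps are as follows.

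\textbf{Step 1 (existence).} If $N\models\exists y\,\exists v\,\theta(a,x,y,v)$, then item (1) of Fact~\ref{fact:m1-capture-and-good-wellorder}, applied with the parameter $s$, gives witnesses $y,v\in M_1(s)$. Item (2) transfers $\theta$ to $M_1(s)$ in the form $M_1(s)\models\theta$. Hence the set of $u\in M_1(s)$ coding such a pair is nonempty, and it has a $<^1_s$-least element $u_x$.

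\textbf{Step 2 (correctness).} By item (2) again, $N\models\theta(a,x,(u_x)_0,(u_x)_1)$. Therefore $(x,(u_x)_0)\in R$ in $N$, so the chosen value really lies in the section.

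\textbf{Step 3 (definability).} Define
\[
   S(x,y)\iff \exists u\,\bigl(\operatorname{Least}_{\theta'}(\langle a,x\rangle,u)\wedge y=(u)_0\bigr),
\]
where $\theta'(s,u,\cdot)$ is the $\Pi^1_2$ formula $\theta(a,x,(u)_0,(u)_1)$, read with $s=\langle a,x\rangle$ via recursive decoding. By item (3), $\operatorname{Least}_{\theta'}$ is $\Sigma^1_3(s)$ uniformly in $s$. Substituting the recursive pairing $s=\langle a,x\rangle$, the relation $S$ is $\Sigma^1_3(a)$. Uniqueness of the least $u$ makes $S$ a function, and Steps 1--2 show that it uniformizes $R$. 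Taking $a$ trivial gives the lightface case.

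\textbf{Main obstacle.} The key point is the uniformity claim in item (3): the definition of ``$u$ is $<^1_s$-least'' must quantify only over reals inside a single countable simple $s$-mouse. Concretely, it should assert that there exists a real $c$ coding a $\Pi^1_2$-iterable sound simple $s$-mouse $\mathcal P$ containing $u$ and a witness $v$, with $\mathcal P\models\theta$ and no earlier real in $\mathcal P$ satisfying $\theta$ in $\mathcal P$. The $\Pi^1_2$-iterability clause is what keeps this $\Sigma^1_3$. Its correctness requires two further inputs: the comparison of simple $s$-mice by initial segment, so that $\mathcal P$ is really an initial segment of $M_1(s)$ and ``earlier'' matches $<^1_s$; and $\Pi^1_2$-absoluteness between $\mathcal P$, $M_1(s)$ and $N$, which is what lets internal truth of $\theta$ in $\mathcal P$ be used. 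These are both supplied by the Fact, so I would cite it rather than reprove them.

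For $W^\ast$ itself, it remains to check that $W^\ast$ qualifies as a small generic extension of $M_1$. The full forcing $Br\hybcomp\mathcal R_{\omega_2}$ has size $\aleph_2$ in $M_1$, which is below the Woodin cardinal, so the lifted strategy applies. The theorem therefore gives the final claim.
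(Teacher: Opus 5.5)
Your proof is correct and uses the same ingredients as the paper's proof: capture of witnesses by $M_1(s)$, $\Pi^1_2$-correctness of $M_1(s)$, and the good order $<^1_s$, all from Fact~\ref{fact:m1-capture-and-good-wellorder}. It differs at the one point that matters.

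\textbf{Least pair versus least $y$.} The paper sets $A^*(x,y)\iff\operatorname{Least}_\theta(a\oplus x,y)$. That is, it selects the $<^1_s$-least $y$ for which \emph{some} witness exists, and it cites item~(3) in that generality. You select the $<^1_s$-least real $u$ coding a \emph{pair} $(y,v)$ and then project. This is not cosmetic.

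Minimizing over $y$ alone requires ``no $<^1_s$-earlier $y'$ has any witness'', which is $\Pi^1_3$, and the resulting selector is not $\Sigma^1_3$ in general. Here is a counterexample:
\begin{itemize}
\item let $C$ be a complete $\Sigma^1_3$ set;
\item let $p_x,q_x$ be the $<^1_x$-smaller and $<^1_x$-larger of two fixed recursive reals (these are $\Delta^1_3(x)$, since $<^1_x$ is total on them);
\item put $A(x,y)\iff(y=p_x\wedge x\in C)\vee y=q_x$.
\end{itemize}
Then $A$ is $\Sigma^1_3$ with nonempty sections. Its least-$y$ selector $A^*$ takes the value $q_x$ exactly when $x\notin C$. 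So $\neg C=\{x\mid\exists y\,(y=q_x\wedge A^*(x,y))\}$ would be $\Sigma^1_3$ if $A^*$ were.

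With the witness packed into $u$, you need item~(3) only with a dummy witness variable. There it reduces to
\[
   \theta(u)\wedge\exists c\,\bigl[c\text{ codes a sound simple }s\text{-mouse }\mathcal P\ni u\wedge\forall w\in\mathcal P\,(w<_{\mathcal P}u\rightarrow\neg\theta(w))\bigr].
\]
This is genuinely $\Sigma^1_3(s)$: $\neg\theta$ is $\Sigma^1_2$, and the quantifier over $w\in\mathcal P$ is a number quantifier. This is Addison's least-pair trick, and it is what makes your Step~3 true.

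\textbf{A correction to your `main obstacle' paragraph.} The clause $\mathcal P\models\theta$ cannot be used as stated. A countable mouse is not $\Pi^1_2$-correct: $\Sigma^1_2$ facts are upward absolute from it, but $\Pi^1_2$ facts are not. Item~(2) of the Fact concerns the class-sized $M_1(s)$, not its countable initial segments. Evaluating $\theta(u)$ inside $\mathcal P$ could therefore accept a $u$ with $N\models\neg\theta(u)$. Instead, evaluate $\theta(u)$ and the $\neg\theta(w)$ in $N$, as in the display; this costs nothing in complexity.

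Your check that $W^\ast$ is a small generic extension matches what the paper asserts.
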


\begin{proof}
Let \(N=M_1[G]\) be a small generic extension of \(M_1\).  Work in \(N\), and
let \(A\subseteq\mathbb R^2\) be a boldface \(\Sigma^1_3\) relation.  The same
argument also works for lightface \(\Sigma^1_3\)-relations.  Fix a real
parameter \(a\) and a \(\Pi^1_2\) formula \(\psi\) such that
\[
   A(x,y)\quad\Longleftrightarrow\quad \exists z\,\psi(a,x,y,z).
\]
For each real \(x\), put \(s=a\oplus x\).  Define \(A^*(x,y)\) to hold iff
\[
   \operatorname{Least}_{\theta}(s,y),
\]
where \(\theta(s,y,z)\) is the \(\Pi^1_2\) formula obtained from
\(\psi(a,x,y,z)\) after decoding \(s=a\oplus x\).  By
Fact~\ref{fact:m1-capture-and-good-wellorder}(3), the relation \(A^*\) is
\(\Sigma^1_3(a)\), hence boldface \(\Sigma^1_3\).

We verify that \(A^*\) uniformizes \(A\).  If \(A^*(x,y)\) holds, then by
definition there is some \(z\in M_1(a\oplus x)\) such that
\[
   M_1(a\oplus x)\models \psi(a,x,y,z).
\]
By Fact~\ref{fact:m1-capture-and-good-wellorder}(2),
\(N\models\psi(a,x,y,z)\).  Hence \(A(x,y)\) holds.

The relation \(A^*\) is single-valued because \(<^1_{a\oplus x}\) is a
well-order and \(A^*(x,y)\) asserts that \(y\) is the
\(<^1_{a\oplus x}\)-least real in \(M_1(a\oplus x)\) for which a suitable
\(z\) exists.

Finally suppose that the \(x\)-section of \(A\) is nonempty in \(N\).  Then
\[
   N\models \exists y\exists z\,\psi(a,x,y,z).
\]
By Fact~\ref{fact:m1-capture-and-good-wellorder}(1), there are witnesses
\(y,z\in M_1(a\oplus x)\).  Therefore the set of \(<^1_{a\oplus x}\)-candidates
is nonempty, so it has a least element.  For this least element \(y_0\),
Fact~\ref{fact:m1-capture-and-good-wellorder}(3) gives \(A^*(x,y_0)\).

Thus \(A^*\) is a boldface \(\Sigma^1_3\) uniformization of \(A\) in \(N\).  The
final model \(W^\ast\) is a small generic extension of \(M_1\) by the forcings
fixed in the construction, so the final assertion follows.
\end{proof}

\begin{theorem}[The main theorem in the $M_1$ case]\label{thm:main-m1-final}
Assume that $M_1$ exists.  There is a forcing extension $W^\ast$ such that
\[
   W^\ast\models 2^{\aleph_0}=\aleph_2,
\]
every boldface $\boldsymbol\Sigma^1_3$ set of reals is Lebesgue measurable and has the Baire property, the $\Sigma^1_3$- and $\Pi^1_4$-uniformization properties hold, and the reals admit a $\Delta^1_4$-definable well-order.
\end{theorem}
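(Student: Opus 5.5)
The plan is to assemble Theorem~\ref{thm:main-m1-final} from the lemmas already proved about the final model. Take $W^\ast=W[G^\ast]=M_1[H][G^\ast]$, where $G^\ast$ is generic for $\mathbb P^\ast$ from Definition~\ref{def:main-iteration}. Thus $W^\ast$ is a forcing extension of $M_1$ by $Br\hybcomp\mathcal R_{\omega_2}$.

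\textbf{Cardinals and the continuum.} By Lemma~\ref{lem:main-iteration-infty-allowable} the final presentation is $\infty$-allowable, so in particular it is $0$-allowable (Lemma~\ref{lem:shrinking-allowability-local}). Lemma~\ref{lem:basic-zero-allowable}(1),(2) then shows that $\omega_1$ and $\omega_2$ are preserved. In the normal form $Br*(\mathbb C_{\mathrm{res}}*\dot{\mathbb S})$, every factor except the real-adding part is either $\sigma$-closed over $M_1$ or c.c.c.\ of size $\aleph_1$, and the whole forcing is $\aleph_2$-c.c.\ by the $\Delta$-system arguments given there. Since $M_1\models\GCH$, all cardinals are preserved. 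For the upper bound $2^{\aleph_0}\le\aleph_2$, count nice names: by Lemma~\ref{lem:countable-localization-real-names} every real lies in $W[G_A]$ for a countable admissible $A\subseteq\omega_2$, and there are only $\aleph_2^{\aleph_0}=\aleph_2$ names over such supports. For the lower bound, Cohen stages occur cofinally often in $\omega_2$ and each adds a new real, so $2^{\aleph_0}\ge\aleph_2$.

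\textbf{The projective conclusions.} Each one is quoted directly:
\begin{itemize}
\item Regularity of boldface $\boldsymbol\Sigma^1_3$ sets is Lemma~\ref{lem:sigma13-regularity-final}, which rests on the covering statement Lemma~\ref{lem:random-cohen-covering-final}.
\item $\Pi^1_4$-uniformization is Corollary~\ref{cor:final-pi14-uniformization-local}. Real parameters are handled by the pairing remark made there.
\item The $\Delta^1_4$ well-order is Lemma~\ref{lem:final-delta14-wellorder}.
\item $\Sigma^1_3$-uniformization is Theorem~\ref{thm:small-m1-sigma13-uniformization}.
\end{itemize}

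\textbf{Main obstacle.} The one genuinely delicate point is that Theorem~\ref{thm:small-m1-sigma13-uniformization} applies only to a ``small generic extension of $M_1$'', meaning a forcing of $M_1$-size below the Woodin cardinal $\delta$ to which the strategy lifts. The forcing $Br\hybcomp\mathcal R_{\omega_2}$ has size $\aleph_2<\delta$ in $M_1$, since $\delta$ is inaccessible there, and it is defined in $M_1$ from $F\in M_1$. So I would check that the whole presentation, including the countable tails appended in the second uniformization case, is coded in $M_1$ and has size $\aleph_2$. The strategy then lifts to generic extensions by standard arguments.

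The same size bound is what justifies the $T_2$-absoluteness (Corollary~\ref{cor:t2-sigma13-absoluteness}) invoked in the regularity proof and at the end of the dichotomy proof. For it, I would fix $\kappa$ with $\aleph_2<\kappa<\delta$ and apply Lemma~\ref{lem:tn-small-generic-absoluteness} to the whole forcing at once, rather than relying on the localized remark. With these size checks recorded, the theorem follows by conjoining the cited results.
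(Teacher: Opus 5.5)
Your proposal is correct and follows essentially the same route as the paper. The paper bounds $2^{\aleph_0}\le\aleph_2$ from the size and $\aleph_2$-c.c.\ of the final forcing and gets the lower bound from cofinally many Cohen stages; it then cites Theorem~\ref{thm:small-m1-sigma13-uniformization}, Corollary~\ref{cor:final-pi14-uniformization-local}, Lemma~\ref{lem:final-delta14-wellorder} and Lemma~\ref{lem:sigma13-regularity-final}. Your extra check that $Br\hybcomp\mathcal R_{\omega_2}$ lies in $M_1$ with size $\aleph_2$ below the Woodin cardinal, so that smallness and the $T_2$-absoluteness of Lemma~\ref{lem:tn-small-generic-absoluteness} apply to the whole forcing at once, only makes explicit what the paper asserts without comment at the end of the proof of Theorem~\ref{thm:small-m1-sigma13-uniformization}.
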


\begin{proof}
Let $W^\ast=W[G^\ast]$ be the final model obtained from Definition~\ref{def:main-iteration}.  By Lemma~\ref{lem:basic-zero-allowable} and Fact~\ref{fact:main-iteration-infty-allowable}, the final forcing is a proper, $\aleph_2$-c.c. hybrid presentation of length $\omega_2=\omega_2^{M_1}$ with iterands of size at most $\aleph_1$.  Since $W$ satisfies $\CH$, the final forcing has size at most $\omega_2$.  Hence $W^\ast\models 2^{\aleph_0}\leq\aleph_2$.  On the other hand, ordinary Cohen forcing occurs cofinally often in the final presentation, and each such nontrivial coordinate adds a new real.  Therefore $W^\ast$ has at least $\omega_2$ many reals.  Thus $W^\ast\models 2^{\aleph_0}=\aleph_2$.

Theorem~\ref{thm:small-m1-sigma13-uniformization} gives the boldface $\Sigma^1_3$-uniformization property.  Corollary~\ref{cor:final-pi14-uniformization-local} gives the boldface $\Pi^1_4$-uniformization property.  Lemma~\ref{lem:final-delta14-wellorder} gives a $\Delta^1_4$-definable well-order of the reals.  Finally, Lemma~\ref{lem:sigma13-regularity-final} gives Lebesgue measurability and the Baire property for all boldface $\boldsymbol\Sigma^1_3$ sets of reals.  These are exactly the asserted conclusions.
\end{proof}

\section{The uniform $M_n$-version}\label{sec:uniform-mn-version}

The preceding sections were written in full detail for the first nontrivial case, namely for the construction over $M_1$.  We now record the uniform form of the argument.  The point of this section is not to introduce a new forcing construction, but to make explicit the replacements which turn the $M_1$-argument into the general $M_n$-argument and to isolate the few places where the shift in projective complexity is used.

Fix for the rest of this section a natural number $n\geq 1$, and assume that $M_n$ exists.  Let
\[
   \kappa_n=(\omega_2)^{M_n}.
\]
We work over the canonical $M_n$-ground in exactly the same way as above.  Thus we let
\[
   \vec S^n=\langle S^n_\xi\mid \xi<\omega_1^{M_n}\rangle
\]
be the $M_n$-least independent sequence of Suslin trees obtained from the canonical $M_n$-diamond sequence of Lemma~\ref{lem:mn-diamond}, and we first pass to the finite-support branch extension
\[
   W_n=M_n[\langle b_\xi\mid \xi<\omega_1^{M_n}\rangle].
\]
All subsequent forcing is a hybrid coding iteration over $W_n$.  It uses the same reservoir convention as the detailed $M_1$-construction.  Thus every successor stage first attaches a fresh $M_n$-Cohen reservoir coordinate, added by the $M_n$-computed countably closed forcing.  The second-step forcing is then a finite-support real-adding coordinate: a relative Random algebra, a relative amoeba forcing for measure or category, ordinary Cohen forcing, a Jensen--Solovay almost-disjoint coding forcing, or the trivial forcing.  Random and amoeba coordinates are understood in the relative sense: the stage is tagged by an admissible base of the form $L[T_{n+1},\vec a]$, with finitely many real parameters read on a countable admissible support, and uses the forcing computed in that base, not the corresponding forcing recomputed in the ambient universe.  At an explicit coding coordinate, the fresh reservoir generic chooses the coding area in $\vec S^n$; the construction writes the relevant real into the associated $\omega$-blocks of the Suslin sequence, reshapes in the sense of David, and finally almost-disjointly codes the reshaped set by a real.

The following dictionary will be used throughout this section:
\[
   M_1\rightsquigarrow M_n,
   \qquad
   T_2\rightsquigarrow T_{n+1},
\]
\[
   \boldsymbol\Sigma^1_3\rightsquigarrow \boldsymbol\Sigma^1_{n+2},
   \qquad
   \Pi^1_4\rightsquigarrow \Pi^1_{n+3},
   \qquad
   \Phi\rightsquigarrow \Phi_n.
\]
Here $T_{n+1}$ is the canonical weakly homogeneous tree fixed in Definition~\ref{def:canonical-tn}; by Theorem~\ref{thm:canonical-tn-existence} and Lemma~\ref{lem:tn-small-generic-absoluteness}, it represents the universal $\boldsymbol\Sigma^1_{n+2}$ set and has the required small-generic absoluteness properties.  In the $M_n$-version, Definition~\ref{def:admissible-random-base} is read with $M_1$ replaced by $M_n$ and with $T_2$ replaced by $T_{n+1}$.  Thus a relative regularity stage is generated by finitely many real names over a countable admissible support, and its base has the form
\[
   L[T_{n+1},\dot a_0,\ldots,\dot a_{k-1}].
\]

\begin{definition}[The $M_n$-localized coding predicate]\label{def:mn-localized-coding-predicate}
Let $\Phi_n(r)$ be the predicate obtained from the construction of $\Phi$ in Section~\ref{sec:M1-large-continuum-coding} by replacing $M_1$ by $M_n$, the $M_1$-least Suslin sequence by $\vec S^n$, and the $M_1$-local presentation order by the $M_n$-local presentation order.  Thus $\Phi_n(r)$ asserts that $r$ is decoded by a correct $M_n$-localized coding witness: a lower-part $M_n$-mouse code, a bounded localized presentation of the relevant hybrid forcing, a fresh $M_n$-Cohen coding area, a David-reshaped set of ordinals, and a final Jensen--Solovay almost-disjoint code.

For the two tags used in the final construction we reserve the forms
\[
   \langle\mathrm{WO},n,z_0,z_1\rangle
   \qquad\text{and}\qquad
   \langle\mathrm{UF},n,x,y,m\rangle .
\]
These tags are not used as direct coding tags except at the corresponding well-order and uniformization stages.
\end{definition}

\begin{lemma}[Complexity and exactness of $\Phi_n$]\label{lem:mn-coding-complexity-exactness}
In the final $M_n$-construction, $\Phi_n$ is a $\Sigma^1_{n+3}$ predicate.  Moreover, for the reserved tags the following exactness statement holds.  If
\[
   W_n^\ast\models \Phi_n(\langle\mathrm{WO},n,z_0,z_1\rangle),
\]
then this code was introduced at an intentional well-order coding stage comparing the canonical localized presentations of $z_0$ and $z_1$.  If
\[
   W_n^\ast\models \Phi_n(\langle\mathrm{UF},n,x,y,m\rangle),
\]
then this code was introduced at an intentional uniformization stage for the triple $(x,y,m)$.  Conversely, every intentional coding of either of these reserved tags gives the corresponding instance of $\Phi_n$ in the final model.
\end{lemma}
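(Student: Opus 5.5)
The plan is to transport the $M_1$ arguments for $\Phi$ through the dictionary of Section~\ref{sec:uniform-mn-version}, checking only the places where projective complexity or mouse recognition changes. The proof splits into two parts: the complexity bound, and exactness together with its converse.

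\textbf{Complexity.} Write $\Phi_n(w)\Leftrightarrow\exists r\,\Psi_n(r,w)$, where $\Psi_n$ is obtained from $\Psi$ by replacing the class $\mathcal I$ with the lower-part class $\mathcal I_n$ of Definition~\ref{def:mn-lower-part-approximations}. In $\Psi_n$ the universal quantifier ranges over countable transitive models $M$ and $m\in M$. The hypothesis $m\in\mathcal I_n$ stands in the antecedent, and by Lemma~\ref{lem:definable-mn-initial-segments} it is $\boldsymbol\Pi^1_{n+1}$. Occurring negatively, it contributes a $\Sigma^1_{n+1}$ disjunct. The remaining clauses are arithmetic in codes for $M$: the almost-disjoint decoding relative to $D^m$, the David reshaping test, and $M\models\operatorname{LocalCode}_n(Y,w,m)$. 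Hence $\Psi_n$ is $\Pi^1_{n+2}$, and $\Phi_n$ is $\Sigma^1_{n+3}$. It is lightface because $\mathcal I_n$ is defined without real parameters; the boldface in Lemma~\ref{lem:definable-mn-initial-segments} reflects only Fact~\ref{fact:steel-pin-iterability}(1), and that fact is parameter-free for $\eta=0$.

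\textbf{Exactness.} I would first re-prove Lemma~\ref{lem:inner-model-decoded-by-Phi} with $m_\infty=\mathcal J^{M_n}_{\omega_1}$. The argument uses the condensation of the collapsed $\pi(m_\infty)$ into $\mathcal I_n$, which is Lemma~\ref{lem:recover-mn-omega1} together with Theorem~\ref{thm:steel-condensation-mn}, and the uniqueness of the almost-disjoint decoding. It follows that any witness $r$ for $\Phi_n(w)$ yields $L[r,m_\infty]$ containing $\omega_1$-branches through $S^n_{\tau_w(\gamma,k)}$ for all $\gamma\in C_r$.

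Next I would carry over Lemmas~\ref{lem:countable-localization-real-names}, \ref{lem:omit-unused-branch-coordinate} and~\ref{lem:no-unwanted-codes} verbatim. Their only inputs are structural:
\begin{itemize}
\item independence of $\vec S^n$;
\item closure of the $M_n$-Cohen reservoir coordinates;
\item the c.c.c./Knaster and $\sigma$-linked nature of the second-step forcings;
\item the fact that regularity bases $L[T_{n+1},\vec a]$ have footprints of the form $B\cup\bigcup_j\operatorname{Sel}(C_j,u_j)$.
\end{itemize}
None of these depends on $n$, since $T_{n+1}\in M_n$ carries no branch coordinate.

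Then I would argue as in Lemma~\ref{lem:reserved-tag-coding-exactness}. Localize $r$ and $w$ to a countable admissible support, which gives a $0$-allowable presentation, including the countable tails appended in the second uniformization case. Apply the no-unwanted-codes lemma to obtain an explicit coding stage writing $w$. The syntactic disjointness of the tags $\langle\mathrm{WO},n,\dots\rangle$ and $\langle\mathrm{UF},n,\dots\rangle$ from direct coding tags then identifies the kind of stage. For a WO-tag, the stage compared canonical localized presentations by definition.

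The converse is clause (5) of Lemma~\ref{lem:basic-zero-allowable} in the $M_n$ setting. After $\mathbb A_D(Y_{g,u})$, the generic real witnesses $\Psi_n$. It persists because later stages preserve $\omega_1$, and because $\mathcal I_n$ still recognizes exactly the initial segments of $M_n$ in every later extension, by the preservation convention of Paragraph~\ref{par:steel-mn-preservation-convention}.

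The main obstacle I expect is this recognition step: $\Psi_n$ must be evaluated correctly in all intermediate and final models. Two things have to be checked. First, $\Pi_n$-iterability of countable initial segments of $M_n$ must be preserved through the hybrid forcing. Second, no nonstandard lower-part $\Pi_n$-iterable premouse may enter $\mathcal I_n$, since such a premouse could let a countable model decode a spurious $m$. I would handle the first point by the preservation convention together with Fact~\ref{fact:steel-pin-iterability}(2), arguing that the final forcing is a small forcing over $M_n$ when computed locally on countable supports. I would handle the second point by the comparison argument of Lemma~\ref{lem:definable-mn-initial-segments}, using the restriction to lower-part premice.
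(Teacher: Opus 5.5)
Your route is the paper's route. Complexity comes from a quantifier count around the recognition class $\mathcal I_n$. Exactness comes from transporting Lemma~\ref{lem:no-unwanted-codes} together with the syntactic reservation of the $\mathrm{UF}$- and $\mathrm{WO}$-tags. The converse comes from clause~(5) of Lemma~\ref{lem:basic-zero-allowable}.

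The step that fails is where you re-prove Lemma~\ref{lem:inner-model-decoded-by-Phi} by placing the collapse $\pi(m_\infty)$ into $\mathcal I_n$, and the failure is not cosmetic. Every $m\in\mathcal I_n$ is $\omega$-sound with $\rho_\omega(m)=\omega$. So any transitive $M\models\ZFC^-$ with $m\in M$ computes the fine-structural hull of $\omega\cup\{p(m)\}$, and hence a surjection of $\omega$ onto $m$. Thus $\operatorname{Ord}^m<\omega_1^M$, and the antecedent ``$m\in M$, $m\in\mathcal I_n$, $\omega_1^m=\omega_1^M$'' of $\Psi_n$ is never satisfied.

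Read literally, as you read it in the complexity count, $\Psi_n(r,w)$ is vacuously true and $\Phi_n$ holds of every real. Exactness then fails outright: the reserved $\mathrm{UF}$-tag of the protected value $y_0$ in Lemma~\ref{lem:final-dichotomy-mn} satisfies $\Phi_n$, although it is never intentionally coded. Concretely, $\pi(m_\infty)=\mathcal J^{M_n}_{\omega_1^{\bar M}}$ is uncountable in $\bar M$, so it is not in $\mathcal I_n$, and $\Psi_n$ gives no information about $\bar M$. Copying the $M_1$ argument verbatim does not help, because $\Psi$ and Lemma~\ref{lem:inner-model-decoded-by-Phi} have the same defect. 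The paper's short proof of the present lemma relies on the same transport and inherits it.

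What is missing is that the correct $m$ has to be recognized as a limit of $\mathcal I_n$-members, not as a member. In $\Psi_n$ and in $\operatorname{LocalCode}$, quantify instead over $m\in M$ of height $\omega_1^M$ whose proper initial segments lying in $\mathcal I_n$ are cofinal in $\omega_1^M$. This is a number quantifier over the code of $M$ in front of the $\Pi^1_{n+1}$ test for $\mathcal I_n$, hence still $\Pi^1_{n+1}$. So your count is unaffected: $\Psi_n$ is $\Pi^1_{n+2}$ and $\Phi_n$ is lightface $\Sigma^1_{n+3}$.

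By Lemma~\ref{lem:definable-mn-initial-segments}, which is your no-nonstandard-premouse point, any such $m$ equals $\mathcal J^{M_n}_{\omega_1^M}$. Conversely, let $X\prec H_\Theta$ be countable and $\alpha=X\cap\omega_1$. Each $\mathcal J^{M_n}_\eta$ with $\eta<\alpha$ is countable and belongs to $X$, hence is a subset of $X$. So the collapse sends $m_\infty$ to $\mathcal J^{M_n}_\alpha$ without any appeal to condensation, and elementarity gives cofinally many $\eta<\alpha$ with $\mathcal J^{M_n}_\eta\in\mathcal I_n$. With this change, the rest goes through as you describe: the decoding lemma, branch omission, no unwanted codes, the tag argument, and the persistence of intentional codes.
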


\begin{proof}
The complexity calculation is the uniform version of the calculation for $\Phi$ in the $M_1$ case.  A witness to $\Phi_n(r)$ is a real coding a countable localized presentation together with the associated decoding data.  The correctness of the lower-part mouse and of the localized $M_n$-initial segment is expressed using the $\Pi_n$-iterability analysis from Definition~\ref{def:pin-iterability}, Fact~\ref{fact:steel-pin-iterability}, and Lemma~\ref{lem:definable-mn-initial-segments}.  Steel's definability theorem places the relevant verification one projective level above the definition of the $M_n$-well-order, hence at level $\Sigma^1_{n+3}$.  The remaining clauses--that the displayed Suslin-tree pattern is present, that the reshaping is correct, and that the almost-disjoint code decodes the reshaped set--are arithmetic or projective of lower complexity relative to that mouse witness.  Thus the whole predicate is $\Sigma^1_{n+3}$.

The exactness assertion is the $M_n$-version of Lemma~\ref{lem:reserved-tag-coding-exactness}.  The proof of no unwanted codes uses independence of the Suslin sequence, freshness of the $M_n$-Cohen coding areas, and branch omission for selected coordinates.  The branch-footprint part is unchanged: the finitely many real names generating a regularity base are read on countable admissible supports, and hence generate only countably many coding areas; the footprint records the selected coordinates coming from those areas.  Hence the proof of Lemma~\ref{lem:no-unwanted-codes} applies after replacing $M_1$ by $M_n$.  Since the two reserved tag forms are excluded from direct coding and are used only by the well-order and uniformization clauses, any final occurrence of $\Phi_n$ on such a tag is intentional, and every intentional code is read by $\Phi_n$.
\end{proof}

Let
\[
   \langle A^n_m\mid m\in\omega\rangle
\]
be a fixed universal enumeration of lightface $\Pi^1_{n+3}$ subsets of $(\omega^\omega)^2$.  Boldface parameters are handled, as usual, by allowing the bookkeeping to range over names for real parameters and by coding the parameter into the first coordinate.  Thus it is enough to define uniformizing relations for the displayed universal family.

\begin{definition}[$M_n$-$\rho$-allowability and $M_n$-$\infty$-allowability]\label{def:mn-rho-infty-allowability}
The classes of $M_n$-$\rho$-allowable and $M_n$-$\infty$-allowable hybrid presentations are obtained from Definitions~\ref{def:zero-local-allowable}, \ref{def:limit-lambda-allowable-local}, \ref{def:relative-rho-allowable-local}, \ref{def:rho-allowable-local}, and~\ref{def:infty-allowable-local} by making the following replacements:
\[
   M_1\text{-localized presentation}
\rightsquigarrow M_n\text{-localized presentation},
\]
\[
   T_2\rightsquigarrow T_{n+1},
   \qquad
   L[T_2,\dot a_0,\ldots,\dot a_{k-1}]
   \rightsquigarrow
   L[T_{n+1},\dot a_0,\ldots,\dot a_{k-1}],
\]
\[
   \Phi\rightsquigarrow\Phi_n,
   \qquad
   \Pi^1_4\rightsquigarrow\Pi^1_{n+3},
   \qquad
   A_m\rightsquigarrow A^n_m.
\]
In particular, the regularity bases in the $M_n$-version are precisely the finite-real-parameter models
\[
   L[T_{n+1},a_0,\ldots,a_{k-1}]
\]
whose real parameters are read on countable admissible supports.
At a successor uniformization stage for a triple $(\dot x,\dot y,\dot m)$ the construction asks the same stabilized question as before: is there, locally and relative to the current $M_n$-allowable presentation, a tentative value in the $\dot x$-section of $A^n_{\dot m}$ which cannot be removed by the relevant future $M_n$-allowable tails?  If yes, the least such value, ordered by rank and by the $M_n$-canonical order of localized presentations, is protected and all competing candidates are coded with their reserved $\mathrm{UF}$-tags.  If no, a countable relative $M_n$-$\rho$-allowable local hybrid tail is chosen which forces the candidate out of the section.
\end{definition}

\begin{lemma}[Structural lemmas for $M_n$-allowability]\label{lem:mn-allowability-structural-lemmas}
The shrinking, tail, product, and persistence lemmas for allowability hold for the $M_n$-allowable hierarchy.  More explicitly:
\begin{enumerate}
   \item if a presentation is $M_n$-$\beta$-allowable and $\alpha<\beta$, then it is $M_n$-$\alpha$-allowable;
   \item tails of $M_n$-$\rho$-allowable presentations are relative $M_n$-$\rho$-allowable; moreover, any tail witnessing that a candidate can be forced out of an $A^n_m$-section can be replaced by a countable local hybrid tail with the same forcing property;
   \item products of $M_n$-$\rho$-allowable presentations are again represented by $M_n$-$\rho$-allowable presentations after the usual concatenation and renaming of the hybrid supports;
   \item tentative values remain in their $\Pi^1_{n+3}$ sections through all later $M_n$-$\infty$-allowable tails.
\end{enumerate}
\end{lemma}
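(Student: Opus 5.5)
The plan is to transfer each of the four $M_1$-proofs verbatim through the dictionary of Definition~\ref{def:mn-rho-infty-allowability}. The proofs of Lemmas~\ref{lem:shrinking-allowability-local}, \ref{lem:tail-allowability-local}, \ref{lem:rho-allowable-products-local} and~\ref{lem:tentative-values-remain-local} never use a property specific to $M_1$. They use only four kinds of ingredients: the syntactic shape of the successor clauses (a)/(b), the canonical $<_{M_1}$-order of localized presentations, the mixed-support hybrid limits, and the forcing relation over the current initial segment. I would first isolate this observation. Every use of $<_{M_1}$ becomes $<_{M_n}$, which is still a canonical well-order of codes in $M_n$ by Fact~\ref{fact:mn-projective-wellorder}. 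Every regularity base $L[T_2,\vec a]$ becomes $L[T_{n+1},\vec a]$, and the admissibility witnesses of Definition~\ref{def:admissible-random-base} are unchanged in form. The universal list $A_m$ becomes $A^n_m$. None of the arguments looks inside $A_m$ beyond its membership relation, so the higher complexity is irrelevant for them.

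I would then treat the items in order.

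For (1), I would copy the proof of Lemma~\ref{lem:shrinking-allowability-local}. It builds a new bookkeeping function $F^\alpha\in M_n$ by keeping clause (b) stages unchanged and converting clause (a) stages of rank $\geq\alpha$ into plain uniformization coding tags. The only new point is that $F^\alpha$ is definable in $M_n$ from $F$ and $<_{M_n}$.

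For (3), I would induct on $\rho$ as in Lemma~\ref{lem:rho-allowable-products-local}. The core is the equivalence $\mathsf{Kick}^1_\beta\Leftrightarrow\mathsf{Kick}^\otimes_\beta$. Its right-to-left direction restricts a product tail to the lifted second-block support, and that uses (1) and the tail statement in (2). So I would prove (1), then the first half of (2), then (3), in that order. The first half of (2) itself follows Lemma~\ref{lem:tail-allowability-local} exactly.

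The second half of (2), replacing a killing tail by a countable one, is the step where the shift in level actually enters, and I expect it to be the main obstacle. I would write the complement of $A^n_m$ as $\exists r\,\theta(r,x,y,m)$ with $\theta\in\Pi^1_{n+2}$. By the maximum principle I would choose a condition and a name $\dot r$ forcing $\theta$. Then I would localize the condition and the names $\dot x,\dot y,\dot m,\dot r$ to a countable admissible support using the $M_n$-version of Lemma~\ref{lem:countable-localization-real-names}. The delicate point is that passing to the complete subforcing must preserve the forced $\Pi^1_{n+2}$ statement $\theta$. I would argue that ``$q\Vdash\theta$'' over the full tail implies the same over the complete subpresentation, because any generic for the subforcing extends to a generic for the whole and $\theta$ holds there. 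Rather than appealing to absoluteness, I would keep the statement as a forcing statement about names in the subpresentation, which is exactly what completeness preserves. The restricted tail is still relative $\rho$-allowable by the tail part of (2) together with (1).

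Finally, (4) follows from (2) exactly as Lemma~\ref{lem:tentative-values-remain-local} does: a later $M_n$-$\infty$-allowable tail, concatenated with the interval from the insertion stage, is a relative $\xi$-allowable extension over that stage, and clause (a) forbids it from forcing the pair out of $A^n_m$.
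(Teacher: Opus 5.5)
Your global plan matches the paper: transfer through the dictionary, prove (1), then the interval-tail half of (2), then (3) (whose right-to-left $\mathsf{Kick}$ direction needs those two), and get (4) from clause (a) plus transitivity of relative tails. That last argument is the definitional part of the paper's proof of (4). The gap is in the one step you yourself flag, the countable-tail half of (2).

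\textbf{The countable-tail step.} You assert that if $q\Vdash\theta(\dot r,\dot x,\dot y,\dot m)$ over the full killing tail $P$, then the same holds over the countable complete subpresentation $Q$. Your reason is that ``any generic for the subforcing extends to a generic for the whole and $\theta$ holds there,'' and you explicitly decline to use absoluteness. That inference is invalid.

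\begin{itemize}
\item If $G_Q\ni q$ is $Q$-generic and $G\supseteq G_Q$ is $P$-generic, you learn only that $\theta(r,x,y,m)$ holds in $W_n[G]$.
\item But $q\Vdash_Q\theta$ is a claim about $W_n[G_Q]$, and the final iteration appends the countable tail, not $P$.
\item Completeness transfers the forcing relation only for statements that are downward absolute from $W_n[G]$ to $W_n[G_Q]$.
\end{itemize}

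Since $\theta$ is $\Pi^1_{n+2}$, you need at least downward absoluteness of $\Sigma^1_{n+1}$ formulas between these two models. For $n=1$ this is Shoenfield, which is why the $M_1$ version of the step looks free. For $n\geq 2$ it needs large-cardinal input.

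\textbf{What supplies it.} The paper's proof handles the shift in level through the small-generic absoluteness of the Martin--Solovay pair for $T_{n+1}$ (Lemma~\ref{lem:tn-small-generic-absoluteness}). All forcings here have $M_n$-size at most $\aleph_2$, far below the least Woodin cardinal of $M_n$. So $\boldsymbol\Sigma^1_{n+2}$ truth about reals of $W_n[G_Q]$, and hence the truth of $\theta(r,x,y,m)$, is the same in $W_n[G_Q]$ and $W_n[G]$. The same absoluteness later carries the witness $r$ up to $W_n^\ast$. Replace your ``completeness alone'' argument with this appeal.

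\textbf{A secondary point.} The restricted countable tail is a localized subpresentation, not an interval. So its relative allowability is not literally an instance of the interval-tail statement plus shrinking, as you claim. It needs closure of the allowability clauses under passing to admissible complete subpresentations, which the paper states separately when verifying that the final presentation is $\infty$-allowable.
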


\begin{proof}
The first three assertions are formal consequences of the recursive definition, exactly as in Lemmas~\ref{lem:shrinking-allowability-local}, \ref{lem:tail-allowability-local}, and~\ref{lem:rho-allowable-products-local}.  The proof uses only the fact that the class at a later derivative stage has more restrictions, that the hybrid presentation can be factored into an initial part and a relative tail, that relative regularity tags carry finitely many real parameters and branch footprints under products, and that the fresh coding areas for a product can be renamed to fresh coordinates.  The countable-tail refinement in (2) is the same local-support argument used in the $M_1$ construction: choose a name for the witness to the relevant $\Sigma^1_{n+3}$ statement and restrict to the countable complete subpresentation generated by that name, the condition, and the parameters.

For (4), suppose a value has been declared tentative at a successor stage.  By definition, every relevant future $M_n$-$\rho$-allowable tail preserves its membership in the relevant $\Pi^1_{n+3}$ section.  The only possible loss of membership would be witnessed by a $\Sigma^1_{n+3}$ statement in a later extension.  This statement is represented, relative to the real parameters already present, by the canonical tree $T_{n+1}$ and its Martin--Solovay absolute complement.  Lemma~\ref{lem:tn-small-generic-absoluteness} therefore gives the same small-generic absoluteness used in the $T_2$ argument.  Hence the proof of Lemma~\ref{lem:tentative-values-remain-local} lifts with $T_2$ replaced by $T_{n+1}$.
\end{proof}

\begin{definition}[The final $M_n$-iteration]\label{def:main-iteration-mn}
Let
\[
   F_n:\kappa_n\longrightarrow H_{\kappa_n}^{M_n}
\]
be the \(M_n\)-least bookkeeping function with the following properties.  It
places ordinary Cohen tags cofinally often.  Moreover, whenever finitely many
real names over a countable admissible support determine a regularity stage with
canonical admissibility witness
\[
   (A_{\mathrm{rb}},\dot W_{\mathrm{rb}},B^0_{\mathrm{rb}},\dot B_{\mathrm{rb}}),
\]
where
\[
   \dot W_{\mathrm{rb}}=L[T_{n+1},\dot a_0,\ldots,
      \dot a_{k-1}],
\]
the corresponding finite-real-parameter tags
\[
   (\mathrm{Rand};\dot a_0^{\uparrow},\ldots,
      \dot a_{k-1}^{\uparrow}),
   \qquad
   (\mathrm{Am}_{\mathcal N};\dot a_0^{\uparrow},\ldots,
      \dot a_{k-1}^{\uparrow}),
   \qquad
   (\mathrm{Am}_{\mathcal M};\dot a_0^{\uparrow},\ldots,
      \dot a_{k-1}^{\uparrow})
\]
appear cofinally often after that point.  The forcing is computed in the lifted
base generated by the displayed real names, and the witness records the support
on which the real parameters are read, the countable branch-product support
\(B^0_{\mathrm{rb}}\) of their \(W_n\)-parameters, and the branch footprint
\(\dot B_{\mathrm{rb}}\).  The bookkeeping also lists, cofinally often and in all
possible localized presentations, well-order tags
\[
   (\mathrm{WO},\dot z_0,
   \dot z_1)
\]
and uniformization tags
\[
   (\mathrm{UF},\dot x,
   \dot y,
   \dot m).
\]
We define by recursion a hybrid presentation
\[
   \langle (\mathcal R^n_\beta,\dot I^n_\beta)
      \mid \beta\leq\kappa_n\rangle .
\]
At every single-coordinate successor stage of this presentation we first attach the fresh $M_n$-Cohen reservoir coordinate.  At relative regularity stages the second-step forcing is the Random or amoeba forcing computed in the lifted base \(L[T_{n+1},\vec a]\) generated by the displayed finite tuple of real names, and at Cohen stages the second-step forcing is the ordinary Cohen iterand.  At well-order stages we compare the final canonical localized presentations of the two interpreted reals and, if they are distinct, use the second-step almost-disjoint coding forcing to intentionally code exactly one of
\[
   \langle\mathrm{WO},n,z_0,z_1\rangle,
   \qquad
   \langle\mathrm{WO},n,z_1,z_0\rangle,
\]
namely the tag with the smaller canonical presentation first.  At uniformization stages we apply the $M_n$-$\infty$-allowable successor rule from Definition~\ref{def:mn-rho-infty-allowability}.  Thus, if a tentative value is found, the least tentative value is protected and the non-selected candidates are coded with their tags
\[
   \langle\mathrm{UF},n,x,y,m\rangle .
\]
If no tentative value is found, we append the chosen countable relative $M_n$-$\infty$-allowable local hybrid tail, restricted below the quotient condition which forces the current candidate out of its $A^n_m$-section.  At limits we take the mixed-support hybrid limit, with countable support on reservoir coordinates and finite support on the c.c.c. real-adding coordinates.  The final forcing is denoted by
\[
   \mathbb P^n_\infty=\mathbb P^n_{\kappa_n},
\]
and if $G^n_\infty$ is generic, we write
\[
   W_n^\ast=W_n[G^n_\infty],
   \qquad
   I_n^\ast=(\dot I^n_{\kappa_n})^{G^n_\infty}.
\]
\end{definition}

\begin{fact}\label{fact:main-iteration-mn-infty-allowable}
The final presentation $(\mathcal R^n_{\kappa_n},\dot I^n_{\kappa_n})$ is $M_n$-$\infty$-allowable.  Moreover ordinary Cohen forcing occurs cofinally often.  If a finite-real-parameter regularity stage becomes meaningful, with canonical support witness
\[
   (A_{\mathrm{rb}},\dot W_{\mathrm{rb}},B^0_{\mathrm{rb}},\dot B_{\mathrm{rb}}),
\]
then the relative Random forcing, measure-amoeba forcing, and category-amoeba forcing computed in the lifted base generated by that witness occur cofinally often in the final presentation.
\end{fact}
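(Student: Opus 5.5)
The plan is to transfer the proof of Lemma~\ref{lem:main-iteration-infty-allowable} along the dictionary of Section~\ref{sec:uniform-mn-version}, and to argue by induction on the stages $\beta\leq\kappa_n$ of the recursion in Definition~\ref{def:main-iteration-mn}. The inductive claim is that every initial segment $(\mathcal R^n_\beta,\dot I^n_\beta)$ is $M_n$-$\alpha^\ast_n$-allowable relative to a bookkeeping function $F^\ast_n\in M_n$. Here $\alpha^\ast_n=\alpha_{0,n}+1$ is taken from the stabilization of the $M_n$-hierarchy, and we define $\alpha_{0,n}$ exactly as in Definition~\ref{def:infty-allowable-local}, using $\operatorname{rep}_{W_n}$ and the decreasing classes $\mathcal A^{W_n}_\rho$; the decreasing property is Lemma~\ref{lem:mn-allowability-structural-lemmas}(1). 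Once the claim holds at $\beta=\kappa_n$, $M_n$-$\infty$-allowability of the terminal pair follows directly from that definition.

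The induction runs as follows. At $\beta=0$ the pair is trivial; it is $0$-allowable by the $M_n$-version of Lemma~\ref{lem:basic-zero-allowable}, and it is allowable at every rank because no uniformization stage has yet occurred. At a limit stage we take the mixed-support limit, and the auxiliary name is the union of the earlier ones. This is exactly the limit clause of the recursion in Definition~\ref{def:rho-allowable-local}, read via Definition~\ref{def:mn-rho-infty-allowability}. Single-coordinate successor steps come in three kinds, and each matches a clause of the $\alpha^\ast_n$-successor recursion.
\begin{itemize}
\item Regularity, Cohen and well-order steps are the $0$-allowable clauses, with $T_{n+1}$-bases and localized $M_n$-presentations in place of the $M_1$ data.
\item The first uniformization case is clause~(a), evaluated literally at rank $\alpha^\ast_n$.
\item The second uniformization case appends a countable relative $M_n$-$\alpha^\ast_n$-allowable tail. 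This tail is provided by Lemma~\ref{lem:mn-allowability-structural-lemmas}(2), and restricting it below the witnessing condition preserves relative allowability. Transitivity of relative extensions (the $M_n$-analogue of Lemma~\ref{lem:tail-allowability-local}) then shows that the concatenation is again $\alpha^\ast_n$-allowable.
\end{itemize}

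For the bookkeeping we assemble $F^\ast_n$ exactly as in the $M_1$ proof. On the original coordinates we use $F_n$, and on each appended countable block we use that block's witnessing bookkeeping function. Since $\kappa_n$ is regular in $M_n$ and each block is countable, reindexing keeps the length at $\kappa_n$. Because all choices are $<_{M_n}$-least, $F^\ast_n$ is definable in $M_n$ and hence belongs to $M_n$.

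Next come the cofinality assertions. Ordinary Cohen tags are placed cofinally by $F_n$. For a meaningful regularity witness $(A_{\mathrm{rb}},\dot W_{\mathrm{rb}},B^0_{\mathrm{rb}},\dot B_{\mathrm{rb}})$, $F_n$ schedules the three tags cofinally often, and the second-step forcing used at each such stage is computed in the lifted base $L[T_{n+1},\vec a]$. Inserting countable blocks at individual stages only shifts indices. It therefore cannot destroy cofinality in a final indexing of regular length $\kappa_n$.

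The main obstacle is the second uniformization case. There we must check that restricting the appended tail to its countable localized subpresentation below $q$ is still relative $\alpha^\ast_n$-allowable, and that the forcing statement $(\dot x,\dot y)\notin A^n_{\dot m}$ is preserved under this restriction. My first approach would be to write the complement of $A^n_m$ as $\exists r\,\theta$ with $\theta$ a $\Pi^1_{n+2}$ formula, and to choose the name $\dot r$ by the maximum principle. I would then close the support under the regularity witnesses and the coded names, as in Lemma~\ref{lem:countable-localization-real-names}. Finally, I would use the fact that the restricted presentation is a complete subforcing, together with $T_{n+1}$-absoluteness (Lemma~\ref{lem:tn-small-generic-absoluteness}), so that $\theta(\dot r,\dots)$ evaluates to the same truth value in the subextension as in the full tail.
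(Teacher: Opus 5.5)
Your proposal is correct and follows essentially the same route as the paper. The paper's proof transfers the countable-tail argument of Lemma~\ref{lem:main-iteration-infty-allowable} along the $M_n$-dictionary, with the same stage-by-stage matching to the $\alpha^\ast$-recursion, bookkeeping concatenation via regularity of $\kappa_n$, and cofinality inherited from $F_n$. Your explicit appeal to $T_{n+1}$-absoluteness when restricting the killing tail below $q$ is a useful sharpening of the paper's bare remark that completeness preserves the forcing statement.

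One caution concerns your reason for $F^\ast_n\in M_n$: the recursion consults the forcing relation over $W_n=M_n[H]$, so ``all choices are $<_{M_n}$-least'' only makes $F^\ast_n$ definable in $W_n$, not in $M_n$. The paper asserts this membership without argument, so this is a soft spot you share with it rather than a point where you diverge.
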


\begin{proof}
This is the same countable-tail argument as in Fact~\ref{fact:main-iteration-infty-allowable}.  The bookkeeping function $F_n$ supplies cofinally many ordinary Cohen coordinates, cofinally many relative Random and amoeba coordinates for every finite-real-parameter regularity stage once it becomes meaningful, and cofinally many names for the well-order and uniformization tasks.  At a relative regularity stage the forcing is the one computed in the witnessed base \(L[T_{n+1},\vec a]\).  The witness carries the support on which the finitely many real parameters \(\vec a\) are read, the countable branch-product support of the \(W_n\)-parameters occurring in those names, and the branch footprint through later tails and products.  A second-case uniformization stage appends a countable relative $M_n$-$\infty$-allowable local hybrid tail rather than a single coordinate.  Since $\kappa_n$ is regular, inserting countable blocks at $\kappa_n$ many stages still gives a presentation of length at most $\kappa_n$, and the witnessing bookkeeping functions for the countable tails can be concatenated with the original bookkeeping into one global bookkeeping function.  The $M_n$-allowability clauses are preserved by this countable-tail construction.
\end{proof}

\begin{lemma}[The $M_n$ final dichotomy]\label{lem:final-dichotomy-mn}
Let $x\in\omega^\omega$, $m\in\omega$, and work in $W_n^\ast$.  Exactly one of the following alternatives holds.
\begin{enumerate}
   \item The final auxiliary object contains a tentative value for the pair $(x,m)$.  In this case there is a rank-and-presentation least such value $y_0$, and
   \[
      (x,y_0)\in A^n_m.
   \]
   Moreover, for every $y$,
   \[
      (x,y)\in A^n_m\ \text{ and }\ y\neq y_0
      \quad\Longrightarrow\quad
      \Phi_n(\langle\mathrm{UF},n,x,y,m\rangle).
   \]
   Also
   \[
      \neg\Phi_n(\langle\mathrm{UF},n,x,y_0,m\rangle).
   \]

   \item The final auxiliary object contains no tentative value for $(x,m)$.  In this case
   \[
      A^n_m(x,\cdot)=\emptyset .
   \]
\end{enumerate}
\end{lemma}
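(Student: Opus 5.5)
The plan is to transcribe the proof of Lemma~\ref{lem:final-dichotomy-local} into the $M_n$-setting. The replacements are those of Definition~\ref{def:mn-rho-infty-allowability}. Where the earlier proof used the local structural lemmas, I will use Lemma~\ref{lem:mn-allowability-structural-lemmas}. Where it used Lemma~\ref{lem:reserved-tag-coding-exactness}, I will use Lemma~\ref{lem:mn-coding-complexity-exactness}. The two alternatives are complementary by their first clauses, since they only ask whether $I_n^\ast$ contains a tuple $(x,y,m,\xi)$ with $\xi<\alpha^\ast$. So the work is to verify the conclusion attached to each alternative.

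\textbf{Case (1).} Let $\xi_0$ be the least rank occurring among the tuples for $(x,m)$. Among names giving a tuple of rank $\xi_0$, take the one with the $M_n$-canonically least localized presentation, and call its value $y_0$. Let $\beta_0<\kappa_n$ be its insertion stage. By Fact~\ref{fact:main-iteration-mn-infty-allowable}, the interval from $\beta_0$ to $\kappa_n$ is a relative $M_n$-$\xi_0$-allowable tail. Item (4) of Lemma~\ref{lem:mn-allowability-structural-lemmas} then gives $(x,y_0)\in A^n_m$ in $W_n^\ast$.

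For $\neg\Phi_n(\langle\mathrm{UF},n,x,y_0,m\rangle)$, argue by contradiction. By the exactness part of Lemma~\ref{lem:mn-coding-complexity-exactness}, some least intentional uniformization stage $\eta$ coded this tag, and there are three subcases.
\begin{enumerate}
\item If $\eta\ge\beta_0$, then $y_0$ is still the rank-and-presentation least tentative value at $\eta$, and the first case never codes the protected value.
\item If $\eta<\beta_0$ and the second case was used at $\eta$, the appended countable tail forces the lift of $(x,y_0)$ out of $A^n_m$. Composing this tail with the actual interval $[\eta,\beta_0)$, using the product and tail clauses (2)--(3) of Lemma~\ref{lem:mn-allowability-structural-lemmas}, yields a relative $\xi_0$-allowable continuation over stage $\beta_0$ that kills $y_0$. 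This contradicts its insertion.
\item If $\eta<\beta_0$ and the first case was used at $\eta$, the same composition shows that $y_0$ was already eligible at rank $\xi_0$. The value selected at $\eta$ is then strictly smaller in rank or in presentation, and it lies in $I_n^\ast$, contradicting minimality.
\end{enumerate}

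For the coding of competitors, take $y\ne y_0$ with $(x,y)\in A^n_m$. Mix names below a condition in the generic so that $1\Vdash\dot y\ne\dot y_0$. The bookkeeping $F_n$ lists $(\mathrm{UF},\dot x,\dot y,\check m)$ at some stage after $\beta_0$. There the protected value is $y_0$, so $\langle\mathrm{UF},n,x,y,m\rangle$ is coded unless it was coded earlier. The converse clause of Lemma~\ref{lem:mn-coding-complexity-exactness} then gives $\Phi_n$ of this tag in $W_n^\ast$.

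\textbf{Case (2).} Suppose $(x,y)\in A^n_m$ in $W_n^\ast$. Choose localized names and a later stage $\gamma$ where $F_n(\gamma)=(\mathrm{UF},\dot x,\dot y,\check m)$. The first case cannot occur at $\gamma$, since it would insert a tuple for $(x,m)$. So the second case applies: a countable tail together with a quotient condition in the generic forces $\theta(\dot r,\dot x,\dot y,\check m)$. Here $\exists r\,\theta$ expresses $(x,y)\notin A^n_m$, with $\theta\in\Pi^1_{n+2}$.

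\textbf{Main obstacle.} The step I expect to be hardest is transporting this $\Pi^1_{n+2}$ statement from the intermediate model to $W_n^\ast$. My first attempt is to represent the complement by the tree $T_{n+1}$ and its Martin--Solovay complement. Lemma~\ref{lem:tn-small-generic-absoluteness} should then make $\Pi^1_{n+2}$ facts about reals in the intermediate extension upward absolute to $W_n^\ast$, exactly as in item (4) of Lemma~\ref{lem:mn-allowability-structural-lemmas}. The localized-support remark is what makes each relevant subforcing small enough for that lemma to apply. This contradicts $(x,y)\in A^n_m$, so the section is empty.
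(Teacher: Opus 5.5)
Your proposal is correct and follows the paper's route. The paper proves this lemma by declaring it the transcription of Lemma~\ref{lem:final-dichotomy-local}: it uses Lemma~\ref{lem:mn-allowability-structural-lemmas}(4) for persistence of the protected value, Lemma~\ref{lem:mn-coding-complexity-exactness} for the coding clauses, and $T_{n+1}$ small-generic absoluteness (Lemma~\ref{lem:tn-small-generic-absoluteness}) to carry the killing $\Sigma^1_{n+3}$ witness to $W_n^\ast$ in the empty-section case. The only difference is that you write out the three-subcase argument for $\neg\Phi_n(\langle\mathrm{UF},n,x,y_0,m\rangle)$ from the $M_1$ proof, which the paper's $M_n$ proof compresses into a single appeal to the exactness lemma.
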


\begin{proof}
The proof is the proof of Lemma~\ref{lem:final-dichotomy-local}, with the complexity shifted by $n-1$.  We indicate the points where something has to be checked.

Suppose first that a tentative value occurs.  By Lemma~\ref{lem:mn-allowability-structural-lemmas}(4), tentative values remain in their $\Pi^1_{n+3}$ sections through later $M_n$-$\infty$-allowable tails.  Let $y_0$ be the least tentative value according to the fixed rank and $M_n$-localized-presentation order.  The bookkeeping revisits every later candidate $y$ in the same section.  Whenever $y\neq y_0$ is still a member of the $x$-section, the uniformization clause intentionally codes the tag $\langle\mathrm{UF},n,x,y,m\rangle$.  By Lemma~\ref{lem:mn-coding-complexity-exactness}, these intentional codes are exactly the final instances of $\Phi_n$ on reserved $\mathrm{UF}$-tags.  The selected value $y_0$ is never coded with its own $\mathrm{UF}$-tag, again by the exactness lemma.  This gives the first alternative.

Now suppose that no tentative value occurs.  Let $y$ be any real in the final model and choose a bounded localized name for $y$.  By the cofinality of the bookkeeping, some later uniformization stage considers the corresponding triple $(x,y,m)$.  Since no tentative value is available, the construction uses the second case and appends a countable relative $M_n$-$\infty$-allowable local hybrid tail, restricted below a quotient condition forcing
\[
   (x,y)\notin A^n_m.
\]
The complement of $A^n_m$ is $\Sigma^1_{n+3}$.  The witness to this $\Sigma^1_{n+3}$ statement is preserved through the remaining tail by the $T_{n+1}$ absoluteness supplied by Lemma~\ref{lem:tn-small-generic-absoluteness}.  Hence $(x,y)\notin A^n_m$ in $W_n^\ast$.  Since $y$ was arbitrary, the section is empty.
\end{proof}

\begin{corollary}[The $M_n$ uniformizing relations]\label{cor:final-pin3-uniformization-mn}
In $W_n^\ast$, every $\Pi^1_{n+3}$ set of pairs of reals has a $\Pi^1_{n+3}$ uniformization.  More precisely, for each $m\in\omega$, define
\[
   U^n_m(x,y)
   \quad\Longleftrightarrow\quad
   (x,y)\in A^n_m\ \land\
   \neg\Phi_n(\langle\mathrm{UF},n,x,y,m\rangle).
\]
Then $U^n_m$ uniformizes $A^n_m$.
\end{corollary}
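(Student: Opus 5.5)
The plan is to mirror the proof of Corollary~\ref{cor:final-pi14-uniformization-local} in the $M_n$ setting, with Lemma~\ref{lem:final-dichotomy-mn} doing the work that Lemma~\ref{lem:final-dichotomy-local} did there.

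Fix $m\in\omega$ and a real $x\in W_n^\ast$. If the $x$-section of $A^n_m$ is empty, then $U^n_m(x,\cdot)$ is empty by its first conjunct, and there is nothing to prove. So assume $A^n_m(x,\cdot)\neq\emptyset$.

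\emph{Existence and uniqueness.} Alternative (2) of Lemma~\ref{lem:final-dichotomy-mn} forces the section to be empty, so alternative (1) must hold. That alternative supplies the rank-and-presentation least tentative value $y_0$ with $(x,y_0)\in A^n_m$ and $\neg\Phi_n(\langle\mathrm{UF},n,x,y_0,m\rangle)$. Hence $U^n_m(x,y_0)$ holds. For uniqueness, take any $y\neq y_0$.
\begin{itemize}
\item[] If $(x,y)\in A^n_m$, then alternative (1) gives $\Phi_n(\langle\mathrm{UF},n,x,y,m\rangle)$, so $U^n_m(x,y)$ fails.
\item[] If $(x,y)\notin A^n_m$, then $U^n_m(x,y)$ fails by its first conjunct.
\end{itemize}
Thus $U^n_m\subseteq A^n_m$, $U^n_m$ is single-valued, and its domain equals the domain of $A^n_m$. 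In other words, $U^n_m$ uniformizes $A^n_m$.

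\emph{Complexity.} The membership clause $(x,y)\in A^n_m$ is lightface $\Pi^1_{n+3}$ by the choice of the universal enumeration. By Lemma~\ref{lem:mn-coding-complexity-exactness}, $\Phi_n$ is $\Sigma^1_{n+3}$. The map $(x,y)\mapsto\langle\mathrm{UF},n,x,y,m\rangle$ is recursive, so the negated clause is $\Pi^1_{n+3}$. Therefore $U^n_m$ is a conjunction of two $\Pi^1_{n+3}$ relations, hence $\Pi^1_{n+3}$.

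\emph{Arbitrary relations.} An arbitrary lightface $\Pi^1_{n+3}$ relation appears as some $A^n_m$. A boldface relation $\Pi^1_{n+3}(a)$ is handled as in the $M_1$ corollary. Pair the parameter $a$ with $x$ and view the relation as a lightface relation on $((a,x),y)$. This is some $A^n_{m}$, and we apply the lightface result to it. Fixing $a$ afterwards gives a $\Pi^1_{n+3}(a)$ uniformizing relation.

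\emph{Expected obstacle.} The only point that needs care is the complexity of $\Phi_n$, and that is already granted by Lemma~\ref{lem:mn-coding-complexity-exactness}. We also need that this complexity is evaluated in $W_n^\ast$ itself. That is how the lemma is stated, so no further argument is required.
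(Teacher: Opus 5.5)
Your proposal is correct and follows the paper's proof essentially verbatim. The empty-section case is trivial. In the nonempty case, alternative (1) of Lemma~\ref{lem:final-dichotomy-mn} gives existence (the value $y_0$ whose $\mathrm{UF}$-tag is not $\Phi_n$-coded) and uniqueness (every other member of the section is $\Phi_n$-coded), and the $\Pi^1_{n+3}$ bound comes from the $\Sigma^1_{n+3}$ complexity of $\Phi_n$ in Lemma~\ref{lem:mn-coding-complexity-exactness}, with the boldface case handled by pairing the parameter with $x$ exactly as the paper does.
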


\begin{proof}
Since $A^n_m$ is $\Pi^1_{n+3}$ and $\Phi_n$ is $\Sigma^1_{n+3}$, the displayed relation is $\Pi^1_{n+3}$.  If the $x$-section of $A^n_m$ is empty, then the $x$-section of $U^n_m$ is empty.  If the $x$-section of $A^n_m$ is nonempty, Lemma~\ref{lem:final-dichotomy-mn} gives a unique value $y_0$ in the section whose $\mathrm{UF}$-tag is not $\Phi_n$-coded.  Hence $U^n_m(x,y)$ holds exactly for $y=y_0$.  This is uniformization.  The boldface version follows by the usual coding of real parameters into the universal family and by the fact that the bookkeeping ranges over names for those parameters.
\end{proof}

\begin{lemma}[The $\Delta^1_{n+3}$ well-order in the $M_n$ extension]\label{lem:final-deltan3-wellorder-mn}
In $W_n^\ast$, the reals admit a $\Delta^1_{n+3}$-definable well-order.
\end{lemma}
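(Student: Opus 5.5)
The argument will follow the proof of Lemma~\ref{lem:final-delta14-wellorder} line by line, with the dictionary $M_1\rightsquigarrow M_n$, $\Phi\rightsquigarrow\Phi_n$, $\Sigma^1_4\rightsquigarrow\Sigma^1_{n+3}$. First we define the underlying order independently of any projective description. For a real $z\in W_n^\ast$, let $\pi_n(z)$ be the $<_{M_n}$-least localized presentation, in the final presentation $\mathcal R^n_{\kappa_n}$, of a name whose interpretation by $G^n_\infty$ is $z$. Such a presentation exists by the $M_n$-version of Lemma~\ref{lem:countable-localization-real-names}, since every real is read on a countable admissible support, and hence below some stage $<\kappa_n$ by regularity of $\kappa_n$. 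Distinct reals receive distinct presentations, because a fixed localized name has only one interpretation. So $z_0\triangleleft_n z_1 \iff z_0\neq z_1\wedge \pi_n(z_0)<_{M_n}\pi_n(z_1)$ is a well-order of $\mathbb R^{W_n^\ast}$.

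Next we prove that for distinct $z_0,z_1$,
\[
z_0\triangleleft_n z_1\iff W_n^\ast\models\Phi_n(\langle\mathrm{WO},n,z_0,z_1\rangle).
\]
For the forward direction, take names realizing $\pi_n(z_0),\pi_n(z_1)$, bounded below some $\beta<\kappa_n$. The bookkeeping $F_n$ of Definition~\ref{def:main-iteration-mn} lists $(\mathrm{WO},\dot z_0,\dot z_1)$ at some later stage $\gamma$. There the well-order clause compares these presentations and codes the tag with the smaller one first. Clause~(5) of the $M_n$-version of Lemma~\ref{lem:basic-zero-allowable}, together with the converse part of Lemma~\ref{lem:mn-coding-complexity-exactness}, then gives $\Phi_n$ in $W_n^\ast$. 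For the backward direction, the exactness part of Lemma~\ref{lem:mn-coding-complexity-exactness} says that the code was introduced at an intentional well-order stage which compared canonical localized presentations of names for $z_0,z_1$ and placed the smaller first.

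\textbf{Main obstacle.} The delicate step, in both directions, is verifying that the comparison performed at stage $\gamma$ agrees with the comparison of the final presentations $\pi_n(z_i)$. The canonical localized presentation of a name is computed relative to the current initial segment $\mathbb P_\gamma$. Later stages, including the countable tails appended in second-case uniformization stages, could in principle supply new localized presentations of the same real that are $<_{M_n}$-smaller. I would handle this as follows. A localized presentation $(A,\sigma)$ only refers to the complete subforcing $\mathbb P_A$ with $A$ an admissible support, so it is absolute to any longer presentation in which $A$ remains admissible. Lemma~\ref{lem:mn-allowability-structural-lemmas}(2),(3) (tails and concatenation) preserve admissibility of earlier supports. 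The remaining issue is that the least presentation of the interpreted real is attained at some bounded stage. To settle it, I would arrange the bookkeeping to list the pair $(\dot z_0,\dot z_1)$ only at stages past the stage bounding the supports of $\pi_n(z_0)$ and $\pi_n(z_1)$ themselves. I would then show that the stage-$\gamma$ canonical presentation of a name $\dot z$ with $\dot z^{G}=z$ is at least $\pi_n(z)$, and equals it when $\dot z$ is chosen as the lift of $\pi_n(z)$. If this is not clean, the fallback is to read the order directly off the codes that are actually written, noting that each pair is coded in exactly one orientation; this is what the complexity argument below needs.

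Finally, the complexity. By Lemma~\ref{lem:mn-coding-complexity-exactness}, $z_0<_\Delta z_1\iff z_0\ne z_1\wedge\Phi_n(\langle\mathrm{WO},n,z_0,z_1\rangle)$ is $\Sigma^1_{n+3}$. Since exactly one orientation is coded for each distinct pair, the same relation is also given by $z_0\neq z_1\wedge\neg\Phi_n(\langle\mathrm{WO},n,z_1,z_0\rangle)$, which is $\Pi^1_{n+3}$. Hence the well-order is $\Delta^1_{n+3}$.
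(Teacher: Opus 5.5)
Your outline matches the paper's: define $\pi_n$, prove $z_0\triangleleft_n z_1\iff\Phi_n(\langle\mathrm{WO},n,z_0,z_1\rangle)$ from the bookkeeping and Lemma~\ref{lem:mn-coding-complexity-exactness}, then read off the two definitions. Your forward direction is sound: at a stage listing the lifts of $\pi_n(z_0),\pi_n(z_1)$, the canonical presentations of these names are exactly $\pi_n(z_0),\pi_n(z_1)$, so the correct tag is coded.

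\textbf{The gap is the converse.} You need that \emph{no} stage codes $\langle\mathrm{WO},n,z_1,z_0\rangle$ when $z_0\triangleleft_n z_1$. This ``exactly one orientation'' claim is what antisymmetry and your $\Pi^1_{n+3}$ definition both rest on. Following Lemma~\ref{lem:final-delta14-wellorder}, you read the well-order clause as comparing the canonical presentations $P_0,P_1$ of the \emph{listed names}, and $F_n$ lists well-order tags for all localized names. Your bound $P_i\geq\pi_n(z_i)$ is one-sided, so it does not exclude $P_1<_{M_n}P_0$ while $\pi_n(z_0)<_{M_n}\pi_n(z_1)$. Concretely:
\begin{itemize}
\item let $\dot z_1$ be the lift of $\pi_n(z_1)$;
\item let $\dot z_0$ agree with the lift of $\pi_n(z_0)$ below some condition in $G^n_\infty$, and be some other name elsewhere.
\end{itemize}
Varying the ``elsewhere'' part gives $\aleph_2$ names that are pairwise not forced equal. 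They therefore have distinct canonical presentations, and some lie $<_{M_n}$-above $\pi_n(z_1)$. A stage listing such a pair codes $\langle\mathrm{WO},n,z_1,z_0\rangle$, so both tags satisfy $\Phi_n$. Restricting $F_n$ to stages past the supports of $\pi_n(z_i)$ does not fix this, because the problem is name-dependence, not timing. Moreover, that bound depends on the generic, so it cannot be built into $F_n\in M_n$. Your fallback assumes exactly the missing uniqueness.

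The paper avoids this in Definition~\ref{def:main-iteration-mn}: a well-order stage compares the canonical presentations of the two \emph{interpreted reals}. The orientation coded is then a function of the pair of reals alone. The proof reads ``$\langle\mathrm{WO},n,z_0,z_1\rangle$ is coded exactly when $z_0\triangleleft_n z_1$'' directly off that clause and applies exactness.

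If you adopt this clause, you still owe the stability point you raised but did not settle. At any stage $\gamma$ where both reals exist, the presentation computed in $W_n[G_\gamma]$ must already equal the final $\pi_n(z_i)$. Otherwise a name with support reaching beyond $\gamma$, evaluating to $z_i$ and having a $<_{M_n}$-smaller code, would make an early stage code the wrong orientation. The standard remedy is to rank a real first by the least initial segment of the presentation containing it, and only then by its least presentation over that segment. Then every well-order stage, whatever names it lists, codes the same orientation, and both directions of your claim follow.
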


\begin{proof}
For a real $z\in W_n^\ast$, let $\pi_n(z)$ be the least canonical $M_n$-localized presentation of a name whose interpretation is $z$, ordered by the fixed $M_n$-well-order of localized presentations.  Every real has such a presentation because the final hybrid forcing has countable support and every real name has bounded local support.  Distinct reals have distinct least presentations.

Define the underlying order by
\[
   z_0\triangleleft_n z_1
   \quad\Longleftrightarrow\quad
   z_0\neq z_1\ \text{ and }\ \pi_n(z_0)<_{M_n}\pi_n(z_1).
\]
This is a well-order of the reals.  The well-order stages of Definition~\ref{def:main-iteration-mn} intentionally code
\[
   \langle\mathrm{WO},n,z_0,z_1\rangle
\]
exactly when $z_0\triangleleft_n z_1$.  By Lemma~\ref{lem:mn-coding-complexity-exactness}, for distinct reals $z_0,z_1$,
\[
   z_0\triangleleft_n z_1
   \quad\Longleftrightarrow\quad
   \Phi_n(\langle\mathrm{WO},n,z_0,z_1\rangle).
\]
Thus
\[
   z_0<_{\Delta,n}z_1
   \quad\Longleftrightarrow\quad
   z_0\neq z_1\ \land\
   \Phi_n(\langle\mathrm{WO},n,z_0,z_1\rangle)
\]
is a $\Sigma^1_{n+3}$ definition of the well-order.  Since exactly one of the two opposite well-order tags is intentionally coded, the same relation is equivalently given by
\[
   z_0<_{\Delta,n}z_1
   \quad\Longleftrightarrow\quad
   z_0\neq z_1\ \land\
   \neg\Phi_n(\langle\mathrm{WO},n,z_1,z_0\rangle).
\]
This is a $\Pi^1_{n+3}$ definition.  Hence the well-order is $\Delta^1_{n+3}$.
\end{proof}

\begin{lemma}[Localized covering over \(L\lbrack T_{n+1},a\rbrack\)]\label{lem:random-cohen-covering-mn}
Let \(a\in W_n^\ast\) be a real and put
\[
   N^n_a=L[T_{n+1},a].
\]
Then there is a Borel null set \(Z^{\mathcal N}_{n,a}\in W_n^\ast\) such that
\[
   \bigcup\{B\mid B\text{ is a Borel null set coded in }N^n_a\}
   \subseteq Z^{\mathcal N}_{n,a}.
\]
There is also a Borel meager set \(Z^{\mathcal M}_{n,a}\in W_n^\ast\) such that
\[
   \bigcup\{B\mid B\text{ is a Borel meager set coded in }N^n_a\}
   \subseteq Z^{\mathcal M}_{n,a}.
\]
Consequently the set of reals which are not Random-generic over \(N^n_a\) is
null, and the set of reals which are not Cohen-generic over \(N^n_a\) is
meager.
\end{lemma}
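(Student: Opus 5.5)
The plan is to follow the proof of Lemma~\ref{lem:random-cohen-covering-final} line by line, replacing $M_1$, $W$, $\mathbb P^\ast$ and $T_2$ by $M_n$, $W_n$, $\mathbb P^n_\infty$ and $T_{n+1}$.

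The first step is localization. Fix $a\in W_n^\ast$. Apply the $M_n$-analogue of Lemma~\ref{lem:countable-localization-real-names} to a name for $a$. Its proof only uses the normal form of Lemma~\ref{lem:basic-zero-allowable} with the $M_n$-computed countably closed reservoir block placed before the c.c.c.\ part, so it transfers verbatim. This yields a countable admissible support $A\subseteq\kappa_n$ and a $\mathbb P^n_A$-name $\sigma$ with $\sigma^{G_A}=a$. Since $\kappa_n$ is regular, $A\subseteq\beta$ for some $\beta<\kappa_n$.

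Next, exhibit the one-parameter support witness exactly as in the $M_1$ case:
\[
   A_{\mathrm{rb}}=A,\qquad k=1,\qquad \dot a_0=\sigma,\qquad \dot W_{\mathrm{rb}}=L[T_{n+1},\sigma].
\]
Here $B^0_{\mathrm{rb}}$ is the countable branch-product support of the $W_n$-parameters of $\sigma$, and $\dot B_{\mathrm{rb}}$ is the footprint generated by $B^0_{\mathrm{rb}}$ and the explicit coding coordinates of $A$. This is a legitimate witness in the sense of Definition~\ref{def:admissible-random-base}, read for $M_n$ as prescribed in Definition~\ref{def:mn-rho-infty-allowability}. So the regularity stage over $L[T_{n+1},\dot a]$ becomes meaningful by stage $\beta$.

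The covering step then comes from bookkeeping. By Fact~\ref{fact:main-iteration-mn-infty-allowable}, there are stages $\lambda_{\mathcal N},\lambda_{\mathcal M}>\beta$ at which the tags $(\mathrm{Am}_{\mathcal N};\dot a^\uparrow)$ and $(\mathrm{Am}_{\mathcal M};\dot a^\uparrow)$ are used. At those stages the iterand is the amoeba forcing computed in the interpreted base $N^n_a=L[T_{n+1},a]$, and it is not recomputed in the ambient model. The generic then yields a Borel null set $Z^{\mathcal N}_{n,a}$, respectively a Borel meager set $Z^{\mathcal M}_{n,a}$, containing every Borel null (meager) set coded in $N^n_a$. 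The containment $B\subseteq Z$ and the nullness or meagerness of $Z$ are $\Pi^1_1$ assertions about the Borel codes. So by Mostowski absoluteness they survive to $W_n^\ast$, and the codes of $Z$ themselves persist.

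The consequences follow as before. A real fails to be Random (Cohen) over $N^n_a$ iff it lies in some Borel null (meager) set coded in $N^n_a$. Hence the non-Random reals are contained in $Z^{\mathcal N}_{n,a}$ and the non-Cohen reals in $Z^{\mathcal M}_{n,a}$.

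The main obstacle I expect is making sure that the meaningfulness condition is actually met: the admissible one-parameter base must be recognized by $F_n$ in the form recorded by the lifted witness. This requires that the countable-tail insertions and the reindexing of Fact~\ref{fact:main-iteration-mn-infty-allowable} do not destroy the cofinal scheduling for this particular witness. I would argue this by noting that the witness is lifted canonically through later tails and products, via the $M_n$ version of Lemma~\ref{lem:tail-allowability-local}, so that its identity as an $F_n$-listed base is preserved. Cofinality in the reindexed length-$\kappa_n$ presentation then gives the required stages.
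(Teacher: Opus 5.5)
Your proposal is correct and follows the paper's proof essentially step for step. You localize $a$ on a countable admissible support bounded below $\kappa_n$, exhibit the one-parameter witness $L[T_{n+1},\sigma]$, use the final bookkeeping to get measure- and category-amoeba stages computed in $N^n_a$, and read off the covering sets. Your explicit appeal to $\Pi^1_1$ (Mostowski) absoluteness for the persistence of the covering relation, and your remark about the cofinal scheduling surviving the countable-tail reindexing, only spell out what the paper covers by citing the final bookkeeping and noting that later forcing preserves the Borel codes.
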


\begin{proof}
Fix \(a\in W_n^\ast\).  By the \(M_n\)-version of
Lemma~\ref{lem:countable-localization-real-names}, choose a countable
admissible support \(A\subseteq\kappa_n\) and a \(\mathbb P_A\)-name
\(\sigma\) such that
\[
   \sigma^{G_A}=a,
\]
where \(G_A=G^n_\infty\cap\mathbb P_A\).  Choose \(\beta<\kappa_n\) with
\(A\subseteq\beta\).  Let \(\dot a=\sigma^{\uparrow\beta}\), and let
\(\dot N\) be the \(\mathbb P_\beta\)-name for \(L[T_{n+1},\dot a]\).
Then the corresponding regularity tags for \(\dot N\) are admissibly based.  This is the
higher-level version of Definition~\ref{def:admissible-random-base}, with
\[
   A_{\mathrm{rb}}=A,
   \qquad k=1,
   \qquad \dot a_0=\sigma,
   \qquad \dot W_{\mathrm{rb}}=L[T_{n+1},\sigma].
\]
The set \(B^0_{\mathrm{rb}}\) is the countable union of the branch-product
supports of the \(W_n\)-parameters occurring in \(\sigma\), and
\(\dot B_{\mathrm{rb}}\) is the corresponding branch footprint generated by
\(B^0_{\mathrm{rb}}\) and by the explicit coding coordinates in \(A\).

By the final bookkeeping, at some stage \(\lambda_{\mathcal N}>\beta\) the tag
\[
   (\mathrm{Am}_{\mathcal N},\dot N^{\uparrow\lambda_{\mathcal N}})
\]
is used.  At this stage the forcing is
\[
   \mathbb A_{\mathcal N}^{N^n_a},
\]
computed in the interpreted base \(N^n_a\).  Let \(Z^{\mathcal N}_{n,a}\) be
the Borel null set added by this amoeba forcing.  By the definition of amoeba
forcing for the null ideal,
\[
   B\subseteq Z^{\mathcal N}_{n,a}
\]
for every Borel null set \(B\) coded in \(N^n_a\).  Later forcing preserves
the Borel code for \(Z^{\mathcal N}_{n,a}\) and the statement that it is null.
Hence the first displayed inclusion holds in \(W_n^\ast\).

The meager case is the same.  The bookkeeping later uses the tag
\[
   (\mathrm{Am}_{\mathcal M},\dot N^{\uparrow\lambda_{\mathcal M}})
\]
for some \(\lambda_{\mathcal M}>\beta\).  The forcing
\(\mathbb A_{\mathcal M}^{N^n_a}\) adds a Borel meager set
\(Z^{\mathcal M}_{n,a}\) covering every Borel meager set coded in \(N^n_a\),
and this covering relation remains true in the final extension.

Finally, a real is not Random-generic over \(N^n_a\) iff it belongs to a Borel
null set coded in \(N^n_a\).  Hence the non-Random reals over \(N^n_a\) are
contained in \(Z^{\mathcal N}_{n,a}\).  Similarly, the non-Cohen reals over
\(N^n_a\) are contained in \(Z^{\mathcal M}_{n,a}\).  This proves the lemma.
\end{proof}

\begin{lemma}[Hjorth--Solovay regularity in the $M_n$ extension]\label{lem:sigma-n2-regularity-mn}
In $W_n^\ast$, every boldface $\boldsymbol\Sigma^1_{n+2}$ set of reals is Lebesgue measurable and has the Baire property.
\end{lemma}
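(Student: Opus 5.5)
The plan is to follow the proof of Lemma~\ref{lem:sigma13-regularity-final} verbatim, making the substitutions $T_2\rightsquigarrow T_{n+1}$, $L[T_2,a]\rightsquigarrow N^n_a=L[T_{n+1},a]$ and $\boldsymbol\Sigma^1_3\rightsquigarrow\boldsymbol\Sigma^1_{n+2}$. Fix a real parameter $a\in W_n^\ast$ and a $\Sigma^1_{n+2}(a)$ formula $\varphi(v,a)$ defining $A=\{z\mid W_n^\ast\models\varphi(z,a)\}$. By Lemma~\ref{lem:random-cohen-covering-mn}, the reals that are not Random over $N^n_a$ form a null set $R^n_a$. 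Likewise, the reals that are not Cohen over $N^n_a$ form a meager set $C^n_a$. It therefore suffices to produce Borel sets $B,C$ coded in $N^n_a$ such that $A\triangle B\subseteq R^n_a$ and $A\triangle C\subseteq C^n_a$.

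For measure, we work in $N^n_a$ with $\mathbb B_{\mathrm{rand}}^{N^n_a}$ and let $B$ be a Borel representative of the Boolean value $\|\varphi(\dot r,a)\|$. If $z$ is Random over $N^n_a$, the forcing theorem gives $z\in B\iff N^n_a[z]\models\varphi(z,a)$. The remaining step is the transfer $N^n_a[z]\models\varphi(z,a)\iff W_n^\ast\models\varphi(z,a)$. For this we write $\varphi(z,a)$ as $(a,z,k)\in U_{n+2}$, the universal $\boldsymbol\Sigma^1_{n+2}$ set, which is $p[T_{n+1}]$ by Theorem~\ref{thm:canonical-tn-existence}. Then $\varphi(z,a)$ holds in a model containing $T_{n+1}$ and its Martin--Solovay complement $S_{n+1}$ exactly when the section of $T_{n+1}$ at $(a,z,k)$ is ill-founded. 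Ill-foundedness of a tree section is absolute between transitive models that contain the tree. So both $N^n_a[z]$ and $W_n^\ast$ compute $\varphi(z,a)$ by the same criterion, provided each computes $\varphi$ correctly via $T_{n+1}$. The category case is identical: use $\mathbb C^{N^n_a}$ and a Borel representative $C$, and argue for Cohen reals $z$ over $N^n_a$.

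The main obstacle is justifying that $p[T_{n+1}]$ equals the true $\Sigma^1_{n+2}(a)$ set in both $N^n_a[z]$ and $W_n^\ast$. For $W_n^\ast$, I would invoke Lemma~\ref{lem:tn-small-generic-absoluteness} (with $n+1$ in place of $n$, so the tree lives in $M_n$ and the homogeneity systems reach below the least Woodin cardinal $\delta^{n+1}_0$ of $M_n$). To apply it, I would use the localization remark after Corollary~\ref{cor:small-generic-absoluteness-fixed-tree}. Every real, and every $\Sigma^1_{n+2}$ witness, appears in $M_n[G_A]$ for a countable admissible support $A$, which gives a forcing of size $\aleph_1<\delta^{n+1}_0$. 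By the final sentence of that lemma, the complementing pair $(T_{n+1},S_{n+1})$ stays absolute into further small extensions. So $W_n^\ast$ computes $\Sigma^1_{n+2}$ truth at $(a,z)$ through $T_{n+1}$.

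For $N^n_a[z]$, I would not treat it as a model of full projective truth. Instead I would define $B$ from the tree criterion directly: $B$ represents the Boolean value of ``the $T_{n+1}$-section at $(a,\dot r,k)$ is ill-founded''. This statement is $\Sigma_1$ over $L[T_{n+1},a,z]$ with parameter $T_{n+1}$, hence absolute upward to $W_n^\ast$. Its negation, that the section is well-founded, is also absolute: being well-founded means having a rank function into the ordinals, and such a function in $N^n_a[z]$ persists to $W_n^\ast$. Since $T_{n+1}$ represents $\varphi$ correctly in $W_n^\ast$, this removes any need for correctness inside $N^n_a[z]$, and the same reformulation handles the category case.
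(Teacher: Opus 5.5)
Your proof is correct and follows the paper's argument: the covering Lemma~\ref{lem:random-cohen-covering-mn}, a Borel representative of the Random (resp.\ Cohen) Boolean value computed in $N^n_a=L[T_{n+1},a]$, and $T_{n+1}$-absoluteness to transfer truth to $W_n^\ast$. Two differences are worth noting. Taking the Boolean value of ``the $T_{n+1}$-section is ill-founded'', rather than of the $\Sigma^1_{n+2}$ formula itself, sharpens the paper's transfer step: $N^n_a[z]$ is not a small generic extension of $M_n$, so Lemma~\ref{lem:tn-small-generic-absoluteness} does not apply to it directly. Your detour through countable supports is harmless but unnecessary, because $W_n^\ast$ is itself a generic extension of $M_n$ by a forcing of size $\aleph_2$, far below the least Woodin cardinal.
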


\begin{proof}
Let $A\subseteq\omega^\omega$ be $\Sigma^1_{n+2}(a)$, where $a\in W_n^\ast$ is a real parameter, and put $N^n_a=L[T_{n+1},a]$.  By Lemma~\ref{lem:random-cohen-covering-mn}, almost every real is Random over $N^n_a$ and comeagerly many reals are Cohen over $N^n_a$.

For Lebesgue measurability, work in $N^n_a$ with the Random algebra $\mathbb B_{\mathrm{rand}}^{N^n_a}$ and let $\dot r$ be the canonical Random real.  The Boolean value of the statement ``$\dot r\in A$'' is represented, modulo null sets, by a Borel set $B$ coded in $N^n_a$.  If $z$ is Random over $N^n_a$, the forcing theorem for $\mathbb B_{\mathrm{rand}}^{N^n_a}$ gives
\[
   z\in B
   \quad\Longleftrightarrow\quad
   N^n_a[z]\models z\in A.
\]
Since $A$ is $\Sigma^1_{n+2}(a)$ and $T_{n+1}$ is the canonical weakly homogeneous tree representing the universal $\Sigma^1_{n+2}$ set, Lemma~\ref{lem:tn-small-generic-absoluteness} identifies this truth with the truth of $z\in A$ in the final model.  Therefore $A\triangle B$ is contained in the null set of reals which are not Random over $N^n_a$.  Hence $A$ is Lebesgue measurable.

For the Baire property, the same argument is carried out in $N^n_a$ with Cohen forcing.  The Boolean value of ``$\dot c\in A$'' for the canonical Cohen real is represented by a Borel set $C$ coded in $N^n_a$, and for every Cohen real $z$ over $N^n_a$ we have
\[
   z\in C
   \quad\Longleftrightarrow\quad
   z\in A
\]
by the same $T_{n+1}$ absoluteness.  Thus $A\triangle C$ is contained in the meager set of reals which are not Cohen-generic over $N^n_a$.  Hence $A$ has the Baire property.
\end{proof}

\subsection{\(\Sigma^1_{n+2}\)-uniformization in small \(M_n\)-generic extensions}
\label{subsec:sigman2-uniformization-mn-small}

We shall argue for \(\Sigma^1_{n+2}\)-uniformization now.  Again, as in the
case \(n=1\), this follows from a more general fact.  Throughout this
subsection, a small generic extension of \(M_n\) means an extension \(M_n[G]\)
by a set forcing in \(M_n\) of \(M_n\)-cardinality below the least Woodin
cardinal, in the class of forcing extensions considered here, so that the
canonical \(M_n\)-strategy and the relevant comparison arguments are preserved.

\begin{proposition}\label{prop:mn-relativized-capture}
Let \(N\) be such a small generic extension of \(M_n\), and let
\(s\in\mathbb R^N\).  Let \(M_n(s)\) be the canonical proper class
\(s\)-mouse with \(n\) Woodin cardinals.  Then the following hold in \(N\).
\begin{enumerate}
    \item If
    \[
       N\models \exists u\,\exists v\,\theta(s,u,v),
    \]
    where \(\theta\) is \(\Pi^1_{n+1}\), then there are witnesses
    \(u,v\in\mathbb R\cap M_n(s)\) such that
    \[
       M_n(s)\models \theta(s,u,v).
    \]

    \item For reals \(u,v\in M_n(s)\), \(\Pi^1_{n+1}\) truth is computed
    correctly by \(M_n(s)\) and is preserved to \(N\).  Thus, for every
    \(\Pi^1_{n+1}\) formula \(\theta\),
    \[
       M_n(s)\models\theta(s,u,v)
       \quad\Longleftrightarrow\quad
       N\models\theta(s,u,v).
    \]

    \item The canonical mouse order \(<_{n,s}\) on
    \(\mathbb R\cap M_n(s)\) is a good \(\Sigma^1_{n+2}(s)\) well-order.  More
    explicitly, if \(\theta(s,u,v)\) is \(\Pi^1_{n+1}\), then the relation
    \[
       \operatorname{Least}^{n}_{\theta}(s,u)
    \]
    saying that \(u\in M_n(s)\) and \(u\) is the \(<_{n,s}\)-least real for
    which there is a \(v\in M_n(s)\) with
    \(M_n(s)\models\theta(s,u,v)\) is uniformly \(\Sigma^1_{n+2}(s)\).
\end{enumerate}
\end{proposition}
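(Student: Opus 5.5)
We will prove Proposition~\ref{prop:mn-relativized-capture} by running the relativized version of Steel's projective analysis for \(M_n\), in exactly the form used for Fact~\ref{fact:m1-capture-and-good-wellorder} at \(n=1\), and then transferring it from \(V\)-style statements to the small generic extension \(N\).  The first step is to see that \(M_n(s)\) exists in \(N\) and is iterable there.  Since the forcing producing \(N\) has \(M_n\)-size below the least Woodin cardinal, \(M_n[G]\) can be rearranged as an \(s\)-premouse (a generic extension of \(M_n\) above the forcing), and the canonical \(M_n\)-strategy lifts to it.  From this we obtain \(M_n(s)\) as the \(s\)-version of the background construction, with an iteration strategy in \(N\) for countable trees.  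In parallel we relativize Fact~\ref{fact:steel-pin-iterability} and Lemma~\ref{lem:definable-mn-initial-segments}.  The set of reals coding sound \(n\)-small \(\Pi_n\)-iterable \(s\)-premice projecting to \(\omega\) (restricted to lower-part ones, as in Definition~\ref{def:mn-lower-part-approximations}) is \(\Pi^1_{n+1}(s)\).  Moreover, each such premouse is an initial segment of \(M_n(s)\), by comparison as in Lemma~\ref{lem:local-comparison-mn}, and these segments are cofinal in \(M_n(s)|\omega_1\).

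For item (2), we proceed by induction on \(n\), using the genericity-iteration argument.  Given reals \(u,v\in M_n(s)\) and a \(\Pi^1_{n+1}\) formula \(\theta\), we show \(M_n(s)\) is \(\Sigma^1_{n+1}\)-correct in \(N\).  For the upward direction, let \(N\models\neg\theta(s,u,v)\), witnessed by a real \(w\) for a \(\Pi^1_n\) matrix.  Iterate \(M_n(s)\) via Woodin's extender algebra at its least Woodin cardinal to make \(w\) generic, obtaining an iterate \(\mathcal P\) with \(w\in\mathcal P[g]\).  The remaining \(n-1\) Woodin cardinals of \(\mathcal P[g]\) make it \(\Sigma^1_n\)-correct by the induction hypothesis.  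Hence \(\mathcal P[g]\models\exists w\,\neg\theta\), so \(\mathcal P\) forces it, and by elementarity of the iteration map \(M_n(s)\) forces \(\exists w\,\neg\theta\) over its collapse.  Homogeneity of the collapse together with \(\Sigma^1_{n+1}\)-absoluteness from \(M_n(s)\) to its own collapse extensions, itself obtained inductively, then yields \(M_n(s)\models\neg\theta\).  The downward direction is the corresponding argument for the \(\Sigma\) side, using that a witness in \(M_n(s)\) remains a witness in \(N\) by \(\Pi^1_n\)-correctness from the previous level.  The parity issue (odd \(n\) versus even \(n\)) only affects which side needs the iteration and which follows by upward absoluteness; we would handle it in the manner of Steel's treatment.

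Item (1) follows from (2) with the same genericity trick.  If \(N\models\exists u\,\exists v\,\theta\), we make a pair of witnesses generic over an iterate, so the statement holds in an extension of an iterate.  Elementarity then pulls it back to a collapse extension of \(M_n(s)\), and \(\Sigma^1_{n+2}\)-correctness of \(M_n(s)\) relative to its collapse extensions, which holds because \(n\) Woodins reflect one level past \(\Pi^1_{n+1}\), gives witnesses already in \(M_n(s)\).  Item (2) then shows that \(M_n(s)\) itself satisfies \(\theta(s,u,v)\).  For item (3), we define \(u<_{n,s}u'\) by comparing the least initial segments of \(M_n(s)\) from the \(\Pi^1_{n+1}(s)\) class that contain each real, and then the construction order inside the larger one.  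The relation \(\operatorname{Least}^n_\theta(s,u)\) becomes ``there exists a code for an \(s\)-premouse \(\mathcal Q\) in the \(\Pi^1_{n+1}(s)\) class containing \(u\) and some \(v\) with \(\mathcal Q\models\theta\) (read correctly by (2) and the lower-part recovery), such that no earlier real of \(\mathcal Q\) has such a \(v\) in \(\mathcal Q\).'' Leastness is bounded by \(\mathcal Q\) and is arithmetic in the code, since later segments are end-extensions and every candidate below \(u\) lies in \(\mathcal Q\).  One subtlety remains: a witness \(v\) might first appear in a later segment.  We would address this by quantifying over \(\mathcal Q\) taken large enough to be \(\Sigma_1\)-correct for \(\theta\) about its reals; this adds only an existential real quantifier.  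The definition is therefore \(\Sigma^1_{n+2}(s)\), uniformly in \(\theta\).

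The main obstacle is item (2), and specifically the iterability transfer.  We need \(M_n(s)\) and its iterates to be genuinely iterable inside \(N\), not only in \(V\), for the genericity iterations to run there, and we need the \(\Pi_n\)-iterable class to contain no nonstandard mice at odd levels.  Our first approach is to rely on the preservation convention of Paragraph~\ref{par:steel-mn-preservation-convention} together with the lower-part restriction of Definition~\ref{def:mn-lower-part-approximations}.  We would then cite Steel's theorem that realizable lower-part \(s\)-mice are exactly the \(\Pi_n\)-iterable ones, rather than reproving the comparison analysis.
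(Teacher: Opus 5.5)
Your argument for (1) fails at its last step. The genericity iteration only shows that \(\mathrm{Col}(\omega,\delta_0)\) forces \(\exists u\exists v\,\theta(s,u,v)\) over \(M_n(s)\). To bring witnesses back into \(M_n(s)\) you need \(M_n(s)\prec_{\Sigma^1_{n+2}}M_n(s)^{\mathrm{Col}(\omega,\delta_0)}\), and nothing supports this. Only \(n-1\) Woodin cardinals lie above \(\delta_0\), so the internal Martin--Steel/Martin--Solovay trees give \(\Sigma^1_{n+1}\)-absoluteness for that collapse, which is why your argument for (2) goes through, but not \(\Sigma^1_{n+2}\). In fact this elementarity is false for \(n=1\). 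In \(V\), \(\mathbb R\cap M_1=Q_3\) is \(\Pi^1_3\) (Steel). Running your genericity argument on the true \(\Sigma^1_3\) sentence ``some real is not in \(Q_3\)'' shows that it holds in \(M_1^{\mathrm{Col}(\omega,\delta)}\). It fails in \(M_1\), because a witness \(y\in M_1\) would by Shoenfield satisfy \(y\notin Q_3=\mathbb R\cap M_1\). So genericity iterations certify correctness of the collapse extension only.

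What makes (1) true in \(N\) is smallness. By Solovay's intermediate-model theorem, \(s\) is generic over \(M_n\) for a complete subalgebra of the small forcing. Hence \(M_n(s)\) may be taken to be (the reorganization of) \(M_n[s]\), and both \(M_n[s]\) and \(N\) are small generic extensions of \(M_n\). Lemma~\ref{lem:tn-small-generic-absoluteness}, with \(n+1\) in place of \(n\), then computes \(\Sigma^1_{n+2}(s)\) truth in both models by membership in \(p[T_{n+1}]\), which is absolute between them. This gives \(M_n(s)\prec_{\Sigma^1_{n+2}}N\), hence (1) and (2) at once. It also removes the need for iteration strategies inside \(N\), which you identified as the main obstacle.

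For (3) you located the right difficulty, but the proposed repair does not work. Asserting that \(\mathcal Q\) is ``large enough'' means that every \(u'<_{n,s}u\) with a witness somewhere in \(M_n(s)\) already has one in \(\mathcal Q\). That quantifies over all of \(M_n(s)\) and is \(\Pi^1_{n+2}(s)\), not an extra existential real quantifier. No repair is possible for the relation as literally stated:
\begin{itemize}
\item Take any \(\Pi^1_{n+2}\) formula \(\varphi(s)\) and write \(\neg\varphi(s)\Leftrightarrow\exists v\,\chi(s,v)\) with \(\chi\in\Pi^1_{n+1}\).
\item Fix distinct recursive reals \(\bar 0,\bar 1\), and put \(\theta_1\equiv(u=\bar0\wedge\chi(s,v))\vee u=\bar1\) and \(\theta_2\equiv(u=\bar1\wedge\chi(s,v))\vee u=\bar0\).
\item By (1) and (2), \(\bar0\) is a \(\theta_1\)-candidate and \(\bar1\) is a \(\theta_2\)-candidate exactly when \(\neg\varphi(s)\).
\item Checking both possible \(<_{n,s}\)-orders of \(\bar0,\bar1\) gives \(\varphi(s)\Leftrightarrow\operatorname{Least}^n_{\theta_1}(s,\bar1)\wedge\operatorname{Least}^n_{\theta_2}(s,\bar0)\).
\end{itemize}
So uniform \(\Sigma^1_{n+2}\)-definability of \(\operatorname{Least}^n_\theta\) would make every \(\Pi^1_{n+2}\) set \(\Sigma^1_{n+2}\).

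What goodness does give is that ``\(\langle u,v\rangle\) is the \(<_{n,s}\)-least pair with \(\theta(s,u,v)\)'' is \(\Sigma^1_{n+2}(s)\). Guess a real coding the \(<_{n,s}\)-predecessors of \(\langle u,v\rangle\); then ``no coded pair satisfies \(\theta\)'' is a countable conjunction of \(\Sigma^1_{n+1}\) conditions. This pair form is what Theorem~\ref{thm:small-mn-sigman2-uniformization} actually needs: choose the least pair \(\langle y,z\rangle\) and project. It is also the reading under which the paper's closing sentence about least-witness assertions is correct.

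Finally, goodness itself rests on the \(\Pi^1_{n+1}(s)\) recovery of initial segments of \(M_n(s)\). Lemma~\ref{lem:definable-mn-initial-segments} proves this only for \(\omega_1\)-preserving extensions, while a small forcing may collapse \(\omega_1^{M_n(s)}\). That case needs an additional argument.
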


\begin{proof}
This is the relativized Steel analysis of the canonical mice \(M_n(s)\), in
the same form as Fact~\ref{fact:m1-capture-and-good-wellorder} for \(n=1\).
We recall the ingredients in order to make clear that no new coding argument
is being used here.

The projective description of the sound initial segments of \(M_n(s)\) is the
relativized version of the \(\Pi_n\)-iterability analysis recalled in
Section~\ref{sec:canonical-inner-models}.  In the codes, the relevant
\(s\)-premice are described by a \(\Pi^1_{n+1}(s)\) condition: they are sound,
project to \(\omega\), are \(n\)-small over the parameter \(s\), and have the
required \(\Pi_n\)-iterability.  Steel comparison linearly orders these premice
by initial segment and identifies the correctly iterable ones with the initial
segments of the canonical mouse \(M_n(s)\).

The usual comparison-and-capture argument then gives the first two clauses.  If
a small generic extension satisfies a \(\Sigma^1_{n+2}(s)\) assertion, write it
in the form \(\exists u\exists v\,\theta(s,u,v)\) with \(\theta\)
\(\Pi^1_{n+1}\).  The tree or mouse witnessing this assertion is captured by a
countable initial segment of \(M_n(s)\); otherwise comparison with the canonical
construction would produce the same contradiction as in Steel's proof of the
projective correctness of \(M_n(s)\).  Conversely, once the witnesses belong to
\(M_n(s)\), \(\Pi^1_{n+1}\) correctness follows from the same comparison theorem
and the preservation convention for the generic extensions considered here.

Finally, define \(<_{n,s}\) by first taking the least sound initial segment of
\(M_n(s)\) containing the real in question, and then using the canonical
well-order of that premouse.  Since membership in the relevant initial segment
class is \(\Pi^1_{n+1}(s)\) and comparison gives initial-segment linearity,
initial segments of \(<_{n,s}\) are uniformly \(\Sigma^1_{n+2}(s)\).  Therefore
least-witness assertions for \(\Pi^1_{n+1}\) matrices are again
\(\Sigma^1_{n+2}(s)\).
\end{proof}

\begin{theorem}\label{thm:small-mn-sigman2-uniformization}
Every small generic extension of \(M_n\) satisfies boldface
\(\Sigma^1_{n+2}\)-uniformization.  In particular, \(W_n^\ast\) satisfies
\(\Sigma^1_{n+2}\)-uniformization.
\end{theorem}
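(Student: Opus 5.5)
The plan is to copy the proof of Theorem~\ref{thm:small-m1-sigma13-uniformization}, with Proposition~\ref{prop:mn-relativized-capture} in place of Fact~\ref{fact:m1-capture-and-good-wellorder}. Let \(N=M_n[G]\) be a small generic extension and work in \(N\). Let \(A\subseteq\mathbb R^2\) be boldface \(\Sigma^1_{n+2}\). Fix a parameter \(a\) and a \(\Pi^1_{n+1}\) formula \(\psi\) with
\[
   A(x,y)\Longleftrightarrow\exists z\,\psi(a,x,y,z).
\]
The lightface case is the special case where \(a\) is recursive. For each \(x\) put \(s=a\oplus x\), and let \(\theta(s,y,z)\) be the \(\Pi^1_{n+1}\) formula obtained from \(\psi\) by decoding \(s\). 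Define
\[
   A^*(x,y)\Longleftrightarrow \operatorname{Least}^n_\theta(a\oplus x,y).
\]
By Proposition~\ref{prop:mn-relativized-capture}(3), applied uniformly in \(s\) and then composed with the recursive map \(x\mapsto a\oplus x\), the relation \(A^*\) is \(\Sigma^1_{n+2}(a)\).

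Next I verify the three uniformization clauses in order.

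\textbf{Soundness.} Suppose \(A^*(x,y)\). Then there is a \(z\in M_n(a\oplus x)\) with \(M_n(a\oplus x)\models\theta(s,y,z)\). By clause (2) of the proposition this truth transfers to \(N\), so \(A(x,y)\).

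\textbf{Single-valuedness.} This holds because \(<_{n,s}\) is a well-order of \(\mathbb R\cap M_n(s)\), so the least candidate is unique.

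\textbf{Totality on nonempty sections.} Suppose the \(x\)-section of \(A\) is nonempty in \(N\). Then \(N\models\exists y\exists z\,\theta(s,y,z)\). Clause (1) gives witnesses in \(M_n(s)\) satisfying \(\theta\) there, so the candidate set is nonempty and has a \(<_{n,s}\)-least element \(y_0\). Then \(A^*(x,y_0)\) holds.

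The final sentence of the theorem requires showing that \(W_n^\ast\) is a small generic extension of \(M_n\) in the sense of this subsection. This is the step I expect to be the real obstacle. The full forcing \(Br\hybcomp\mathcal R^n_{\kappa_n}\) has size \(\kappa_n=\omega_2^{M_n}\), which is far below the least Woodin cardinal of \(M_n\). By the normal form of Lemma~\ref{lem:basic-zero-allowable} (\(M_n\)-version), it is equivalent to a set forcing in \(M_n\) of that size. Consequently the canonical \(M_n\)-strategy lifts, and the comparison arguments behind the proposition are available; this is the preservation convention of Paragraph~\ref{par:steel-mn-preservation-convention}.

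One subtle point needs checking here: the proposition must be applied in \(N\) itself, not inside intermediate models. The hypotheses of the proposition concern only \(N\) and the mouse \(M_n(s)\) for \(s\in N\), so this is harmless. Granting smallness, Theorem~\ref{thm:small-mn-sigman2-uniformization} yields \(\Sigma^1_{n+2}\)-uniformization in \(W_n^\ast\).
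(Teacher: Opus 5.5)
Your argument is the paper's proof of Theorem~\ref{thm:small-mn-sigman2-uniformization} step for step: the selector $\operatorname{Least}^n_\theta(a\oplus x,\cdot)$, soundness from clause~(2), uniqueness from the well-order, and totality from clause~(1). Your remarks on the smallness of $W_n^\ast$ only make explicit what the paper asserts. The trouble is the one step you take on credit from clause~(3) of Proposition~\ref{prop:mn-relativized-capture}, and it affects the paper's proof equally: minimizing the first coordinate $y$ alone does not give a $\Sigma^1_{n+2}$ relation.

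Unfolded, $\operatorname{Least}^n_\theta(s,y)$ contains the conjunct that no $u'<_{n,s}y$ has any witness $v'\in M_n(s)$. By clauses~(1) and~(2) this means $\forall u'<_{n,s}y\;\forall v'\;\neg\theta(s,u',v')$, which is $\Pi^1_{n+2}$ and in general not $\Sigma^1_{n+2}$. Here is a concrete instance. Let $P=\{x\mid\forall z\,\neg\theta_0(x,z)\}$, with $\theta_0$ a $\Pi^1_{n+1}$ formula, be a $\Pi^1_{n+2}$ set which is not $\boldsymbol\Sigma^1_{n+2}$ (the diagonal of a universal set). Let $c_0\neq c_1$ be recursive reals with, say, $c_0<_{n,s}c_1$; on the $\Delta^1_{n+2}$ set of $x$ where they are ordered the other way, swap their roles. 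Put $A(x,y)\Leftrightarrow y=c_1\vee(y=c_0\wedge\exists z\,\theta_0(x,z))$. Then your selector satisfies $A^*(x,c_1)\Leftrightarrow x\in P$, so $A^*$ is not $\boldsymbol\Sigma^1_{n+2}$. In particular, clause~(3) read literally is incompatible with clauses~(1) and~(2).

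The repair is the classical one from $\Sigma^1_2$-uniformization in $L$: minimize over pairs rather than over first coordinates. With $s=a\oplus x$, let $A^*(x,y)$ hold iff there is $z$ such that $\langle y,z\rangle\in M_n(s)$, $\theta(s,y,z)$ holds, and $\neg\theta(s,(w)_0,(w)_1)$ holds for every $w<_{n,s}\langle y,z\rangle$. You only need the first sentence of clause~(3), Steel's theorem that $<_{n,s}$ is a good $\Sigma^1_{n+2}(s)$ well-order. Goodness means the predecessors of $\langle y,z\rangle$ are coded by a real in a uniformly $\Sigma^1_{n+2}(s)$ way. Since $\neg\theta$ is $\Sigma^1_{n+1}$, the bounded universal quantifier over this countable set keeps $A^*$ in $\Sigma^1_{n+2}(a)$. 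Soundness then follows from clause~(2), single-valuedness from the uniqueness of the least pair, and totality from clause~(1), exactly as in your argument.
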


\begin{proof}
Let \(N\) be a small generic extension of \(M_n\).  Work in \(N\), and let
\(A\subseteq\mathbb R^2\) be a boldface \(\Sigma^1_{n+2}\) relation.  Choose a
real parameter \(a\) and a \(\Pi^1_{n+1}\) formula \(\psi\) such that
\[
   A(x,y)
   \quad\Longleftrightarrow\quad
   \exists z\,\psi(a,x,y,z).
\]
For each real \(x\), put \(s=a\oplus x\), and let \(\theta(s,y,z)\) be the
\(\Pi^1_{n+1}\) formula obtained from \(\psi(a,x,y,z)\) after decoding
\(s\) as \(a\oplus x\).  Define \(U(x,y)\) to hold iff
\[
   \operatorname{Least}^{n}_{\theta}(a\oplus x,y).
\]
By Proposition~\ref{prop:mn-relativized-capture}(3), \(U\) is
\(\Sigma^1_{n+2}(a)\), hence boldface \(\Sigma^1_{n+2}\).

If \(U(x,y)\) holds, then for some \(z\in M_n(a\oplus x)\),
\[
   M_n(a\oplus x)\models\psi(a,x,y,z).
\]
By Proposition~\ref{prop:mn-relativized-capture}(2),
\(N\models\psi(a,x,y,z)\), and therefore \(A(x,y)\) holds.  Thus
\(U\subseteq A\).

The relation \(U\) is single-valued because \(<_{n,a\oplus x}\) is a
well-order and \(U(x,y)\) asserts that \(y\) is the least real in
\(M_n(a\oplus x)\) for which a suitable \(z\) exists.

Finally suppose that the \(x\)-section of \(A\) is nonempty in \(N\).  Then
\[
   N\models\exists y\exists z\,\psi(a,x,y,z).
\]
By Proposition~\ref{prop:mn-relativized-capture}(1), there are witnesses
\(y,z\in M_n(a\oplus x)\).  Hence the set of \(<_{n,a\oplus x}\)-candidates is
nonempty, and it has a least element \(y_0\).  By definition, \(U(x,y_0)\)
holds.  Thus \(U\) uniformizes \(A\).
\end{proof}

\begin{theorem}[The main theorem, uniform form]\label{thm:main-mn-final}
Assume that $M_n$ exists, where $1\leq n<\omega$.  There is a forcing extension $W_n^\ast$ such that
\[
   W_n^\ast\models 2^{\aleph_0}=\aleph_2,
\]
every boldface $\boldsymbol\Sigma^1_{n+2}$ set of reals is Lebesgue measurable and has the Baire property, the $\Sigma^1_{n+2}$- and $\Pi^1_{n+3}$-uniformization properties hold, and the reals admit a $\Delta^1_{n+3}$-definable well-order.
\end{theorem}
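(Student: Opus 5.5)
The plan is to take $W_n^\ast=W_n[G^n_\infty]$ from Definition~\ref{def:main-iteration-mn} and assemble the five clauses from the $M_n$-lemmas already proved. This mirrors the proof of Theorem~\ref{thm:main-m1-final}, with the dictionary $M_1\rightsquigarrow M_n$, $T_2\rightsquigarrow T_{n+1}$, $\Phi\rightsquigarrow\Phi_n$.

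The first step is the value of the continuum, which is the only place requiring a genuine forcing computation.

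\begin{itemize}
\item I would invoke the $M_n$-version of the normal form, Lemma~\ref{lem:basic-zero-allowable}(1),(2). The whole forcing over $M_n$ is equivalent to $\mathbb C_{\mathrm{res}}*(Br*\dot{\mathbb S})$.
\item The reservoir block is $\sigma$-closed over $M_n$ and is a countable-support product of $\kappa_n$ many $\aleph_1$-sized factors. Since $M_n\models\GCH$, the $\Delta$-system argument gives the $\aleph_2$-c.c.
\item $Br*\dot{\mathbb S}$ is a finite-support c.c.c.\ iteration with iterands of size $\le\aleph_1$. This uses that the regularity bases $L[T_{n+1},\vec a]$ have only $\aleph_1$ many relevant Borel codes in the ambient model.
\item Hence all cardinals are preserved, and there are at most $\kappa_n^{\aleph_0}=\aleph_2$ nice names for reals, so $2^{\aleph_0}\le\aleph_2$.
\item For the lower bound, Fact~\ref{fact:main-iteration-mn-infty-allowable} gives cofinally many ordinary Cohen coordinates in a presentation of length $\kappa_n$, so $\aleph_2$ new reals appear.
\item Length $\kappa_n$ is not exceeded by the inserted countable tails, since $\kappa_n$ is regular.
\end{itemize}

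The remaining clauses are direct citations.

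\begin{itemize}
\item Lemma~\ref{lem:sigma-n2-regularity-mn} gives Lebesgue measurability and the Baire property for boldface $\boldsymbol\Sigma^1_{n+2}$ sets. It is built on the covering Lemma~\ref{lem:random-cohen-covering-mn} and the $T_{n+1}$-absoluteness of Lemma~\ref{lem:tn-small-generic-absoluteness}.
\item Corollary~\ref{cor:final-pin3-uniformization-mn} gives $\Pi^1_{n+3}$-uniformization. It rests on the dichotomy Lemma~\ref{lem:final-dichotomy-mn} and on the complexity and exactness of $\Phi_n$ from Lemma~\ref{lem:mn-coding-complexity-exactness}.
\item Lemma~\ref{lem:final-deltan3-wellorder-mn} gives the $\Delta^1_{n+3}$ well-order.
\item Theorem~\ref{thm:small-mn-sigman2-uniformization} gives $\Sigma^1_{n+2}$-uniformization, provided $W_n^\ast$ is a small generic extension of $M_n$ in the required sense.
\end{itemize}

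The main obstacle is this last proviso: verifying that $W_n^\ast$ is a small generic extension of $M_n$ in the sense of Subsection~\ref{subsec:sigman2-uniformization-mn-small}. The whole forcing $Br\hybcomp\mathcal R^n_{\kappa_n}$ has $M_n$-cardinality $\kappa_n=\omega_2^{M_n}$, which is below the least Woodin cardinal of $M_n$. It lies in $M_n$ because the bookkeeping $F_n$ and all derivative classes are defined in $M_n$ (via $W_n$'s $Br$-names). So it is a set forcing of size below $\delta_0$, and the canonical strategy lifts to it.

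The same smallness is what licenses the applications of Lemma~\ref{lem:tn-small-generic-absoluteness} inside the regularity and dichotomy lemmas. I would check explicitly that $\kappa_n<\kappa<\delta_0$ can be chosen with $T_{n+1}$ being $\kappa$-weakly homogeneous.

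Finally, I would record that the preservation conventions of Paragraph~\ref{par:steel-mn-preservation-convention} apply because the forcing is proper and preserves $\omega_1$. This makes the recovery of $M_n|\omega_1$, and hence the definitions of $\Phi_n$ and $<_{M_n}$ on localized presentations, correct in $W_n^\ast$.
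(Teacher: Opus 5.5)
Your proposal is correct and follows the paper's proof essentially step for step. The continuum is computed from three ingredients: the $\aleph_2$-c.c., the $\aleph_1$ bound on iterands together with CH/GCH in the ground, and the cofinally many ordinary Cohen coordinates. The remaining clauses are then obtained by citing Lemma~\ref{lem:sigma-n2-regularity-mn}, Corollary~\ref{cor:final-pin3-uniformization-mn}, Lemma~\ref{lem:final-deltan3-wellorder-mn} and Theorem~\ref{thm:small-mn-sigman2-uniformization}.

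You also verify explicitly that $W_n^\ast$ is a small generic extension of $M_n$: it is a set forcing in $M_n$ of size $\kappa_n<\delta_0$, namely $Br$ followed by a $Br$-name for the post-branch forcing. The paper leaves this implicit in the ``in particular'' clause of that theorem, so this is added detail rather than a different route.
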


\begin{proof}
Let $W_n^\ast$ be the final extension from Definition~\ref{def:main-iteration-mn}.  The forcing has length $\kappa_n=(\omega_2)^{M_n}$, preserves $\omega_1$, has the $\aleph_2$-chain condition by the same size and $\Delta$-system argument used in Lemma~\ref{lem:basic-zero-allowable}, and every iterand has size at most $\aleph_1$ in the relevant intermediate model.  Since the preparatory model satisfies $\CH$, the final forcing has size at most $\kappa_n$, so
\[
   W_n^\ast\models 2^{\aleph_0}\leq\aleph_2.
\]
Cofinally many ordinary Cohen coordinates add new reals, and hence $W_n^\ast$ has at least $\kappa_n$ many reals.  Therefore
\[
   W_n^\ast\models 2^{\aleph_0}=\aleph_2.
\]

Theorem~\ref{thm:small-mn-sigman2-uniformization} gives $\Sigma^1_{n+2}$-uniformization.  Corollary~\ref{cor:final-pin3-uniformization-mn} gives $\Pi^1_{n+3}$-uniformization.  Lemma~\ref{lem:final-deltan3-wellorder-mn} gives a $\Delta^1_{n+3}$ well-order of the reals.  Lemma~\ref{lem:sigma-n2-regularity-mn} gives Lebesgue measurability and the Baire property for all boldface $\boldsymbol\Sigma^1_{n+2}$ sets of reals.  This proves the theorem.
\end{proof}

\bibliographystyle{plain}
\bibliography{references}

\end{document}